\documentclass[oneside]{amsart}
\usepackage{amsmath}
\usepackage{amssymb}
\usepackage{esint}
\usepackage[a4paper]{geometry}
\usepackage{hyperref}
\usepackage{enumerate}
\usepackage{amsthm}
\usepackage{dsfont}
\usepackage[foot]{amsaddr}

\theoremstyle{plain}
\newtheorem{thm}{Theorem}[section]
	\newtheorem{prop}[thm]{Proposition}
	\newtheorem{lem}[thm]{Lemma}
	\newtheorem{cor}[thm]{Corollary}

\theoremstyle{definition}

\theoremstyle{remark}
\newtheorem*{rem}{Remark}

\numberwithin{equation}{section}

\begin{document}

\title[Hydrodynamics of degenerate Fermi gases on spherical Fermi surfaces]{Hydrodynamics of degenerate Fermi gases \\ on spherical Fermi surfaces}

\author{Benjamin Anwasia}
\email{\href{mailto:benjamin.anwasia@nyu.edu}{benjamin.anwasia@nyu.edu}}

\author{Diogo Ars\'enio}
\address{Division of Science \\ New York University Abu Dhabi \\ Abu Dhabi \\ United Arab Emirates}
\email{\href{mailto:diogo.arsenio@nyu.edu}{diogo.arsenio@nyu.edu}}

\date{\today}

\begin{abstract}
	We consider the description of a Fermi gas of free electrons given by the Boltzmann--Fermi--Dirac equation, and aim at providing a precise mathematical understanding of the Fermi ground state and its first-order approximation of excited states on the Fermi sphere.

	In order to achieve that, using the framework of hydrodynamic limits in collisional kinetic theory, we identify the low-temperature regimes in which charge-density fluctuations concentrate on the Fermi sphere. In three spatial dimensions or higher, we also characterize the thermodynamic equilibra and energy spectra of fluctuations. This allows us to derive the macroscopic hydrodynamic equations describing how charge densities flow and propagate in metals, thereby providing a precise description of plasma oscillations in conductors. The two-dimensional case is fundamentally different and is handled in a companion article.
	
	Remarkably, our results establish that excited electrons and their energy can be distributed on the Fermi sphere anisotropically, which deviates from the common intuitive assumption that electrons and their energy should be distributed uniformly in all directions.
	
	The hydrodynamic regimes featured in this work are akin to the acoustic limit of the classical Boltzmann equation. However, we emphasize that our derivation holds for arbitrarily fast rates of convergence of the Knudsen number, which significantly extends the applicability of the known proofs of the classical acoustic limit. This suggest that low-temperature limits of Fermi gases provide a promising avenue of research toward a complete understanding of the compressible Euler limit.
\end{abstract}

\maketitle

\tableofcontents

\section{Introduction}

Fermions are elementary quantum particles satisfying the Pauli exclusion principle, which dictates that no two particles can occupy the same quantum state.
Accordingly, in a Fermi gas or liquid close to its absolute-zero temperature, particles cannot all accumulate near the zero-energy level. Instead, they tend to occupy their lowest available energy configuration. At low temperatures, this results in a ground state where all energy levels are filled uniformly up to a fixed energy, know as the Fermi energy or Fermi level $E_{\mathrm{F}}>0$. For the sake of simplicity, we leave particle-spin interactions out of this discussion.

This behavior is effectively captured by the Fermi--Dirac distribution
\begin{equation*}
	f(E)=\frac{1}{1+e^{\frac{E-E_{\mathrm{F}}}{k_\mathrm{B}T}}},
\end{equation*}
which describes the energy spectrum of an ideal Fermi gas at thermodynamic equilibrium, where $T$ is the temperature and $k_\mathrm{B}$ is Boltzmann's constant. Specifically, up to a normalization constant, the value $f(E)$ represents the mean number of particles in the gas which have energy level $E$. It is then readily seen that, as the Fermi gas reaches cold temperatures, i.e., in the regime $T\to 0$, the distribution $f(E)$ tends to a step function, with $f= 1$, if $E<E_\mathrm{F}$, and $f= 0$ if $E>E_\mathrm{F}$.

In nature, even though all matter condensates into a solid or a liquid before it reaches an absolute-zero temperature and quantum effects become significant, there are Fermi systems whose behavior is accurately described by Fermi--Dirac statistics near a ground state. For instance, Landau's theory of Fermi liquids shows how the helium isotopes $^3$He and $^4$He, in their liquid state, display a Fermi-type energy spectrum. Similarly, electrons in a metal also constitute a phase state akin to an ideal Fermi gas at low temperatures, provided one ignores other types of interactions within electrons, with other atoms and with impurities in the metal. Finally, matter at high densities can, in some suitable regimes, transform into a highly compressed plasma of electrons and nuclei. In such a high-density setting, provided the temperature is not too high, the electrons in the plasma form a degenerate Fermi gas, which means that they behave according to Fermi--Dirac statistics near a temperature of absolute zero. This is the case in white dwarf stars.

We refer to \cite{Kittel2004}, \cite{LandauVol5}, \cite{LandauVol9} and \cite{Pathria2021StatisticalMechanics} for further contextualization of the physics of Fermi systems and the Fermi--Dirac equilibrium distribution function.

Importantly, in a Fermi gas at suitably low temperatures, deviations from ground states can only occur when particles in the occupied band $E<E_\mathrm{F}$ are excited and transition into the region of available energy levels $E>E_\mathrm{F}$. This phenomenon is mostly localized and happens principally near the surface in the space of particle momenta determined by the constraint $E=E_\mathrm{F}$, which is called Fermi surface. In the simplest three-dimensional classical case where the energy of a particle of mass $m>0$ is given by its kinetic energy $\frac 12 mv^2$, with velocity $v\in\mathbb{R}^3$, the Fermi surface is a sphere with a radius given by the Fermi velocity $v_\mathrm{F}=(\frac{2E_\mathrm{F}}{m})^\frac 12$.

More generally, the behavior of electrons in a metallic crystal lattice is influenced by the periodic potential created by ions. As a result, the Fermi surface can assume intricate geometrical shapes related to the crystal structure of the metal. The study of Fermi surfaces is of fundamental significance because electrical currents are due to changes in the occupancy of states near the Fermi surface, and electrical properties of metals are therefore determined by the volume and shape of the Fermi surface.

In this work, we focus on the simple setting provided by a Fermi gas of free electrons, without worrying about the complex interactions of electrons with other particles, and we aim at providing a precise mathematical understanding of the Fermi ground state and its first-order approximation of excited states on the Fermi sphere. In order to achieve that, we consider the semiclassical description of a Fermi gas provided by the Boltzmann--Fermi--Dirac equation and, using the framework of hydrodynamic limits in collisional kinetic theory, we identify the low-temperature regimes in which charge-density fluctuations concentrate on the Fermi sphere. Furthermore, we characterize the thermodynamic equilibrium and energy spectrum of fluctuations in such regimes, which then allows us to derive the macroscopic hydrodynamic equations describing how charge densities flow and propagate in metals, thereby providing a precise description of plasma oscillations in conductors.

\section{The Boltzmann--Fermi--Dirac equation}

The nonequilibrium dynamics of an ideal gas filling the whole space and obeying Fermi--Dirac statistics can be described, in the absence of external forces, by the Boltzmann--Fermi--Dirac equation
\begin{equation}\label{eq:BFDE}
	(\partial_t+v\cdot\nabla_x)f=Q_\mathrm{FD}(f),
\end{equation}
where
\begin{equation*}
	f(t,x,v)\geq 0
\end{equation*}
is the number density of particles at time $t\in\mathbb{R}^+$, located at position $x\in\mathbb{R}^d$, with velocity $v\in\mathbb{R}^d$. Throughout this work, the integer parameter $d\geq 2$ denotes the spatial dimension.

The left-hand side above describes the temporal change of the distribution function $f$ due to the linear motion of particles between scattering events, while the right-hand side accounts for collision encounters between particles.

More specifically, the Boltzmann--Fermi--Dirac collision operator $Q_\mathrm{FD}(f)$ acts on the velocity variable $v$ only and is given by the nonlinear integral
\begin{equation}\label{eq:collision_operator}
	Q_\mathrm{FD}(f)(v)=\int_{\mathbb{R}^d\times\mathbb{S}^{d-1}}\Big(f'f_*'(1-\delta f)(1-\delta f_*)-ff_*(1-\delta f')(1-\delta f_*')\Big) b(v-v_*,\sigma) dv_*d\sigma,
\end{equation}
where we have denoted
\begin{equation*}
	f=f(t,x,v),\quad f_*=f(t,x,v_*),\quad f'=f(t,x,v'), \quad f'_*=f(t,x,v'_*),
\end{equation*}
with
\begin{equation*}
	v'=\frac{v+v_*}2+\frac{|v-v_*|}2\sigma
	\quad\text{and}\quad
	v_*'=\frac{v+v_*}2-\frac{|v-v_*|}2\sigma.
\end{equation*}
The constant $\delta>0$ is a small, fixed parameter. It can be expressed in terms of physical quantities and is proportional to the cube of Planck's constant. In particular, formally setting $\delta=0$ in \eqref{eq:BFDE} yields the classical Boltzmann equation.

The vectors $(v',v_*')$ constitute a parametrization, by $\sigma\in\mathbb{S}^{d-1}$, of all possible postcollisional velocities resulting from an elastic scattering event between two particles with precollisional velocities $(v,v_*)$. In other words, they form the family of all solutions to the system
\begin{equation*}
	\begin{aligned}
		v+v_*&= v'+v_*',
		\\
		v^2+v_*^2&=v'^2+v_*'^2,
	\end{aligned}
\end{equation*}
which expresses the conservation of momentum and energy at the microscopic level.

The cross section $b(v-v_*,\sigma)$ (or, equivalently, the collision kernel) is nonnegative and measures the likelihood of a scattering event with ingoing relative velocity $v-v_*$ and outgoing relative velocity $v'-v_*'=|v-v_*|\sigma$. In fact, it only depends on the magnitude of relative velocities and their deflection angle.
As a consequence, it can be written as a function of the magnitude $|v-v_*|=|v'-v_*'|$ and of the cosine of the deflection angle $\frac{v-v_*}{|v-v_*|}\cdot\frac{v'-v_*'}{|v'-v_*'|}$. Thus, we often write
\begin{equation*}
	b(v-v_*,\sigma)=b\left(|v-v_*|,\frac{v-v_*}{|v-v_*|}\cdot\sigma\right),
\end{equation*}
without any ambiguity. It is to be emphasized that the kernel $b$ only alters the weight given to different scattering events in the collision operator, but it is not a probability measure.

The semiclassical description provided by \eqref{eq:BFDE} posits that the transport properties of a Fermi gas or liquid can be described classically. In particular, as we have seen, this means that interactions are assumed to be elastic and that particles stream freely between scattering events according to Newton's laws. As for the quantum properties of particles, they are encoded in the Pauli blocking factors $(1-\delta f)$, $(1-\delta f_*)$, $(1-\delta f')$ and $(1-\delta f_*')$ found in the Boltzmann--Fermi--Dirac collision operator, which enforce that collisions only take particles into unoccupied states, in accordance with Pauli's exclusion principle. As a consequence, the behavior of the gas is conditioned by its trend to Fermi--Dirac equilibria. We discuss the equilibrium states of \eqref{eq:BFDE} in more detail below.

This description works well in a gas of free electrons where interactions between electrons and other elements in the material are neglected, i.e., where the behavior of the dilute gas is dominated by electron--electron interactions. Furthermore, these electron-electron interactions need to remain relatively weak so that the Boltzmann approximation of a dilute gas remains valid, and the mean-field effects of electromagnetic fields can be neglected. We refer to \cite{LandauVol10} for further details on the physical background of \eqref{eq:BFDE}. We also recommend the reading of \cite{cc90}, which we find helpful in bridging the physical with the mathematical aspects of kinetic theory.

\subsection{Dolbeault's wellposedness result}

Mathematically, the Pauli blocking term
\begin{equation*}
	(1-\delta f)
\end{equation*}
forces that the particle distribution satisfy the pointwise bound
\begin{equation}\label{pauli:bound}
	0\leq f \leq \delta^{-1}.
\end{equation}
This is a consequence of the fact that the gain term
\begin{equation*}
	Q_\mathrm{FD}^+(f)(v)=\int_{\mathbb{R}^d\times\mathbb{S}^{d-1}}f'f_*'(1-\delta f)(1-\delta f_*) b dv_*d\sigma
\end{equation*}
vanishes as $f$ approaches the saturation value $\delta^{-1}$, in the same fashion that the nonnegativeness of $f$ follows from the fact that the loss term
\begin{equation*}
	Q_\mathrm{FD}^-(f)(v)=\int_{\mathbb{R}^d\times\mathbb{S}^{d-1}}ff_*(1-\delta f')(1-\delta f_*') b dv_*d\sigma
\end{equation*}
vanishes as $f$ approaches vacuum.

This pointwise a priori bound is a central observation which allowed Dolbeault to establish the wellposedness of the Boltzmann--Fermi--Dirac equation in \cite{d94}, through a simple fixed-point argument. Specifically, for any initial datum
\begin{equation*}
	f|_{t=0}=f^\mathrm{in}\in L^\infty(\mathbb{R}^d\times\mathbb{R}^d)
\end{equation*}
satisfying \eqref{pauli:bound} almost everywhere, he showed that, provided the cross section is globally integrable, i.e.,
\begin{equation}\label{assumption:b:1}
	b(z,\sigma)\in L^1(\mathbb{R}^d\times\mathbb{S}^{d-1}),
\end{equation}
there exists a unique weak solution to \eqref{eq:BFDE} in $L^\infty(\mathbb{R}^+\times\mathbb{R}^d\times\mathbb{R}^d)$ satisfying \eqref{pauli:bound} almost everywhere. Moreover, this solution is absolutely continuous in time, with values in the Banach space of locally integrable functions in $x$ and $v$.

Dolbeault's solutions provide a convenient mathematical setting for our study of Fermi surfaces. In particular, since we are interested in low-temperature regimes where particle velocities are bound to a Fermi sphere, the hypothesis that the collision kernel $b$ is globally integrable does not reduce the generality of the asymptotic macroscopic models derived in our main results, below.

That being said, it should be noted that the well-posedness theory of \eqref{eq:BFDE} has been extended by Lions in \cite{Lions94} to a large range of physically relevant collision kernels, which are only locally integrable and satisfy the quadratic growth control
\begin{equation}\label{assumption:b:2}
	\lim_{|v|\to \infty}|v|^{-2}\int_{K\times\mathbb{S}^{d-1}}b(v-v_*,\sigma)dv_*d\sigma=0,
\end{equation}
for any compact set $K\subset\mathbb{R}^d$. This is the usual DiPerna--Lions assumption on the cross section which was used in \cite{DiPernaLions89} for the construction of renormalized solutions of the Boltzmann equation. However, in this more general context, it is unkown whether solutions to \eqref{eq:BFDE} are unique.

We presume that the results presented in this article remain valid, in some form, in the more general context provided by cross sections satisfying the above quadratic growth control assumption. In that setting, we anticipate that the corresponding proofs would then rely on renormalization techniques, as performed in the framework of hydrodynamic limits of the classical Boltzmann equation (see \cite{SR09}, for instance, for an introduction to the subject).

\subsection{Collisional symmetries and conservation laws}

It is important to observe now that the structure of the Boltzmann--Fermi--Dirac operator \eqref{eq:collision_operator} enjoys the same symmetries as the classical Boltzmann operator under the volume-preserving pre-post-collisional change of variables
\begin{equation}\label{change:prepost}
	(v,v_*,\sigma)\mapsto \left(v',v_*',\frac{v-v_*}{|v-v_*|}\right)
	=\left(\frac{v+v_*}{2}+\frac{|v-v_*|}{2}\sigma,\frac{v+v_*}{2}-\frac{|v-v_*|}{2}\sigma,\frac{v-v_*}{|v-v_*|}\right),
\end{equation}
and particle-exchange change of variables
\begin{equation}\label{change:exchange}
	(v,v_*,\sigma)\mapsto \left(v_*,v,-\sigma\right),
\end{equation}
on $\mathbb{R}^d\times\mathbb{R}^d\times\mathbb{S}^{d-1}$. These changes of variables are sometimes called collisional symmetries.

As in the case of the Boltzmann equation (see \cite{CIP94} for details on the classical Boltzmann equation), it can then be shown that the equilibrium identity
\begin{equation*}
	\int_{\mathbb{R}^d}Q_\mathrm{FD}(f)(v)\phi(v)dv=0
\end{equation*}
is solved for all suitable densities $f(v)$ if and only if $\phi(v)$ is a collision invariant, i.e., it belongs to the linear span of the set
\begin{equation*}
	\{1,v_1,\ldots,v_d,|v|^2\}.
\end{equation*}

Therefore, formally integrating \eqref{eq:BFDE} against collision invariants, we deduce the local conservation laws of mass (or continuity equation), momentum, and energy, respectively written as
\begin{align}
	\label{cons:mass}\partial_t \int_{\mathbb{R}^d}f(t,x,v) dv+\nabla_x \cdot\int_{\mathbb{R}^d}vf(t,x,v)dv&=0,
	\\
	\label{cons:momentum}\partial_t \int_{\mathbb{R}^d}vf(t,x,v) dv+\nabla_x \cdot\int_{\mathbb{R}^d}(v\otimes v)f(t,x,v)dv&=0,
	\\
	\label{cons:energy}\partial_t \int_{\mathbb{R}^d}\frac 12 |v|^2f(t,x,v) dv+\nabla_x \cdot\int_{\mathbb{R}^d}\frac 12 |v|^2vf(t,x,v)dv&=0.
\end{align}
It is then customary to introduce the macroscopic density $\rho$, the bulk (or drift) velocity $U$, the internal energy $E$, the stress (or pressure) tensor $P$ and the heat flux vector $q$, which are macroscopic observable fields defined by the moments
\begin{gather*}
	\rho= \int_{\mathbb{R}^d}f dv,
	\qquad \rho U= \int_{\mathbb{R}^d}vf dv,
	\qquad E= \int_{\mathbb{R}^d}\frac 12|v-U|^2f dv,
	\\
	P=\int_{\mathbb{R}^d}(v-U)\otimes (v-U)f dv,
	\qquad q=\int_{\mathbb{R}^d}\frac 12 |v-U|^2(v-U)f dv.
\end{gather*}
In this notation, the above conservation laws can then be recast with macroscopic variables as
\begin{align*}
	\partial_t \rho+\nabla_x \cdot(\rho U) &=0,
	\\
	\partial_t (\rho U)+\nabla_x \cdot(\rho U\otimes U+ P)&=0,
	\\
	\partial_t \left(\frac 12\rho U^2+E\right)+\nabla_x \cdot
	\left(\left(\frac 12\rho U^2+E\right)U+PU+q\right)&=0.
\end{align*}

For globally integrable cross sections, Dolbeault also showed in \cite{d94} how the unique solution to \eqref{eq:BFDE} satisfies the global conservation laws which are naturally expected from collisional kinetic equations. Specifically, if the initial datum is such that
\begin{equation}\label{initial:bound}
	(1+|x|^2+|v|^2)f^\mathrm{in}(x,v)\in L^1(\mathbb{R}^d\times\mathbb{R}^d),
\end{equation}
he showed that the corresponding solution $f(t,x,v)$ satisfies that
\begin{align*}
	\int_{\mathbb{R}^d\times\mathbb{R}^d}f(t,x,v)dxdv&=\int_{\mathbb{R}^d\times\mathbb{R}^d}f^\mathrm{in}(x,v)dxdv,
	\\
	\int_{\mathbb{R}^d\times\mathbb{R}^d}|x|^2f(t,x,v)dxdv&=\int_{\mathbb{R}^d\times\mathbb{R}^d}|x+tv|^2f^\mathrm{in}(x,v)dxdv,
	\\
	\int_{\mathbb{R}^d\times\mathbb{R}^d}|v|^2f(t,x,v)dxdv&=\int_{\mathbb{R}^d\times\mathbb{R}^d}|v|^2f^\mathrm{in}(x,v)dxdv,
\end{align*}
for all $t\in \mathbb{R}^+$, whence
\begin{equation}\label{propagation:initial:bound}
	(1+|x|^2+|v|^2)f(t,x,v)\in L^\infty_\mathrm{loc}(\mathbb{R}^+;L^1(\mathbb{R}^d\times\mathbb{R}^d)).
\end{equation}
This global bound is then sufficient to show that
\begin{equation}\label{collision_operator:quad_control}
	(1+|x|^2+|v|^2)Q_\mathrm{FD}^\pm (f)\in L^\infty_\mathrm{loc}(\mathbb{R}^+;L^1(\mathbb{R}^d\times\mathbb{R}^d)),
\end{equation}
and, consequently, that the local conservation laws of mass \eqref{cons:mass} and momentum \eqref{cons:momentum} hold in a weak sense.

Note that the validity of the conservation of energy \eqref{cons:energy} was not addressed in \cite{d94}. Nevertheless, previous methods introduced by Perthame in \cite{p89} in the context of the BGK equation apply here to deduce that
\begin{equation}\label{cubic:bound}
	|v|^3 f\in L^1_\mathrm{loc}(\mathbb{R}^+\times\mathbb{R}^d;L^1(\mathbb{R}^d)).
\end{equation}
More precisely, applying Lemma 2 from \cite{p89} with the bounds \eqref{initial:bound} and \eqref{collision_operator:quad_control} readily shows that \eqref{cubic:bound} holds true. As a consequence, one can deduce that the conservation of energy \eqref{cons:energy} holds in a weak sense, as well. However, this only seems to work well when the spatial domain is the whole Euclidean space and the solution is globally integrable in space.

With regard to the solutions constructed in \cite{Lions94} for locally integrable cross sections satisfying the quadratic growth assumption \eqref{assumption:b:2}, it remains unknown whether all exact conservation laws hold or not.
Specifically, while the continuity equation \eqref{cons:mass} remains valid in this setting, one cannot, in general, establish the validity of \eqref{cons:momentum} and \eqref{cons:energy}. The difficulty in doing so originates in the fact that the bound \eqref{collision_operator:quad_control} may no longer hold true. Instead, in addition to the continuity equation \eqref{cons:mass}, one can only show that
\begin{align*}
	\int_{\mathbb{R}^d\times\mathbb{R}^d}vf(t,x,v)dxdv&=\int_{\mathbb{R}^d\times\mathbb{R}^d}vf^\mathrm{in}(x,v)dxdv,
	\\
	\int_{\mathbb{R}^d\times\mathbb{R}^d}|x|^2f(t,x,v)dxdv&\leq \int_{\mathbb{R}^d\times\mathbb{R}^d}|x+tv|^2f^\mathrm{in}(x,v)dxdv,
	\\
	\int_{\mathbb{R}^d\times\mathbb{R}^d}|v|^2f(t,x,v)dxdv&\leq \int_{\mathbb{R}^d\times\mathbb{R}^d}|v|^2f^\mathrm{in}(x,v)dxdv,
\end{align*}
for all $t\in\mathbb{R}^+$. This is analogous to what one can expect for renormalized solutions of the classical Boltzmann equation.

\subsection{The $H$-theorem}

The Boltzmann--Fermi--Dirac equation enjoys a quantum version of the $H$-theorem, which is similar to the classical result for the Boltzmann equation. This feature is essential to our work. Formally, it is obtained by multiplying \eqref{eq:BFDE} by
\begin{equation*}
	\delta\log\left(\frac{\delta f}{1-\delta f}\right)
\end{equation*}
and then integrating in $x$ and $v$ to reach, with suitable applications of the collisional symmetries \eqref{change:prepost} and \eqref{change:exchange}, that
\begin{equation}\label{thm:H}
	H(f)(t)+ \int_0^tD(f)(s)ds = H(f^\mathrm{in}),
\end{equation}
where
\begin{equation*}
	H(f)=\int_{\mathbb{R}^d\times\mathbb{R}^d} (\delta f\log \delta f+(1-\delta f)\log(1-\delta f)) dxdv
\end{equation*}
is the entropy, and
\begin{equation}\label{entropy:dissipation}
	\begin{aligned}
		D(f)
		&=\frac 1{4\delta}\int_{\mathbb{R}^d\times\mathbb{R}^d\times\mathbb{R}^d\times\mathbb{S}^{d-1}}
		\big(
		\delta f' \delta f_{ *}'(1-\delta f)(1-\delta f_{ *})
		-\delta f \delta f_{ *}(1-\delta f')(1-\delta f_{ *}')
		\big)
		\\
		&\quad\times\log\left(
		\frac{\delta f' \delta f_{ *}'(1-\delta f)(1-\delta f_{ *})}
		{\delta f \delta f_{ *}(1-\delta f')(1-\delta f_{ *}')}
		\right)b(v-v_*,\sigma)dxdvdv_*d\sigma
	\end{aligned}
\end{equation}
is the entropy dissipation. Note that the integrand of $D(f)$ is nonnegative.

A standard argument (see Lemma 4 in \cite{d94}, for instance) readily shows that the pointwise bound \eqref{pauli:bound} on $f^\mathrm{in}$ and $f$ in combination with \eqref{initial:bound} and \eqref{propagation:initial:bound} entails that $f^\mathrm{in}\log f^\mathrm{in}\in L^1(\mathbb{R}^d\times\mathbb{R}^d)$ and $f\log f\in L^\infty_\mathrm{loc}(\mathbb{R}^+;L^1(\mathbb{R}^d\times\mathbb{R}^d))$. But \eqref{thm:H} provides more precise information than a simple $L\log L$ bound. Indeed, it gives the decay of the entropy functional $H(f)$, which, in combination with other conservation laws, will play a central role in the proofs of our main results.

For collision kernels satisfying \eqref{assumption:b:1} and initial datum with \eqref{pauli:bound} and \eqref{initial:bound}, it was established in \cite{d94} that the unique solution to \eqref{eq:BFDE} satisfies the $H$-theorem, in the sense that \eqref{thm:H} holds for all $t\in\mathbb{R}^+$. This is noteworthy. Indeed, in general, one only expects that solutions to collisional kinetic equations satisfy a weakened version of the $H$-theorem where the equal sign is replaced by an inequality. In such a case, one sometimes speaks of an entropy inequality. To be precise, in the case of the Boltzmann--Fermi--Dirac equation, the weakened form of \eqref{thm:H} reads
\begin{equation*}
	H(f)(t)+ \int_0^tD(f)(s)ds \leq H(f^\mathrm{in}),
\end{equation*}
for all $t\in\mathbb{R}^+$. For practical purposes, this is often sufficient, since the main use of the $H$-theorem is to provide a uniform control on the decay of the entropy of $f$ and its dissipation.

For renormalized solutions of the classical Boltzmann equation, the entropy inequality was established in \cite{dl91}, but it remains unknown whether or not such solutions satisfy the $H$-theorem exactly. As for the solutions to the Boltzmann--Fermi--Dirac equation constructed in \cite{Lions94}, which exist under the assumption \eqref{assumption:b:2}, the situation is similar and one only expects that they satisfy an entropy inequality.

\subsection{Fermi--Dirac equilibria and ground states}

Equilibrium states of \eqref{eq:BFDE} are defined as particle number densities $f$ for which the collision integral $Q_\mathrm{FD}(f)$ vanishes. This condition precisely translates to solutions of the relation
\begin{equation}\label{equi:relation}
	f'f_*'(1-\delta f)(1-\delta f_*)=ff_*(1-\delta f')(1-\delta f_*'),
\end{equation}
and the $H$-theorem therefore formally suggests that solutions of the Boltzmann--Fermi--Dirac equation converge toward an equilibrium state in long-time asymptotics (due to global time-integrability of the entropy dissipation).

One can show, at least formally, that solutions of \eqref{equi:relation}, with $0<f<\delta^{-1}$, are given by Fermi--Dirac distributions, which take the general form
\begin{equation*}
	M_\mathrm{FD}(v)= \frac{\delta^{-1}}{1+e^{\frac{|v-U|^2-\mu}{T}}},
\end{equation*}
where $U\in\mathbb{R}^d$, $T\in \mathbb{R}^+$ and $\mu\in\mathbb{R}$ are the bulk velocity, temperature and chemical potential, respectively, in suitable physical units. In particular, for simplicity, we assume here that we are working with a choice of units which allows us to set Boltzmann's constant $k_\mathrm{B}$ equal to one.

If $\mu>0$, further observe that the Fermi--Dirac density converges pointwise, as $T\to 0$, toward the ground state
\begin{equation*}
	F(v)=\left\{
	\begin{aligned}
		&0 && \text{if } |v-U|>\mu^\frac 12,
		\\
		&(2\delta)^{-1} && \text{if } |v-U|=\mu^\frac 12,
		\\
		&\delta^{-1} && \text{if } |v-U|<\mu^\frac 12,
	\end{aligned}
	\right.
\end{equation*}
which is also a solution of \eqref{equi:relation}.

A rigorous investigation of equilibrium states of the Boltzmann--Fermi--Dirac equation is available in \cite{d94}.

\subsection{Low-temperature hydrodynamic regimes}\label{section:low_temp_regime}

As previously mentioned, in this work, we look to describe the macroscopic evolution of a Fermi gas in a low-temperature regime near a ground state. This will be achieved by considering the hydrodynamic regime $\varepsilon\to 0$ of the nondimensionalized form of the Boltzmann--Fermi--Dirac equation
\begin{equation}\label{eq:BFDE:scaled}
	(\partial_t+v\cdot\nabla_x)f_\varepsilon=\frac 1{\varepsilon^\kappa}Q_\mathrm{FD}(f_\varepsilon),
\end{equation}
where $\varepsilon^\kappa$ represents the Knudsen number, with $\varepsilon>0$ and $\kappa>0$, for some suitable choice of initial datum $f_\varepsilon|_{t=0}=f_\varepsilon^\mathrm{in}$.

Recall that the Knudsen number is a nondimensional constant which is proportional to the mean-free path in the gas, relative to a fiducial length. Therefore, by considering an asymptotic regime where the Knudsen number vanishes, one expects to reach a continuum limit where the gas effectively becomes a fluid accurately described by a closed system of macroscopic observables. Here, we only point to \cite{SR09} for further details on the Knudsen number and hydrodynamic regimes of the Boltzmann equation in a classical setting.

We are now going to assume that the Fermi gas under consideration, with particle number density $f_\varepsilon(t,x,v)$, remains close to the normalized ground state
\begin{equation*}
	F(v)=\left\{
	\begin{aligned}
		&0 && \text{if } |{v}|>R,
		\\
		&(2\delta)^{-1} && \text{if } |{v}|=R,
		\\
		&\delta^{-1} && \text{if } |{v}|<R,
	\end{aligned}
	\right.
\end{equation*}
where the radius $R>0$ is, in fact, the Fermi velocity $v_\mathrm{F}=R$. More specifically, we consider the first-order density fluctuations $g_\varepsilon^\mathrm{in}(x,v)$ and $g_\varepsilon(t,x,v)$, defined by
\begin{equation}\label{epsilon:fluctuations}
	\begin{aligned}
		f_\varepsilon^\mathrm{in} &=M_\varepsilon+\varepsilon g_\varepsilon^\mathrm{in},
		\\
		f_\varepsilon &=M_\varepsilon+\varepsilon g_\varepsilon,
	\end{aligned}
\end{equation}
around the normalized low-temperature Fermi--Dirac distribution
\begin{equation}\label{normalized:distribution}
	M_\varepsilon(v)= \frac{1}{\delta} \Bigg(1+\exp\bigg(\frac{v^2-R^2}{\varepsilon^\tau}\bigg)\Bigg)^{-1},
\end{equation}
for some fixed parameter $\tau>0$.

Note that the temperature of $M_\varepsilon(v)$ is precisely $\varepsilon^\tau$, which is small if the Knudsen number is small. In particular, the distribution $M_\varepsilon(v)$ converges pointwise to $F(v)$ in the hydrodynamic regime $\varepsilon\to 0$ and is therefore a low-temperature approximation of this ground state. Our main results, below, provide a complete asymptotic characterization of $g_\varepsilon$ in the low-temperature hydrodynamic regime $\varepsilon\to 0$.

In order to quantify how $f_\varepsilon$ departs from the ground state $F$ and its approximate Fermi--Dirac equilibrium $M_\varepsilon$, and ensure that the fluctuation $g_\varepsilon$ is not immeasurably small or large, as $\varepsilon\to 0$, we rely on the relative entropy of $f_\varepsilon$ with respect to $M_\varepsilon$. It is defined by
\begin{equation}\label{relative:entropy:0}
	H(f_\varepsilon|M_\varepsilon)
	=\int_{\mathbb{R}^d\times\mathbb{R}^d}h(\delta f_\varepsilon,\delta M_\varepsilon)dx dv,
\end{equation}
where the nonnegative function $h$ is given by
\begin{equation}\label{entropy:function}
	h(z,a)=z\log\frac za+(1-z)\log \frac{1-z}{1-a},
\end{equation}
for all $(z,a)\in [0,1]\times (0,1)$.

Now, employing the relative entropy $H(f_\varepsilon|M_\varepsilon)$ and the entropy dissipation $D(f_\varepsilon)$ given in \eqref{entropy:dissipation}, we observe that the entropy inequality for the scaled Boltzmann--Fermi--Dirac equation \eqref{eq:BFDE:scaled} can be written as
\begin{equation}\label{entropy:inequality:scaled}
	H(f_\varepsilon|M_\varepsilon)(t)
	+\frac{1}{\varepsilon^\kappa}\int_0^tD(f_\varepsilon)(s)ds \leq
	H(f_\varepsilon^\mathrm{in}|M_\varepsilon),
\end{equation}
for all $t>0$. As previously emphasized, in the context of Dolbeault's wellposedness result, the preceding entropy inequality is, in effect, an equality, but we will not make any use of this fact.

In view of \eqref{entropy:inequality:scaled}, the size of the relative entropy of $f_\varepsilon$ and its dissipation is dominated, in the regime $\varepsilon\to 0$, by the magnitude of the relative entropy of the initial datum. We control these quantities, uniformly in $\varepsilon$, by assuming that
\begin{equation}\label{initial:relative:entropy}
	\frac 1{\varepsilon^{2-\gamma}}H(f_\varepsilon^\mathrm{in}|M_\varepsilon)
	\leq C^\mathrm{in},
\end{equation}
for some constant parameters $\gamma>0$ and $C^\mathrm{in}>0$.

Later on, we identify the numerical range for the parameters $\kappa$, $\tau$ and $\gamma$ for which the hydrodynamic regime $\varepsilon\to 0$ yields a closed limiting system, thereby providing a complete macroscopic description of the asymptotic behavior of electrons near Fermi spheres.

We emphasize that the ratio of the drift velocity's fiducial magnitude to the Fermi velocity in a free electron gas forms a nondimensional parameter akin to the Mach number in acoustics. Moreover, in hydrodynamic regimes where the fluctuation $g_\varepsilon$ remains of finite magnitude, the fluctuation coefficient $\varepsilon$ showing up in \eqref{epsilon:fluctuations} can be interpreted as the Mach number. This follows from the formal asymptotic computation
\begin{equation*}
	\mathrm{(Mach\ number)}\sim
	\frac{\left|\int_{\mathbb{R}^d}f_\varepsilon v dv\right|}{\left(\int_{\mathbb{R}^d}f_\varepsilon v^2 dv\right)^\frac 12}
	=\frac{\varepsilon\left|\int_{\mathbb{R}^d}g_\varepsilon v dv\right|}{\left(\int_{\mathbb{R}^d}f_\varepsilon v^2 dv\right)^\frac 12}
	\sim\frac{\varepsilon\left|\int_{\mathbb{R}^d}g_\varepsilon v dv\right|}{\left(\int_{\mathbb{R}^d}M_\varepsilon v^2 dv\right)^\frac 12}
	\sim\varepsilon.
\end{equation*}
Now, the Fermi velocity in a free electron gas is typically $10^6\ \mathrm{m/s}$ at ambient conditions, a sizeable fraction of the speed of light. In contrast, the electron drift velocity with a typical applied current is much slower, around $10^{-4}\ \mathrm{m/s}$. (We refer to \cite[Chapter 6]{Kittel2004} for further details on these numerical values.) This leads to infinitesimally small Mach numbers, directly justifying the relevance of considering the regime $\varepsilon\to 0$ in \eqref{epsilon:fluctuations}.

Finally, some remarks on the implications of the entropy bounds \eqref{entropy:inequality:scaled} and \eqref{initial:relative:entropy} on the behavior of solutions at infinity are in order. Indeed, a global control on the relative entropy implies that $f_\varepsilon$ and $M_\varepsilon$ must be close to each other at infinity. Thus, since the solution $f_\varepsilon$ is not expected to approach vacuum, it can no longer be globally integrable and the initial bound \eqref{initial:bound} needs to be relaxed into
\begin{equation*}
	(1+|v|^2)f^\mathrm{in}_\varepsilon(x,v)\in L^1_\mathrm{loc}(\mathbb{R}^d;L^1(\mathbb{R}^d)).
\end{equation*}
Accordingly, for a fixed value $\varepsilon>0$, the a priori global bounds \eqref{propagation:initial:bound} and \eqref{collision_operator:quad_control} need to be replaced with
\begin{equation*}
	(1+|v|^2)f_\varepsilon(t,x,v)\in L^\infty(\mathbb{R}^+;L^1_\mathrm{loc}(\mathbb{R}^d;L^1(\mathbb{R}^d)))
\end{equation*}
and
\begin{equation*}
	(1+|v|^2)Q_\mathrm{FD}^\pm (f_\varepsilon)\in L^\infty(\mathbb{R}^+;L^1_\mathrm{loc}(\mathbb{R}^d;L^1(\mathbb{R}^d))),
\end{equation*}
which is sufficient to establish the validity of the local conservation laws of mass \eqref{cons:mass} and momentum \eqref{cons:momentum} in a weak sense.

\section{Historical context and the acoustic limit}\label{acoustic:limit}

Hydrodynamic limits of collisional kinetic equations are a central subject of modern mathematical research, with most results focusing on the rigorous derivation of macroscopic fluid mechanical models from the Boltzmann equation. Pioneering contributions by Bardos, Golse and Levermore in \cite{bgl91,bgl93} established an influential research program. This work notably led to the first fully rigorous derivation of the incompressible Navier--Stokes system from the Boltzmann equation in \cite{gsr04}, subsequently advanced by \cite{lm10} and \cite{a12}.

At the lowest-order hydrodynamic regime, a Boltzmann gas formally approaches the compressible Euler system as the Knudsen number vanishes and it reaches statistical equilibrium. However, a rigorous understanding of the connection between this macroscopic gas dynamics system and the Boltzmann equation remains challenging and incomplete (see \cite{SR09} for further details).

Alternatively, one can characterize the first-order hydrodynamics of particle-density fluctuations around a normalized equilibrium state. Asymptotically, this approach linearizes the compressible Euler system around a constant solution, describing the propagation of sound waves. This hydrodynamic regime is therefore known as the acoustic limit of the Boltzmann equation.

Specifically, the acoustic regime considers fluctuations
\begin{equation*}
	f_\varepsilon=M+\varepsilon g_\varepsilon,
\end{equation*}
around a fixed normalized Maxwellian equilibrium state $M$, of the scaled Boltzmann equation
\begin{equation*}
	(\partial_t+v\cdot\nabla_x)f_\varepsilon=\frac 1{\varepsilon^\kappa}Q_\mathrm{B}(f_\varepsilon),
\end{equation*}
where $Q_\mathrm{B}(f_\varepsilon)$ is the classical Boltzmann collision operator (obtained by setting $\delta=0$ in \eqref{eq:collision_operator}) and $\varepsilon^\kappa$ is the Knudsen number. The rigorous derivation of this acoustic limit was fully achieved in the works \cite{bgl98,bgl00,gl02,jlm10}. These studies demonstrate that, under natural hypotheses and for $\kappa\leq 2$, renormalized solutions of the Boltzmann equation converge to the expected acoustic-wave system as $\varepsilon\to 0$. The case where $\kappa>2$ remains open and exceptionally challenging, as its resolution would constitute a significant step toward a general understanding of the compressible Euler limit of the Boltzmann equation.

In the context of the Boltzmann--Fermi--Dirac equation's hydrodynamic regimes described in Section \ref{section:low_temp_regime}, the classical acoustic limit corresponds to fixing the values of the parameters $\tau$ and $\gamma$ to zero. In this setting, where the global temperature of the gas remains bounded away from its absolute zero, it should be possible to adapt the proofs from \cite{gl02,jlm10} establishing the acoustic limit of the classical Boltzmann equation to its quantum version for fermions, provided $\kappa\leq 2$.

The asymptotic regime investigated in this work notably diverges from the classical acoustic limit. Here, we analyze fluctuations around an equilibrium that approaches a ground state, effectively characterizing an asymptotically degenerate gas whose global temperature reaches its absolute zero in the limit. This is achieved by considering positive values for the parameter $\tau$.

A key feature of our main results, below, is the demonstration of fluctuation convergence to a macroscopic wave system for parameters satisfying
\begin{equation*}
	0<\gamma=\tau< 1
	\qquad\text{and}\qquad
	\kappa>2\tau.
\end{equation*}
Remarkably, the restriction $\kappa\leq 2$, which seems crucial for the classical acoustic limit, is entirely lifted in our setting, meaning that there is no upper bound on the numerical range of $\kappa$. This unique characteristic of low-temperature regimes suggests it may be a promising avenue for progress toward understanding the compressible Euler limit of the Boltzmann--Fermi--Dirac equation.

It is also worth noting that results concerning hydrodynamic limits of the Boltzmann equation for fermions are still sparse. Formal investigations of the hydrodynamic regimes of \eqref{eq:BFDE} were first presented in \cite{z15,z15thesis}. Subsequent rigorous justifications of certain regimes were achieved in \cite{jxz22,jz24jde,jz24cmp,h25} within smooth and local settings near equilibria. However, these prior results do not explore low-temperature regimes and thus do not asymptotically exhibit quantum features of fermions.

\section{Main results}

The results in this section contain our main contributions to the study of degenerate Fermi gases. They only hold in dimensions $d\geq 3$. The two-dimensional case is fundamentally different and is treated in our companion article \cite{aa25}.

\subsection{Macroscopic dynamics of a degenerate Fermi gas}

Our first main theorem fully characterizes the thermodynamic equilibrium and macroscopic dynamics of a degenerate free electron gas.

\begin{thm}\label{thm:main}
	Assume that $d\geq 3$ and take a cross-section such that
	\begin{equation*}
		b(z,\sigma)\in L^1\cap L^\infty(\mathbb{R}^d\times\mathbb{S}^{d-1})\cap C(B(0,2R)\times\mathbb{S}^{d-1})
	\end{equation*}
	and
	\begin{equation*}
		b(z,\sigma)>0\quad\text{on }B(0,2R)\times\mathbb{S}^{d-1}.
	\end{equation*}
	Let
	\begin{equation*}
		f_\varepsilon^\mathrm{in}(x,v)\in L^\infty(\mathbb{R}^d\times\mathbb{R}^d),
		\quad\text{with }0\leq f_\varepsilon^\mathrm{in}\leq\delta^{-1},
	\end{equation*}
	be a family of initial data such that the initial relative entropy bound \eqref{initial:relative:entropy} holds uniformly in $\varepsilon>0$. Consider the unique solution
	\begin{equation*}
		f_\varepsilon(t,x,v)\in L^\infty(\mathbb{R}^+\times\mathbb{R}^d\times\mathbb{R}^d),
		\quad\text{with }0\leq f_\varepsilon\leq\delta^{-1},
	\end{equation*}
	of the Boltzmann--Fermi--Dirac equation \eqref{eq:BFDE:scaled} with initial datum $f_\varepsilon^\mathrm{in}$ and such that the scaled entropy inequality \eqref{entropy:inequality:scaled} holds.
	Further suppose that
	\begin{equation*}
		0<\gamma=\tau< 1
		\qquad\text{and}\qquad
		\kappa>2\tau,
	\end{equation*}
	and consider the density fluctuations $g_\varepsilon^\mathrm{in}(x,v)$ and $g_\varepsilon(t,x,v)$ given in \eqref{epsilon:fluctuations}.
	
	Then, as $\varepsilon\to 0$, up to extraction of a subsequence, the family of fluctuations $g_\varepsilon$ converges in the weak* topology of $\mathcal{M}_\mathrm{loc}(\mathbb{R}^+\times\mathbb{R}^d\times\mathbb{R}^d)$ toward a limit point
	\begin{equation}\label{concentration:characterization}
		\mu(t,x,v)=g\left(t,x,R\frac{v}{|v|}\right)dt\otimes dx\otimes \delta_{\partial B(0,R)}(v),
	\end{equation}
	where the density $g(t,x,\omega)$ belongs to $L^\infty\big(dt;L^2\big(\mathbb{R}^d\times\partial B(0,R)\big)\big)$.
	Similarly, up to further extraction of a subsequence, the family of initial fluctuations $g_\varepsilon^\mathrm{in}$ converges in the weak* topology of $\mathcal{M}_\mathrm{loc}(\mathbb{R}^d\times\mathbb{R}^d)$ toward a limit point
	\begin{equation*}
		\mu^\mathrm{in}(x,v)=g^\mathrm{in}\left(x,R\frac{v}{|v|}\right) dx\otimes \delta_{\partial B(0,R)}(v),
	\end{equation*}
	where $g^\mathrm{in}(x,\omega)\in L^2\big(\mathbb{R}^d\times\partial B(0,R)\big)$.
	
	Moreover, the limiting density $g(t,x,\omega)$ takes the form of an infinitesimal distribution
	\begin{equation}\label{infinitesimal_FD_distribution}
		g(t,x,\omega)=\rho(t,x)+U(t,x)\cdot\omega,
	\end{equation}
	where the density $\rho(t,x)$ and the velocity field $U(t,x)$ belong to $L^\infty\big(dt;L^2\big(\mathbb{R}^d\big)\big)$ and solve the wave system
	\begin{equation}\label{plasma:wave:system}
		\left\{
		\begin{aligned}
			\textstyle
			\partial_t \rho(t,x)+\frac{R^2}{d}
			\nabla_x \cdot U(t,x)&=0,
			\\
			\textstyle
			\partial_t U(t,x)
			+\nabla_x \rho(t,x)&=0,
		\end{aligned}
		\right.
	\end{equation}
	with initial data
	\begin{equation}\label{plasma:wave:system:data}
		\rho|_{t=0}=\rho^\mathrm{in}=\fint_{\partial B(0,R)}g^\mathrm{in}(x,\omega)d\omega
		\quad\text{and}\quad
		U|_{t=0}=U^\mathrm{in}=\frac{d}{R^2}\fint_{\partial B(0,R)}\omega g^\mathrm{in}(x,\omega)d\omega.
	\end{equation}
\end{thm}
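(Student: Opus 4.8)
The plan is to extract weak-* limits from the uniform entropy bounds and then identify the limiting structure in three stages: (i) show that the fluctuations $g_\varepsilon$ concentrate on the Fermi sphere $\partial B(0,R)$; (ii) show that the angular profile of the limit is an infinitesimal Fermi--Dirac distribution of the form $\rho+U\cdot\omega$, i.e.\ it lies in the kernel of the linearized collision operator restricted to the sphere; and (iii) pass to the limit in the local conservation laws of mass and momentum to obtain the wave system \eqref{plasma:wave:system} with the stated initial data.

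\emph{Concentration on the Fermi sphere.} First I would use the scaled entropy inequality \eqref{entropy:inequality:scaled} together with the initial bound \eqref{initial:relative:entropy}: these give $H(f_\varepsilon|M_\varepsilon)(t)\lesssim\varepsilon^{2-\gamma}$ uniformly in $t$, and the entropy-dissipation control $\varepsilon^{-\kappa}\int_0^t D(f_\varepsilon)\lesssim\varepsilon^{2-\gamma}$. The function $h(z,a)$ in \eqref{entropy:function} is convex in $z$ and vanishes to second order at $z=a$, so a Taylor/coercivity estimate controls $\int \tfrac{1}{\delta M_\varepsilon(1-\delta M_\varepsilon)}(\delta f_\varepsilon-\delta M_\varepsilon)^2$ whenever $f_\varepsilon$ stays away from the saturation endpoints; away from the Fermi sphere (where $M_\varepsilon$ is exponentially close to $0$ or $\delta^{-1}$) the weight $\big(M_\varepsilon(1-\delta M_\varepsilon)\big)^{-1}$ blows up exponentially, forcing $g_\varepsilon^2$ to be exponentially small there after localizing in $|v^2-R^2|\gtrsim\varepsilon^{\tau}|\log\varepsilon|$. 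Meanwhile, in the boundary layer of thickness $O(\varepsilon^\tau)$ around $\partial B(0,R)$, the mass $\int M_\varepsilon(1-\delta M_\varepsilon)\,dv$ is $O(\varepsilon^\tau)$, and the entropy bound $H\lesssim\varepsilon^{2-\gamma}=\varepsilon^{2-\tau}$ with Cauchy--Schwarz gives $\big|\int_{\text{layer}} g_\varepsilon\,\varphi\,dv\big|\lesssim\varepsilon^{-1}\cdot(\varepsilon^{2-\tau})^{1/2}(\varepsilon^{\tau})^{1/2}=O(1)$, so the normalized fluctuation $\varepsilon^{-1}(f_\varepsilon-M_\varepsilon)$ integrated against test functions in $v$ produces a bounded family of measures on $\partial B(0,R)$. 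A change of variables $v\mapsto(|v|,v/|v|)$ inside the layer identifies the weak-* limit with a measure of the form \eqref{concentration:characterization}, and the $L^2$ coercivity inherited from $h$ — after rescaling by the layer width — yields $g\in L^\infty(dt;L^2(\mathbb{R}^d\times\partial B(0,R)))$; the same argument at $t=0$ gives $g^\mathrm{in}$.

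\emph{Identifying the angular profile.} Next I would exploit the entropy-dissipation bound. Since $\varepsilon^{-\kappa}\int_0^t D(f_\varepsilon)\lesssim\varepsilon^{2-\tau}$ and $\kappa>2\tau$, we get $\int_0^t D(f_\varepsilon)=o(\varepsilon^{\kappa})$ with $\kappa>2\tau$; comparing with the natural size $\varepsilon^{2+\cdots}$ of $D$ when $f_\varepsilon=M_\varepsilon+\varepsilon g_\varepsilon$ (again using the Taylor expansion of the entropy-dissipation integrand, whose leading quadratic form is the Dirichlet form of the linearized Fermi--Dirac collision operator weighted by $M_\varepsilon(1-\delta M_\varepsilon)$ times $b$), forces the limiting density $g$ to annihilate that Dirichlet form. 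On the Fermi sphere the relevant quadratic form degenerates precisely on functions $g(\omega)$ satisfying $g(\omega')+g(\omega'_*)=g(\omega)+g(\omega_*)$ for all admissible collisions with $|v|=|v_*|=R$; since $b>0$ on $B(0,2R)\times\mathbb{S}^{d-1}$, this collisional relation restricted to the sphere has solution space spanned exactly by $1$ and the components of $\omega$ (the restrictions of the collision invariants, as $|\omega|^2\equiv R^2$ is constant), giving \eqref{infinitesimal_FD_distribution} with $\rho,U\in L^\infty(dt;L^2(\mathbb{R}^d))$ by orthogonal projection. This is the step I expect to be the main obstacle: making rigorous the bilinear lower bound $D(f_\varepsilon)\gtrsim\varepsilon^{2}\,\mathcal{D}(g_\varepsilon)$ for some coercive Dirichlet form, controlling the error terms uniformly despite the degeneracy of $M_\varepsilon(1-\delta M_\varepsilon)$ outside the layer, and then verifying that the degenerate kernel on the sphere is only two-dimensional per point (here $d\geq3$ is used, since in $d=2$ the collision geometry on a circle is rigid and the kernel is larger — consistent with the remark that $d=2$ is handled separately).

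\emph{Passing to the limit in the conservation laws.} Finally, I would use the local conservation laws of mass \eqref{cons:mass} and momentum \eqref{cons:momentum}, which hold weakly for $f_\varepsilon$ by the a priori bounds discussed before Theorem~\ref{thm:main}. Subtracting the (stationary, $v$-odd/even) contributions of $M_\varepsilon$, dividing by $\varepsilon$, and inserting $f_\varepsilon=M_\varepsilon+\varepsilon g_\varepsilon$, the flux terms become moments of $g_\varepsilon$ in $v$ that, by the concentration established in stage (i), converge to moments of $g(t,x,\omega)=\rho+U\cdot\omega$ over $\partial B(0,R)$. Using $\fint_{\partial B(0,R)}\omega\,d\omega=0$, $\fint_{\partial B(0,R)}\omega_i\omega_j\,d\omega=\tfrac{R^2}{d}\delta_{ij}$, and $\fint_{\partial B(0,R)}\omega_i\omega_j\omega_k\,d\omega=0$, the continuity equation yields $\partial_t\rho+\tfrac{R^2}{d}\nabla_x\cdot U=0$ and the momentum equation yields $\partial_t U+\nabla_x\rho=0$, i.e.\ \eqref{plasma:wave:system}; the higher-order flux in the momentum equation (the analogue of $\rho U\otimes U$) is $O(\varepsilon)$ and drops out. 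The initial data \eqref{plasma:wave:system:data} follow by testing the limit $\mu^\mathrm{in}$ against $1$ and against $\omega$ and using the same moment identities (the factor $d/R^2$ is exactly the inverse of the second-moment matrix). Well-posedness of the linear wave system \eqref{plasma:wave:system} then shows the whole family converges, not merely a subsequence, and closes the argument.
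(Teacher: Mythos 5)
Your proposal follows essentially the same three-stage strategy as the paper: extract concentration on the Fermi sphere from the relative-entropy bound, use the entropy-dissipation bound to identify the limit as a collision invariant on the sphere (spanned by $1$ and $\omega$ since $|\omega|^2\equiv R^2$), and pass to the limit in the weak forms of the mass and momentum conservation laws using the sphere moment identities. The moment computations and the resulting wave system are correct, and your intuition for why $d=2$ is excluded (rigidity of the collision geometry on the circle, making every function an invariant) is accurate at a conceptual level, although the paper's proof also fails earlier in $d=2$ for a more technical reason (the singularity $(|\nu+\nu_*|\,|\nu-\nu_*|)^{-1}$ arising in the attenuation-coefficient integral of Lemma \ref{lemma:attenuation:coefficient} is integrable over $\mathbb{S}^{d-1}$ only for $d\geq 3$).

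Two points deserve correction. First, your final sentence overreaches: well-posedness of the limit wave system does not upgrade subsequential convergence to full-family convergence, because different subsequences of $g_\varepsilon^{\mathrm{in}}$ may converge to different limits $g^{\mathrm{in}}$, producing different solutions. The theorem as stated only asserts convergence up to a subsequence; the upgrade requires the additional hypothesis of entropic convergence of the initial data, addressed in the remark following the theorem, and is not a consequence of well-posedness alone. Second, the quantitative mechanism in stage (i) is not quite as you describe: the coercivity estimate for $h(z,a)$ used in the paper (namely $h(z,a)\geq \tfrac{\log(\frac{1-a}{a})}{1-2a}(z-a)^2$) produces a weight that grows only linearly in $u=(v^2-R^2)/\varepsilon^\tau$, not exponentially like $(\delta M_\varepsilon(1-\delta M_\varepsilon))^{-1}$; the exponential tail decay comes instead from an appropriate Young inequality with the exponentially growing conjugate $w(y)=e^y+e^{-y}-2$. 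Relatedly, your envisaged coercive lower bound $D(f_\varepsilon)\gtrsim\varepsilon^2\mathcal{D}(g_\varepsilon)$ is not quite what the paper proves: because the limiting density is a singular measure in $v$, only an $L^1$-in-velocity control on the linearized collision integrand is available, and the characterization proceeds by passing to a weak limit in a dual (weak) form of the linearized operator rather than by a pointwise Dirichlet-form coercivity estimate. Finally, a small slip: the kernel of the linearized operator on the sphere is $(d+1)$-dimensional (spanned by $1,\omega_1,\ldots,\omega_d$), not ``two-dimensional per point.''
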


\begin{rem}
	The previous statement can be strengthened by exploiting the notion of entropic convergence introduced in \cite{bgl93} and the fact that the limiting wave system \eqref{plasma:wave:system} is well-posed in $L^2$. Specifically, assuming that the initial data converges entropically, in the sense that
	\begin{equation*}
		g_\varepsilon^\mathrm{in}\rightharpoonup^*
		g^\mathrm{in}\left(x,R\frac{v}{|v|}\right) dx\otimes \delta_{\partial B(0,R)}(v),
	\end{equation*}
	with
	\begin{equation*}
		g^\mathrm{in}(x,\omega)=\rho^\mathrm{in}+U^\mathrm{in}\cdot \omega,
	\end{equation*}
	and
	\begin{equation*}
		\frac 1{\varepsilon^{2-\gamma}}H(f_\varepsilon^\mathrm{in}|M_\varepsilon)
		\to
		R\delta^2\int_{\mathbb{R}^d\times\partial B(0,R)}(g^\mathrm{in})^2(x,\omega)dxd\omega,
	\end{equation*}
	one can show that the convergence of fluctuations $g_\varepsilon$ toward $\mu$ holds in a suitable strong topology in $(t,x,\omega)$, without extraction of subsequences. This is analogous to the improvement of weak limit theorems into strong ones performed in \cite{gl02}.
\end{rem}

Before proving the theorem, let us discuss the physical relevance of this result. As previously explained, in a metal, electrons form a quantum mechanical system, a degenerate Fermi gas, even at room temperature. Their internal kinetic energy is not determined by the temperature, but rather by the Pauli exclusion principle, which gives rise to a zero-temperature pressure known as the Fermi pressure. The characteristic speed associated with this quantum pressure is the Fermi velocity $v_\mathrm{F}$, which we denote by $R$ in the above statement, for simplicity and clarity. Only electrons propagating at a speed which is close to $v_\mathrm{F}$ can interact with other electrons and get excited into a different state. This behavior is effectively captured in the above theorem by showing the convergence of fluctuations $g_\varepsilon$ toward a measure concentrated on the Fermi sphere $\{|v|=v_\mathrm{F}\}$.

Theorem \ref{thm:main} goes further into the description of electron dynamics at the Fermi sphere by characterizing the thermodynamic equilibrium of limiting fluctuations as an infinitesimal Fermi--Dirac distribution in \eqref{infinitesimal_FD_distribution}. While the convergence toward an infinitesimal distribution is reminiscent of the classical acoustic limit of the Boltzmann equation, it is remarkable in that it establishes an anisotropic velocity distribution of electrons on a Fermi surface. This is significant, for it departs from the common physical intuition that particles on a spherical Fermi surface should be distributed uniformly since their behavior is dictated by the Fermi--Dirac density, which is isotropic.

Additionally, the wave system \eqref{plasma:wave:system} describes the evolution of the macroscopic density and velocity field of electrons found in \eqref{infinitesimal_FD_distribution}, which establishes the propagation of electron-plasma waves, also known as Langmuir waves following the pioneering work of Tonks and Langmuir in \cite{tl29}. These are longitudinal waves in the electron-charge density that propagate through the plasma at a characteristic speed. Generally, the restoring forces that drive these oscillations are not just due to electrostatic interactions within the plasma (which are neglected in our investigation in order to focus on the quantum mechanical properties of Fermi gases) but also include the pressure of the gas (much like the internal pressure in a fluid is responsible for sound waves, which propagate at a characteristic thermal velocity).

The electron-plasma waves system \eqref{plasma:wave:system} effectively describes the propagation phenomenon of waves due to the Fermi pressure. Specifically, a direct computation readily shows that a plane wave
\begin{equation*}
	\begin{pmatrix}
		\rho \\ U
	\end{pmatrix}(t,x)
	=
	\begin{pmatrix}
		\rho_0 \\ U_0
	\end{pmatrix}
	e^{i(\tau t+\xi\cdot x)},
\end{equation*}
with $\rho_0,\tau\in\mathbb{R}\setminus\{0\}$ and $U_0,\xi\in\mathbb{R}^d$, is a solution of \eqref{plasma:wave:system} if and only if
\begin{equation*}
	\rho_0^2=\frac{R^2}{d}U_0^2,
	\quad
	\tau^2=\frac{R^2}{d}\xi^2
	\quad\text{and}\quad
	\tau U_0 
	+\rho_0\xi =0.
\end{equation*}
This establishes that electron-plasma waves propagate with a characteristic speed of
\begin{equation*}
	\frac{R}{\sqrt d}=\frac{v_\mathrm{F}}{\sqrt d},
\end{equation*}
which is a constant multiple of the Fermi velocity. This is consistent with the Bohm--Gross dispersion relation for longitudinal plasma waves in a degenerate electron gas (see \cite{bg49,Shukla:2010,Zhu_2012}), which predicts that in a small-wavelength regime, where electrostatic forces are negligible, the phase and group velocities of plasma waves are directly proportional to the Fermi velocity.

We move on now to the justification of our first main theorem.

\begin{proof}[Proof of Theorem \ref{thm:main}]
	The proof is lengthy and intricate. To ensure clarity, its various components are detailed in subsequent sections.
	
	Thus, the extraction of uniform bounds from the relative entropy inequality and ensuing weak compactness estimates are the subject of Section \ref{section:bounds_and_compactness}. In particular, the convergence of fluctuations $g_\varepsilon$ and initial data $g_\varepsilon^\mathrm{in}$, up to extraction of subsequences, toward measures concentrated on a spherical Fermi surface with densities $g(t,x,\omega)$ and $g^\mathrm{in}(x,\omega)$, respectively, is established in Proposition \ref{prop:limit:characterization}, where it is also shown that, for almost every $t\geq 0$,
	\begin{equation*}
		\int_{\mathbb{R}^d\times\partial B(0,R)}g^2(t,x,\omega)dxd\omega \leq
		\frac{C^{\mathrm{in}}}{R\delta^2}
		\quad\text{and}\quad
		\int_{\mathbb{R}^d\times\partial B(0,R)}(g^\mathrm{in})^2(x,\omega)dxd\omega \leq
		\frac{C^{\mathrm{in}}}{R\delta^2},
	\end{equation*}
	with the constant $C^{\mathrm{in}}$ given in \eqref{initial:relative:entropy}.
	It is further established therein that
	\begin{equation*}
		\int_{\mathbb{R}^+\times\mathbb{R}^d\times\mathbb{R}^d}
		g_\varepsilon(t,x,v)\varphi(t,x,v)dtdxdv
		\to
		\int_{\mathbb{R}^+\times\mathbb{R}^d\times\partial B(0,R)}
		g(t,x,\omega)\varphi(t,x,\omega)dtdxd\omega,
	\end{equation*}
	and
	\begin{equation*}
		\int_{\mathbb{R}^d\times\mathbb{R}^d}
		g_\varepsilon^\mathrm{in}(x,v)\varphi(0,x,v)dxdv
		\to
		\int_{\mathbb{R}^d\times\partial B(0,R)}
		g^\mathrm{in}(x,\omega)\varphi(0,x,\omega)dxd\omega,
	\end{equation*}
	as $\varepsilon\to 0$, for any continuous function $\varphi(t,x,v)$ which is compactly supported in $(t,x)$ and satisfies the quadratic growth condition
	\begin{equation*}
		\frac{\varphi(t,x,v)}{1+v^2}\in L^\infty\big(\mathbb{R}^+\times\mathbb{R}^d\times\mathbb{R}^d\big).
	\end{equation*}
	The weak convergence of fluctuations against unbounded test functions with quadratic growth is essential in the derivation of the wave system \eqref{plasma:wave:system}, below.
	
	Then, Section \ref{section:relaxation_and_equilibria} focuses on the consequences of the entropy dissipation bound and the relaxation process toward thermodynamic equilibria. In particular, the fundamental characterization of limiting densities given in \eqref{infinitesimal_FD_distribution} is found in Proposition \ref{prop:instrumental}.
	
	Now, by gathering the preceding properties, we derive the limiting wave system \eqref{plasma:wave:system}. To that end, we start from the macroscopic conservation laws
	\begin{equation*}
		\begin{aligned}
			\partial_t \int_{\mathbb{R}^d}g_\varepsilon(t,x,v) dv+\nabla_x \cdot\int_{\mathbb{R}^d}vg_\varepsilon(t,x,v)dv&=0,
			\\
			\partial_t \int_{\mathbb{R}^d}vg_\varepsilon(t,x,v) dv+\nabla_x \cdot\int_{\mathbb{R}^d}(v\otimes v)g_\varepsilon(t,x,v)dv&=0,
		\end{aligned}
	\end{equation*}
	which can be written in weak form as
	\begin{equation*}
		\begin{aligned}
			\int_{\mathbb{R}^d\times\mathbb{R}^d}g_\varepsilon^\mathrm{in}(x,v) \varphi(0,x)dxdv
			+\int_{\mathbb{R}^+\times\mathbb{R}^d\times\mathbb{R}^d}g_\varepsilon(t,x,v) (\partial_t+v\cdot \nabla_x) \varphi(t,x) dtdxdv
			&=0,
			\\
			\int_{\mathbb{R}^d\times\mathbb{R}^d}vg_\varepsilon^\mathrm{in}(x,v) \varphi(0,x)dxdv
			+\int_{\mathbb{R}^+\times\mathbb{R}^d\times\mathbb{R}^d}vg_\varepsilon(t,x,v) (\partial_t+v\cdot \nabla_x) \varphi(t,x) dtdxdv
			&=0,
		\end{aligned}
	\end{equation*}
	for any test function $\varphi(t,x)\in C_c^1(\mathbb{R}^+\times\mathbb{R}^d)$.
	
	We can then pass to the limit and deduce that
	\begin{equation*}
		\begin{aligned}
			\int_{\mathbb{R}^d\times\partial B(0,R)}g^\mathrm{in}(x,\omega) \varphi(0,x)dxd\omega
			+\int_{\mathbb{R}^+\times\mathbb{R}^d\times\partial B(0,R)}g(t,x,\omega) (\partial_t+\omega\cdot \nabla_x) \varphi(t,x) dtdxd\omega
			&=0,
			\\
			\int_{\mathbb{R}^d\times\partial B(0,R)}\omega g^\mathrm{in}(x,\omega) \varphi(0,x)dxd\omega
			+\int_{\mathbb{R}^+\times\mathbb{R}^d\times\partial B(0,R)}\omega g(t,x,\omega) (\partial_t+\omega\cdot \nabla_x) \varphi(t,x) dtdxd\omega
			&=0.
		\end{aligned}
	\end{equation*}
	Finally, a straightforward computation yields that
	\begin{equation*}
		\begin{aligned}
			\int_{\mathbb{R}^d}\left(\fint_{\partial B(0,R)} g^\mathrm{in}(x,\omega)d\omega\right) \varphi(0,x)dx
			+\int_{\mathbb{R}^+\times\mathbb{R}^d}\rho(t,x) \partial_t \varphi(t,x) dtdx&
			\\
			+\frac{R^2}{d}\int_{\mathbb{R}^+\times\mathbb{R}^d}U(t,x) \cdot \nabla_x \varphi(t,x) dtdx
			&=0,
			\\
			\int_{\mathbb{R}^d}\left(\fint_{\partial B(0,R)} \omega g^\mathrm{in}(x,\omega)d\omega\right) \varphi(0,x)dx
			+\frac{R^2}{d}\int_{\mathbb{R}^+\times\mathbb{R}^d}U(t,x) \partial_t \varphi(t,x) dtdx&
			\\
			+\frac{R^2}{d}\int_{\mathbb{R}^+\times\mathbb{R}^d}\rho(t,x) \nabla_x \varphi(t,x) dtdx
			&=0,
		\end{aligned}
	\end{equation*}
	which is precisely the weak form of the wave system \eqref{plasma:wave:system} with the initial datum given in \eqref{plasma:wave:system:data}. The proof of the theorem is thereby complete.
\end{proof}

\subsection{Energy spectrum of a degenerate Fermi gas}

The previous result characterized the asymptotic fluctuations $g_\varepsilon(t,x,v)$ within a specific hydrodynamic regime. We are now able to further describe the energy spectrum of a degenerate free electron gas, or Fermi gas, in the same asymptotic regime near the Fermi sphere. This is the focus of the next theorem.

\begin{thm}\label{thm:main:2}
	Under the hypotheses of Theorem \ref{thm:main}, we assume that there is a convergent subsequence of fluctuations $g_\varepsilon(t,x,v)$ such that \eqref{concentration:characterization}, \eqref{infinitesimal_FD_distribution}, \eqref{plasma:wave:system} and \eqref{plasma:wave:system:data} hold for some limit points $\mu(t,x,v)$ and $g(t,x,\omega)$. Further suppose that
	\begin{equation*}
		0<\gamma=\tau< \frac 23
		\qquad\text{and}\qquad
		2\tau<\kappa<2-\tau.
	\end{equation*}
	
	Then, as $\varepsilon\to 0$, up to further extraction of a subsequence, for any continuous test function $\varphi(t,x,u,v)$, compactly supported in $(t,x)$, with the growth condition
	\begin{equation}\label{spectrum:test:function:growth}
		\frac{\varphi(t,x,u,v)}{1+|u|}\in L^\infty\big(\mathbb{R}^+\times\mathbb{R}^d\times\mathbb{R}\times\mathbb{R}^d\big),
	\end{equation}
	we have the limits
	\begin{equation}\label{spectrum:convergence}
		\begin{aligned}
			\int_{\mathbb{R}^+\times\mathbb{R}^d\times\mathbb{R}^d}
			g_\varepsilon(t,x,v)\varphi & \left(t,x,\frac{v^2-R^2}{\varepsilon^\tau},v\right)dtdxdv
			\\
			&\to
			\int_{\mathbb{R}^+\times\mathbb{R}^d\times\mathbb{R}\times\partial B(0,R)}
			\widetilde g(t,x,u,\omega)\varphi(t,x,u,\omega)dtdxdud\omega,
		\end{aligned}
	\end{equation}
	and
	\begin{equation}\label{spectrum:convergence:initial}
		\begin{aligned}
			\int_{\mathbb{R}^d\times\mathbb{R}^d}
			g_\varepsilon^\mathrm{in}(x,v)\varphi & \left(0,x,\frac{v^2-R^2}{\varepsilon^\tau},v\right)dxdv
			\\
			&\to
			\int_{\mathbb{R}^d\times\mathbb{R}\times\partial B(0,R)}
			\widetilde g^\mathrm{in}(x,u,\omega)\varphi(0,x,u,\omega)dxdud\omega,
		\end{aligned}
	\end{equation}
	where the limit points satisfy that
	\begin{equation*}
		\widetilde g(t,x,u,\omega)\in L^\infty\left(dt;L^2\left(\cosh^2\left(\frac u2\right)dxdud\omega\right)\right)
	\end{equation*}
	and
	\begin{equation*}
		\widetilde g^\mathrm{in}(x,u,\omega)\in L^2\left(\cosh^2\left(\frac u2\right)dxdud\omega\right).
	\end{equation*}
	
	Moreover, the initial limiting density satisfies that
	\begin{equation*}
		\int_{\mathbb{R}}\widetilde g^\mathrm{in}(x,u,\omega)du=g^\mathrm{in}(x,\omega),
	\end{equation*}
	while the limiting density $\widetilde g(t,x,u,\omega)$ takes the form of an infinitesimal distribution
	\begin{equation}\label{limit:fluctuation:dilated}
		\widetilde g(t,x,u,\omega)=\frac{\rho(t,x)+U(t,x)\cdot\omega+E(x)u}{4\cosh^2(\frac u2)},
	\end{equation}
	where the density $\rho(t,x)$ and the velocity field $U(t,x)$ belong to $L^\infty\big(dt;L^2\big(\mathbb{R}^d\big)\big)$ and coincide with the coefficients from \eqref{infinitesimal_FD_distribution}. In particular, they solve the wave system \eqref{plasma:wave:system}, with initial data \eqref{plasma:wave:system:data}. Finally, the energy density $E(x)$ is independent of time, it belongs to $L^2\big(\mathbb{R}^d\big)$ and it is given by
	\begin{equation*}
		\frac{\pi^2}{3}R^{d-1}|\mathbb{S}^{d-1}|E(x)
		=\int_{\mathbb{R}\times\partial B(0,R)}u\widetilde g^\mathrm{in}(x,u,\omega)dud\omega.
	\end{equation*}
\end{thm}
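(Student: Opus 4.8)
The plan is to build on the foundation of Theorem \ref{thm:main} by working with the \emph{dilated} fluctuations, i.e., by inserting the rescaled energy variable $u=\frac{v^2-R^2}{\varepsilon^\tau}$ into the picture. First I would revisit the relative entropy bound \eqref{initial:relative:entropy} and expand $H(f_\varepsilon|M_\varepsilon)$ to second order around $M_\varepsilon$ using the convexity of $h$ in \eqref{entropy:function}. Since $\partial_z^2 h(z,a)=\frac{1}{z(1-z)}$, the quadratic form controlling $g_\varepsilon$ is weighted by $\big(\delta M_\varepsilon(1-\delta M_\varepsilon)\big)^{-1}$; with $\delta M_\varepsilon=(1+e^u)^{-1}$ this weight is exactly $4\cosh^2(\frac u2)$ after the change of variables $v\mapsto(u,\omega)$, where the Jacobian contributes a factor $\varepsilon^\tau$ that must be tracked against the $\varepsilon^{2-\gamma}$ normalization (this is where $\gamma=\tau$ enters, so that $\varepsilon^{2-\gamma}\cdot\varepsilon^{-\tau}$ combines cleanly with the $\varepsilon^2$ from $g_\varepsilon^2$). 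This yields a uniform bound on $g_\varepsilon$ in $L^\infty(dt;L^2(\cosh^2(\frac u2)\,dxdud\omega))$ after pushing forward along the map $v\mapsto(u(v),v)$, giving weak-$*$ compactness and the existence of $\widetilde g$, $\widetilde g^{\mathrm{in}}$ with the asserted integrability; the consistency relation $\int_{\mathbb R}\widetilde g^{\mathrm{in}}\,du=g^{\mathrm{in}}$ follows by testing against functions independent of $u$ and comparing with Theorem \ref{thm:main}, using that $\int_{\mathbb R}\,du$ over the fiber is absorbed into the $d\omega$ integral via the coarea formula. The growth condition \eqref{spectrum:test:function:growth} is the natural one that Cauchy--Schwarz against the $\cosh^2$ weight can handle, since $\frac{|u|}{\cosh(u/2)}$ is bounded.

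Next I would identify the form \eqref{limit:fluctuation:dilated} of $\widetilde g$. This is the analogue of Proposition \ref{prop:instrumental}: the entropy dissipation bound $\frac1{\varepsilon^\kappa}\int_0^t D(f_\varepsilon)\,ds\le C^{\mathrm{in}}\varepsilon^{2-\gamma}$ forces the limiting infinitesimal Maxwellian-type structure. Linearizing $Q_{\mathrm{FD}}$ around $M_\varepsilon$ and using that the linearized collision operator's null space is spanned by the collision invariants $1,v,|v|^2$ — but now multiplied by the derivative $\partial_z(\text{something})$, i.e., weighted so that in the $(u,\omega)$ variables the null space becomes $\{1,\omega,u\}$ divided by $4\cosh^2(\frac u2)$ — one concludes that $\widetilde g$ must lie in the span of $\frac{1}{4\cosh^2(u/2)},\ \frac{\omega}{4\cosh^2(u/2)},\ \frac{u}{4\cosh^2(u/2)}$, with coefficients $\rho(t,x)$, $U(t,x)$, $E(t,x)$. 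The condition $\kappa>2\tau$ guarantees that the entropy dissipation, when compared against the $\varepsilon^{2-\gamma}=\varepsilon^{2-\tau}$ scale, vanishes fast enough to kill all non-hydrodynamic modes in the limit; meanwhile $\kappa<2-\tau$ is needed so that the dissipation does not over-constrain things, and more importantly so that the remaining flux terms in the local conservation laws survive (it is the counterpart of the $\kappa\le 2$ restriction in the classical acoustic limit, here shifted by $\tau$). Integrating $\widetilde g$ in $u$ and matching with \eqref{infinitesimal_FD_distribution} pins $\rho,U$ to the same coefficients from Theorem \ref{thm:main} — using $\int_{\mathbb R}\frac{du}{4\cosh^2(u/2)}=1$ and $\int_{\mathbb R}\frac{u\,du}{4\cosh^2(u/2)}=0$ by oddness — so the energy coefficient $E$ decouples from the $\rho,U$ dynamics.

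Then I would establish that $E$ is independent of time and derive its formula. The key is the local conservation of energy: testing \eqref{eq:BFDE:scaled} against $\frac12|v|^2$ (or rather against $\frac{v^2-R^2}{\varepsilon^\tau}=u$, which differs from $\frac12|v|^2$ by a collision invariant and a constant, hence is itself essentially a collision invariant up to the constant) produces, after dividing by the fluctuation scale, a local balance law of the form $\partial_t\langle u g_\varepsilon\rangle + \varepsilon^{-\tau}\nabla_x\cdot\langle (v^2-R^2) v g_\varepsilon/\varepsilon^\tau\rangle = 0$ in weak form; wait — more carefully, one tests against $\varphi(t,x)\cdot u$ where $u$ is evaluated at $v$, and the transport term carries an extra factor. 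The point is that the flux term, being a moment of $g_\varepsilon$ against $v(v^2-R^2)$, scales like an additional power of $\varepsilon^\tau$ relative to the density term (since $v^2-R^2=\varepsilon^\tau u$ on the support where $g_\varepsilon$ concentrates), and the hypothesis $\kappa<2-\tau$, equivalently $\tau<2-\tau-\ldots$, together with $\tau<\frac23$, ensures this flux vanishes in the limit while the time-derivative term survives. Hence $\partial_t\int\int u\,\widetilde g\,dud\omega=0$ weakly, so $\int\int u\widetilde g\,dud\omega$ — which by \eqref{limit:fluctuation:dilated} equals $E(x)\int_{\partial B(0,R)}\int_{\mathbb R}\frac{u^2\,du}{4\cosh^2(u/2)}\,d\omega=\frac{\pi^2}{3}R^{d-1}|\mathbb S^{d-1}|E(x)$ using $\int_{\mathbb R}\frac{u^2\,du}{4\cosh^2(u/2)}=\frac{\pi^2}{3}$ — is constant in time and thus equal to its value at $t=0$, giving the stated formula. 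The main obstacle I anticipate is precisely the careful bookkeeping of the powers of $\varepsilon^\tau$ coming from three sources simultaneously — the Jacobian of $v\mapsto(u,\omega)$, the smallness of $v^2-R^2$ in fluxes, and the competition between $\varepsilon^{2-\gamma}$ (entropy), $\varepsilon^{-\kappa}$ (collision gain) — and showing that the triple inequality $0<\tau<\frac23$, $2\tau<\kappa<2-\tau$ is exactly what makes every limit pass through without either a diverging or a vanishing obstruction; in particular verifying that the concentration estimates of Section \ref{section:bounds_and_compactness} survive the extra $u$-variable and that $E$ genuinely lands in $L^2(\mathbb R^d)$ rather than merely in a weaker space will require the refined $\cosh^2$-weighted bound established in the first step.
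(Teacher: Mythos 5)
Your first two steps are broadly aligned with the paper's strategy: the $\cosh^2(\frac u2)$-weighted square-integrability of the limit and the weak compactness of the dilated fluctuations are established in Proposition \ref{prop:limit:characterization:dilated} (though the paper obtains the sharp $L^2(\cosh^2(\frac u2))$ bound on $\widetilde g$ via a Legendre-transform/Young-inequality argument at the level of the limit, not directly from the Taylor expansion of $h$), and the identification of the infinitesimal equilibrium $\widetilde g=(\rho+U\cdot\omega+Eu)/(4\cosh^2(\frac u2))$, together with the role of $\kappa>2\tau$, is Proposition \ref{prop:instrumental:dilated}. The closing moment computations with $\int u^2/(4\cosh^2(\frac u2))\,du=\pi^2/3$ and the $u$-odd/$\omega$-odd cancellations are also correct.

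However, the third step has a genuine gap, and it is precisely the step the paper devotes the most technical work to. You test the equation against $u=(v^2-R^2)/\varepsilon^\tau$ and treat the resulting collision term as exactly zero because $u$ is a collision invariant. But without a velocity cutoff the moments $\int ug_\varepsilon\,dv$ and $\int uvg_\varepsilon\,dv$ are not controlled (the test function grows like $|u|$, which sits at the boundary of what the relative entropy bound allows, and the flux picks up an extra $|v|\sim(\varepsilon^\tau|u|)^{1/2}$), and once you introduce a cutoff $\chi(v)=\mathds{1}_{\{|u|<-\log\varepsilon^\alpha\}}$ to make them well-defined, the collision term no longer integrates to zero: it produces a nonzero conservation defect
\begin{equation*}
D_\varepsilon=\frac{1}{\varepsilon^{\kappa+1}}\int_{\mathbb{R}^d}\frac{v^2-R^2}{\varepsilon^\tau}\,Q_{\mathrm{FD}}(f_\varepsilon)\,\chi\,dv,
\end{equation*}
which must be shown to vanish. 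This is Lemma \ref{lemma:removal}, a lengthy estimate that decomposes the collision integrand using the renormalized collision integrands $q_\varepsilon$ and the square-root renormalized fluctuations, and it is \emph{here} — not in the flux — that the condition $\kappa+\tau<2$ enters (together with $\alpha$ large). Your proposal omits the cutoff, omits the defect, and therefore never encounters the actual reason the upper constraint on $\kappa$ appears; your attribution of $\kappa<2-\tau$ to ``the flux vanishing'' is simply misplaced.

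Relatedly, your explanation of why the flux vanishes is incorrect. The flux $F_\varepsilon=\int uv\,g_\varepsilon\chi\,dv$, rewritten as $\int u(1+\varepsilon^\tau u/R^2)^{1/2}\omega\,\widetilde g_\varepsilon\chi\,du\,d\omega$, does \emph{not} carry an extra power of $\varepsilon^\tau$ overall. Its principal part $F_\varepsilon^1=\int u\omega\,\widetilde g_\varepsilon\chi\,du\,d\omega$ has $O(1)$ size and converges weakly to $\int u\omega\,\widetilde g\,du\,d\omega$, which equals zero only because of the symmetry structure of $\widetilde g$: the $\rho$ and $U\cdot\omega$ contributions die by $u$-oddness of $u/\cosh^2(\frac u2)$, and the $Eu$ contribution dies by $\omega$-oddness after producing $\tfrac{\pi^2}3 E\omega$. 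Only the correction $F_\varepsilon^2$, which carries the $(1+\varepsilon^\tau u/R^2)^{1/2}-1$ factor, is $O(\varepsilon^\tau|\log\varepsilon|)$. So the mechanism is a limit cancellation plus a small correction, not an overall smallness — and it has nothing to do with the hypothesis $\kappa<2-\tau$. (The constraint $\tau<\frac 23$ is purely so that the interval $(2\tau,2-\tau)$ is nonempty.) Without recognizing the conservation defect and supplying the estimates to remove it, the proof of time-independence of $E$ is not complete.
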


\begin{rem}
	Observe that Theorem \ref{thm:main:2} imposes the additional parameter restriction $\kappa+\tau<2$, which is absent from Theorem \ref{thm:main}. This is a consequence of the need to control conservation defects in the proof below. It is interesting to note that the formal regime $\tau=0$ leads to the condition $\kappa<2$. This restriction then matches the parameter range for the classical acoustic limit of the Boltzmann equation, as discussed in Section \ref{acoustic:limit}. At this stage, it remains unclear whether the $\kappa+\tau<2$ restriction can be improved.
\end{rem}

In order to adequately interpret the preceding result, we introduce the dilated fluctuations
\begin{equation}\label{dilated:fluctuations}
	\widetilde g_\varepsilon (t,x,u,\omega)=
	\left\{
	\begin{aligned}
		&\frac{\varepsilon^\tau|v|^{d-2}}{2R^{d-1}}
		g_\varepsilon(t,x,v) &&\text{if }u\geq -\frac{R^2}{\varepsilon^\tau},
		\\
		&0 &&\text{if }u< -\frac{R^2}{\varepsilon^\tau},
	\end{aligned}
	\right.
\end{equation}
for all $(t,x,u,\omega)\in\mathbb{R}^+\times\mathbb{R}^d\times\mathbb{R}\times\partial B(0,R)$, and similarly for the initial data, where
\begin{equation}\label{spectrum:variables}
	v=\left(1+\frac{\varepsilon^\tau}{R^2}u\right)^\frac 12 \omega,
\end{equation}
or, equivalently,
\begin{equation*}
	u=\frac{v^2-R^2}{\varepsilon^\tau}, \qquad \omega=R\frac{v}{|v|}.
\end{equation*}
Thus, the variable $u$ represents energy levels of electrons: values less than zero $\{u<0\}$ are inside the Fermi sphere, a value zero $\{u=0\}$ is on the surface, and values greater than zero $\{u>0\}$ are outside it.

Observe that the preceding change of variables acts on infinitesimal volumes according to the identity
\begin{equation*}
	dv=\frac{\varepsilon^\tau}{2R}\left(1+\frac{\varepsilon^\tau}{R^2}u\right)^\frac{d-2}{2}dud\omega,
\end{equation*}
or, equivalently,
\begin{equation}\label{dilation:change:variable}
	\frac{2R^{d-1}}{\varepsilon^\tau |v|^{d-2}}dv=dud\omega.
\end{equation}
In particular, it holds that
\begin{equation*}
	\int_{\mathbb{R}^d}g_\varepsilon(t,x,v)\psi(v)dv=\int_{\mathbb{R}\times\partial B(0,R)}\widetilde g_\varepsilon(t,x,u,\omega)\psi(v)dud\omega,
\end{equation*}
for any suitable test function $\psi$.

It is judicious to also introduce the dilations of $f_\varepsilon(t,x,v)$ and $M_\varepsilon(v)$ by
\begin{equation*}
	\widetilde f_\varepsilon (t,x,u,\omega)=
	\left\{
	\begin{aligned}
		&\frac{|v|^{d-2}}{2R^{d-1}}
		f_\varepsilon(t,x,v) &&\text{if }u\geq -\frac{R^2}{\varepsilon^\tau},
		\\
		&0 &&\text{if }u< -\frac{R^2}{\varepsilon^\tau},
	\end{aligned}
	\right.
\end{equation*}
and
\begin{equation*}
	\begin{aligned}
		\widetilde M_\varepsilon (u)&=
		\left\{
		\begin{aligned}
			&\frac{|v|^{d-2}}{2R^{d-1}}M_\varepsilon(v) &&\text{if }u\geq -\frac{R^2}{\varepsilon^\tau},
			\\
			&0 &&\text{if }u< -\frac{R^2}{\varepsilon^\tau},
		\end{aligned}
		\right.
		\\
		&=
		\left\{
		\begin{aligned}
			&\frac{|v|^{d-2}}{2R^{d-1}\delta(1+e^u)} &&\text{if }u\geq -\frac{R^2}{\varepsilon^\tau},
			\\
			&0 &&\text{if }u< -\frac{R^2}{\varepsilon^\tau}.
		\end{aligned}
		\right.
	\end{aligned}
\end{equation*}
It then holds that
\begin{equation*}
	\widetilde f_\varepsilon=\widetilde M_\varepsilon+\varepsilon^{1-\tau} \widetilde g_\varepsilon.
\end{equation*}
This readily shows that, at the lowest order, the dilated densities $\widetilde f_\varepsilon$ converge in a suitable sense toward the energy distribution
\begin{equation*}
	\widetilde M(u)=\frac{1}{2R\delta(1+e^u)},
\end{equation*}
with $u\in\mathbb{R}$. At the next order, by virtue of the above theorem, the asymptotic energy distribution in the Fermi gas is determined by the limiting dilated fluctuation $\widetilde g$ given in \eqref{limit:fluctuation:dilated}. In other words, in the hydrodynamic regime $\varepsilon\to 0$, the dilated particle number densities $\widetilde f_\varepsilon$ are well-approximated, at least in a weak sense, by the distributions
\begin{equation*}
	\frac{1}{2R\delta(1+e^u)}+\varepsilon^{1-\tau}\widetilde g.
\end{equation*}
This strikingly demonstrates an anisotropic asymptotic energy distribution near the Fermi surface.

Another noteworthy feature of Theorem \ref{thm:main:2} resides in the asymptotic emergence of a persistent energy structure. This is readily seen in \eqref{limit:fluctuation:dilated}, where the energy density coefficient $E(x)$ is independent of time and fully determined by the initial datum. In particular, this shows that any perturbation of the energy distribution of a degenerate Fermi gas persists forever or, at least, on a suitable time-scale compatible with the hydrodynamic regime under consideration.

We can now proceed to the proof of our second main theorem.

\begin{proof}[Proof of Theorem \ref{thm:main:2}]
	To ensure a smooth and accessible reading, we provide only an outline of the main steps of the convergence proof here. The more technical and complex details of our arguments are deferred to later sections.
	
	We build now upon the proof of Theorem \ref{thm:main}. In particular, we begin by assuming that we have extracted convergent subsequences of fluctuations $g_\varepsilon$ and $g_\varepsilon^\mathrm{in}$, which satisfy all properties listed in the statement of Theorem \ref{thm:main}.
	
	Then, we consider the dilated fluctuations $\widetilde g_\varepsilon$ and $\widetilde g_\varepsilon^\mathrm{in}$. Their uniform boundedness and weak relative compactness in
	\begin{equation*}
		L^1_\mathrm{loc}(dtdx;L^1((1+|u|)dud\omega))
		\quad\text{and}\quad
		L^1_\mathrm{loc}(dx;L^1((1+|u|)dud\omega)),
	\end{equation*}
	respectively, is established in Proposition \ref{prop:limit:characterization:dilated}. It is also shown therein, up to another extraction of subsequences, that we may assume their weak convergence toward respective limit points
	\begin{equation*}
		\widetilde g(t,x,u,\omega)\in L^\infty\left(dt;L^2\left(\cosh^2\left(\frac u2\right)dxdud\omega\right)\right)
	\end{equation*}
	and
	\begin{equation*}
		\widetilde g^\mathrm{in}(x,u,\omega)\in L^2\left(\cosh^2\left(\frac u2\right)dxdud\omega\right),
	\end{equation*}
	such that
	\begin{equation}\label{moment:relation}
		\int_{\mathbb{R}}\widetilde g(t,x,u,\omega)du=g(t,x,\omega)
		\qquad\text{and}\qquad
		\int_{\mathbb{R}}\widetilde g^\mathrm{in}(x,u,\omega)du=g^\mathrm{in}(x,\omega).
	\end{equation}
	Finally, considering a continuous test function $\varphi(t,x,u,v)$, compactly supported in $t$ and $x$, with the growth condition \eqref{spectrum:test:function:growth}, it is further demonstrated in Proposition \ref{prop:limit:characterization:dilated} that
	\begin{equation*}
		\begin{aligned}
			\int_{\mathbb{R}^+\times\mathbb{R}^d\times\mathbb{R}\times\partial B(0,R)}
			&\widetilde g_\varepsilon(t,x,u,\omega)\varphi(t,x,u,v)dtdxdud\omega
			\\
			&\to
			\int_{\mathbb{R}^+\times\mathbb{R}^d\times\mathbb{R}\times\partial B(0,R)}
			\widetilde g(t,x,u,\omega)\varphi(t,x,u,\omega)dtdxdud\omega,
		\end{aligned}
	\end{equation*}
	and
	\begin{equation*}
		\int_{\mathbb{R}^d\times\mathbb{R}\times\partial B(0,R)}
		\widetilde g_\varepsilon^\mathrm{in}(x,u,\omega)\varphi(0,x,u,v)dxdud\omega
		\to
		\int_{\mathbb{R}^d\times\mathbb{R}\times\partial B(0,R)}
		\widetilde g^\mathrm{in}(x,u,\omega)\varphi(0,x,u,\omega)dxdud\omega,
	\end{equation*}
	as $\varepsilon\to 0$, which establishes the convergences \eqref{spectrum:convergence} and \eqref{spectrum:convergence:initial}.
	
	Next, establishing the thermodynamic equilibrium of $\widetilde g$ is subtle and intricate, and relies specifically on the control of the entropy dissipation bound. This characterization is given in Proposition \ref{prop:instrumental:dilated}, which shows that the equilibrium state is
	\begin{equation}\label{limiting:infinitesimal:distribution:energy}
		\widetilde g(t,x,u,\omega)=\frac{\rho(t,x)+U(t,x)\cdot\omega+E(t,x)u}{4\cosh^2(\frac u2)}.
	\end{equation}
	At this stage, in view of \eqref{moment:relation}, it is clear that the density $\rho(t,x)$ and the velocity field $U(t,x)$ match the coefficients found in the statement of Theorem \ref{thm:main}. In particular, they solve the wave system \eqref{plasma:wave:system}. Therefore, there only remains to show that the energy density $E(t,x)\in L^\infty (dt;L^2(\mathbb{R}^d))$ is independent of time.
	
	To that end, we perform an asymptotic analysis of an approximate energy conservation law for the scaled Boltzmann--Fermi--Dirac equation. Specifically, we start from the equation
	\begin{equation*}
		\partial_t \int_{\mathbb{R}^d}\frac{v^2-R^2}{\varepsilon^\tau}g_\varepsilon \chi dv+\nabla_x \cdot\int_{\mathbb{R}^d}\frac{v^2-R^2}{\varepsilon^\tau}vg_\varepsilon \chi dv=
		\frac{1}{\varepsilon^{\kappa+1}}\int_{\mathbb{R}^d} \frac{v^2-R^2}{\varepsilon^\tau}Q_\mathrm{FD}(f_\varepsilon)\chi dv,
	\end{equation*}
	where
	\begin{equation}\label{cutoff:function:0}
		\chi(v)=\mathds{1}_{\left\{\frac{|v^2-R^2|}{\varepsilon^\tau}< -\log\varepsilon^\alpha\right\}},
	\end{equation}
	for some fixed parameter $\alpha>0$, to be determined later. In its weak form, this equation becomes
	\begin{equation}\label{weak:form:energy:law}
		\begin{aligned}
			\int_{\mathbb{R}^d\times\mathbb{R}\times\partial B(0,R)} &
			u\widetilde g_\varepsilon^\mathrm{in}(x,u,\omega)\chi(v) \varphi(0,x)dxdud\omega
			\\
			&+\int_{\mathbb{R}^+\times\mathbb{R}^d\times\mathbb{R}\times\partial B(0,R)}
			u\widetilde g_\varepsilon(t,x,u,\omega)\chi(v) \partial_t \varphi(t,x) dtdxdud\omega
			\\
			&+\int_{\mathbb{R}^+\times\mathbb{R}^d}F_\varepsilon(t,x)\cdot \nabla_x \varphi(t,x) dtdx
			=-\int_{\mathbb{R}^+\times\mathbb{R}^d}D_\varepsilon(t,x) \varphi(t,x) dtdx,
		\end{aligned}
	\end{equation}
	for any test function $\varphi(t,x)\in C_c^1(\mathbb{R}^+\times\mathbb{R}^d)$, where we have introduced the fluxes
	\begin{equation*}
		F_\varepsilon(t,x)=
		\int_{\mathbb{R}^d}\frac{v^2-R^2}{\varepsilon^\tau}v g_\varepsilon(t,x,v)\chi(v) dv
		=
		\int_{\mathbb{R}\times\partial B(0,R)}uv \widetilde g_\varepsilon(t,x,u,\omega)\chi(v) dud\omega
	\end{equation*}
	and the conservation defects
	\begin{equation*}
		D_\varepsilon(t,x)=
		\frac{1}{\varepsilon^{\kappa+1}}\int_{\mathbb{R}^d} \frac{v^2-R^2}{\varepsilon^\tau}Q_\mathrm{FD}(f_\varepsilon)(t,x,v)\chi(v)dv.
	\end{equation*}
	
	It is reasonable to expect formally that $D_\varepsilon$ vanish as $\varepsilon\to 0$, because the cutoff $\chi$ converges to a constant function in the variable $u$. However, showing this with full rigor is nontrivial. Moreover, it requires that $\alpha$ be large enough, and that $\kappa+\tau<2$. Therefore, for clarity, we defer the removal of conservation defects to Section \ref{removal:defects}, where it is established in Lemma \ref{lemma:removal} that
	\begin{equation*}
		D_\varepsilon \to 0,
	\end{equation*}
	in the strong topology of $L^1_\mathrm{loc}(dtdx)$.
	
	Now, we claim that
	\begin{equation*}
		F_\varepsilon \rightharpoonup 0,
	\end{equation*}
	in the weak topology of $L^1_\mathrm{loc}(dtdx)$. Momentarily assuming the validity of this assertion, we can readily pass to the limit in \eqref{weak:form:energy:law} to infer that
	\begin{equation*}
		\int_{\mathbb{R}\times\partial B(0,R)}
		u\widetilde g^\mathrm{in}(x,u,\omega)dud\omega
		=\int_{\mathbb{R}\times\partial B(0,R)}
		u\widetilde g(t,x,u,\omega) dud\omega.
	\end{equation*}
	Then, since $\widetilde g$ reaches the infinitesimal equilibrium described in \eqref{limiting:infinitesimal:distribution:energy}, we conclude, by virtue of the identity
	\begin{equation*}
		\int_{\mathbb{R}}\frac{u^2}{4\cosh^2\left(\frac u2\right)}du=\frac{\pi^2}3,
	\end{equation*}
	that
	\begin{equation*}
		\int_{\mathbb{R}\times\partial B(0,R)}
		u\widetilde g^\mathrm{in}(x,u,\omega)dud\omega
		=\frac{\pi^2}{3}R^{d-1}|\mathbb{S}^{d-1}|E(t,x),
	\end{equation*}
	for all $t>0$, which establishes that the energy density $E$ is time-independent.
	
	We are now going to demonstrate that the fluxes vanish. To that end, we first decompose $F_\varepsilon$ into the sum of
	\begin{equation*}
		F_\varepsilon^1=
		\int_{\mathbb{R}\times\partial B(0,R)} u \omega \widetilde g_\varepsilon \chi dud\omega
	\end{equation*}
	and
	\begin{equation*}
		F_\varepsilon^2=
		\int_{\mathbb{R}\times\partial B(0,R)} u\left(\left(1+\frac{\varepsilon^\tau}{R^2}u\right)^\frac 12-1\right) \omega \widetilde g_\varepsilon \chi dud\omega.
	\end{equation*}
	It is then readily seen, employing \eqref{limiting:infinitesimal:distribution:energy}, that
	\begin{equation*}
		F_\varepsilon^1\rightharpoonup
		\int_{\mathbb{R}\times\partial B(0,R)} u \omega \widetilde g dud\omega=0,
	\end{equation*}
	in the weak topology of $L^1_\mathrm{loc}(dtdx)$, whereas
	\begin{equation*}
		\begin{aligned}
			|F_\varepsilon^2|&\leq
			\int_{\mathbb{R}\times\partial B(0,R)} |u|\left|\left(1+\frac{\varepsilon^\tau}{R^2}u\right)^\frac 12-1\right|
			|\widetilde g_\varepsilon| \chi dud\omega
			\\
			&\leq
			\frac{\varepsilon^\tau}{R^2}
			\int_{\mathbb{R}\times\partial B(0,R)} u^2|\widetilde g_\varepsilon| \chi dud\omega=O(\varepsilon^\tau|\log\varepsilon|)_{L^1_\mathrm{loc}(dtdx)}.
		\end{aligned}
	\end{equation*}
	All in all, we have shown that $F_\varepsilon$ tends to zero, as $\varepsilon\to 0$, in the weak topology of locally integrable functions. This completes the proof of the theorem.
\end{proof}

\section{Uniform bounds and weak compactness}\label{section:bounds_and_compactness}

This section is devoted to the extraction of quantitative bounds from the relative entropy inequality and their compactness consequences. The starting point of our analysis is the relative entropy inequality, which not only ensures the nonlinear stability of the kinetic dynamics with respect to the limiting fluid behavior, but also provides the quantitative control required to rigorously analyze the small-$\varepsilon$ limit of the Boltzmann--Fermi--Dirac equation.

More specifically, we derive here precise bounds on the fluctuations $g_\varepsilon$ which capture how the fluctuations scale with respect to the parameters $\gamma$ and $\tau$, and in particular show that $g_\varepsilon$ is increasingly concentrated near the Fermi sphere $\{|v|=R\}$ as $\varepsilon \to 0$.

\subsection{Controls from the relative entropy bound}

Our goal now consists in extracting a uniform control of fluctuations from the relative entropy bound
\begin{equation}\label{relative:entropy:1}
	\frac 1{\varepsilon^{2-\gamma}}H(f_\varepsilon|M_\varepsilon)(t)
	\leq C^\mathrm{in},
\end{equation}
for some $\gamma>0$, $C^\mathrm{in}>0$ and all $t\geq 0$,

Observe that the relative entropy $H(f_\varepsilon|F)$, obtained by replacing $M_\varepsilon$ by $F$ in \eqref{relative:entropy:0}, is not well defined, which is why we rely on the approximation $M_\varepsilon$ of the ground state $F$ to measure the entropy of the densities $f_\varepsilon$ at low temperatures.

We begin by establishing a collection of uniform estimates that follow directly from the relative entropy bound. These will serve as the foundation for the compactness and asymptotic analysis carried out in the critical regime.

\begin{prop}\label{prop:relative:entropy}
	Consider a family of density distributions $0\leq f_\varepsilon(t,x,v)\leq\delta^{-1}$, with $\varepsilon>0$, such that the relative entropy bound \eqref{relative:entropy:1} holds uniformly in $\varepsilon$, with some fixed parameter $\gamma>0$, where the Fermi--Dirac distribution $M_\varepsilon$ defined in \eqref{normalized:distribution} has a given parameter value $\tau>0$.
	Further consider the density fluctuations $g_\varepsilon(t,x,v)$ defined in \eqref{epsilon:fluctuations}.
	
	Then, as $\varepsilon\to 0$:
	\begin{itemize}
		
		\item
		If $\gamma+\tau\leq 2$, then any subsequence of fluctuations $g_\varepsilon$ satisfies the bounds
		\begin{equation}\label{entropy:bound:1}
			g_\varepsilon
			=
			O(\varepsilon^{\frac{\tau-\gamma}2})_{L^\infty(dt;L^1_\mathrm{loc}(dx;L^1(dv)))}
		\end{equation}
		and
		\begin{equation}\label{entropy:bound:2}
			g_\varepsilon|v^2-R^2|
			=
			O(\varepsilon^{\frac{3\tau-\gamma}2})_{L^\infty(dt;L^1_\mathrm{loc}(dx;L^1(dv)))}.
		\end{equation}
		
		\item
		If $\gamma+\tau< 2$, then any subsequence of fluctuations $g_\varepsilon$ satisfies a concentrated tightness property around the sphere $\{|v|=R\}$, in the sense that, for any compact subset $K\subset\mathbb{R}^d$, it holds that
		\begin{equation}\label{concentrated:tightness}
			\lim_{\lambda\to\infty}\sup_{\varepsilon>0,t>0}\int_{K\times\mathbb{R}^d}\varepsilon^{\frac{\gamma-3\tau}2}\big|g_\varepsilon(v^2-R^2)\big|
			\mathds{1}_{\left\{|v^2-R^2|\geq \lambda \varepsilon^\tau\right\}}dxdv
			=0.
		\end{equation}
		
		\item
		For any choice of parameters $\gamma>0$ and $\tau>0$, any subsequence of fluctuations $g_\varepsilon$ satisfies the bound
		\begin{equation}\label{fluctuation:H:2}
			\sup_{t>0}\int_{\mathbb{R}^d\times\mathbb{R}^d}\left(g_\varepsilon\right)^2
			\max\left\{2,\varepsilon^{-\tau}\left|v^2-R^2\right|\right\}dx dv
			\lesssim \varepsilon^{-\gamma}.
		\end{equation}
	\end{itemize}
\end{prop}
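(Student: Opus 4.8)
My plan is to deduce all three displays from a single pointwise lower bound on the relative-entropy integrand, combining convexity of $h$ with the explicit form of $M_\varepsilon$. Writing $z=\delta f_\varepsilon$, $a=\delta M_\varepsilon$, so that $z-a=\delta\varepsilon g_\varepsilon$ and, by \eqref{normalized:distribution}, $\log\frac{1-a}{a}=\frac{v^2-R^2}{\varepsilon^\tau}$, the key is the elementary estimate that there is a universal constant $c>0$ with
\[ h(z,a)\ \geq\ c\,(z-a)^2\max\Big\{2,\ \Big|\log\tfrac{1-a}{a}\Big|\Big\},\qquad 0\leq z\leq 1,\ 0<a<1. \]
I would prove this from the representation $h(z,a)=\int_a^z\frac{z-r}{r(1-r)}\,dr$, which follows from $\partial_z^2h=\frac1{r(1-r)}$ together with $h(a,a)=\partial_zh(a,a)=0$. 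Since $r(1-r)\leq\frac14$, this already gives $h(z,a)\geq 2(z-a)^2$, covering the case where the maximum equals $2$. For the case $|\log\frac{1-a}{a}|$ large, i.e. $a$ near $0$ or $1$, I would use $h(z,a)\geq z\log\frac za-(z-a)$ for $z>a$ (from $r(1-r)\leq r$, integrating; symmetrically for $z<a$) together with $-\log a\geq|\log\frac{1-a}{a}|-\log 2$, distinguishing whether $z$ is comparable to $a$ — where instead one uses the sharp quadratic bound $h(z,a)\gtrsim\frac{(z-a)^2}{a(1-a)}$, valid as long as $z$ lies between $a$ and, say, $2a$ (resp. the symmetric statement near $1$) — or $z\gg a$, where the term $z\log\frac za$ dominates. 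This is routine but has to be checked across the whole range of $(z,a)$.

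Granting the pointwise bound, multiplying by $\varepsilon^{-(2-\gamma)}$ and integrating in $(x,v)$ against \eqref{relative:entropy:1} yields \eqref{fluctuation:H:2} at once, since $(z-a)^2=\delta^2\varepsilon^2g_\varepsilon^2$ and $|\log\frac{1-a}{a}|=\varepsilon^{-\tau}|v^2-R^2|$. The $L^1$-type bounds \eqref{entropy:bound:1} and \eqref{entropy:bound:2} then follow by Cauchy--Schwarz, splitting the $v$-integral into a fixed collar $\{|v^2-R^2|\leq C\}$ around the Fermi sphere and a far zone $\{|v^2-R^2|>C\}$. In the collar, \eqref{fluctuation:H:2} controls $g_\varepsilon$ in $L^2$ with weight at least $2$, and with weight $\sim\varepsilon^{-\tau}|v^2-R^2|$ outside the thin shell $\{|v^2-R^2|\leq\Lambda\varepsilon^\tau\}$; since this shell has $d$-dimensional volume comparable to $\Lambda\varepsilon^\tau$, Cauchy--Schwarz produces the factor $\varepsilon^{(\tau-\gamma)/2}$ for $\int|g_\varepsilon|$ and $\varepsilon^{(3\tau-\gamma)/2}$ for $\int|g_\varepsilon|\,|v^2-R^2|$ (the extra $\varepsilon^\tau$ being the size of $|v^2-R^2|$ on the shell). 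In the far zone, $M_\varepsilon$ is exponentially small and one uses $0\leq f_\varepsilon\leq\delta^{-1}$ together with the super-polynomial penalization carried by the relative entropy there — concretely $h(\delta f_\varepsilon,\delta M_\varepsilon)\gtrsim\delta f_\varepsilon\cdot\varepsilon^{-\tau}(v^2-R^2)$ on the set where $f_\varepsilon$ is not already pointwise negligible compared to $M_\varepsilon$, and $|g_\varepsilon|\lesssim M_\varepsilon/\varepsilon$ on the complementary set — to see that this contribution is of strictly lower order.

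The concentrated tightness \eqref{concentrated:tightness} is a uniform-integrability statement coming out of the same weighted bound \eqref{fluctuation:H:2}. On $\{|v^2-R^2|\geq\lambda\varepsilon^\tau\}$ with $\lambda\geq 2$ the weight there is at least $\lambda$, so $\int g_\varepsilon^2\,\mathds 1_{\{|v^2-R^2|\geq\lambda\varepsilon^\tau\}}\,dxdv\lesssim\varepsilon^{-\gamma}/\lambda$; combining this with the collar estimate for $\int(v^2-R^2)^2$ and treating the far zone $\{|v^2-R^2|>C\}$ by the exponential-decay/entropy argument above — this being exactly where the strict inequality $\gamma+\tau<2$ is used, to guarantee that the far-zone remainder is $o(\varepsilon^{(3\tau-\gamma)/2})$ uniformly in $\varepsilon$ — gives $\varepsilon^{\frac{\gamma-3\tau}{2}}\int_{K\times\mathbb R^d}|g_\varepsilon(v^2-R^2)|\,\mathds 1_{\{|v^2-R^2|\geq\lambda\varepsilon^\tau\}}\,dxdv\to 0$ as $\lambda\to\infty$, uniformly in $\varepsilon>0$ and $t>0$.

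The principal difficulty is the pointwise convexity lemma: once it is in hand, the rest of the proposition is bookkeeping, and the only delicate point in proving it is the regime $a\to 0$ (resp. $a\to 1$) with $z$ comparable to $a$, where the crude quadratic bound must be upgraded to the sharp $h(z,a)\gtrsim(z-a)^2/(a(1-a))$. A secondary difficulty is the large-$|v|$ tail: since $f_\varepsilon\leq\delta^{-1}$ provides no decay in $v$, the $L^1(dv)$-integrability and tightness in \eqref{entropy:bound:1}--\eqref{concentrated:tightness} must be recovered from the logarithmic weight $-\log\delta M_\varepsilon\sim|v|^2/\varepsilon^\tau$ implicit in the relative entropy, through the linear-in-$f_\varepsilon$ lower bound on $h$ away from the Fermi sphere.
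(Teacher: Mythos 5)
Your pointwise convexity bound
\[
h(z,a)\ \geq\ c\,(z-a)^2\max\Big\{2,\ \Big|\log\tfrac{1-a}{a}\Big|\Big\}
\]
is correct (in fact it holds with $c=1$; the paper proves the sharp version $h(z,a)\ge\frac{\log\left((1-a)/a\right)}{1-2a}(z-a)^2$ by studying $H(z)=h(z,a)/(z-a)^2$), and it yields \eqref{fluctuation:H:2} exactly as you say. That part of your plan agrees with the paper's proof of \eqref{fluctuation:H:2}.

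The gap is in deducing \eqref{entropy:bound:1}, \eqref{entropy:bound:2} and \eqref{concentrated:tightness} from \eqref{fluctuation:H:2} by Cauchy--Schwarz. Write $W=\max\{2,\varepsilon^{-\tau}|v^2-R^2|\}$. Cauchy--Schwarz in the collar $\{|v^2-R^2|<C\}$ gives
\[
\int_{K}\int_{\{|v^2-R^2|<C\}}|g_\varepsilon|\,dv\,dx
\ \lesssim\
\Big(\int g_\varepsilon^2\,W\,dxdv\Big)^{1/2}
\Big(|K|\int_{\{|v^2-R^2|<C\}}W^{-1}\,dv\Big)^{1/2},
\]
and the second factor is \emph{not} $O(\varepsilon^{\tau/2})$: the thin shell contributes $O(\varepsilon^\tau)$, but the intermediate region $\Lambda\varepsilon^\tau<|v^2-R^2|<C$ contributes $\int\varepsilon^\tau|v^2-R^2|^{-1}dv\sim\varepsilon^\tau|\log\varepsilon|$, so this route gives $O(\varepsilon^{(\tau-\gamma)/2}|\log\varepsilon|^{1/2})$, short of \eqref{entropy:bound:1} by a logarithm. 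The same Cauchy--Schwarz tested against $g_\varepsilon^2W$ for \eqref{entropy:bound:2} loses a full power: $\int_{\{|v^2-R^2|<C\}}|v^2-R^2|^2W^{-1}dv\sim\varepsilon^\tau\int_{\{|v^2-R^2|<C\}}|v^2-R^2|\,dv\sim\varepsilon^\tau$, giving only $O(\varepsilon^{(\tau-\gamma)/2})$ instead of $O(\varepsilon^{(3\tau-\gamma)/2})$. These losses are sharp given the information in \eqref{fluctuation:H:2} alone, and they occur in the \emph{intermediate} region $\Lambda\varepsilon^\tau<|v^2-R^2|<C$, not in the far zone where your $h(z,a)\gtrsim z\log(z/a)$ argument lives. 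So the two regimes you propose to distinguish do not cover the case that actually breaks the estimate.

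What is missing is the fact that the relative entropy integrand penalizes $f_\varepsilon$ \emph{exponentially} in $|v^2-R^2|/\varepsilon^\tau$ away from the Fermi sphere, a piece of information that the quadratic lower bound on $h$ throws away. The paper's proof captures it through the generalized Young inequality (Lemma \ref{lemma:inequality:Young}):
\[
|zy|\ \leq\ \frac{1}{\alpha\varepsilon^{2-\gamma}}\,h(a+z,a)\ +\ a(1-a)\,\alpha\,\varepsilon^{2-\gamma}\,w(y),
\qquad w(y)=e^y+e^{-y}-2,
\]
where the conjugate term is weighted by $a(1-a)=\delta M_\varepsilon(1-\delta M_\varepsilon)=\tfrac14\operatorname{sech}^2\!\big(\tfrac{v^2-R^2}{2\varepsilon^\tau}\big)$. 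Choosing $y=1$ gives \eqref{entropy:bound:1}, and $y=\tfrac{|v^2-R^2|}{2\varepsilon^\tau}$ gives \eqref{entropy:bound:2}: in the latter case $w(y)\sim e^{|v^2-R^2|/2\varepsilon^\tau}$ grows, but it is beaten by $a(1-a)\sim e^{-|v^2-R^2|/\varepsilon^\tau}$, so the conjugate term integrates to $O(\varepsilon^\tau)$ with no logarithm and no extra power. The tightness \eqref{concentrated:tightness} comes from the same inequality with $\alpha$ boosted by $e^{\lambda/8}$, again crucially using the exponential decay of $a(1-a)$, not from a uniform-integrability argument based on \eqref{fluctuation:H:2}. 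Your quadratic-plus-Cauchy--Schwarz plan therefore needs to be replaced by (or supplemented with) an argument that retains the exponential concentration encoded in $M_\varepsilon(1-\delta M_\varepsilon)$.
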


The proof of the above proposition hinges upon appropriate applications of a convexity inequality established in the next lemma, which is a particular case of a generalized Young inequality. Thus, for the sake of clarity, we defer the proof of Proposition \ref{prop:relative:entropy} until after the proof of Lemma \ref{lemma:inequality:Young}, below.

The idea of using Young inequalities to extract suitable controls of fluctuations from relative entropy bounds goes back to the seminal work \cite{bgl93} on hydrodynamic limits (see Section 3 therein). We also refer to Appendix B of \cite{asr19} for a brief introduction to Young inequalities related to hydrodynamic limits.

\begin{lem}\label{lemma:inequality:Young}
	For all $a\in (0,1)$, $z\in [-a,1-a]$, $y\in\mathbb{R}$ and $\alpha>0$, with $\alpha\varepsilon^{2-\gamma}\leq 1$, it holds that
	\begin{equation}\label{inequality:Young:0}
		|zy|
		\leq
		\frac{1}{\alpha\varepsilon^{2-\gamma}}h(a+z,a)
		+a(1-a)\alpha \varepsilon^{2-\gamma}w(y),
	\end{equation}
	where we have denoted $w(y)=e^y+e^{-y}-2$.
\end{lem}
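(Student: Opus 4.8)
The plan is to reduce the stated inequality to the classical Young-type inequality coming from the Legendre duality between the entropy function $h(a+\cdot\,,a)$ and its convex conjugate. First I would fix $a\in(0,1)$ and define, for $s\in[-a,1-a]$, the strictly convex function $\phi_a(s)=h(a+s,a)=(a+s)\log\frac{a+s}{a}+(1-a-s)\log\frac{1-a-s}{1-a}$, which satisfies $\phi_a(0)=0$, $\phi_a'(0)=0$ and $\phi_a''(s)=\frac{1}{(a+s)(1-a-s)}\geq \frac{1}{a(1-a)}\cdot(\text{something})$—more precisely I want the clean bound $\phi_a''(s)\geq \frac{4}{?}$ is false, so instead I will use that near $0$ the second derivative is $\tfrac1{a(1-a)}$. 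The key computation is the Legendre transform: $\phi_a^*(p)=\sup_{s}\big(ps-\phi_a(s)\big)$. A direct computation (differentiating, solving $p=\phi_a'(s)=\log\frac{(a+s)(1-a)}{(1-a-s)a}$) gives the maximizer $a+s=\frac{ae^{p}}{1-a+ae^{p}}$ and, after simplification, the closed form $\phi_a^*(p)=-\log\!\big(1-a+ae^{p}\big)+ (\text{linear in }p)$; organizing the algebra, one finds $\phi_a^*(p)=\log\frac{1}{1-a+ae^p}$ is not quite it either, so I will carefully carry out the substitution to land on an expression bounded above by $a(1-a)w(p)$ with $w(p)=e^p+e^{-p}-2$. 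The cleanest route is actually to bound $\phi_a^*$ directly rather than compute it exactly.

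Concretely, the second and main step is the pointwise inequality
\begin{equation*}
	ps-\phi_a(s)\leq a(1-a)\,w(p)\qquad\text{for all }s\in[-a,1-a],\ p\in\mathbb{R},
\end{equation*}
i.e. $\phi_a^*(p)\leq a(1-a)w(p)$. To prove this I would verify it by reducing to a one-variable convexity statement: both sides vanish at $p=0$ together with their $p$-derivatives (since $\phi_a^*{}'(0)=0$ because the maximizer at $p=0$ is $s=0$, and $w'(0)=0$), so it suffices to compare second derivatives, $\phi_a^*{}''(p)\leq a(1-a)w''(p)=a(1-a)(e^p+e^{-p})$. Now $\phi_a^*{}''(p)=\big(\phi_a''(s(p))\big)^{-1}=(a+s(p))(1-a-s(p))$, and with $a+s(p)=\frac{ae^p}{1-a+ae^p}$, $1-a-s(p)=\frac{1-a}{1-a+ae^p}$, this equals $\frac{a(1-a)e^p}{(1-a+ae^p)^2}$. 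So the required inequality becomes $\frac{e^p}{(1-a+ae^p)^2}\leq e^p+e^{-p}$, i.e. $(1-a+ae^p)^2\geq \frac{1}{1+e^{-2p}}$; since $1-a+ae^p\geq \min\{1,e^p\}\geq e^{-|p|}$ is too weak, I would instead note $(1-a+ae^p)^2\geq$ its value at the worst $a$, or more simply use AM–GM: $1-a+ae^p\geq e^{ap\log e}$? — that is $\geq e^{ap}$ by Jensen applied to the convex exponential, hence $(1-a+ae^p)^2\geq e^{2ap}\geq \min\{1,e^{2p}\}$, and one checks $\min\{1,e^{2p}\}\geq \frac{1}{1+e^{-2p\,\mathrm{sgn}(p)}}\cdot(\dots)$. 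I anticipate the clean finish is: $(1-a+ae^p)(1-a+ae^{-p})\geq 1$ by expanding ($=1+a(1-a)(e^p+e^{-p}-2)\geq 1$), and $(1-a+ae^p)^2\cdot(1-a+ae^{-p})^2\geq 1$ then controls things after symmetrizing in $p\leftrightarrow -p$; combined with $\frac{e^p}{(1-a+ae^p)^2}+\frac{e^{-p}}{(1-a+ae^{-p})^2}\leq e^p+e^{-p}$, which follows since each $(1-a+ae^{\pm p})^2\geq 1-a+ae^{\pm p}\cdot$—here I will simply verify $\frac{e^p}{(1-a+ae^p)^2}\leq \cosh p$ directly by clearing denominators into a polynomial inequality in $e^p$ that factors favorably.

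The third step assembles the conclusion. Given the Young inequality $\phi_a^*(p)\leq a(1-a)w(p)$, Fenchel's inequality $ps\leq \phi_a(s)+\phi_a^*(p)$ yields, for every $s\in[-a,1-a]$ and $p\in\mathbb{R}$,
\begin{equation*}
	ps\leq h(a+s,a)+a(1-a)w(p).
\end{equation*}
Applying this with $s=z\in[-a,1-a]$ and $p=\alpha\varepsilon^{2-\gamma}y$, then dividing by $\alpha\varepsilon^{2-\gamma}$ (here is where the hypothesis $\alpha\varepsilon^{2-\gamma}\leq 1$ is used, though in fact only positivity is needed for the division; the stated constraint will be convenient for the later applications of the lemma and for keeping $p$ in a controlled range), gives
\begin{equation*}
	|zy|=\big|z\cdot y\big|\leq \frac{1}{\alpha\varepsilon^{2-\gamma}}h(a+z,a)+a(1-a)\,\alpha\varepsilon^{2-\gamma}\,\frac{w(\alpha\varepsilon^{2-\gamma}y)}{(\alpha\varepsilon^{2-\gamma})^2}.
\end{equation*}
To recover exactly \eqref{inequality:Young:0} with $w(y)=e^y+e^{-y}-2$ on the right, I would apply Fenchel instead with $p=\operatorname{sgn}(z)\,$-adjusted sign and use the elementary convexity fact $w(\lambda y)\leq \lambda^2 w(y)$ for $|\lambda|\leq 1$ (true since $w$ is convex, even, vanishing to second order at $0$, so $y\mapsto w(y)/y^2$ is nondecreasing on $(0,\infty)$), with $\lambda=\alpha\varepsilon^{2-\gamma}\leq 1$; this collapses the last term to $a(1-a)\alpha\varepsilon^{2-\gamma}w(y)$, as desired. \textbf{The main obstacle} I expect is the polynomial/convexity verification of $\phi_a^*(p)\leq a(1-a)w(p)$ uniformly in $a\in(0,1)$ — getting the constant $a(1-a)$ exactly right (rather than a worse absolute constant) requires the exact computation of the Legendre transform and a careful second-derivative comparison, and handling the sign of $z$ so that the left side is genuinely $|zy|$ and not just $zy$; everything else is bookkeeping with Fenchel's inequality and the scaling property of $w$.
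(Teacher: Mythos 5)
Your proposal is correct and follows essentially the same route as the paper: you identify the same Legendre-duality structure, compute the same conjugate $\phi_a^*(p)=\log(1-a+ae^p)-ap$ with the same maximizer, reduce the key estimate $\phi_a^*(p)\leq a(1-a)w(p)$ to a second-derivative comparison, and finish with the scaling property $w(\lambda y)\leq \lambda^2 w(y)$ combined with Fenchel's inequality applied to a sign-adjusted argument. The only place your write-up wanders is the verification of $\frac{e^p}{(1-a+ae^p)^2}\leq e^p+e^{-p}$, where you actually do have a complete argument buried in the middle: your Jensen step $(1-a+ae^p)^2\geq e^{2ap}\geq\min\{1,e^{2p}\}$ immediately gives $\frac{e^p}{(1-a+ae^p)^2}\leq e^{|p|}\leq e^p+e^{-p}$, which is exactly the bound the paper obtains by instead writing the same expression in the symmetric form $\frac{a(1-a)e^{-p}}{(a+(1-a)e^{-p})^2}$ and bounding each representation by $a(1-a)e^{\pm p}$ according to the sign of $p$. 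So the two proofs differ only cosmetically in how they establish the pointwise domination ${\phi_a^*}''(p)\leq a(1-a)e^{|p|}$.
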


\begin{proof}[Proof of Lemma \ref{lemma:inequality:Young}]
	For any fixed $0<a<1$, we introduce the function
	\begin{equation*}
		h_1(z)=h(a+z,a),
	\end{equation*}
	which is defined on the domain $z\in [-a,1-a]$, and its convex conjugate (also called the Legendre transform)
	\begin{equation*}
		h_1^*(y)=\sup_{z\in [-a,1-a]}\big(zy-h_1(z)\big),
	\end{equation*}
	which leads to the generalized Young inequality
	\begin{equation*}
		zy\leq h_1(z)+h_1^*(y).
	\end{equation*}
	
	More explicitly, a straightforward optimization procedure applied to the function
	\begin{equation*}
		z\mapsto zy-h_1(z),
	\end{equation*}
	for any fixed $y\in\mathbb{R}$, readily shows that its global maximum is attained at the critical point
	\begin{equation*}
		z_*=\frac{a(1-a)(e^y-1)}{1+a(e^y-1)},
	\end{equation*}
	with the critical value
	\begin{equation*}
		h_1^*(y)=z_*y-h_1(z_*)=\log\big(1+a(e^y-1)\big)-ay.
	\end{equation*}
	In particular, the convex conjugate $h_1^*(y)$ is finite for all values $y\in\mathbb{R}$, and we obtain the explicit Young inequality
	\begin{equation}\label{inequality:Young:3}
		yz\leq
		\left[(a+z)\log\left(1+\frac za\right)+(1-a-z)\log\left(1-\frac z{1-a}\right)\right]
		+\left[\log\big(1+a(e^y-1)\big)-ay\right],
	\end{equation}
	for all $z\in [-a,1-a]$ and $y\in\mathbb{R}$.
	
	Next, observing that
	\begin{equation*}
		\begin{aligned}
			{h_1^*}''(y)
			&=\frac{a(1-a)e^y}{(1+a(e^y-1))^2}
			=\frac{a(1-a)e^{-y}}{(1+(1-a)(e^{-y}-1))^2}
			\\
			&\leq a(1-a)e^{|y|}\leq a(1-a)(e^y+e^{-y}),
		\end{aligned}
	\end{equation*}
	we deduce that
	\begin{equation}\label{estimate:crude}
		h_1^*(y)\leq a(1-a)(e^{|y|}-|y|-1)\leq a(1-a)(e^y+e^{-y}-2),
	\end{equation}
	for all $a\in (0,1)$ and $y\in\mathbb{R}$.
	It therefore holds that
	\begin{equation}\label{inequality:Young:1}
		zy\leq h_1(z)+h_1^*(y)\leq h_1(z)+a(1-a)w(y),
	\end{equation}
	for every $z\in [-a,1-a]$ and $y\in\mathbb{R}$.
	
	Now, computing that
	\begin{equation*}
		\frac{\partial}{\partial\lambda}\left(\frac{w(\lambda y)}{\lambda^2}\right)
		=\frac{\lambda y(e^{\lambda y}-e^{-\lambda y})-2(e^{\lambda y}+e^{-\lambda y}-2)}{\lambda^3}\geq 0,
	\end{equation*}
	for all $y\in\mathbb{R}$ and $\lambda>0$, we deduce that $\lambda^{-2}w(\lambda y)$ is increasing in $\lambda$, which implies that
	\begin{equation}\label{inequality:Young:2}
		w(\lambda y)\leq \lambda^2w(y),
	\end{equation}
	for every $\lambda\in [0,1]$ and $y\in\mathbb{R}$.
	
	All in all, combining the functional properties \eqref{inequality:Young:1} and \eqref{inequality:Young:2}, we find that
	\begin{equation*}
		\begin{aligned}
			|zy\alpha|&\leq \frac{1}{\varepsilon^{2-\gamma}}\left(h_1(z)
			+h_1^*\left(\varepsilon^{2-\gamma}|y\alpha|\frac{z}{|z|}\right)\right)
			\\
			&\leq
			\frac{1}{\varepsilon^{2-\gamma}}h_1(z)+
			a(1-a)\varepsilon^{2-\gamma}\alpha^2 w(y),
		\end{aligned}
	\end{equation*}
	for all $z\in [-a,1-a]$ (with $z\neq 0$), $y\in\mathbb{R}$ and $0<\alpha\leq \varepsilon^{\gamma-2}$. Since the case $z=0$ is trivial, the proof of the lemma is complete.
\end{proof}

\begin{proof}[Proof of Proposition \ref{prop:relative:entropy}]
	Setting
	\begin{equation*}
		a=\delta M_\varepsilon,
		\qquad
		z=\delta\varepsilon g_\varepsilon,
		\qquad
		y=1
		\qquad
		\text{and}
		\qquad
		\alpha=\varepsilon^{\frac{\gamma-\tau}2-1}
	\end{equation*}
	in \eqref{inequality:Young:0},
	with $\gamma+\tau\leq 2$, so that $\alpha\varepsilon^{2-\gamma}\leq 1$, yields that
	\begin{equation*}
		\varepsilon^{\frac{\gamma-\tau}2}\delta| g_\varepsilon|
		\leq
		\frac{1}{\varepsilon^{2-\gamma}}h(\delta f_\varepsilon,\delta M_\varepsilon)
		+\delta M_\varepsilon(1-\delta M_\varepsilon)
		\varepsilon^{-\tau} w(1).
	\end{equation*}
	Now, a straightforward integration in spherical coordinates shows that
	\begin{equation*}
		\begin{aligned}
			\int_{\mathbb{R}^d}\delta M_\varepsilon(1-\delta M_\varepsilon)dv
			&=
			\int_{\mathbb{R}^d}
			\frac {dv}{\left(1+e^{\frac{v^2-R^2}{\varepsilon^\tau}}\right)
			\left(1+e^{-\frac{v^2-R^2}{\varepsilon^\tau}}\right)}
			\\
			&=
			|\mathbb{S}^{d-1}|\int_0^\infty
			\frac {r^{d-1}dr}{\left(1+e^{\frac{v^2-R^2}{\varepsilon^\tau}}\right)
			\left(1+e^{-\frac{v^2-R^2}{\varepsilon^\tau}}\right)}
			\\
			&=
			\frac{\varepsilon^\tau |\mathbb{S}^{d-1}|}2\int_{-\frac{R^2}{\varepsilon^\tau}}^\infty
			\frac {(R^2+\varepsilon^\tau u)^{\frac{d-2}2}du}{\left(1+e^{u}\right)
			\left(1+e^{-u}\right)}=O(\varepsilon^\tau),
		\end{aligned}
	\end{equation*}
	while the relative entropy bound \eqref{relative:entropy:1} gives that
	\begin{equation*}
		\frac{1}{\varepsilon^{2-\gamma}}h(\delta f_\varepsilon,\delta M_\varepsilon)=
		O(1)_{L^\infty(dt;L^1(dxdv))}.
	\end{equation*}
	Therefore, decomposing the fluctuations into
	\begin{equation*}
		g_\varepsilon=
		g_\varepsilon\mathds{1}_{\left\{\frac{1}{\varepsilon^{2-\gamma}}h(\delta f_\varepsilon,\delta M_\varepsilon)
		\geq
		\delta M_\varepsilon(1-\delta M_\varepsilon)
		\varepsilon^{-\tau}\right\}}
		+
		g_\varepsilon\mathds{1}_{\left\{\frac{1}{\varepsilon^{2-\gamma}}h(\delta f_\varepsilon,\delta M_\varepsilon)
		<
		\delta M_\varepsilon(1-\delta M_\varepsilon)
		\varepsilon^{-\tau}\right\}}
	\end{equation*}
	allows us to conclude that
	\begin{equation*}
		g_\varepsilon
		=
		O(\varepsilon^{\frac{\tau-\gamma}2})_{L^\infty(dt;L^1(dxdv))}
		+
		O(\varepsilon^{\frac{\tau-\gamma}2})_{L^\infty(dtdx;L^1(dv))},
	\end{equation*}
	which establishes the first bound \eqref{entropy:bound:1}.
	
	Similarly, setting
	\begin{equation*}
		y=\frac{|v^2-R^2|}{2\varepsilon^\tau}
	\end{equation*}
	instead of $y=1$ into \eqref{inequality:Young:0}, we arrive at the estimate
	\begin{equation*}
		\varepsilon^{\frac{\gamma-3\tau}2}\delta
		\big|g_\varepsilon(v^2-R^2)\big|
		\leq
		\frac{2}{\varepsilon^{2-\gamma}}h(\delta f_\varepsilon,\delta M_\varepsilon)
		+2\delta M_\varepsilon(1-\delta M_\varepsilon)
		\varepsilon^{-\tau}
		w\left(\frac{v^2-R^2}{2\varepsilon^\tau}\right).
	\end{equation*}
	As before, a direct calculation establishes that
	\begin{equation}\label{spherical:calculation:2}
		\int_{\mathbb{R}^d}\delta M_\varepsilon(1-\delta M_\varepsilon)
		w\left(\frac{v^2-R^2}{2\varepsilon^\tau}\right)dv
		=
		\frac{\varepsilon^\tau |\mathbb{S}^{d-1}|}2\int_{-\frac{R^2}{\varepsilon^\tau}}^\infty
		\frac {(R^2+\varepsilon^\tau u)^{\frac{d-2}2}w(\frac u2)du}{\left(1+e^{u}\right)
		\left(1+e^{-u}\right)}=O(\varepsilon^\tau),
	\end{equation}
	which, taking the relative entropy bound \eqref{relative:entropy:1} into account, leads to the control
	\begin{equation*}
		g_\varepsilon|v^2-R^2|
		=
		O(\varepsilon^{\frac{3\tau-\gamma}2})_{L^\infty(dt;L^1(dxdv))}
		+
		O(\varepsilon^{\frac{3\tau-\gamma}2})_{L^\infty(dtdx;L^1(dv))},
	\end{equation*}
	provided that $\gamma+\tau\leq 2$, thereby completing the justification of \eqref{entropy:bound:2}.

	We move on now to the proof of the concentrated tightness around $\{|v|=R\}$. To that end, taking $\lambda>0$ and setting
	\begin{equation*}
		a=\delta M_\varepsilon,
		\qquad
		z=\delta\varepsilon g_\varepsilon,
		\qquad
		y=\frac{|v^2-R^2|}{2\varepsilon^\tau}
		\qquad
		\text{and}
		\qquad
		\alpha=e^{\frac \lambda 8}\varepsilon^{\frac{\gamma-\tau}2-1}
	\end{equation*}
	in \eqref{inequality:Young:0},
	with $\gamma+\tau< 2$, so that $\alpha\varepsilon^{2-\gamma}\leq 1$ provided $\varepsilon$ is small enough, we obtain that
	\begin{equation*}
		\varepsilon^{\frac{\gamma-3\tau}2}\delta
		\big|g_\varepsilon(v^2-R^2)\big|
		\leq
		\frac{2}{e^{\frac \lambda 8}\varepsilon^{2-\gamma}}h(\delta f_\varepsilon,\delta M_\varepsilon)
		+2\delta M_\varepsilon(1-\delta M_\varepsilon)
		e^{\frac \lambda 8}\varepsilon^{-\tau}
		w\left(\frac{v^2-R^2}{2\varepsilon^\tau}\right).
	\end{equation*}
	Then, we observe that a simple adaptation of \eqref{spherical:calculation:2} yields that
	\begin{equation*}
		\begin{aligned}
			\int_{\mathbb{R}^d}\delta M_\varepsilon (1-\delta M_\varepsilon)
			&w\left(\frac{v^2-R^2}{2\varepsilon^\tau}\right)
			\mathds{1}_{\left\{|v^2-R^2|\geq \lambda \varepsilon^\tau\right\}}dv
			\\
			&=
			\frac{\varepsilon^\tau |\mathbb{S}^{d-1}|}2\int_{-\frac{R^2}{\varepsilon^\tau}}^\infty
			\frac {(R^2+\varepsilon^\tau u)^{\frac{d-2}2}w(\frac u2)\mathds{1}_{\{|u|\geq\lambda\}}du}{\left(1+e^{u}\right)
			\left(1+e^{-u}\right)}=O(e^{-\frac\lambda 4}\varepsilon^\tau).
		\end{aligned}
	\end{equation*}
	It therefore follows that
	\begin{equation*}
		g_\varepsilon|v^2-R^2|
		\mathds{1}_{\left\{|v^2-R^2|\geq \lambda \varepsilon^\tau\right\}}
		=
		O(e^{-\frac\lambda 8}\varepsilon^{\frac{3\tau-\gamma}2})_{L^\infty(dt;L^1(dxdv))}
		+
		O(e^{-\frac\lambda 8}\varepsilon^{\frac{3\tau-\gamma}2})_{L^\infty(dtdx;L^1(dv))},
	\end{equation*}
	provided that $e^{\frac \lambda 8}\leq \varepsilon^{\frac{\gamma+\tau}2-1}$.
	
	Considering now a subsequence of fluctuations $g_\varepsilon$, as $\varepsilon\to 0$, the preceding estimate allows us to conclude that, for any compact subset $K\subset\mathbb{R}^d$ and any small value $\rho>0$, there exists a large number $\lambda>0$ such that
	\begin{equation*}
		\sup_{t>0}\int_{K\times\mathbb{R}^d}\varepsilon^{\frac{\gamma-3\tau}2}\big|g_\varepsilon(v^2-R^2)\big|
		\mathds{1}_{\left\{|v^2-R^2|\geq \lambda \varepsilon^\tau\right\}}dxdv
		<\rho,
	\end{equation*}
	uniformly in $\varepsilon>0$, which yields the concentrated tightness estimate \eqref{concentrated:tightness}.

	Next, in order to establish the quadratic bound \eqref{fluctuation:H:2}, we first show that
	\begin{equation}\label{lower:bound:2}
		h(z,a)\geq \frac{\log\left(\frac{1-a}a\right)}{1-2a}(z-a)^2\geq 2(z-a)^2,
	\end{equation}
	whenever $a\neq \frac 12$. Perhaps, the simplest way to justify \eqref{lower:bound:2} consists in studying the minima of the smooth function
	\begin{equation*}
		H(z)=\frac{h(z,a)}{(z-a)^2},
	\end{equation*}
	on $z\in (0,1)$, for any fixed $a\neq\frac 12$. Indeed, a direct computation shows that
	\begin{equation*}
		H'(z)=\frac{h(a,z)-h(z,a)}{(z-a)^3}
		\quad\text{and}\quad
		H''(z)=\frac{r(z)}{(z-a)^4},
	\end{equation*}
	with $r(z)=2h(z,a)-4h(a,z)+\frac{(z-a)^2}{z(1-z)}$. Since $r(a)=r'(a)=r''(a)=0$ and $r''(z)>0$, for all $z\neq a$, we deduce that both $r(z)$ and $H''(z)$ are nonnegative functions. Therefore, we conclude that $H(z)$ attains its unique global minimum at the critical point $z=1-a$. Then, the critical value
	\begin{equation*}
		H(1-a)=\frac{\log\left(\frac{1-a}a\right)}{1-2a}\geq 2
	\end{equation*}
	yields the optimal constant claimed in \eqref{lower:bound:2}.
	
	Now, observe that
	\begin{equation*}
		\frac{\log\left(\frac{1-\delta M_\varepsilon}{\delta M_\varepsilon}\right)}{1-2\delta M_\varepsilon}
		=\frac{\exp\left(\frac{v^2-R^2}{\varepsilon^\tau}\right)+1}{\exp\left(\frac{v^2-R^2}{\varepsilon^\tau}\right)-1}
		\left(\frac{v^2-R^2}{\varepsilon^\tau}\right)
		\geq \max\left\{2,\left|\frac{v^2-R^2}{\varepsilon^\tau}\right|\right\},
	\end{equation*}
	where we employed the elementary inequality $\frac{e^z+1}{e^z-1}z\geq\max\{2,|z|\}$, for all $z\in\mathbb{R}$. Therefore, incorporating \eqref{lower:bound:2} in the definition \eqref{relative:entropy:0} of the relative entropy provides the estimate
	\begin{equation*}
		\delta^2\varepsilon^{2}\int_{\mathbb{R}^d\times\mathbb{R}^d}\left(g_\varepsilon\right)^2
		\max\left\{2,\varepsilon^{-\tau}\left|v^2-R^2\right|\right\}
		dx dv
		\leq H(f_\varepsilon|M_\varepsilon)(t),
	\end{equation*}
	which completes the proof of the proposition.
\end{proof}

It is apparent from the bound \eqref{entropy:bound:1} established in Proposition \ref{prop:relative:entropy} that the relative entropy bound \eqref{relative:entropy:1} does not yield a meaningful control of fluctuations in the case $\gamma>\tau$, whereas a choice of parameters $\gamma<\tau$ leads to a trivial asymptotic regime. Therefore, from now on, we are going to focus mainly on the asymptotic regime provided by the case $0<\gamma=\tau\leq 1$. Later on, we will further restrict our attention to the case $0<\gamma=\tau< 1$.

Thus, the next result explores the convergence of density fluctuations satisfying a uniform relative entropy bound in the case $0<\gamma=\tau\leq 1$, and provides a preliminary characterization of their limit points. The full characterization of these limit points will only be possible later, in Section \ref{section:relaxation_and_equilibria}, by exploiting the entropy dissipation bound.

\begin{prop}\label{prop:limit:characterization}
	Consider a family of density distributions $0\leq f_\varepsilon(t,x,v)\leq\delta^{-1}$, with $\varepsilon>0$, such that the relative entropy bound \eqref{relative:entropy:1} holds uniformly in $\varepsilon$, with some fixed parameter $\gamma>0$, where the Fermi--Dirac distribution $M_\varepsilon$ defined in \eqref{normalized:distribution} has a given parameter value $\tau>0$.
	Further suppose that
	\begin{equation*}
		0<\gamma=\tau\leq 1
	\end{equation*}
	and consider the density fluctuations $g_\varepsilon(t,x,v)$ defined in \eqref{epsilon:fluctuations}.
	
	Then, as $\varepsilon\to 0$:
	\begin{itemize}
		\item
		Any subsequence of fluctuations $g_\varepsilon$ is uniformly bounded in
		\begin{equation*}
			L^\infty\big(dt;L^1_\mathrm{loc}\big(dx;L^1\big((1+v^2)dv\big)\big)\big),
		\end{equation*}
		and weakly* relatively compact in the space of locally finite Radon measures
		\begin{equation*}
			\mathcal{M}_\mathrm{loc}\big(\mathbb{R}^+\times\mathbb{R}^d\times\mathbb{R}^d\big).
		\end{equation*}
		
		\item
		For any continuous function $\varphi(t,x,v)$ such that
		\begin{equation}\label{quadratic:bound:1}
			\frac{\varphi(t,x,v)}{1+v^2}\in L^\infty\big(\mathbb{R}^+\times\mathbb{R}^d\times\mathbb{R}^d\big)
		\end{equation}
		and any subsequence of fluctuations $g_\varepsilon$, the corresponding subsequence
		\begin{equation*}
			\int_0^\infty g_\varepsilon(t,x,r\sigma)\varphi(t,x,r\sigma)r^{d-1}dr,
		\end{equation*}
		with $(t,x,\sigma)\in\mathbb{R}^+\times\mathbb{R}^d\times\mathbb{S}^{d-1}$,
		is weakly relatively compact in the space of locally integrable functions
		\begin{equation*}
			L^1_\mathrm{loc}\big(\mathbb{R}^+\times\mathbb{R}^d\times\mathbb{S}^{d-1}\big).
		\end{equation*}
		
		\item
		If $\mu$ is a weak* limit point in $\mathcal{M}_\mathrm{loc}\big(\mathbb{R}^+\times\mathbb{R}^d\times\mathbb{R}^d\big)$ of the family of fluctuations $g_\varepsilon$, then it can be characterized as
		\begin{equation}\label{measure:characterization:1}
			\mu(t,x,v)=g\left(t,x,R\frac{v}{|v|}\right)dt\otimes dx\otimes \delta_{\partial B(0,R)}(v),
		\end{equation}
		where the density $g(t,x,\omega)$ belongs to $L^\infty\big(\mathbb{R}^+;L^2\big(\mathbb{R}^d\times\partial B(0,R)\big)\big)$ and satisfies, for almost every $t\geq 0$,
		\begin{equation}\label{square:integrable:0}
			\int_{\mathbb{R}^d\times\partial B(0,R)}g^2(t,x,\omega)dxd\omega \leq
			\frac{C^{\mathrm{in}}}{R\delta^2}.
		\end{equation}
		Furthermore, up to extraction of a subsequence, it holds that
		\begin{equation}\label{limit:fluctuations}
			\int_{\mathbb{R}^+\times\mathbb{R}^d\times\mathbb{R}^d}
			g_\varepsilon(t,x,v)\varphi(t,x,v)dtdxdv
			\to
			\int_{\mathbb{R}^+\times\mathbb{R}^d\times\partial B(0,R)}
			g(t,x,\omega)\varphi(t,x,\omega)dtdxd\omega,
		\end{equation}
			as $\varepsilon\to 0$, for any continuous function $\varphi(t,x,v)$ which is compactly supported in $(t,x)$ and satisfies the quadratic growth condition \eqref{quadratic:bound:1}.
	\end{itemize}
\end{prop}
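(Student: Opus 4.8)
The plan is to feed the three consequences of the relative entropy bound \eqref{relative:entropy:1} collected in Proposition \ref{prop:relative:entropy}---now specialized to $0<\gamma=\tau\le 1$---into the dilated energy variables $u=\frac{v^2-R^2}{\varepsilon^\tau}$, $\omega=R\frac v{|v|}$ of \eqref{spectrum:variables}, working throughout with the rescaled fluctuations $\widetilde g_\varepsilon$ of \eqref{dilated:fluctuations}, which obey the exact identity $g_\varepsilon(t,x,v)\,dv=\widetilde g_\varepsilon(t,x,u,\omega)\,du\,d\omega$ granted by \eqref{dilation:change:variable}. First I would record the elementary facts: since $\gamma=\tau$, estimate \eqref{entropy:bound:1} reads $g_\varepsilon=O(1)$ in $L^\infty(dt;L^1_{\mathrm{loc}}(dx;L^1(dv)))$ and \eqref{entropy:bound:2} reads $g_\varepsilon|v^2-R^2|=O(\varepsilon^\tau)$ in the same space, so writing $v^2=(v^2-R^2)+R^2$ upgrades this to a uniform bound in $L^\infty(dt;L^1_{\mathrm{loc}}(dx;L^1((1+v^2)dv)))$, hence to $L^1_{\mathrm{loc}}$-boundedness and weak-$*$ relative compactness in $\mathcal M_{\mathrm{loc}}$. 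Because $1+v^2\lesssim|v^2-R^2|$ for $|v|\ge 2R$, the same estimate \eqref{entropy:bound:2} yields uniform $v$-tightness of $g_\varepsilon(1+v^2)$ along any sequence $\varepsilon\to 0$ (the $O(\varepsilon^\tau)$ decay handles small $\varepsilon$, the remaining finitely many terms being individually tight), which lets one test weak-$*$ limits against continuous functions of quadratic growth \eqref{quadratic:bound:1}; and any weak-$*$ limit $\mu$ of $g_\varepsilon$ is carried by $\mathbb R^+\times\mathbb R^d\times\partial B(0,R)$, since a test function vanishing near $\{|v|=R\}$ is dominated by a constant multiple of $|g_\varepsilon|\,|v^2-R^2|\to 0$.

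Next comes the $L^2$-structure of the limit. In the $(u,\omega)$ variables bound \eqref{fluctuation:H:2} turns into a uniform bound of $\widetilde g_\varepsilon$ in $L^2_{\mathrm{loc}}(dt\,dx\,du\,d\omega)$ over bounded ranges of $u$, while \eqref{entropy:bound:1}--\eqref{entropy:bound:2} give $\int|\widetilde g_\varepsilon|(1+|u|)\,du\,d\omega\lesssim 1$; a diagonal extraction then yields $\widetilde g_\varepsilon\rightharpoonup\widetilde g$ weakly in $L^2_{\mathrm{loc}}$, and the $L^1(|u|\,du)$ control furnishes $u$-tightness. Passing to the limit in $\int g_\varepsilon\psi\,dv=\int\widetilde g_\varepsilon(t,x,u,\omega)\,\psi\big((R^2+\varepsilon^\tau u)^{1/2}\tfrac{\omega}{|\omega|}\big)\,du\,d\omega$, and using that the argument of $\psi$ converges locally uniformly to $R\tfrac{\omega}{|\omega|}$, identifies $\mu$ with the measure \eqref{measure:characterization:1} of density $g(t,x,\omega)=\int_{\mathbb R}\widetilde g(t,x,u,\omega)\,du$ on $\partial B(0,R)$, and simultaneously delivers \eqref{limit:fluctuations}. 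To obtain the sharp bound \eqref{square:integrable:0} I would revisit the entropy lower bound with the exact Hessian of $z\mapsto h(z,a)$ at $z=a=\delta M_\varepsilon$: after a truncation of fluctuations one has $h(\delta f_\varepsilon,\delta M_\varepsilon)\ge\big(1-o(1)\big)\frac{(\delta\varepsilon g_\varepsilon)^2}{2\,\delta M_\varepsilon(1-\delta M_\varepsilon)}=\big(1-o(1)\big)\,2\delta^2\varepsilon^2 g_\varepsilon^2\cosh^2(\tfrac u2)$, which after the change of variables and weak lower semicontinuity gives $\int\widetilde g^2\cosh^2(\tfrac u2)\,dx\,du\,d\omega\le\frac{C^{\mathrm{in}}}{4\delta^2 R}$ for a.e.\ $t$; since $\int_{\mathbb R}\cosh^{-2}(\tfrac u2)\,du=4$, the Cauchy--Schwarz inequality $g^2=\big(\int\widetilde g\,du\big)^2\le 4\int\widetilde g^2\cosh^2(\tfrac u2)\,du$ then yields \eqref{square:integrable:0}.

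For the radial averages of the second bullet, the change of variables rewrites $\int_0^\infty g_\varepsilon(t,x,r\sigma)\varphi(t,x,r\sigma)r^{d-1}\,dr=R^{d-1}\int_{\mathbb R}\widetilde g_\varepsilon(t,x,u,R\sigma)\,\varphi\big(t,x,(R^2+\varepsilon^\tau u)^{1/2}\sigma\big)\,du$; splitting at $|u|=\lambda$, the bulk part is bounded in $L^2_{\mathrm{loc}}(dt\,dx\,d\sigma)$ uniformly in $\varepsilon$---hence uniformly integrable---while the tail part has $L^1_{\mathrm{loc}}$-norm $O(1/\lambda)+O(\varepsilon^\tau)$ by the $L^1((1+|u|)du)$ bound. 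Together with the $L^1_{\mathrm{loc}}$-boundedness above, the Dunford--Pettis theorem gives weak relative compactness in $L^1_{\mathrm{loc}}(\mathbb R^+\times\mathbb R^d\times\mathbb S^{d-1})$, and the same decomposition identifies the limit as $R^{d-1}g(t,x,R\sigma)\varphi(t,x,R\sigma)$; integrating in $\sigma$ over $\mathbb S^{d-1}$ and in $(t,x)$ gives back \eqref{limit:fluctuations}.

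The hard part is the sharp weighted $L^2$ bound on $\widetilde g_\varepsilon$. The globally valid convexity estimate behind Lemma \ref{lemma:inequality:Young} produces only the suboptimal weight $u\coth(\tfrac u2)\asymp\max\{2,|u|\}$, whose reciprocal fails to be integrable in $u$ and therefore cannot localize the limiting density into $L^2$; recovering the sharp weight $\cosh^2(\tfrac u2)=\big(4\delta M_\varepsilon(1-\delta M_\varepsilon)\big)^{-1}$ demands a careful truncation-of-fluctuations argument---splitting $g_\varepsilon$ according to whether $|\delta\varepsilon g_\varepsilon|$ is small relative to $\delta M_\varepsilon(1-\delta M_\varepsilon)$, using the exact quadratic behaviour of $h$ on the small part and absorbing the large part through the a priori bound $|g_\varepsilon|\le(\delta\varepsilon)^{-1}$ together with \eqref{entropy:bound:1}--\eqref{entropy:bound:2}. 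A secondary technical burden is the bookkeeping in passing weak limits through the $\varepsilon$-dependent, degenerating change of variables \eqref{dilation:change:variable}, which is precisely where the uniform $v$- and $u$-tightness established above is used.
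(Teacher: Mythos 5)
The first two bullets and the identification of $\mu$ with a measure concentrated on $\partial B(0,R)$ are handled along essentially the same lines as the paper: you feed \eqref{entropy:bound:1}--\eqref{entropy:bound:2} into a local $L^1((1+v^2)dv)$ bound and a concentration argument, and use \eqref{fluctuation:H:2} for equi-integrability of the radial averages; the paper's proof uses the radial averages $G_\varepsilon$ and $\widetilde G_\varepsilon$ directly, whereas you phrase everything through the dilated fluctuations $\widetilde g_\varepsilon$, which is a legitimate reorganization (and essentially anticipates what the paper does later in Proposition \ref{prop:limit:characterization:dilated}).

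The genuine divergence, and the place where your argument has a gap, is the sharp $L^2$ bound \eqref{square:integrable:0}. The paper obtains it by duality: it tests $\delta\varepsilon g_\varepsilon$ against $\varepsilon^{1-\tau}\lambda\varphi$ in the Young inequality $zy \le h_1(z) + h_1^*(y)$ from Lemma \ref{lemma:inequality:Young}, passes to the limit in the exact Legendre dual $h_1^*$ via dominated convergence, and optimizes over $\lambda$. Your plan instead relies on the Hessian lower bound $h(\delta f_\varepsilon,\delta M_\varepsilon)\ge(1-o(1))\frac{(\delta\varepsilon g_\varepsilon)^2}{2\delta M_\varepsilon(1-\delta M_\varepsilon)}$ after truncating to the region $\{|\delta\varepsilon g_\varepsilon|\lesssim \eta\,\delta M_\varepsilon(1-\delta M_\varepsilon)\}$, then lower semicontinuity of the $L^2(\cosh^2(u/2)dud\omega dx)$ norm. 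The issue is that the globally valid constant in $h(z,a)\geq c(a)(z-a)^2$ is $c(a)=\frac{\log((1-a)/a)}{1-2a}$, which is strictly smaller than the Hessian constant $\frac{1}{2a(1-a)}$ for $a\neq\frac12$, so recovering the Hessian constant needs $|z-a|\ll a(1-a)$; and at the endpoint $\gamma=\tau=1$, which the statement explicitly includes, the fluctuation $\delta\varepsilon g_\varepsilon$ near the sphere is only $O(1)$ relative to $\delta M_\varepsilon(1-\delta M_\varepsilon)$ (one has $\varepsilon g_\varepsilon\sim\varepsilon^{1-\tau}=O(1)$ against $\delta M_\varepsilon(1-\delta M_\varepsilon)\sim\frac14$), so your truncation cuts off a non-negligible portion of the mass and cannot produce the sharp constant $\frac{C^{\mathrm{in}}}{R\delta^2}$. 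You would also need to show that the truncated-away part is negligible in the weak limit, but the available control $\int|g_\varepsilon^l|\,dxdv\lesssim\varepsilon^{1-\gamma}$ does not vanish at $\gamma=1$, and the concentrated tightness \eqref{concentrated:tightness} does not apply at $\gamma+\tau=2$ either.

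The paper's dual argument sidesteps this entirely: rather than linearizing $h$, it evaluates the limiting $\int h_1^*$ exactly, using the identity
\begin{equation*}
\int_{-\infty}^{\infty}\Big[\log\Big(1+\tfrac{e^y-1}{1+e^u}\Big)-\tfrac{y}{1+e^u}\Big]\,du
=\int_0^1\big[\log(1+a(e^y-1))-ay\big]\tfrac{da}{a(1-a)}=\tfrac{y^2}{2},
\end{equation*}
which makes the $\tau=1$ case work without any smallness of $\varepsilon^{1-\tau}$. If you want to preserve your Hessian-truncation route, you would have to restrict to $\gamma=\tau<1$ (which is what Theorem \ref{thm:main} ultimately requires, but the proposition as stated covers $\tau\le 1$, and the paper correctly proves the endpoint), or import an identity of the above kind rather than a local expansion.
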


\begin{proof}
	Note first that the uniform boundedness of the family of fluctuations $g_\varepsilon$ in
	\begin{equation*}
		L^\infty\big(dt;L^1_\mathrm{loc}\big(dx;L^1\big((1+v^2)dv\big)\big)\big)
		\subset \mathcal{M}_\mathrm{loc}(\mathbb{R}^+\times\mathbb{R}^d\times\mathbb{R}^d)
	\end{equation*}
	follows directly from the combination of \eqref{entropy:bound:1} and \eqref{entropy:bound:2}. In particular, we conclude that the fluctuations $g_\varepsilon$ are weakly* compact in the space of locally finite Radon measures, for the closed unit ball in $\mathcal{M}(K)$, for any compact domain $K\subset \mathbb{R}^+\times\mathbb{R}^d\times\mathbb{R}^d$, is compact in the weak* topology, by the Banach--Alaoglu Theorem. Further recall that compactness and sequential compactness are equivalent in the weak* topology of the unit ball of $\mathcal{M}(K)=C(K)'$, because $C(K)$ is separable.
	
	Next, we consider any continuous function $\varphi(t,x,v)$ with the quadratic growth condition \eqref{quadratic:bound:1} and write
	\begin{equation}\label{ray:1}
		G_\varepsilon(t,x,\sigma)=\int_0^\infty g_\varepsilon(t,x,r\sigma)\varphi(t,x,r\sigma)r^{d-1}dr.
	\end{equation}
	Observe that
	\begin{equation*}
		\int_{\mathbb{S}^{d-1}}\big|G_\varepsilon(t,x,\sigma)\big|d\sigma
		\leq \left\|\frac{\varphi(v)}{1+v^2}\right\|_{L^\infty}\left\|g_\varepsilon(t,x,v)\right\|_{L^1\left((1+v^2)dv\right)},
	\end{equation*}
	whereby $G_\varepsilon$ is locally integrable. Moreover, it can be decomposed into
	\begin{equation*}
		G_\varepsilon =G_\varepsilon^1+G_\varepsilon^2+G_\varepsilon^3,
	\end{equation*}
	where $G_\varepsilon^1$, $G_\varepsilon^2$ and $G_\varepsilon^3$ are obtained by restricting the domain of integration in \eqref{ray:1} to
	\begin{equation*}
		\{|r^2-R^2|<\lambda\varepsilon^\tau\},
		\quad
		\{\lambda\varepsilon^\tau\leq |r^2-R^2|<1\}
		\quad\text{and}\quad
		\{|r^2-R^2|\geq 1\},
	\end{equation*}
	respectively, for some $0<\lambda<\varepsilon^{-\tau}$.
	
	Then, we estimate that
	\begin{equation*}
		\begin{aligned}
			\int_{\mathbb{R}^d\times\mathbb{S}^{d-1}}\big|G_\varepsilon^1(t,x,\sigma)\big|^2dxd\sigma
			&\lesssim
			\int_{\mathbb{R}^d\times\mathbb{S}^{d-1}}
			\left(\int_{\sqrt{R^2-\lambda\varepsilon^\tau}}^{\sqrt{R^2+\lambda\varepsilon^\tau}} |g_\varepsilon(t,x,r\sigma)|dr\right)^2
			dxd\sigma
			\\
			&\lesssim\lambda\varepsilon^{\tau}
			\int_{\mathbb{R}^d\times\mathbb{S}^{d-1}}
			\left(\int_0^\infty |g_\varepsilon(t,x,r\sigma)|^2r^{d-1}dr\right)
			dxd\sigma
			\\
			&=\lambda\varepsilon^\tau\int_{\mathbb{R}^d\times\mathbb{R}^d}g_\varepsilon^2 dxdv\lesssim\lambda,
		\end{aligned}
	\end{equation*}
	 where we employed \eqref{fluctuation:H:2}. In particular, we deduce that $G^1_\varepsilon$ remains uniformly bounded in $L^\infty(dt;L^2(dxd\sigma))$, for any fixed value $\lambda$.
	
	Further notice, for any compact set $K\subset \mathbb{R}^d$, that \eqref{entropy:bound:2} yields that
	\begin{equation}\label{weak:limit:1}
		\begin{aligned}
			\int_{K\times\mathbb{S}^{d-1}}\big|G_\varepsilon^2(t,x,\sigma)\big| dxd\sigma
			&\lesssim
			\int_{K\times\{\left|v^2-R^2\right|\geq\lambda\varepsilon^\tau\}}
			|g_\varepsilon(t,x,v)|
			dxdv
			\\
			&\lesssim
			\int_{K\times\{\left|v^2-R^2\right|\geq\lambda\varepsilon^\tau\}}
			|g_\varepsilon(t,x,v)|\frac{|v^2-R^2|}{\lambda\varepsilon^\tau}
			dxdv
			\lesssim \lambda^{-1},
		\end{aligned}
	\end{equation}
	and
	\begin{equation}\label{weak:limit:2}
		\begin{aligned}
			\int_{K\times\mathbb{S}^{d-1}}\big|G_\varepsilon^3(t,x,\sigma)\big| dxd\sigma
			&\lesssim
			\int_{K\times\{\left|v^2-R^2\right|\geq 1\}}
			|g_\varepsilon(t,x,v)|(1+v^2)
			dxdv
			\\
			&\lesssim
			\int_{K\times\{\left|v^2-R^2\right|\geq 1\}}
			|g_\varepsilon(t,x,v)|\left|v^2-R^2\right|
			dxdv\lesssim \varepsilon^\tau,
		\end{aligned}
	\end{equation}
	as $\varepsilon\to 0$.
	
	All in all, for any compact set $K\subset\mathbb{R}^+\times\mathbb{R}^d\times\mathbb{S}^{d-1}$ and any measurable set $E\subset K$, we see that
	\begin{equation*}
		\int_E\big|G_\varepsilon(t,x,\sigma)\big|dtdxd\sigma
		\lesssim \lambda^\frac 12|E|^\frac 12+\lambda^{-1}+\varepsilon^\tau,
	\end{equation*}
	which implies that $G_\varepsilon$ is uniformly integrable (or equi-integrable) on any compact domain $K$. Therefore, by the Dunford--Pettis compactness criterion, we conclude that any subsequence $G_\varepsilon$ is weakly relatively compact in $L^1(K)$.

	We turn now to the characterization of weak* limit points $\mu\in\mathcal{M}_\mathrm{loc}(\mathbb{R}^+\times\mathbb{R}^d\times\mathbb{R}^d)$ of the family of fluctuations $g_\varepsilon$, as $\varepsilon\to 0$. To that end, up to extraction of a subsequence, we may assume that
	\begin{equation}\label{weak:limit:0}
		\begin{aligned}
			\int_{\mathbb{R}^+\times\mathbb{R}^d\times\mathbb{S}^{d-1}}G_\varepsilon(t,x,\sigma)dtdxd\sigma
			&=
			\int_{\mathbb{R}^+\times\mathbb{R}^d\times\mathbb{R}^d}g_\varepsilon(t,x,v)\varphi(t,x,v)dtdxdv
			\\
			&\to
			\int_{\mathbb{R}^+\times\mathbb{R}^d\times\mathbb{R}^d}\varphi(t,x,v)d\mu(t,x,v),
		\end{aligned}
	\end{equation}
	as $\varepsilon\to 0$,
	for all compactly supported continuous functions $\varphi\in C_c(\mathbb{R}^+\times\mathbb{R}^d\times\mathbb{R}^d)$.
	
	Next, we further introduce
	\begin{equation}\label{ray:2}
		\widetilde G_\varepsilon(t,x,\sigma)=\varphi(t,x,R\sigma)\int_0^\infty g_\varepsilon(t,x,r\sigma)\chi(r)r^{d-1}dr,
	\end{equation}
	for any continuous function $\varphi(t,x,v)$ which satisfies \eqref{quadratic:bound:1} and any continuous cutoff function
	\begin{equation*}
		\mathds{1}_{\{|r^2-R^2|< \frac 12\}}\leq\chi(r)\leq\mathds{1}_{\{|r^2-R^2|< 1\}},
	\end{equation*}
	and decompose
	\begin{equation*}
		\widetilde G_\varepsilon =\widetilde G_\varepsilon^1+\widetilde G_\varepsilon^2,
	\end{equation*}
	where $\widetilde G_\varepsilon^1$ and $\widetilde G_\varepsilon^2$ are obtained by restricting the domain of integration in \eqref{ray:2} to
	\begin{equation*}
		\{|r^2-R^2|<\lambda\varepsilon^\tau\}
		\quad\text{and}\quad
		\{\lambda\varepsilon^\tau\leq |r^2-R^2|\},
	\end{equation*}
	respectively, for some $0<\lambda<\frac 1{2\varepsilon^\tau}$.
	
	Now, for any continuous function $\varphi(t,x,v)$ which is compactly supported in $(t,x)$ and satisfies the quadratic growth condition \eqref{quadratic:bound:1}, we find that
	\begin{equation*}
		\begin{aligned}
			\int_{\mathbb{R}^+\times\mathbb{R}^d\times\mathbb{S}^{d-1}}
			&\left|G_\varepsilon^1(t,x,\sigma)
			-\widetilde G_\varepsilon^1(t,x,\sigma)\right|
			dtdxd\sigma
			\\
			&\lesssim
			\sup_{\{|v^2-R^2|<\lambda\varepsilon^\tau\}}
			\left|\varphi(t,x,v)-\varphi\left(t,x,R\frac{v}{|v|}\right)\right|
			\left\|g_\varepsilon\right\|_{L^1_\mathrm{loc}\left(dtdx;L^1(dv)\right)},
		\end{aligned}
	\end{equation*}
	which implies that
	\begin{equation}\label{strong:limit:1}
		G_\varepsilon^1-\widetilde G_\varepsilon^1
		\to 0, \quad\text{in }L^1(dtdxd\sigma),
	\end{equation}
	as $\varepsilon\to 0$, by uniform continuity of $\varphi$ on compact domains. Moreover, the remaining term $\widetilde G_\varepsilon^2$ satisfies an estimate similar to \eqref{weak:limit:1}.
	
	Therefore, by combining \eqref{weak:limit:1}, \eqref{weak:limit:2} and \eqref{strong:limit:1}, we conclude that
	\begin{equation*}
		G_\varepsilon-\widetilde G_\varepsilon
		\to 0, \quad\text{in }L^1(dtdxd\sigma),
	\end{equation*}
	whence, by \eqref{weak:limit:0},
	\begin{equation*}
		\int_{\mathbb{R}^+\times\mathbb{R}^d\times\mathbb{S}^{d-1}}\widetilde G_\varepsilon(t,x,\sigma)dtdxd\sigma
		\to
		\int_{\mathbb{R}^+\times\mathbb{R}^d\times\mathbb{R}^d}
		\varphi\left(t,x,R\frac v{|v|}\right)\chi(|v|)d\mu(t,x,v)
	\end{equation*}
	and
	\begin{equation*}
		\begin{aligned}
			\int_{\mathbb{R}^+\times\mathbb{R}^d\times\mathbb{S}^{d-1}}
			G_\varepsilon(t,x,\sigma)dtdxd\sigma
			\to&
			\int_{\mathbb{R}^+\times\mathbb{R}^d\times\mathbb{R}^d}
			\varphi\left(t,x,R\frac v{|v|}\right)\chi(|v|)d\mu(t,x,v)
			\\
			&=\int_{\mathbb{R}^+\times\mathbb{R}^d\times\mathbb{R}^d}\varphi(t,x,v)d\mu(t,x,v),
		\end{aligned}
	\end{equation*}
	as $\varepsilon\to 0$, for any continuous function $\varphi(t,x,v)$ which is compactly supported in $(t,x)$ and satisfies the quadratic growth condition \eqref{quadratic:bound:1}.
	
	Next, since, by the preceding steps, the corresponding subsequence
	\begin{equation*}
		\int_0^\infty g_\varepsilon(t,x,r\sigma)\chi(r)r^{d-1}dr
	\end{equation*}
	is weakly relatively compact in $L^1_\mathrm{loc}(\mathbb{R}^+\times\mathbb{R}^d\times\mathbb{S}^{d-1})$, we infer the existence of
	\begin{equation*}
		g(t,x,\omega)\in L^1_\mathrm{loc}(\mathbb{R}^+\times\mathbb{R}^d\times \partial B(0,R))
	\end{equation*}
	such that
	\begin{equation*}
		\begin{aligned}
			\int_{\mathbb{R}^+\times\mathbb{R}^d\times\mathbb{S}^{d-1}}\widetilde G_\varepsilon(t,x,\sigma)dtdxd\sigma
			\to &
			\int_{\mathbb{R}^+\times\mathbb{R}^d\times\mathbb{S}^{d-1}}
			\varphi\left(t,x,R\sigma\right)R^{d-1}g(t,x,R\sigma)dtdxd\sigma
			\\
			&=
			\int_{\mathbb{R}^+\times\mathbb{R}^d\times\partial B(0,R)}
			\varphi\left(t,x,\omega\right)g(t,x,\omega)dtdxd\omega.
		\end{aligned}
	\end{equation*}
	It therefore follows that
	\begin{equation*}
		\int_{\mathbb{R}^+\times\mathbb{R}^d\times\mathbb{R}^d}\varphi(t,x,v)d\mu(t,x,v)
		=
		\int_{\mathbb{R}^+\times\mathbb{R}^d\times\partial B(0,R)}
		\varphi\left(t,x,\omega\right)g(t,x,\omega)dtdxd\omega,
	\end{equation*}
	for all suitable test functions $\varphi(t,x,v)$, which establishes the characterization \eqref{measure:characterization:1} with a locally integrable density $g(t,x,\omega)$.
	
	There only remains to show that, in fact, the density $g(t,x,\omega)$ is square-integrable and satisfies the bound \eqref{square:integrable:0}.
	
	To that end, we set
	\begin{equation*}
		z=\delta\varepsilon g_\varepsilon,
		\quad
		a=\delta M_\varepsilon
		\quad\text{and}\quad
		y=\varepsilon^{1-\tau}\lambda \varphi\left(t,x,R\frac v{|v|}\right),
	\end{equation*}
	for some $\varphi(t,x,\omega)\in C_c(\mathbb{R}^+\times\mathbb{R}^d\times\partial B(0,R))$ and $\lambda>0$, in \eqref{inequality:Young:3}.
	Then, integrating the resulting inequality in $t$, $x$ and $v$, we obtain, for any $0\leq t_1\leq t_2$, that
	\begin{equation*}
		\begin{aligned}
			\delta\lambda\int_{[t_1,t_2]\times \mathbb{R}^d \times\mathbb{R}^d} & g_\varepsilon\varphi dtdxdv
			\\
			&\leq
			\frac 1{\varepsilon^{2-\tau}}\int_{[t_1,t_2]\times \mathbb{R}^d \times\mathbb{R}^d}h(\delta f_\varepsilon,\delta M_\varepsilon)dtdxdv
			\\
			&\quad +\frac 1{\varepsilon^{2-\tau}}\int_{[t_1,t_2]\times \mathbb{R}^d \times\mathbb{R}^d}\left[\log\big(1+\delta M_\varepsilon(e^{\varepsilon^{1-\tau}\lambda\varphi}-1)\big)-\delta M_\varepsilon\varepsilon^{1-\tau}\lambda\varphi\right]dtdxdv
			\\
			&\leq (t_2-t_1)C^\mathrm{in}+\int_{[t_1,t_2]\times \mathbb{R}^d\times\mathbb{S}^{d-1}} I_\varepsilon\big(\lambda \varphi(t,x,R\sigma)\big)
			dtdxd\sigma,
		\end{aligned}
	\end{equation*}
	where
	\begin{equation*}
		\begin{aligned}
			I_\varepsilon(y)&=\frac 1{\varepsilon^{2-\tau}}\int_0^\infty
			\left[\log\left(1+\frac{e^{\varepsilon^{1-\tau} y}-1}{1+e^{\frac{r^2-R^2}{\varepsilon^\tau}}}\right)
			-\frac{\varepsilon^{1-\tau}y}{1+e^{\frac{r^2-R^2}{\varepsilon^\tau}}}\right]
			r^{d-1}dr
			\\
			&=\frac 1{2\varepsilon^{2(1-\tau)}}
			\int_{-\frac {R^2}{\varepsilon^\tau}}^\infty
			\left[\log\left(1+\frac{e^{\varepsilon^{1-\tau} y}-1}{1+e^{u}}\right)
			-\frac{\varepsilon^{1-\tau} y}{1+e^{u}}\right]
			(R^2+\varepsilon^\tau u)^\frac{d-2}2du.
		\end{aligned}
	\end{equation*}
	Thus, letting $\varepsilon\to 0$ yields that
	\begin{equation}\label{limit:entropy:1}
		\begin{aligned}
			\delta\lambda \int_{[t_1,t_2]\times \mathbb{R}^d \times\partial B(0,R)} & g(t,x,\omega)\varphi(t,x,\omega) dtdxd\omega
			\\
			&\leq
			(t_2-t_1)C^\mathrm{in}
			+\liminf_{\varepsilon\to 0}\int_{[t_1,t_2]\times \mathbb{R}^d\times\mathbb{S}^{d-1}} I_\varepsilon\big(\lambda \varphi(t,x,R\sigma)\big)
			dtdxd\sigma,
		\end{aligned}
	\end{equation}
	by weak* convergence.
	
	Next, we recall, from \eqref{estimate:crude} and \eqref{inequality:Young:2}, the crude estimate
	\begin{equation*}
		\log(1+a(e^{\rho y}-1))-a\rho y\leq a(1-a)(e^{\rho y}+e^{-\rho y}-2)
		\leq a(1-a)\rho^2(e^{ y}+e^{- y}-2),
	\end{equation*}
	for all $a\in[0,1]$, $y\in\mathbb{R}$ and $\rho\in [0,1]$. It then follows, for any $u\geq-\frac {R^2}{\varepsilon^\tau}$ and $0<\varepsilon\leq 1$, that
	\begin{equation}\label{dominated:estimate}
		\frac 1{\varepsilon^{2(1-\tau)}}\left[\log\left(1+\frac{e^{\varepsilon^{1-\tau} y}-1}{1+e^{u}}\right)
		-\frac{\varepsilon^{1-\tau} y}{1+e^{u}}\right]
		\leq
		\frac{2e^{u}}{(1+e^{u})^2}
		(e^{|y|}-1)
		\leq 2e^{-|u|}(e^{|y|}-1),
	\end{equation}
	which implies that the integrand of $I_\varepsilon\big(\lambda \varphi(t,x,R\sigma)\big)$ is dominated by the integrable function
	\begin{equation*}
		e^{-|u|}(R^2+\varepsilon^\tau|u|)^{\frac{d-2}2}\left(e^{\lambda \left|\varphi(t,x,R\sigma)\right|}-1\right).
	\end{equation*}
	Therefore, evaluating that
	\begin{equation}\label{special:limit}
		\lim_{\rho\to 0}
		\frac{\log\left(1+a(e^{\rho y}-1)\right)
		-a\rho y}{\rho^2}
		=a(1-a)\frac{y^2}2,
	\end{equation}
	for any $a\in [0,1]$ and $y\in\mathbb{R}$, we conclude, in the case $0<\tau<1$, by the Dominated Convergence Theorem, that
	\begin{equation*}
		\begin{aligned}
			&\lim_{\varepsilon\to 0}
			\int_{[t_1,t_2]\times \mathbb{R}^d\times\mathbb{S}^{d-1}} I_\varepsilon\big(\lambda \varphi(t,x,R\sigma)\big)
			dtdxd\sigma
			\\
			&=\int_{[t_1,t_2]\times \mathbb{R}^d\times\mathbb{S}^{d-1}}
			\frac{R^{d-2}}{4}\lambda^2\varphi(t,x,R\sigma)^2\left(\int_{-\infty}^\infty
			\frac{e^u}{(1+e^u)^2}du\right)
			dtdxd\sigma
			\\&=
			\frac{\lambda^2}{4R}\int_{[t_1,t_2]\times \mathbb{R}^d\times\partial B(0,R)}
			\varphi(t,x,\omega)^2dtdxd\omega.
		\end{aligned}
	\end{equation*}
	As for the case $\tau=1$, the Dominated Convergence Theorem yields that
	\begin{equation*}
		\begin{aligned}
			&\lim_{\varepsilon\to 0}\int_{[t_1,t_2]\times \mathbb{R}^d\times\mathbb{S}^{d-1}} I_\varepsilon\big(\lambda \varphi(t,x,R\sigma)\big)
			dtdxd\sigma
			\\&=
			(2R)^{-1}\int_{[t_1,t_2]\times \mathbb{R}^d\times\partial B(0,R)}
			\left(\int_{-\infty}^\infty
			\left[\log\left(1+\frac{e^{\lambda \varphi(t,x,\omega)}-1}{1+e^{u}}\right)
			-\frac{\lambda \varphi(t,x,\omega)}{1+e^{u}}\right]
			du\right)dtdxd\omega
			\\&=
			\frac{\lambda^2}{4R}\int_{[t_1,t_2]\times \mathbb{R}^d\times\partial B(0,R)}
			\varphi(t,x,\omega)^2dtdxd\omega,
		\end{aligned}
	\end{equation*}
	where we employed the remarkable identity
	\begin{equation*}
		\int_{-\infty}^\infty
		\left[\log\left(1+\frac{e^y-1}{1+e^{u}}\right)
		-\frac{y}{1+e^{u}}\right]
		du=
		\int_0^1\left[\log(1+a(e^y-1))-ay\right]\frac{da}{a(1-a)}
		=\frac{y^2}2,
	\end{equation*}
	for all $y\in\mathbb{R}$, which readily follows from the observation that
	\begin{equation*}
		\frac{d}{dy}\int_0^1\left[\log(1+a(e^y-1))-ay\right]\frac{da}{a(1-a)}
		=
		\int_0^1\frac{e^y-1}{1+a(e^y-1)}da
		=y.
	\end{equation*}
	(Note that the differentiation under the integral sign is easily justified by the continuous differentiability of the integrand.)
	
	All in all, incorporating the previous limits into \eqref{limit:entropy:1}, we conclude that
	\begin{equation*}
		\begin{aligned}
			\delta \int_{[t_1,t_2]\times \mathbb{R}^d \times\partial B(0,R)} & g(t,x,\omega)\varphi(t,x,\omega) dtdxd\omega
			\\
			&\leq
			\lambda^{-1}(t_2-t_1)C^\mathrm{in}
			+\frac{\lambda}{4R}\int_{[t_1,t_2]\times \mathbb{R}^d\times\partial B(0,R)}
			\varphi(t,x,\omega)^2dtdxd\omega,
		\end{aligned}
	\end{equation*}
	which yields, by optimizing the value of $\lambda>0$, that
	\begin{equation*}
		\delta \int_{[t_1,t_2]\times \mathbb{R}^d \times\partial B(0,R)}
		g(t,x,\omega)\varphi(t,x,\omega) dtdxd\omega
		\leq
		\left(\frac{(t_2-t_1)C^\mathrm{in}}{R}\right)^\frac 12
		\|\varphi\|_{L^2(dtdxd\omega)}.
	\end{equation*}
	Finally, taking the supremum over all $\varphi$, we conclude that
	\begin{equation*}
		\int_{[t_1,t_2]\times \mathbb{R}^d \times\partial B(0,R)}
		g^2(t,x,\omega) dtdxd\omega
		\leq
		\frac{(t_2-t_1)C^\mathrm{in}}{R\delta^2},
	\end{equation*}
	which, by the arbitrariness of $t_1$ and $t_2$, gives \eqref{square:integrable:0} and completes the proof of the proposition.
\end{proof}

The preceding proposition established an interesting asymptotic phenomenon which differs from the usual behavior of solutions to the Boltzmann equation in classical settings of hydrodynamic limits. Specifically, Proposition \ref{prop:limit:characterization} shows that the collection of fluctuations $g_\varepsilon$ is uniformly bounded in $L^1$ in $x$ and $v$, while the limiting density $\mu$ in \eqref{measure:characterization:1} is square-integrable in $x$, but not in $v$. In fact, the density $\mu$ is a singular measure in velocity.

In the classical setting previously studied rigorously in \cite[Section 3]{bgl93}, it can be shown that the limiting fluctuations are square-integrable in both variables $x$ and $v$ (see Proposition 3.1, therein). Moreover, by exploiting the control of the entropy dissipation, one can further establish that the limiting fluctuations even take the form of a smooth infinitesimal Maxwellian distribution in $v$ (see Proposition 3.8, therein).

This is considerably different in the quantum setting provided by Fermi grounds states. Indeed, in this case, since, as shown in Proposition \ref{prop:limit:characterization}, fluctuations tend to concentrate on the sphere $\{|v|=R\}$, we see that limiting densities can never exhibit an asymptotically smooth behavior in their velocity variable.

It is to be emphasized that classical results on hydrodynamic limits of Boltzmann equations typically do rely on the asymptotic velocity smoothness of fluctuations. Thus, for example, major progress in the study of the incompressible Navier--Stokes regime of the Boltzmann equation was achieved in \cite{gsr04}, where it was noticed that some uniform integrability of fluctuations in velocity could be transferred to the other variables (see Section 3.2, therein). In fact, this strategy of transfer of uniform integrability is now a cornerstone of proofs of hydrodynamic limits in the incompressible Navier--Stokes regime. For instance, it is featured in \cite[Chapter 7]{asr19}, where it is discussed in detail. The velocity regularity of fluctuations also plays a central role in \cite{b15}, and subsequent related works in smooth settings near equilibria, where the incompressible Navier--Stokes regime of the Boltzmann equation is studied in a high regularity setting.

All in all, it is apparent that the singularity of quantum effects near Fermi ground states brings a new dimension to the study of hydrodynamic limits and the promise of fascinating mathematical challenges.

\subsection{Optimality of constants}

It is possible to show that the constants in \eqref{square:integrable:0} are the best possible ones. Indeed, let us suppose, under the assumptions of Proposition \ref{prop:limit:characterization}, that any limiting density $g(t,x,\sigma)$, as given in \eqref{measure:characterization:1}, satisfies that
\begin{equation}\label{square:integrable:1}
	\int_{\mathbb{R}^d\times\partial B(0,R)}g^2(t,x,\omega)dxd\omega \leq
	\lambda C^\mathrm{in},
\end{equation}
for some independent constant $\lambda>0$. Our goal is to show that $\delta^2 R \lambda\geq 1$.

We treat the case $\tau=1$ first. To that end, we consider the family of fluctuations
\begin{equation*}
	g_\varepsilon(t,x,v)=\delta^{-1}\varepsilon^{-1}
	\frac {e^{\frac{v^2-R^2}{\varepsilon}}}{1+e^{\frac{v^2-R^2}{\varepsilon}}}
	\mathds{1}_{\{x\in K,\ |v^2-R^2|\leq A\varepsilon\}},
\end{equation*}
where $A>0$ is fixed with respect to $\varepsilon$ and $K\subset\mathbb{R}^d$ is an arbitrary compact set. Then, we verify that
\begin{equation*}
	g_\varepsilon\rightharpoonup^*
	g\left(t,x,R\frac{v}{|v|}\right)dt\otimes dx\otimes \delta_{\partial B(0,R)}(v),
\end{equation*}
as $\varepsilon\to 0$, where
\begin{equation*}
	g=\frac{A}{2\delta R}\mathds{1}_{K}(x).
\end{equation*}
In particular, we compute that
\begin{equation*}
	\int_{\mathbb{R}^d\times\partial B(0,R)}g^2(t,x,\omega)dxd\omega=
	\frac {A^2}{4\delta^2} R^{d-3}|K\times \mathbb{S}^{d-1}|.
\end{equation*}

Then, writing $f_\varepsilon=M_\varepsilon+\varepsilon g_\varepsilon$ and noticing that $\delta f_\varepsilon\equiv 1$ on $\{x\in K,\ |v^2-R^2|\leq A\varepsilon\}$, we see that
\begin{equation*}
	\begin{aligned}
		\frac 1\varepsilon H\big(\delta f_\varepsilon|\delta M_\varepsilon\big)
		&=\frac 1\varepsilon\int_{K\times \{|v^2-R^2|\leq A\varepsilon\}}
		h(1,\delta M_\varepsilon)dx dv
		=\frac 1\varepsilon\int_{K\times \{|v^2-R^2|\leq A\varepsilon\}}
		|\log(\delta M_\varepsilon)|dx dv
		\\
		&=\frac{|K\times \mathbb{S}^{d-1}|}\varepsilon
		\int_{\sqrt{R^2-A\varepsilon}}^{\sqrt{R^2+A\varepsilon}} \log \left(1+e^{\frac{r^2-R^2}{\varepsilon}}\right) r^{d-1}dr
		\\
		&=\frac{|K\times \mathbb{S}^{d-1}|}2
		\int_{-A}^{A} \log\left(1+e^{u}\right) (R^2+\varepsilon u)^\frac{d-2}2du
		\\
		&\stackrel{\varepsilon\to 0}{\longrightarrow}
		|K\times \mathbb{S}^{d-1}|\frac{R^{d-2}}{2}
		\int_{-A}^{A}
		\log(1+e^{u})du.
	\end{aligned}
\end{equation*}
Therefore, applying \eqref{square:integrable:1} to the family of fluctuations $g_\varepsilon$ yields that
\begin{equation*}
	\frac {A^2}{4\delta^2} R^{d-3}|K\times \mathbb{S}^{d-1}|
	\leq \lambda
	|K\times \mathbb{S}^{d-1}|\frac{R^{d-2}}{2}
	\int_{-A}^{A}
	\log(1+e^{u})du,
\end{equation*}
which, upon noticing that
\begin{equation*}
	\lim_{A\to\infty}A^{-2}\int_{-A}^{A}
	\log(1+e^{u})du=\frac 12,
\end{equation*}
gives that $\delta^2 R\lambda\geq 1$.
This establishes the optimality of \eqref{square:integrable:0}, in the case $\tau=1$.

Moving on to the case $0<\tau<1$, we consider another family of fluctuations
\begin{equation*}
	g_\varepsilon(t,x,v)=\delta^{-1}\varepsilon^{-\tau}
	Q\left(\frac{v^2-R^2}{\varepsilon^\tau}\right)
	\mathds{1}_{\{x\in K,\ |v^2-R^2|\leq A\varepsilon^\tau\}},
\end{equation*}
where $Q(u)$ is a continuous profile to be determined later on, $A>0$ is fixed with respect to $\varepsilon$ and $K\subset\mathbb{R}^d$ is an arbitrary compact set. We can now verify that
\begin{equation*}
	g_\varepsilon\rightharpoonup^*
	g\left(t,x,R\frac{v}{|v|}\right)dt\otimes dx\otimes \delta_{\partial B(0,R)}(v),
\end{equation*}
as $\varepsilon\to 0$, where
\begin{equation*}
	g=\frac{\int_{-A}^AQ(u)du}{2\delta R}\mathds{1}_{K}(x),
\end{equation*}
and
\begin{equation*}
	\int_{\mathbb{R}^d\times\partial B(0,R)}g^2(t,x,\omega)dxd\omega=
	\frac {R^{d-3}}{4\delta^2}|K\times \mathbb{S}^{d-1}|
	\left(\int_{-A}^AQ(u)du\right)^2.
\end{equation*}

Then, observing that
\begin{equation*}
	\lim_{z\to 0}\frac{h(a+z,a)}{z^2}=\frac 1{2a(1-a)},
\end{equation*}
for any $a\in (0,1)$, we evaluate that
\begin{equation*}
	\begin{aligned}
		\frac 1{\varepsilon^{2-\tau}} H\big(\delta f_\varepsilon|\delta M_\varepsilon\big)
		&=\frac 1{\varepsilon^{2-\tau}}
		\int_{K\times \{|v^2-R^2|\leq A\varepsilon^\tau\}}
		h\left(\delta M_\varepsilon+\varepsilon^{1-\tau}
		Q\left(\frac{v^2-R^2}{\varepsilon^\tau}\right),
		\delta M_\varepsilon\right)dx dv
		\\
		&=\frac {|K\times \mathbb{S}^{d-1}|}{2\varepsilon^{2(1-\tau)}}
		\int_{-A}^A
		h\left(\frac{1}{1+e^u}+\varepsilon^{1-\tau}Q(u),
		\frac{1}{1+e^u}\right)(R^2+\varepsilon^\tau u)^\frac{d-2}2 du
		\\
		&\stackrel{\varepsilon\to 0}{\longrightarrow}
		|K\times \mathbb{S}^{d-1}|\frac{R^{d-2}}{4}
		\int_{-A}^{A}
		\frac{(1+e^u)^2}{e^u}Q(u)^2 du.
	\end{aligned}
\end{equation*}
This time, applying \eqref{square:integrable:1} to the family of fluctuations $g_\varepsilon$ yields that
\begin{equation*}
	\frac {R^{d-3}}{4\delta^2}|K\times \mathbb{S}^{d-1}|
	\left(\int_{-A}^AQ(u)du\right)^2
	\leq\lambda
	|K\times \mathbb{S}^{d-1}|\frac{R^{d-2}}{4}
	\int_{-A}^{A}
	\frac{(1+e^u)^2}{e^u}Q(u)^2 du,
\end{equation*}
which, upon setting $Q(u)=\frac{e^u}{(1+e^u)^2}$ and letting $A\to\infty$, leads to
\begin{equation*}
	\int_{-\infty}^\infty\frac{e^u}{(1+e^u)^2}du
	\leq\lambda
	\delta^2 R.
\end{equation*}
Finally, evaluating that
\begin{equation*}
	\int_{-\infty}^\infty\frac{e^u}{(1+e^u)^2}du=1
\end{equation*}
shows that $\lambda \delta^2 R\geq 1$. This completes the justification of the optimality of \eqref{square:integrable:0} in the remaining range $0<\tau<1$.

\subsection{Renormalized fluctuations}

As a way of refining our understanding of the asymptotic square-integrability of densities, we introduce now the renormalized fluctuations $\phi_\varepsilon(t,x,v)$ and $\psi_\varepsilon(t,x,v)$ of order $\varepsilon$ defined by
\begin{equation}\label{epsilon:renormalized:fluctuations}
	\sqrt{\delta f_\varepsilon}=\sqrt{\delta M_\varepsilon}\left(1+\frac{\delta\varepsilon}2 \phi_\varepsilon\right)
	\qquad\text{and}\qquad
	\sqrt{1-\delta f_\varepsilon}=\sqrt{1-\delta M_\varepsilon}\left(1-\frac{\delta\varepsilon}2 \psi_\varepsilon\right).
\end{equation}
The next result establishes simple uniform bounds on $\phi_\varepsilon$ and $\psi_\varepsilon$, thereby showing the significance of these renormalized fluctuations. Similar square-root-renormalization techniques are often employed in the study of hydrodynamic limits and we refer to \cite[Chapter 5]{asr19} for an example of the use of square-root renormalizations in a classical setting.

\begin{lem}\label{lemma:squareroot:relative:entropy}
	Consider a family of density distributions $0\leq f_\varepsilon(t,x,v)\leq\delta^{-1}$, with $\varepsilon>0$, such that the relative entropy bound \eqref{relative:entropy:1} holds uniformly in $\varepsilon$, with some fixed parameter $\gamma>0$, where the Fermi--Dirac distribution $M_\varepsilon$ defined in \eqref{normalized:distribution} has a given parameter value $\tau>0$.
	Further consider the density fluctuations $\phi_\varepsilon(t,x,v)$ and $\psi_\varepsilon(t,x,v)$ defined in \eqref{epsilon:renormalized:fluctuations}.
	
	Then, as $\varepsilon\to 0$, any subsequences of renormalized fluctuations $\phi_\varepsilon$ and $\psi_\varepsilon$ satisfy the bounds
	\begin{equation*}
		\phi_\varepsilon=O(\varepsilon^{-\frac\gamma 2})_{L^\infty(dt;L^2(M_\varepsilon dxdv))}
	\end{equation*}
	and
	\begin{equation*}
		\psi_\varepsilon=O(\varepsilon^{-\frac\gamma 2})_{L^\infty(dt;L^2((1-\delta M_\varepsilon) dxdv))}.
	\end{equation*}
\end{lem}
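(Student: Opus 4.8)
The plan is to reduce both bounds directly to the relative entropy control \eqref{relative:entropy:1}, via an elementary pointwise convexity inequality; no compactness or asymptotic analysis will be involved.

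First I would put the renormalized fluctuations into closed form. Solving \eqref{epsilon:renormalized:fluctuations} yields
\begin{equation*}
	\frac{\delta\varepsilon}{2}\phi_\varepsilon=\sqrt{\frac{\delta f_\varepsilon}{\delta M_\varepsilon}}-1
	\qquad\text{and}\qquad
	\frac{\delta\varepsilon}{2}\psi_\varepsilon=1-\sqrt{\frac{1-\delta f_\varepsilon}{1-\delta M_\varepsilon}},
\end{equation*}
so that, after multiplying by the natural weights,
\begin{equation*}
	\frac{\delta^3\varepsilon^2}{4}\phi_\varepsilon^2 M_\varepsilon=\big(\sqrt{\delta f_\varepsilon}-\sqrt{\delta M_\varepsilon}\big)^2
	\qquad\text{and}\qquad
	\frac{\delta^2\varepsilon^2}{4}\psi_\varepsilon^2(1-\delta M_\varepsilon)=\big(\sqrt{1-\delta f_\varepsilon}-\sqrt{1-\delta M_\varepsilon}\big)^2.
\end{equation*}
Hence it suffices to dominate the sum of these two Hellinger-type quantities by a multiple of the relative entropy integrand $h(\delta f_\varepsilon,\delta M_\varepsilon)$.

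The crux is therefore the pointwise inequality
\begin{equation*}
	h(z,a)\geq\big(\sqrt z-\sqrt a\big)^2+\big(\sqrt{1-z}-\sqrt{1-a}\big)^2,
	\qquad a\in(0,1),\ z\in[0,1],
\end{equation*}
which I would establish by splitting $h$ into a sum of two classical relative entropies: since the affine parts cancel, one has $h(z,a)=a\,\phi_0(z/a)+(1-a)\,\phi_0\big((1-z)/(1-a)\big)$ with $\phi_0(x)=x\log x-x+1$, and the elementary bound $\phi_0(x)\geq(\sqrt x-1)^2$, equivalent under $x=t^2$ to $\phi_0(t)\geq 0$, then yields the claim upon recombining. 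The only point requiring a little care is the behaviour at the saturation values $\delta f_\varepsilon\in\{0,1\}$, where one term of $h$ degenerates but the inequality survives by continuity.

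Integrating this pointwise inequality in $(x,v)$ and invoking \eqref{relative:entropy:1} then gives, for almost every $t\geq 0$,
\begin{equation*}
	\frac{\delta^3\varepsilon^2}{4}\big\|\phi_\varepsilon(t)\big\|_{L^2(M_\varepsilon dxdv)}^2
	+\frac{\delta^2\varepsilon^2}{4}\big\|\psi_\varepsilon(t)\big\|_{L^2((1-\delta M_\varepsilon)dxdv)}^2
	\leq H(f_\varepsilon|M_\varepsilon)(t)\leq C^{\mathrm{in}}\varepsilon^{2-\gamma},
\end{equation*}
so that $\phi_\varepsilon=O(\varepsilon^{-\gamma/2})$ in $L^\infty(dt;L^2(M_\varepsilon dxdv))$ and $\psi_\varepsilon=O(\varepsilon^{-\gamma/2})$ in $L^\infty(dt;L^2((1-\delta M_\varepsilon)dxdv))$, as claimed. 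I do not anticipate any genuine obstacle here: the argument is short, and its only substantive ingredient is the convexity inequality for $\phi_0$, exactly in the spirit of the square-root renormalizations recalled before the statement.
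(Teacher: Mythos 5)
Your proof is correct and follows essentially the same route as the paper's: you split $h(z,a)$ into $a\,h_0(z/a)+(1-a)\,h_0\bigl((1-z)/(1-a)\bigr)$ with $h_0(y)=y\log y-y+1$, apply the elementary bound $h_0(y)\geq(\sqrt y-1)^2$ to get the Hellinger-type lower bound on $h$, and then integrate and invoke \eqref{relative:entropy:1}. The only difference is cosmetic: the paper cites a reference for the bound $h_0(y)\geq(\sqrt y-1)^2$, whereas you sketch a direct justification via the substitution $y=t^2$.
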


\begin{proof}
	The justification of the bounds on $\phi_\varepsilon$ and $\psi_\varepsilon$ will follow from another careful analysis of the function $h(z,a)$ given in \eqref{entropy:function}. More precisely, we consider now the decomposition
	\begin{equation*}
		h(z,a)=ah_0\left(\frac za\right)+(1-a)h_0\left(\frac{1-z}{1-a}\right),
	\end{equation*}
	where the nonnegative function
	\begin{equation*}
		h_0(y)=y\log y-y+1
	\end{equation*}
	is defined for all $y\in[0,\infty)$.
	
	Next, we employ the elementary inequality (see \cite[Appendix B]{asr19} for a brief justification)
	\begin{equation*}
		h_0(y)\geq \left(\sqrt y - 1\right)^2
	\end{equation*}
	to deduce that
	\begin{equation*}
		h(z,a)\geq \left(\sqrt z-\sqrt a\right)^2+\left(\sqrt {1-z}-\sqrt {1-a}\right)^2.
	\end{equation*}
	It then follows that
	\begin{equation*}
		\begin{aligned}
			H(f_\varepsilon|M_\varepsilon)
			&=\int_{\mathbb{R}^d\times\mathbb{R}^d}h(\delta f_\varepsilon,\delta M_\varepsilon)dx dv
			\\
			&\geq
			\int_{\mathbb{R}^d\times\mathbb{R}^d}(\sqrt{\delta f_\varepsilon}-\sqrt{\delta M_\varepsilon})^2
			+(\sqrt{1-\delta f_\varepsilon}-\sqrt{1-\delta M_\varepsilon})^2dx dv.
		\end{aligned}
	\end{equation*}
	In particular, in view of the relative entropy bound \eqref{relative:entropy:1}, we have now reached the uniform estimate
	\begin{equation*}
		\frac{\delta^2\varepsilon^\gamma}{4}
		\int_{\mathbb{R}^d\times\mathbb{R}^d}
		\delta M_\varepsilon \phi_\varepsilon^2
		+ (1-\delta M_\varepsilon) \psi_\varepsilon^2
		dx dv
		\leq \frac 1{\varepsilon^{2-\gamma}}H(f_\varepsilon|M_\varepsilon)
		\leq C^\mathrm{in},
	\end{equation*}
	for almost all $t\geq 0$, which completes the proof of the lemma.
\end{proof}

By combining \eqref{epsilon:fluctuations} with \eqref{epsilon:renormalized:fluctuations}, observe that
\begin{equation*}
	g_\varepsilon=\delta M_\varepsilon\phi_\varepsilon+\frac{\delta\varepsilon}4 \delta M_\varepsilon\phi_\varepsilon^2
	=\left(1-\delta M_\varepsilon\right)\psi_\varepsilon-\frac{\delta\varepsilon}4 \left(1-\delta M_\varepsilon\right)\psi_\varepsilon^2
\end{equation*}
and, therefore,
\begin{equation}\label{renormalized:decomposition}
	\begin{aligned}
		g_\varepsilon
		&=\left(1-\delta M_\varepsilon\right)g_\varepsilon + \delta M_\varepsilon g_\varepsilon
		\\
		&=\delta M_\varepsilon\left(1-\delta M_\varepsilon\right)
		\left(\phi_\varepsilon+\psi_\varepsilon\right)
		+\frac{\delta\varepsilon}4 \delta M_\varepsilon\left(1-\delta M_\varepsilon\right)
		\left(\phi_\varepsilon^2-\psi_\varepsilon^2\right).
	\end{aligned}
\end{equation}
Moreover, employing that
\begin{equation}\label{basic:asymptotics}
	\begin{aligned}
		\int_{\mathbb{R}^d}\big(\delta M_\varepsilon(1-\delta M_\varepsilon)\big)^\alpha dv
		&=|\mathbb{S}^{d-1}|\int_0^\infty \frac{e^{\alpha\frac{r^2-R^2}{\varepsilon^\tau}}r^{d-1}}{\left(1+e^{\frac{r^2-R^2}{\varepsilon^\tau}}\right)^{2\alpha}}dr
		\\
		&=\frac{\varepsilon^\tau|\mathbb{S}^{d-1}|}{2^{1+2\alpha}}\int_{-\frac{R^2}{\varepsilon^\tau}}^\infty \frac{(R^2+\varepsilon^\tau u)^\frac{d-2}2}{\cosh^{2\alpha}\left(\frac u2\right)}du
		=O(\varepsilon^{\tau}),
	\end{aligned}
\end{equation}
for any $\alpha>0$, to deduce that
\begin{equation*}
	\delta M_\varepsilon\left(1-\delta M_\varepsilon\right)
	=O(\varepsilon^\frac\tau p)_{L^p(dv)},
\end{equation*}
for any $0< p\leq \infty$, we infer, by virtue of Lemma \ref{lemma:squareroot:relative:entropy}, that
\begin{equation}\label{bound:extra}
	\begin{aligned}
		\Big\|(\delta M_\varepsilon(1-\delta M_\varepsilon))&^{\frac 12+\alpha}
		\left(\phi_\varepsilon+\psi_\varepsilon\right)\Big\|_{L^\infty (dt;L^2(dx;L^q(dv)))}
		\\
		&\leq
		\|(\delta M_\varepsilon\left(1-\delta M_\varepsilon\right))^\alpha
		\|_{L^{\frac{2q}{2-q}}(dv)}
		\left\|\phi_\varepsilon+\psi_\varepsilon\right\|_{L^\infty (dt;L^2(\delta M_\varepsilon\left(1-\delta M_\varepsilon\right)dxdv))}
		\\
		&\lesssim \varepsilon^{\tau\left(\frac 1q-\frac 12\right)-\frac\gamma 2},
	\end{aligned}
\end{equation}
for all $1\leq q\leq 2$ and $\alpha> 0$, or $q=2$ and $\alpha=0$.

Therefore, we conclude that the preceding decomposition implies the asymptotic control
\begin{equation}\label{asymptotic:control}
	\begin{aligned}
		g_\varepsilon&= \sqrt{\delta M_\varepsilon(1-\delta M_\varepsilon)}
		O(\varepsilon^{-\frac{\gamma}2})_{L^\infty (dt;L^2(dxdv))}
		+O(\varepsilon^{1-\gamma})_{L^\infty (dt;L^1(dxdv))}
		\\
		&= O(\varepsilon^{\frac{\tau-\gamma}2})_{L^\infty (dt;L^2(dx;L^1(dv)))}
		+O(\varepsilon^{1-\gamma})_{L^\infty (dt;L^1(dxdv))},
	\end{aligned}
\end{equation}
for all $\gamma>0$ and $\tau>0$, as a direct consequence of the relative entropy bound \eqref{relative:entropy:1}.

Thus, in the case $0<\gamma=\tau<1$, which is of interest to us, we see now that the fluctuations $g_\varepsilon$ are uniformly bounded in $L^\infty (dt;L^2(dx;L^1(dv)))$ up to a remainder of order $\varepsilon^{1-\gamma}$ in $L^\infty (dt;L^1(dxdv))$. Moreover, by virtue of Proposition \ref{prop:limit:characterization}, if $\mu$ is a weak* limit point in $\mathcal{M}_\mathrm{loc}$ of a converging subsequence of fluctuations $g_\varepsilon$, then
\begin{equation*}
	\delta M_\varepsilon\phi_\varepsilon,
	\qquad
	\left(1-\delta M_\varepsilon\right)\psi_\varepsilon
	\qquad\text{and}\qquad
	\delta M_\varepsilon\left(1-\delta M_\varepsilon\right)\left(\phi_\varepsilon+\psi_\varepsilon\right)
\end{equation*}
also converge toward $\mu$, at least in the weak* topology of locally finite Radon measures.

\bigskip

The following corollary extends the bounds from the previous lemma to other $L^p$ spaces, by exploiting the fact that fluctuations are bounded pointwise, but not uniformly so.

\begin{cor}\label{cor:squareroot:relative:entropy}
	Under the same assumptions as Lemma \ref{lemma:squareroot:relative:entropy}, any subsequences of renormalized fluctuations $\phi_\varepsilon$ and $\psi_\varepsilon$ satisfy the bounds
	\begin{equation*}
		\sqrt{\delta M_\varepsilon}\phi_\varepsilon
		=O(\varepsilon^{\frac {2-\gamma}p-1})_{L^\infty(dt;L^p(dxdv))}
	\end{equation*}
	and
	\begin{equation*}
		\sqrt{1-\delta M_\varepsilon}\psi_\varepsilon
		=O(\varepsilon^{\frac {2-\gamma}p-1})_{L^\infty(dt;L^p(dxdv))},
	\end{equation*}
	for all $2\leq p\leq\infty$.
\end{cor}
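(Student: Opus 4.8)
The plan is to combine two endpoint estimates, at $p=2$ and $p=\infty$, and interpolate between them. At the exponent $p=2$ the claimed bounds are merely a restatement of Lemma \ref{lemma:squareroot:relative:entropy}: since $\|\sqrt{\delta M_\varepsilon}\,\phi_\varepsilon\|_{L^2(dxdv)}^2=\int_{\mathbb{R}^d\times\mathbb{R}^d}\delta M_\varepsilon\,\phi_\varepsilon^2\,dxdv$ and likewise $\|\sqrt{1-\delta M_\varepsilon}\,\psi_\varepsilon\|_{L^2(dxdv)}^2=\int_{\mathbb{R}^d\times\mathbb{R}^d}(1-\delta M_\varepsilon)\,\psi_\varepsilon^2\,dxdv$, the lemma yields $\sqrt{\delta M_\varepsilon}\,\phi_\varepsilon=O(\varepsilon^{-\gamma/2})_{L^\infty(dt;L^2(dxdv))}$ and $\sqrt{1-\delta M_\varepsilon}\,\psi_\varepsilon=O(\varepsilon^{-\gamma/2})_{L^\infty(dt;L^2(dxdv))}$, which is exactly the asserted bound at $p=2$, in view of $\tfrac{2-\gamma}{2}-1=-\tfrac{\gamma}{2}$.

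The endpoint $p=\infty$ will be extracted directly from the pointwise bound $0\le\delta f_\varepsilon\le 1$. Solving the definitions \eqref{epsilon:renormalized:fluctuations} for the renormalized fluctuations gives the identities $\sqrt{\delta M_\varepsilon}\,\phi_\varepsilon=\tfrac{2}{\delta\varepsilon}\big(\sqrt{\delta f_\varepsilon}-\sqrt{\delta M_\varepsilon}\big)$ and $\sqrt{1-\delta M_\varepsilon}\,\psi_\varepsilon=\tfrac{2}{\delta\varepsilon}\big(\sqrt{1-\delta M_\varepsilon}-\sqrt{1-\delta f_\varepsilon}\big)$. Since $\delta f_\varepsilon$ and $\delta M_\varepsilon$ take values in $[0,1]$, all four square roots lie in $[0,1]$, so that $\big|\sqrt{\delta M_\varepsilon}\,\phi_\varepsilon\big|\le\tfrac{2}{\delta\varepsilon}$ and $\big|\sqrt{1-\delta M_\varepsilon}\,\psi_\varepsilon\big|\le\tfrac{2}{\delta\varepsilon}$ pointwise in $(t,x,v)$; this is precisely the claimed bound at $p=\infty$, where $\tfrac{2-\gamma}{\infty}-1=-1$.

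Finally I would interpolate. For any $2\le p\le\infty$, the log-convexity of Lebesgue norms gives, for almost every fixed $t\ge 0$, the inequality $\|\sqrt{\delta M_\varepsilon}\,\phi_\varepsilon(t)\|_{L^p(dxdv)}\le\|\sqrt{\delta M_\varepsilon}\,\phi_\varepsilon(t)\|_{L^2(dxdv)}^{2/p}\,\|\sqrt{\delta M_\varepsilon}\,\phi_\varepsilon(t)\|_{L^\infty(dxdv)}^{1-2/p}$, and similarly for $\psi_\varepsilon$. Substituting the two endpoint estimates above and taking the essential supremum in $t$ then produces $\sqrt{\delta M_\varepsilon}\,\phi_\varepsilon=O\big((\varepsilon^{-\gamma/2})^{2/p}(\varepsilon^{-1})^{1-2/p}\big)=O(\varepsilon^{\frac{2-\gamma}{p}-1})$ in $L^\infty(dt;L^p(dxdv))$, together with the identical computation for $\sqrt{1-\delta M_\varepsilon}\,\psi_\varepsilon$, which is the assertion of the corollary. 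There is no real obstacle here; the only point needing a little care is that the interpolation inequality must be applied at fixed time, before passing to the $L^\infty$-norm in $t$, so that the uniformity in $t$ furnished by Lemma \ref{lemma:squareroot:relative:entropy} and by the pointwise bound is not lost.
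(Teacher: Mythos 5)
Your proposal is correct and follows essentially the same route as the paper's proof: the $p=2$ endpoint is exactly Lemma \ref{lemma:squareroot:relative:entropy}, the $p=\infty$ endpoint is the uniform pointwise bound $\varepsilon\sqrt{\delta M_\varepsilon}\phi_\varepsilon,\ \varepsilon\sqrt{1-\delta M_\varepsilon}\psi_\varepsilon\in[-\tfrac{2}{\delta},\tfrac{2}{\delta}]$ (which the paper simply asserts and you derive from \eqref{epsilon:renormalized:fluctuations} and \eqref{pauli:bound}), and the general case follows by log-convexity of Lebesgue norms. Your remark about interpolating at fixed $t$ before taking the essential supremum is a valid refinement of what the paper leaves implicit, but the argument is the same.
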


\begin{proof}
	Notice that $\sqrt{\delta M_\varepsilon}\phi_\varepsilon$ and $\sqrt{1-\delta M_\varepsilon}\psi_\varepsilon$ take values in $[-\frac 2{\delta\varepsilon},\frac 2{\delta\varepsilon}]$, whereby
	\begin{equation*}
		\varepsilon\sqrt{\delta M_\varepsilon}\phi_\varepsilon
		\qquad\text{and}\qquad
		\varepsilon\sqrt{1-\delta M_\varepsilon}\psi_\varepsilon
	\end{equation*}
	remain bounded in $L^\infty(dtdxdv)$, uniformly in $\varepsilon$. Then, combining this pointwise bound with the bounds from Lemma \ref{lemma:squareroot:relative:entropy} yields that
	\begin{equation*}
		\begin{aligned}
			\left\|\left(\sqrt{\delta M_\varepsilon}\phi_\varepsilon,\sqrt{1-\delta M_\varepsilon}\psi_\varepsilon\right)\right\|_{L^\infty(dt;L^p(dxdv))}
			&\leq
			\left\|\left(\sqrt{\delta M_\varepsilon}\phi_\varepsilon,\sqrt{1-\delta M_\varepsilon}\psi_\varepsilon\right)\right\|_{L^\infty(dt;L^2(dxdv))}^\frac 2p
			\\
			&\quad\times\left\|\left(\sqrt{\delta M_\varepsilon}\phi_\varepsilon,\sqrt{1-\delta M_\varepsilon}\psi_\varepsilon\right)\right\|_{L^\infty(dt;L^\infty(dxdv))}^{1-\frac 2p}
			\\
			&\lesssim \varepsilon^{-\frac\gamma p+\frac 2p -1},
		\end{aligned}
	\end{equation*}
	which completes the justification of the corollary.
\end{proof}

\subsection{Dilated fluctuations}

The preceding analysis of the relative entropy has provided bounds on the fluctuations $g_\varepsilon(t,x,v)$. We are now going to derive corresponding bounds on the dilated fluctuations $\widetilde g_\varepsilon (t,x,u,\omega)$, as defined in \eqref{dilated:fluctuations}. These new bounds are crucial for understanding the asymptotic energy spectrum of a degenerate Fermi gas.

\begin{prop}\label{prop:relative:entropy:dilated}
	Consider a family of density distributions $0\leq f_\varepsilon(t,x,v)\leq\delta^{-1}$, with $\varepsilon>0$, such that the relative entropy bound \eqref{relative:entropy:1} holds uniformly in $\varepsilon$, with some fixed parameter $\gamma>0$, where the Fermi--Dirac distribution $M_\varepsilon$ defined in \eqref{normalized:distribution} has a given parameter value $\tau>0$.
	Further consider the density fluctuations $g_\varepsilon(t,x,v)$ defined in \eqref{epsilon:fluctuations}, and its dilations $\widetilde g_\varepsilon (t,x,u,\omega)$ given in \eqref{dilated:fluctuations}.
	
	Then, as $\varepsilon\to 0$:
	\begin{itemize}
		
		\item
		If $\gamma+\tau\leq 2$, then any subsequence of dilated fluctuations $\widetilde g_\varepsilon$ satisfies the bound
		\begin{equation}\label{entropy:bound:1:dilated}
			(1+|u|)\widetilde g_\varepsilon
			=
			O(\varepsilon^{\frac{\tau-\gamma}2})_{L^\infty(dt;L^1_\mathrm{loc}(dx;L^1(dud\omega)))}.
		\end{equation}
		
		\item
		If $\gamma+\tau< 2$, then any subsequence of dilated fluctuations $\widetilde g_\varepsilon$ is tight in the energy variable, in the sense that, for any compact subset $K\subset\mathbb{R}^d$, it holds that
		\begin{equation}\label{concentrated:tightness:dilated}
			\lim_{\lambda\to\infty}\sup_{\varepsilon>0,t>0}\int_{K\times\mathbb{R}\times\partial B(0,R)}\varepsilon^{\frac{\gamma-\tau}2}\big|\widetilde g_\varepsilon\big||u|
			\mathds{1}_{\left\{|u|\geq \lambda \right\}}dxdud\omega
			=0.
		\end{equation}
		
		\item
		For any choice of parameters $\gamma>0$ and $\tau>0$, any subsequence of dilated fluctuations $\widetilde g_\varepsilon$ satisfies the bound
		\begin{equation}\label{fluctuation:H:2:dilated}
			\sup_{t>0}\int_{\mathbb{R}^d\times\mathbb{R}\times\partial B(0,R)}\left(\widetilde g_\varepsilon\right)^2
			\frac {1+|u|}{(R^2+\varepsilon^\tau u)^\frac{d-2}{2}} dx dud\omega
			\lesssim \varepsilon^{\tau-\gamma}.
		\end{equation}
	\end{itemize}
\end{prop}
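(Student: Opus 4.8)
The three bounds all follow by transporting the estimates of Proposition~\ref{prop:relative:entropy} through the dilation change of variables \eqref{spectrum:variables}--\eqref{dilation:change:variable}. The plan is first to record two elementary transfer identities and then to match powers of $\varepsilon$. Writing $v=(1+\tfrac{\varepsilon^\tau}{R^2}u)^{1/2}\omega$ and $J_\varepsilon(u)=\tfrac{\varepsilon^\tau|v|^{d-2}}{2R^{d-1}}$, the relation \eqref{dilation:change:variable} says $dv=J_\varepsilon\,dud\omega$, whereas \eqref{dilated:fluctuations} reads $\widetilde g_\varepsilon=J_\varepsilon g_\varepsilon$ on the range $\{u\geq-R^2/\varepsilon^\tau\}$ of the map $v\mapsto u$ (extended by zero elsewhere, which is outside that range). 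Hence, for any weight $\psi=\psi(v)$,
\begin{equation*}
	\int_{\mathbb{R}\times\partial B(0,R)}\widetilde g_\varepsilon\,\psi\,dud\omega=\int_{\mathbb{R}^d}g_\varepsilon\,\psi\,dv
	\qquad\text{and}\qquad
	\int_{\mathbb{R}\times\partial B(0,R)}\widetilde g_\varepsilon^{\,2}\,\psi\,dud\omega=\int_{\mathbb{R}^d}J_\varepsilon\,g_\varepsilon^2\,\psi\,dv,
\end{equation*}
since $\widetilde g_\varepsilon^{\,2}=J_\varepsilon^2g_\varepsilon^2$ and $dud\omega=J_\varepsilon^{-1}dv$.

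For \eqref{entropy:bound:1:dilated}, one applies the first identity with $\psi=1+|u|=1+\varepsilon^{-\tau}|v^2-R^2|$, so that for any compact $K\subset\mathbb{R}^d$,
\begin{equation*}
	\int_{K\times\mathbb{R}\times\partial B(0,R)}(1+|u|)|\widetilde g_\varepsilon|\,dxdud\omega
	=\int_{K\times\mathbb{R}^d}|g_\varepsilon|\,dxdv+\frac1{\varepsilon^\tau}\int_{K\times\mathbb{R}^d}|g_\varepsilon|\,|v^2-R^2|\,dxdv,
\end{equation*}
and one concludes, since $\gamma+\tau\leq 2$, from \eqref{entropy:bound:1} and \eqref{entropy:bound:2}, using $\varepsilon^{-\tau}\varepsilon^{(3\tau-\gamma)/2}=\varepsilon^{(\tau-\gamma)/2}$. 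For \eqref{concentrated:tightness:dilated}, the same identity with $\psi=|u|\mathds{1}_{\{|u|\geq\lambda\}}$ yields
\begin{equation*}
	\int_{\mathbb{R}\times\partial B(0,R)}|\widetilde g_\varepsilon|\,|u|\,\mathds{1}_{\{|u|\geq\lambda\}}\,dud\omega
	=\frac1{\varepsilon^\tau}\int_{\mathbb{R}^d}|g_\varepsilon|\,|v^2-R^2|\,\mathds{1}_{\{|v^2-R^2|\geq\lambda\varepsilon^\tau\}}\,dv,
\end{equation*}
and multiplying by $\varepsilon^{(\gamma-\tau)/2}$ reproduces exactly the integrand $\varepsilon^{(\gamma-3\tau)/2}\,|g_\varepsilon(v^2-R^2)|\,\mathds{1}_{\{|v^2-R^2|\geq\lambda\varepsilon^\tau\}}$ appearing in \eqref{concentrated:tightness}, whose supremum over $\varepsilon>0$ and $t>0$, integrated over $K\times\mathbb{R}^d$, tends to $0$ as $\lambda\to\infty$ by the concentrated tightness part of Proposition~\ref{prop:relative:entropy} (valid since $\gamma+\tau<2$).

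For \eqref{fluctuation:H:2:dilated}, one uses the second identity with $\psi=\tfrac{1+|u|}{(R^2+\varepsilon^\tau u)^{(d-2)/2}}$. Since $(R^2+\varepsilon^\tau u)^{(d-2)/2}=|v|^{d-2}$, this denominator cancels the factor $|v|^{d-2}$ in $J_\varepsilon$, leaving
\begin{equation*}
	\int_{\mathbb{R}^d\times\mathbb{R}\times\partial B(0,R)}\widetilde g_\varepsilon^{\,2}\,\frac{1+|u|}{(R^2+\varepsilon^\tau u)^{(d-2)/2}}\,dxdud\omega
	=\frac{\varepsilon^\tau}{2R^{d-1}}\int_{\mathbb{R}^d\times\mathbb{R}^d}g_\varepsilon^2\Big(1+\frac{|v^2-R^2|}{\varepsilon^\tau}\Big)dxdv.
\end{equation*}
Using the elementary bound $1+\varepsilon^{-\tau}|v^2-R^2|\leq 2\max\{2,\varepsilon^{-\tau}|v^2-R^2|\}$ and invoking \eqref{fluctuation:H:2} then gives the right-hand side $\lesssim\varepsilon^{\tau-\gamma}$.

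There is no genuine obstacle here — everything reduces to the change of variables — but the bookkeeping must be carried out with care: the Jacobian enters linearly (through $\widetilde g_\varepsilon=J_\varepsilon g_\varepsilon$) in the first two estimates and, after pairing with $\psi$, as the single surviving power $J_\varepsilon$ in the quadratic one. The one point that should not be overlooked is the cancellation of $(R^2+\varepsilon^\tau u)^{(d-2)/2}=|v|^{d-2}$ against $J_\varepsilon$ in the last estimate; this is precisely why the weight in \eqref{fluctuation:H:2:dilated} carries that denominator, and it is what produces the clean power $\varepsilon^{\tau-\gamma}$ on the right-hand side.
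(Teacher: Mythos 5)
Your proposal is correct and is exactly the ``careful calculation'' the paper leaves to the reader: the paper's own proof of Proposition~\ref{prop:relative:entropy:dilated} simply states that the bounds follow from Proposition~\ref{prop:relative:entropy} via the change of variables \eqref{dilation:change:variable}, which is precisely what you have spelled out, including the key cancellation of $|v|^{d-2}$ against the Jacobian in the quadratic bound.
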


\begin{proof}
	The change of variable formula \eqref{dilation:change:variable} allows us to deduce the bounds \eqref{entropy:bound:1:dilated}, \eqref{concentrated:tightness:dilated} and \eqref{fluctuation:H:2:dilated} directly from Proposition \ref{prop:relative:entropy}, through a careful calculation. This completes the proof.
\end{proof}

The uniform bounds from the previous proposition allows us to infer weak compactness estimates on subsequences of dilated fluctuations. This is presented in the next result.

\begin{prop}\label{prop:limit:characterization:dilated}
	Consider a family of density distributions $0\leq f_\varepsilon(t,x,v)\leq\delta^{-1}$, with $\varepsilon>0$, such that the relative entropy bound \eqref{relative:entropy:1} holds uniformly in $\varepsilon$, with some fixed parameter $\gamma>0$, where the Fermi--Dirac distribution $M_\varepsilon$ defined in \eqref{normalized:distribution} has a given parameter value $\tau>0$.
	Further suppose that
	\begin{equation*}
		0<\gamma=\tau< 1
	\end{equation*}
	and consider the density fluctuations $g_\varepsilon(t,x,v)$ defined in \eqref{epsilon:fluctuations}, and its dilations $\widetilde g_\varepsilon (t,x,u,\omega)$ given in \eqref{dilated:fluctuations}.
	
	In accordance with Proposition \ref{prop:limit:characterization}, as $\varepsilon\to 0$, up to extraction of a subsequence, the family of fluctuations $g_\varepsilon$ converges in the weak* topology of $\mathcal{M}_\mathrm{loc}\big(\mathbb{R}^+\times\mathbb{R}^d\times\mathbb{R}^d\big)$ toward a limit point
	\begin{equation*}
		\mu(t,x,v)=g\left(t,x,R\frac{v}{|v|}\right)dt\otimes dx\otimes \delta_{\partial B(0,R)}(v),
	\end{equation*}
	where the density $g(t,x,\omega)$ belongs to $L^\infty\big(dt;L^2\big(\mathbb{R}^d\times\partial B(0,R)\big)\big)$.
	
	Then, as $\varepsilon\to 0$:
	\begin{itemize}
		\item
		Any subsequence of dilated fluctuations $(1+|u|)\widetilde g_\varepsilon$ is uniformly bounded in
		\begin{equation*}
			L^\infty\big(dt;L^1_\mathrm{loc}\big(dx;L^1\big(dud\omega\big)\big)\big),
		\end{equation*}
		and weakly relatively compact in the space
		\begin{equation*}
			L^1_\mathrm{loc}\big(dtdx;L^1\big(dud\omega\big)\big).
		\end{equation*}
		
		\item
		If $\widetilde g$ is a weak limit point in $L^1_\mathrm{loc}\big(dtdx;L^1\big((1+|u|)dud\omega\big)\big)$ of the family of dilated fluctuations $\widetilde g_\varepsilon$, then it is related to the limiting density $g$ through the identity
		\begin{equation}\label{link:dilated}
			\int_{\mathbb{R}}\widetilde g(t,x,u,\omega)du=g(t,x,\omega).
		\end{equation}
		Moreover, it satisfies, for almost every $t\geq 0$,
		\begin{equation}\label{square:integrable:0:dilated}
			\int_{\mathbb{R}^d \times\mathbb{R}\times\partial B(0,R)}
			\left(\widetilde g(t,x,u,\omega)\cosh\left(\frac u2\right)\right)^2
			dxdud\omega
			\leq
			\frac{C^\mathrm{in}}{4R\delta^2},
		\end{equation}
		and, up to extraction of a subsequence, it holds that
		\begin{equation}\label{convergence:fluctuations:dilated}
			\begin{aligned}
				\int_{\mathbb{R}^+\times\mathbb{R}^d\times\mathbb{R}\times\partial B(0,R)}
				&\widetilde g_\varepsilon(t,x,u,\omega)\varphi(t,x,u,v)dtdxdud\omega
				\\
				&\to
				\int_{\mathbb{R}^+\times\mathbb{R}^d\times\mathbb{R}\times\partial B(0,R)}
				\widetilde g(t,x,u,\omega)\varphi(t,x,u,\omega)dtdxdud\omega,
			\end{aligned}
		\end{equation}
			as $\varepsilon\to 0$, for any continuous function $\varphi(t,x,u,v)$ which is compactly supported in $(t,x)$ and satisfies the growth condition
		\begin{equation}\label{growth:condition}
			\frac{\varphi(t,x,u,v)}{1+|u|}\in L^\infty\big(\mathbb{R}^+\times\mathbb{R}^d\times\mathbb{R}\times\mathbb{R}^d\big).
		\end{equation}
	\end{itemize}
\end{prop}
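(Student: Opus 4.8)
The plan is to mirror the proof of Proposition~\ref{prop:limit:characterization}, working now in the dilated variables $(u,\omega)$ and relying on the uniform estimates of Proposition~\ref{prop:relative:entropy:dilated} specialized to $\gamma=\tau$. First, the uniform boundedness of $(1+|u|)\widetilde g_\varepsilon$ in $L^\infty(dt;L^1_\mathrm{loc}(dx;L^1(dud\omega)))$ is nothing but \eqref{entropy:bound:1:dilated} with $\gamma=\tau$. To upgrade this to weak relative compactness in $L^1_\mathrm{loc}(dtdx;L^1((1+|u|)dud\omega))$, I would invoke the Dunford--Pettis criterion, which reduces matters to equi-integrability on compact subsets of $(t,x)$. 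The tightness at $|u|=\infty$ is furnished by \eqref{concentrated:tightness:dilated}, since $(1+|u|)\mathds{1}_{\{|u|\geq\lambda\}}\leq 2|u|\mathds{1}_{\{|u|\geq\lambda\}}$ for $\lambda\geq 1$. As for the absence of concentration on sets of small measure, I observe that, for $\varepsilon$ small and $|u|\leq\lambda$, the weight appearing in \eqref{fluctuation:H:2:dilated} satisfies $\tfrac{1+|u|}{(R^2+\varepsilon^\tau u)^{(d-2)/2}}\gtrsim 1$, so that \eqref{fluctuation:H:2:dilated} yields a uniform $L^2$ bound on $\widetilde g_\varepsilon$ over $\{|u|\leq\lambda\}$; an application of the Cauchy--Schwarz inequality then controls $\int_E(1+|u|)|\widetilde g_\varepsilon|$ by a constant times $|E|^{1/2}$ on such slabs. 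Combining both facts gives the equi-integrability, hence the weak relative compactness, and we may extract a subsequence with $\widetilde g_\varepsilon\rightharpoonup\widetilde g$ weakly in $L^1_\mathrm{loc}(dtdx;L^1((1+|u|)dud\omega))$.

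Next, to establish the convergence \eqref{convergence:fluctuations:dilated} against a test function $\varphi(t,x,u,v)$ with the growth \eqref{growth:condition}, I would split the integration into the regions $\{|u|\geq\lambda\}$ and $\{|u|<\lambda\}$. On the first region, the linear growth of $\varphi$ together with the tightness \eqref{concentrated:tightness:dilated} makes the contribution arbitrarily small, uniformly in $\varepsilon$, as $\lambda\to\infty$. On the second region, since $v=(1+\tfrac{\varepsilon^\tau}{R^2}u)^{1/2}\omega\to\omega$ uniformly on $\{|u|\leq\lambda\}$ with $(t,x)$ in a compact set, the uniform continuity of $\varphi$ allows one to replace $\varphi(t,x,u,v)$ by $\varphi(t,x,u,\omega)$ up to an error that vanishes in $L^1$ by the uniform $L^1$ bound on $\widetilde g_\varepsilon$; the resulting integrand $\widetilde g_\varepsilon\,\varphi(t,x,u,\omega)\mathds{1}_{\{|u|<\lambda\}}$ is tested against a bounded function, so the weak $L^1$ convergence applies. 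The link identity \eqref{link:dilated} is then the special case of \eqref{convergence:fluctuations:dilated} in which $\varphi=\varphi(t,x,\omega)$ depends only on $(t,x,\omega)$: on one hand the limit equals $\int(\int_\mathbb{R}\widetilde g\,du)\varphi$, and on the other hand the change of variables \eqref{dilation:change:variable} turns $\int\widetilde g_\varepsilon\varphi\,dtdxdud\omega$ into $\int g_\varepsilon(t,x,v)\,\varphi(t,x,R\tfrac{v}{|v|})\,dtdxdv$, which converges to $\int g\varphi$ by \eqref{limit:fluctuations} after inserting a smooth radial cutoff equal to one near $\{|v|=R\}$ and discarding the remainder via \eqref{entropy:bound:2}.

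The heart of the argument, and the step I expect to be the most delicate, is the square-integrability bound \eqref{square:integrable:0:dilated} with the exponential weight $\cosh^2(\tfrac u2)$; this cannot come from \eqref{fluctuation:H:2:dilated} alone (which only supplies a polynomial weight) and instead requires the convex-duality method. Concretely, I would apply the generalized Young inequality \eqref{inequality:Young:3} with $z=\delta\varepsilon g_\varepsilon$, $a=\delta M_\varepsilon$ and $y=\varepsilon^{1-\tau}\lambda\,\varphi\!\left(t,x,\tfrac{v^2-R^2}{\varepsilon^\tau},R\tfrac{v}{|v|}\right)$ for a compactly supported continuous $\varphi(t,x,u,\omega)$ and $\lambda>0$, integrate in $(t,x,v)$ over $[t_1,t_2]\times\mathbb{R}^d\times\mathbb{R}^d$, and change variables via \eqref{dilation:change:variable}. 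The entropy term is bounded by $(t_2-t_1)C^\mathrm{in}$ through \eqref{relative:entropy:1}, while the conjugate term $I_\varepsilon$ is handled by dominated convergence: the bound \eqref{dominated:estimate} provides an $\varepsilon$-independent integrable majorant and the limit \eqref{special:limit} gives pointwise convergence of the integrand, where the factor $\delta M_\varepsilon(1-\delta M_\varepsilon)=\tfrac{1}{4\cosh^2(u/2)}$ produces precisely the weight in \eqref{square:integrable:0:dilated} (here $\tau<1$ is used so that $\varepsilon^{1-\tau}\to 0$). Passing to the limit on the left-hand side by \eqref{convergence:fluctuations:dilated}, optimizing over $\lambda$, and finally taking the supremum over $\varphi$ after writing $\varphi=\cosh(\tfrac u2)\,\psi$ with $\psi$ ranging over a dense subset of $L^2$, one arrives at \eqref{square:integrable:0:dilated}; the arbitrariness of $t_1$ and $t_2$ then yields the almost-everywhere-in-time statement, and in particular the weak limit point $\widetilde g$, initially only locally integrable, actually belongs to $L^\infty(dt;L^2(\cosh^2(\tfrac u2)dxdud\omega))$.
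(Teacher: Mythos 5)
Your proposal is correct and mirrors the paper's own proof: uniform boundedness and Dunford--Pettis from Proposition~\ref{prop:relative:entropy:dilated}, convergence against linearly growing test functions via tightness and uniform continuity, the link identity by testing against functions of $(t,x,\omega)$ only, and the weighted $L^2$ bound via the same convex-duality argument with $y=\varepsilon^{1-\tau}\lambda\varphi$, dominated convergence using \eqref{dominated:estimate}--\eqref{special:limit}, optimization in $\lambda$, and the substitution $\varphi\mapsto\varphi\cosh(u/2)$. The only departure is cosmetic: you insert a radial cutoff near $\{|v|=R\}$ when invoking \eqref{limit:fluctuations} to handle the discontinuity of $v\mapsto\varphi(t,x,R\tfrac{v}{|v|})$ at $v=0$, which tightens a step the paper leaves implicit.
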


\begin{proof}
	We first note that the uniform boundedness and weak relative compactness of $(1+|u|)\widetilde g_\varepsilon$ follow directly from the bounds established in Proposition \ref{prop:relative:entropy:dilated} and the Dunford--Pettis compactness criterion. Specifically, the estimates contained in \eqref{entropy:bound:1:dilated}, \eqref{concentrated:tightness:dilated} and \eqref{fluctuation:H:2:dilated} provide the necessary boundedness, tightness and equi-integrability, respectively, for the application of this criterion.
	
	Next, taking the weak limits of convergent subsequences of fluctuations, as $\varepsilon\to 0$, we find that
	\begin{equation*}
		\begin{aligned}
			\int_{\mathbb{R}^+\times\mathbb{R}^d\times\mathbb{R}^d}
			g_\varepsilon(t,x,v)\varphi(t,x,\omega)dtdxdv
			&=\int_{\mathbb{R}^+\times\mathbb{R}^d\times\mathbb{R}\times\partial B(0,R)}
			\widetilde g_\varepsilon(t,x,u,\omega)\varphi(t,x,\omega)dtdxdud\omega
			\\
			&\to\int_{\mathbb{R}^+\times\mathbb{R}^d\times\mathbb{R}\times\partial B(0,R)}
			\widetilde g(t,x,u,\omega)\varphi(t,x,\omega)dtdxdud\omega,
		\end{aligned}
	\end{equation*}
	for any compactly supported continuous test function $\varphi(t,x,\omega)$. Combining this limit with \eqref{limit:fluctuations}, we conclude, by the arbitrariness of $\varphi$, that \eqref{link:dilated} holds true.
	
	One can also take weak limits against continuous test functions $\varphi(t,x,u,\omega)$ which are compactly supported in $(t,x)$ and satisfy the growth condition
	\begin{equation*}
		\frac{\varphi(t,x,u,\omega)}{1+|u|}\in L^\infty\big(\mathbb{R}^+\times\mathbb{R}^d\times\mathbb{R}\times\partial B(0,R)\big),
	\end{equation*}
	which yields
	\begin{equation*}
		\begin{aligned}
			\int_{\mathbb{R}^+\times\mathbb{R}^d\times\mathbb{R}\times\partial B(0,R)}
			&\widetilde g_\varepsilon(t,x,u,\omega)\varphi(t,x,u,\omega)dtdxdud\omega
			\\
			&\to
			\int_{\mathbb{R}^+\times\mathbb{R}^d\times\mathbb{R}\times\partial B(0,R)}
			\widetilde g(t,x,u,\omega)\varphi(t,x,u,\omega)dtdxdud\omega,
		\end{aligned}
	\end{equation*}
	as $\varepsilon\to 0$. Moreover, considering a continuous test functions $\varphi(t,x,u,v)$, compactly supported in $(t,x)$, with the growth condition \eqref{growth:condition}, we see that
	\begin{equation*}
		\int_{\mathbb{R}^+\times\mathbb{R}^d\times\mathbb{R}\times\partial B(0,R)}
		\widetilde g_\varepsilon(t,x,u,\omega)\big(\varphi(t,x,u,v)-\varphi(t,x,u,\omega)\big)dtdxdud\omega\to 0,
	\end{equation*}
	by tightness of $\widetilde g_\varepsilon$ and uniform continuity of $\varphi$ on compact domains. Therefore, by combining the preceding limits, we conclude that the convergence \eqref{convergence:fluctuations:dilated} also holds true.

	In order to show now that $\widetilde g$ is square-integrable and satisfies the bound \eqref{square:integrable:0:dilated}, we follow and adapt the strategy of proof of \eqref{square:integrable:0}. Specifically, we set
	\begin{equation*}
		z=\delta\varepsilon g_\varepsilon,
		\quad
		a=\delta M_\varepsilon
		\quad\text{and}\quad
		y=\varepsilon^{1-\tau}\lambda \varphi\left(t,x,u,\omega\right),
	\end{equation*}
	for some $\varphi(t,x,u,\omega)\in C_c(\mathbb{R}^+\times\mathbb{R}^d\times\mathbb{R}\times\partial B(0,R))$ and $\lambda>0$, in \eqref{inequality:Young:3}.
	Then, integrating the resulting inequality in $t$, $x$ and $v$, we obtain, for any $0\leq t_1\leq t_2$, that
	\begin{equation*}
		\begin{aligned}
			\delta\lambda & \int_{[t_1,t_2]\times \mathbb{R}^d \times\mathbb{R}\times\partial B(0,R)} \widetilde g_\varepsilon\varphi dtdxdud\omega
			\\
			&=\delta\lambda\int_{[t_1,t_2]\times \mathbb{R}^d \times\mathbb{R}^d} g_\varepsilon\varphi dtdxdv
			\\
			&\leq
			\frac 1{\varepsilon^{2-\tau}}\int_{[t_1,t_2]\times \mathbb{R}^d \times\mathbb{R}^d}h(\delta f_\varepsilon,\delta M_\varepsilon)dtdxdv
			\\
			&\quad +\frac 1{\varepsilon^{2-\tau}}\int_{[t_1,t_2]\times \mathbb{R}^d \times\mathbb{R}^d}\left[\log\big(1+\delta M_\varepsilon(e^{\varepsilon^{1-\tau}\lambda\varphi}-1)\big)-\delta M_\varepsilon\varepsilon^{1-\tau}\lambda\varphi\right]dtdxdv
			\\
			&\leq (t_2-t_1)C^\mathrm{in}+\int_{[t_1,t_2]\times \mathbb{R}^d\times\mathbb{R}\times\partial B(0,R)} I_\varepsilon\big(\lambda \varphi(t,x,u,\omega)\big)
			dtdxdud\omega,
		\end{aligned}
	\end{equation*}
	where
	\begin{equation*}
		I_\varepsilon(y)=\frac 1{2R^{d-1}\varepsilon^{2(1-\tau)}}
		\left[\log\left(1+\frac{e^{\varepsilon^{1-\tau} y}-1}{1+e^{u}}\right)
		-\frac{\varepsilon^{1-\tau} y}{1+e^{u}}\right]
		(R^2+\varepsilon^\tau u)^\frac{d-2}2.
	\end{equation*}
	We may assume that $u>-\frac{R^2}{\varepsilon^\tau}$ in the definition of $I_\varepsilon$, because $\varphi$ is compactly supported and $\varepsilon$ is small. Then, letting $\varepsilon\to 0$ yields that
	\begin{equation}\label{limit:entropy:1:dilated}
		\begin{aligned}
			\delta\lambda & \int_{[t_1,t_2]\times \mathbb{R}^d \times\mathbb{R}\times\partial B(0,R)} \widetilde g(t,x,u,\omega)\varphi(t,x,u,\omega) dtdxdud\omega
			\\
			&\leq
			(t_2-t_1)C^\mathrm{in}
			+\liminf_{\varepsilon\to 0}
			\int_{[t_1,t_2]\times \mathbb{R}^d\times\mathbb{R}\times\partial B(0,R)} I_\varepsilon\big(\lambda \varphi(t,x,u,\omega)\big)
			dtdxdud\omega,
		\end{aligned}
	\end{equation}
	by weak convergence. Note that the main difference between \eqref{limit:entropy:1} and \eqref{limit:entropy:1:dilated} lies in the fact that $\varphi$ is now allowed to depend on the variable $u$.
	
	Next, by virtue of \eqref{dominated:estimate}, we deduce that $I_\varepsilon\big(\lambda \varphi(t,x,u,\omega)\big)$ is dominated by an integrable function, which implies, by the Dominated Convergence Theorem, that
	\begin{equation*}
		\begin{aligned}
			\delta\lambda & \int_{[t_1,t_2]\times \mathbb{R}^d \times\mathbb{R}\times\partial B(0,R)} \widetilde g(t,x,u,\omega)\varphi(t,x,u,\omega) dtdxdud\omega
			\\
			&\leq
			(t_2-t_1)C^\mathrm{in}
			+\frac{\lambda^2}{16R}\int_{[t_1,t_2]\times \mathbb{R}^d\times\mathbb{R}\times\partial B(0,R)}
			\left(\frac{\varphi(t,x,u,\omega)}{\cosh(\frac u2)}\right)^2
			dtdxdud\omega,
		\end{aligned}
	\end{equation*}
	where we employed \eqref{special:limit} to evaluate the pointwise limit of $I_\varepsilon\big(\lambda \varphi(t,x,u,\omega)\big)$.
	
	Finally, by optimizing the value of $\lambda>0$ and substituting $\varphi(t,x,u,\omega)$ with the compactly supported continuous function $\varphi(t,x,u,\omega)\cosh(\frac u2)$, we obtain that
	\begin{equation*}
		\begin{aligned}
			\delta \int_{[t_1,t_2]\times \mathbb{R}^d \times\mathbb{R}\times\partial B(0,R)}
			\widetilde g(t,x,u,\omega)\cosh\left(\frac u2\right)
			\varphi(t,x,u,\omega) &dtdxdud\omega
			\\
			&\leq
			\left(\frac{(t_2-t_1)C^\mathrm{in}}{4R}\right)^\frac 12
			\|\varphi\|_{L^2(dtdxdud\omega)}.
		\end{aligned}
	\end{equation*}
	At last, taking the supremum over all $\varphi$, we arrive at the bound
	\begin{equation*}
		\int_{[t_1,t_2]\times \mathbb{R}^d \times\mathbb{R}\times\partial B(0,R)}
		\left(\widetilde g(t,x,u,\omega)\cosh\left(\frac u2\right)\right)^2
		dtdxdud\omega
		\leq
		\frac{(t_2-t_1)C^\mathrm{in}}{4R\delta^2},
	\end{equation*}
	which, by the arbitrariness of $t_1$ and $t_2$, gives \eqref{square:integrable:0:dilated} and completes the proof of the proposition.
\end{proof}

\section{Relaxation estimates and thermodynamic equilibria}\label{section:relaxation_and_equilibria}

In this section, we develop the analytical tools necessary to characterize the structure of limiting fluctuations. Our primary objective is to derive the equilibrium form of weak limits of fluctuations, as $\varepsilon \to 0$, by exploiting the entropy dissipation bound.

\subsection{Controls from the entropy dissipation bound}

The entropy inequality \eqref{entropy:inequality:scaled} and the initial relative entropy control \eqref{initial:relative:entropy} provide us with the following uniform entropy dissipation bound
\begin{equation}\label{entropy:dissipation:1}
	\frac 1{\varepsilon^{2+\kappa-\gamma}}
	\int_0^tD(f_\varepsilon)(s)ds
	\leq C^\mathrm{in},
\end{equation}
for some $\gamma>0$, $\kappa>0$, $C^\mathrm{in}>0$ and all $t\geq 0$, where the entropy dissipation $D(f_\varepsilon)$ is given in \eqref{entropy:dissipation}.

The following result extracts a simple quadratic control on the renormalized collision integrands
\begin{equation}\label{collision:integrand}
	q_\varepsilon=\frac 1{\delta^\frac 12 \varepsilon^{1+\frac{\kappa-\gamma}2}}\left(\sqrt{\delta f_\varepsilon' \delta f_{\varepsilon *}'(1-\delta f_\varepsilon)(1-\delta f_{\varepsilon *})}
	-
	\sqrt{\delta f_\varepsilon \delta f_{\varepsilon *}(1-\delta f_\varepsilon')(1-\delta f_{\varepsilon *}')}\right)
\end{equation}
from the entropy dissipation bound. The use of renormalized collision integrands is standard in the analysis of hydrodynamic limits (see \cite[Section 5.2]{asr19}, for instance, for an application of a similar method). The ensuing control, when employed in conjunction with the quadratic estimates from Lemma \ref{lemma:squareroot:relative:entropy}, will allow us to establish, in Section \ref{section:relaxation}, the relaxation of fluctuations toward thermodynamic equilibrium, which is a fundamental step in our work.

\begin{lem}\label{lemma:renormalized:collision:bound}
	Consider a family of density distributions $0\leq f_\varepsilon(t,x,v)\leq\delta^{-1}$, with $\varepsilon>0$, such that the entropy dissipation bound \eqref{entropy:dissipation:1} holds uniformly in $\varepsilon$, with some fixed parameters $\gamma>0$ and $\kappa>0$.
	
	Then, as $\varepsilon\to 0$, any subsequence of renormalized collision integrands $q_\varepsilon$ is uniformly bounded in $L^2(b(v-v_*,\sigma)dtdxdvdv_*d\sigma)$.
\end{lem}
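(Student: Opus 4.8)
The key observation is the elementary inequality relating the entropy dissipation integrand to a squared difference. Recall that for $p, q > 0$ one has the lower bound
$$
(p - q)\log\frac{p}{q} \geq 4\left(\sqrt{p} - \sqrt{q}\right)^2,
$$
which follows from the fact that $(p-q)\log(p/q) = (\sqrt p - \sqrt q)(\sqrt p + \sqrt q)\log(p/q)$ together with the elementary inequality $(\sqrt p + \sqrt q)\log(p/q) = (\sqrt p + \sqrt q)\cdot 2\log(\sqrt p / \sqrt q) \geq 4(\sqrt p - \sqrt q)$, the last step being $(s+t)\log(s/t) \geq 2(s-t)$ for $s,t>0$ (equivalently $\log x \geq 2\frac{x-1}{x+1}$ for $x \geq 1$). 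I would apply this with
$$
p = \delta f_\varepsilon' \delta f_{\varepsilon *}'(1-\delta f_\varepsilon)(1-\delta f_{\varepsilon *}),
\qquad
q = \delta f_\varepsilon \delta f_{\varepsilon *}(1-\delta f_\varepsilon')(1-\delta f_{\varepsilon *}'),
$$
which are precisely the two terms appearing in the entropy dissipation \eqref{entropy:dissipation}. This yields the pointwise bound
$$
\big(\sqrt p - \sqrt q\big)^2 b(v-v_*,\sigma) \leq \frac{1}{4}(p-q)\log\frac pq\, b(v-v_*,\sigma),
$$
and, recalling the definition \eqref{collision:integrand} of $q_\varepsilon$, namely $\delta^{\frac 12}\varepsilon^{1+\frac{\kappa-\gamma}2}q_\varepsilon = \sqrt p - \sqrt q$, the left-hand side is exactly $\delta\,\varepsilon^{2+\kappa-\gamma} q_\varepsilon^2\, b(v-v_*,\sigma)$.

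Integrating this pointwise inequality over $(t,x,v,v_*,\sigma) \in [0,t]\times\mathbb{R}^d\times\mathbb{R}^d\times\mathbb{R}^d\times\mathbb{S}^{d-1}$ and comparing with the expression \eqref{entropy:dissipation} for $D(f_\varepsilon)$, one finds
$$
\delta\,\varepsilon^{2+\kappa-\gamma}\int_0^t\!\!\int_{\mathbb{R}^d\times\mathbb{R}^d\times\mathbb{R}^d\times\mathbb{S}^{d-1}} q_\varepsilon^2\, b(v-v_*,\sigma)\,dxdvdv_*d\sigma\,ds
\;\lesssim\;
\int_0^t D(f_\varepsilon)(s)\,ds,
$$
where I have absorbed the factor $\frac{1}{4\delta}$ from the definition of $D$ and the constant $\frac14$ from the elementary inequality into the implicit constant (one must be a little careful: the definition of $D(f_\varepsilon)$ carries a prefactor $\frac{1}{4\delta}$, so the factors of $\delta$ and $4$ recombine cleanly). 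Dividing through by $\delta\,\varepsilon^{2+\kappa-\gamma}$ and invoking the entropy dissipation bound \eqref{entropy:dissipation:1}, we conclude that
$$
\int_0^t\!\!\int_{\mathbb{R}^d\times\mathbb{R}^d\times\mathbb{R}^d\times\mathbb{S}^{d-1}} q_\varepsilon^2\, b(v-v_*,\sigma)\,dxdvdv_*d\sigma\,ds
\;\lesssim\;
\frac{1}{\delta}\,C^\mathrm{in},
$$
uniformly in $\varepsilon > 0$ and in $t \geq 0$, which is the desired uniform bound in $L^2\big(b(v-v_*,\sigma)\,dtdxdvdv_*d\sigma\big)$.

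**Main obstacle.** There is essentially no analytic obstacle here; the statement is a direct algebraic consequence of the entropy dissipation bound once the right elementary inequality is identified. The only point requiring mild care is the bookkeeping of the constants $\delta$, $4$, and $\frac{1}{4\delta}$ between the definition \eqref{collision:integrand} of $q_\varepsilon$, the definition \eqref{entropy:dissipation} of $D(f_\varepsilon)$, and the logarithmic inequality — one must check that the powers of $\varepsilon$ (namely $\varepsilon^{2+\kappa-\gamma}$, which is exactly the scaling normalization appearing in \eqref{entropy:dissipation:1}) cancel precisely, so that the final bound is genuinely uniform in $\varepsilon$. A secondary subtlety worth a remark is that the inequality $(p-q)\log(p/q) \geq 4(\sqrt p - \sqrt q)^2$ degenerates when $p$ or $q$ vanishes (i.e.\ at the boundary of the Pauli constraint), but since both sides vanish there as well and the cross section $b$ is integrable, this causes no difficulty — one may simply work on the set where both quantities are positive and pass to the limit, or note that the integrand of $D(f_\varepsilon)$ is already nonnegative and everything is well-defined as a (possibly infinite, a priori) integral that \eqref{entropy:dissipation:1} controls.
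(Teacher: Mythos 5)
Your proof takes exactly the same approach as the paper: apply the elementary inequality $4(\sqrt p-\sqrt q)^2\le(p-q)\log(p/q)$ to the two collision products, integrate against $b$, and invoke the entropy dissipation bound \eqref{entropy:dissipation:1}. Your final display carries a spurious factor $1/\delta$ — tracking the constants exactly, the $\delta$ from the normalization of $q_\varepsilon$ cancels the $1/(4\delta)$ in $D(f_\varepsilon)$ and the $1/4$ from the inequality, giving simply $\int_0^t\int q_\varepsilon^2\,b\,dx\,dv\,dv_*\,d\sigma\,ds\le C^{\mathrm{in}}$ — but since $\delta$ is fixed this is cosmetic and does not affect uniformity in $\varepsilon$.
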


\begin{proof}
	The elementary inequality (see \cite[Appendix B]{asr19} for a brief justification)
	\begin{equation*}
		4(\sqrt y - \sqrt z)^2\leq (y-z)\log \left(\frac yz\right),
	\end{equation*}
	which holds for any $y,z>0$, yields that
	\begin{equation*}
		\varepsilon^{2+\kappa-\gamma}
		\int_{\mathbb{R}^d}\int_{\mathbb{R}^d\times\mathbb{R}^d\times\mathbb{S}^{d-1}}
		q_\varepsilon^2 b(v-v_*,\sigma)dvdv_*d\sigma dx
		\leq D(f_\varepsilon).
	\end{equation*}
	It then follows from the entropy dissipation bound \eqref{entropy:dissipation:1} that
	\begin{equation*}
		\int_0^t\int_{\mathbb{R}^d}
		\int_{\mathbb{R}^d\times\mathbb{R}^d\times\mathbb{S}^{d-1}}
		q_\varepsilon(s)^2 b(v-v_*,\sigma)dvdv_*d\sigma dx ds
		\leq C^\mathrm{in},
	\end{equation*}
	for all $t\geq 0$. This establishes the uniform boundedness of renormalized collision integrands and, thus, completes the proof of the lemma.
\end{proof}

In the next lemma, we extract a linearized control from the uniform bound on renormalized collision integrands given in Lemma \ref{lemma:renormalized:collision:bound}. This constitutes the first step in our work where the case $d=2$ needs to be excluded. It is treated separately in our companion article \cite{aa25}.

\begin{lem}\label{lemma:linearized:dissipation}
	Consider a cross-section
	\begin{equation*}
		b(z,\sigma)=b\left(|z|,\frac{z}{|z|}\cdot\sigma\right)\geq 0
	\end{equation*}
	such that
	\begin{equation*}
		b(z,\sigma)\in L^\infty(\mathbb{R}^d\times\mathbb{S}^{d-1})
	\end{equation*}
	and a family of density distributions $0\leq f_\varepsilon(t,x,v)\leq\delta^{-1}$, with $\varepsilon>0$, such that the relative entropy bound \eqref{relative:entropy:1} and the entropy dissipation bound \eqref{entropy:dissipation:1} hold uniformly in $\varepsilon$, with some fixed parameters $\gamma>0$ and $\kappa>0$, where the Fermi--Dirac distribution $M_\varepsilon$ defined in \eqref{normalized:distribution} has a given parameter value $\tau>0$. Further consider the density fluctuations $\phi_\varepsilon(t,x,v)$ and $\psi_\varepsilon(t,x,v)$ defined in \eqref{epsilon:renormalized:fluctuations}.
	For convenience of notation, we also introduce
	\begin{equation}\label{maxwellian:product:notation}
		m_\varepsilon(v,v_*,\sigma)=
		\delta M_\varepsilon \delta M_{\varepsilon *}(1-\delta M_\varepsilon')(1-\delta M_{\varepsilon *}')
		=
		\delta M_\varepsilon' \delta M_{\varepsilon *}'(1-\delta M_\varepsilon)(1-\delta M_{\varepsilon *}).
	\end{equation}
	
	Then, if $d\geq 3$, as $\varepsilon\to 0$, any joint subsequences of renormalized fluctuations $\phi_\varepsilon$ and $\psi_\varepsilon$ satisfy the bounds
	\begin{equation*}
		\begin{aligned}
			(\phi_\varepsilon+\psi_\varepsilon)+(\phi_\varepsilon+\psi_\varepsilon)_*
			-(\phi_\varepsilon+\psi_\varepsilon)'-(\phi_\varepsilon+\psi_\varepsilon)_*'
			&=O(\varepsilon^{1+2\tau-\gamma})_{L^\infty(dt;L^1(bm_\varepsilon dxdvdv_*d\sigma))}
			\\
			&\quad+O(\varepsilon^{\frac{\kappa-\gamma+3\tau}2})_{L^2(dtdx;L^1(bm_\varepsilon dvdv_*d\sigma))}.
		\end{aligned}
	\end{equation*}
\end{lem}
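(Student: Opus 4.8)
The goal is to pass from the quadratic control on the renormalized collision integrands $q_\varepsilon$ (Lemma~\ref{lemma:renormalized:collision:bound}) to a linearized control on the collisional difference of $\phi_\varepsilon+\psi_\varepsilon$. The starting point is to rewrite the numerator inside $q_\varepsilon$ using the square-root-renormalization formulas \eqref{epsilon:renormalized:fluctuations}. Writing $A_\varepsilon=\sqrt{\delta f_\varepsilon}$, $B_\varepsilon=\sqrt{1-\delta f_\varepsilon}$, and their starred/primed analogues, and factoring out the corresponding products of $\sqrt{\delta M_\varepsilon}$, $\sqrt{1-\delta M_\varepsilon}$ etc., one expands
$A_\varepsilon' A_{\varepsilon *}' B_\varepsilon B_{\varepsilon *}-A_\varepsilon A_{\varepsilon *} B_\varepsilon' B_{\varepsilon *}'$
in powers of $\varepsilon$ using the substitutions $A_\varepsilon=\sqrt{\delta M_\varepsilon}(1+\tfrac{\delta\varepsilon}2\phi_\varepsilon)$ and $B_\varepsilon=\sqrt{1-\delta M_\varepsilon}(1-\tfrac{\delta\varepsilon}2\psi_\varepsilon)$. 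Because $M_\varepsilon$ is itself a Fermi--Dirac equilibrium, the zeroth-order term cancels (this is exactly the identity \eqref{maxwellian:product:notation}), so the leading contribution is of order $\varepsilon$ and is precisely $\tfrac{\delta\varepsilon}{2}\sqrt{m_\varepsilon}\big[(\phi_\varepsilon+\psi_\varepsilon)+(\phi_\varepsilon+\psi_\varepsilon)_*-(\phi_\varepsilon+\psi_\varepsilon)'-(\phi_\varepsilon+\psi_\varepsilon)_*'\big]$, with a factor $\sqrt{m_\varepsilon}$ arising from the symmetric combination $\sqrt{\delta M_\varepsilon \delta M_{\varepsilon *}(1-\delta M_\varepsilon')(1-\delta M_{\varepsilon *}')}$ being common to both products in the numerator. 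All higher-order terms carry at least an extra power $\varepsilon$ and products of the renormalized fluctuations $\phi_\varepsilon,\psi_\varepsilon$ (and one also uses the pointwise bound $\varepsilon|\sqrt{\delta M_\varepsilon}\phi_\varepsilon|,\ \varepsilon|\sqrt{1-\delta M_\varepsilon}\psi_\varepsilon|\lesssim 1$ to tame the nonlinear remainders).

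Concretely, I would isolate
\begin{equation*}
	(\phi_\varepsilon+\psi_\varepsilon)+(\phi_\varepsilon+\psi_\varepsilon)_*-(\phi_\varepsilon+\psi_\varepsilon)'-(\phi_\varepsilon+\psi_\varepsilon)_*'
	=\frac{2}{\delta\varepsilon\sqrt{m_\varepsilon}}\Big(A_\varepsilon' A_{\varepsilon *}' B_\varepsilon B_{\varepsilon *}-A_\varepsilon A_{\varepsilon *} B_\varepsilon' B_{\varepsilon *}'\Big)+\mathcal{R}_\varepsilon,
\end{equation*}
where $\mathcal{R}_\varepsilon$ collects the quadratic-and-higher terms in $(\phi_\varepsilon,\psi_\varepsilon)$. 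The first term on the right is, by definition of $q_\varepsilon$ in \eqref{collision:integrand}, equal to $\tfrac{2}{\delta^{1/2}\varepsilon}\cdot\delta^{1/2}\varepsilon^{1+\frac{\kappa-\gamma}{2}}q_\varepsilon\cdot\tfrac1{\sqrt{m_\varepsilon}}\cdot\tfrac1{\delta\varepsilon}$ — i.e., it is a constant multiple of $\varepsilon^{\frac{\kappa-\gamma}{2}-1}\sqrt{m_\varepsilon}^{-1}\sqrt{m_\varepsilon}\, q_\varepsilon$ once one tracks that the $\sqrt{m_\varepsilon}$ in $q_\varepsilon$'s definition and the one I factored out match. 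The point is that $q_\varepsilon$ is bounded in $L^2(b\,dtdxdvdv_*d\sigma)$ by Lemma~\ref{lemma:renormalized:collision:bound}, and one has $m_\varepsilon=O(\varepsilon^{2\tau})$ in an appropriate integrated sense (this follows from \eqref{basic:asymptotics} applied to each factor, since each of $\delta M_\varepsilon$, $1-\delta M_\varepsilon$ contributes a localization near the sphere; more precisely $\int b\, m_\varepsilon\, dv\,dv_*\,d\sigma=O(\varepsilon^{2\tau})$ using the change of variables to spherical coordinates and $b\in L^\infty$). Combining, the main term is $O(\varepsilon^{\frac{\kappa-\gamma}{2}})\cdot O(\varepsilon^{\tau})\cdot(\text{$L^2$ bounded})$ against $b\,m_\varepsilon^{1/2}$, which after accounting for the weight $m_\varepsilon$ versus $m_\varepsilon^{1/2}$ in the norm produces the stated $O(\varepsilon^{\frac{\kappa-\gamma+3\tau}{2}})$ in $L^2(dtdx;L^1(bm_\varepsilon dvdv_*d\sigma))$.

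For the remainder $\mathcal{R}_\varepsilon$, I would bound each monomial in $(\phi_\varepsilon,\psi_\varepsilon)$ of degree $\ge 2$ using a combination of the $L^\infty(dt;L^2(\delta M_\varepsilon(1-\delta M_\varepsilon)\,dxdv))$-type bounds from Lemma~\ref{lemma:squareroot:relative:entropy} and Corollary~\ref{cor:squareroot:relative:entropy}, together with the crude pointwise control $\varepsilon\sqrt{\delta M_\varepsilon}|\phi_\varepsilon|\lesssim1$, $\varepsilon\sqrt{1-\delta M_\varepsilon}|\psi_\varepsilon|\lesssim1$, and the moment asymptotics \eqref{basic:asymptotics} to absorb the $m_\varepsilon$-type weights; this is what yields the $O(\varepsilon^{1+2\tau-\gamma})$ term in $L^\infty(dt;L^1(bm_\varepsilon dxdvdv_*d\sigma))$, the extra $\varepsilon$ coming from the Taylor expansion and the $\varepsilon^{2\tau}$ again from two powers of $\sqrt{m_\varepsilon}$-type weights beyond the factored-out $\sqrt{m_\varepsilon}$, while $\varepsilon^{-\gamma}$ is the cost of the two $L^2$ fluctuation factors. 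Throughout, $d\ge 3$ enters only through the finiteness of $\int_{-R^2/\varepsilon^\tau}^\infty (R^2+\varepsilon^\tau u)^{(d-2)/2}\cosh^{-2\alpha}(u/2)\,du$ and of the corresponding integrals after the pre-post and exchange changes of variables \eqref{change:prepost}--\eqref{change:exchange}: when $d=2$ the weight $(R^2+\varepsilon^\tau u)^{(d-2)/2}=1$ fails to decay and the localization of $m_\varepsilon$ near the sphere is lost, which is why that case is excluded. The main obstacle is the careful bookkeeping of which products of $\delta M_\varepsilon$, $1-\delta M_\varepsilon$, $\delta M_\varepsilon'$, $1-\delta M_\varepsilon'$ etc.\ survive after factoring $\sqrt{m_\varepsilon}$, so that one correctly identifies the residual $m_\varepsilon$-weights and applies \eqref{basic:asymptotics} with the right exponents — getting the powers of $\varepsilon$ exactly right in both the $L^1$ and $L^2$ error terms.
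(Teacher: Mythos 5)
Your high-level strategy matches the paper's: Taylor-expand $q_\varepsilon$ in powers of $\varepsilon$ via the square-root renormalizations, identify the linear term $\mathcal{L}_\varepsilon = (\phi_\varepsilon+\psi_\varepsilon)+(\phi_\varepsilon+\psi_\varepsilon)_*-(\phi_\varepsilon+\psi_\varepsilon)'-(\phi_\varepsilon+\psi_\varepsilon)_*'$ as the coefficient of order $\varepsilon$ (the order-$0$ term vanishing because $M_\varepsilon$ is an equilibrium), bound the quadratic/cubic/quartic remainders using Corollary~\ref{cor:squareroot:relative:entropy}, and estimate the $q_\varepsilon$-contribution by Cauchy--Schwarz against the entropy dissipation bound. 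However, your power counting has a genuine gap, and your explanation of the role of $d\geq 3$ is incorrect.

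Concerning the power counting: you assert that ``$\int b\,m_\varepsilon\,dvdv_*d\sigma = O(\varepsilon^{2\tau})$''. This crude bound — obtained by writing $m_\varepsilon \leq (\delta M_\varepsilon(1-\delta M_\varepsilon))^{1/2}(\delta M_{\varepsilon *}(1-\delta M_{\varepsilon *}))^{1/2}$ and applying \eqref{basic:asymptotics} twice — is correct but not sharp enough. When inserted into Cauchy--Schwarz for the main term, it yields only $O\big(\varepsilon^{\frac{\kappa-\gamma}{2}+\tau}\big) = O\big(\varepsilon^{\frac{\kappa-\gamma+2\tau}{2}}\big)$, which is one factor $\varepsilon^{\tau/2}$ short of the target $O\big(\varepsilon^{\frac{\kappa-\gamma+3\tau}{2}}\big)$. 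Your attempt to recover this by ``accounting for the weight $m_\varepsilon$ versus $m_\varepsilon^{1/2}$'' does not close the gap. The missing ingredient is the sharper estimate
\begin{equation*}
	\int_{\mathbb{R}^d\times\mathbb{S}^{d-1}} b\, m_\varepsilon^\alpha\, dv_*d\sigma = O(\varepsilon^{2\tau})_{L^\infty(dv)},
\end{equation*}
which the paper establishes in a separate Lemma~\ref{lemma:attenuation:coefficient}. This gains an extra $\varepsilon^\tau$ over the naive estimate because the $\sigma$-integration — after the spherical decomposition $\sigma = \frac{v+v_*}{|v+v_*|}\cos\eta + \widetilde\sigma\sin\eta$ and the change of variable $\eta\mapsto s$ with $s=\frac{|v+v_*||v-v_*|}{2\varepsilon^\tau}\cos\eta$ — concentrates the $\sigma$-measure near the Fermi sphere and produces an additional $\varepsilon^\tau$ from the Jacobian. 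Combined with \eqref{basic:asymptotics} for the residual $v_*$-integral, this yields $\int bm_\varepsilon\,dv_*d\sigma = O(\varepsilon^{3\tau})$, which is what the final Cauchy--Schwarz step needs. Without explicitly running this argument, your proof does not reach the stated exponent.

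Concerning the dimension restriction: you claim $d\geq 3$ is needed ``only through the finiteness of $\int_{-R^2/\varepsilon^\tau}^\infty (R^2+\varepsilon^\tau u)^{(d-2)/2}\cosh^{-2\alpha}(u/2)du$'', saying that when $d=2$ the polynomial weight ``fails to decay''. This is wrong: that integral is finite in all dimensions $d\geq 2$ because the $\cosh^{-2\alpha}$ factor dominates any polynomial; the polynomial weight plays no role in the convergence. The actual obstruction is in the change of variables inside Lemma~\ref{lemma:attenuation:coefficient}: writing $d\sigma = \sin^{d-2}\eta\,d\eta\,d\widetilde\sigma$ and passing to the variable $s$ produces the Jacobian factor $\big((|v+v_*||v-v_*|)^2 - (2\varepsilon^\tau s)^2\big)^{\frac{d-3}{2}}_+$. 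For $d\geq 3$ this is bounded and the required $L^\infty(dv)$ estimate goes through (after also handling the integrable angular singularity $(|\nu+\nu_*||\nu-\nu_*|)^{-1}$ via \eqref{attenuation:coefficient:5}). For $d=2$ the exponent equals $-\tfrac12$ and the factor becomes singular near the endpoints of the $s$-range, which destroys the estimate; this is precisely why the two-dimensional case is excluded and treated separately in the companion article.
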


\begin{rem}
	As previously emphasized, in virtue of Proposition \ref{prop:limit:characterization}, if $\mu$ is a limit point of a converging subsequence of fluctuations $g_\varepsilon$, then $\delta M_\varepsilon\phi_\varepsilon$, $\left(1-\delta M_\varepsilon\right)\psi_\varepsilon$ and $\delta M_\varepsilon\left(1-\delta M_\varepsilon\right)\left(\phi_\varepsilon+\psi_\varepsilon\right)$ also converge toward $\mu$, in a suitable sense, as $\varepsilon\to 0$. In particular, recall that $\mu$ is characterized as a singular measure in $v$, which is concentrated on a sphere. Therefore, it is only natural that the uniform bounds from the previous lemma be set in the functional setting of integrable functions in velocity. Moreover, we do not expect that such bounds can be improved to spaces $L^p(bm_\varepsilon dvdv_*d\sigma)$, with $p>1$, in any meaningful way.
\end{rem}

\begin{proof}
	By expanding the renormalized collision integrands $q_\varepsilon$ given in \eqref{collision:integrand} in terms of the renormalized fluctuations $\phi_\varepsilon$ and $\psi_\varepsilon$, we obtain the decomposition
	\begin{equation}\label{linearization:decomposition}
		\begin{aligned}
			\frac{\delta^\frac 12\varepsilon^{1+\frac{\kappa-\gamma}2}q_\varepsilon}{\sqrt{m_\varepsilon}}
			&=
			\left(1+\frac{\delta\varepsilon}2 \phi_\varepsilon'\right)
			\left(1+\frac{\delta\varepsilon}2 \phi_{\varepsilon *}'\right)
			\left(1-\frac{\delta\varepsilon}2 \psi_\varepsilon\right)
			\left(1-\frac{\delta\varepsilon}2 \psi_{\varepsilon *}\right)
			\\
			&\quad-
			\left(1+\frac{\delta\varepsilon}2 \phi_\varepsilon\right)
			\left(1+\frac{\delta\varepsilon}2 \phi_{\varepsilon *}\right)
			\left(1-\frac{\delta\varepsilon}2 \psi_\varepsilon'\right)
			\left(1-\frac{\delta\varepsilon}2 \psi_{\varepsilon *}'\right)
			\\
			&=
			-\frac{\delta\varepsilon}2\mathcal{L}_\varepsilon
			+\left(\frac{\delta\varepsilon}2\right)^2\mathcal{Q}_\varepsilon
			+\left(\frac{\delta\varepsilon}2\right)^3\mathcal{C}_\varepsilon
			+\left(\frac{\delta\varepsilon}2\right)^4\mathcal{T}_\varepsilon,
		\end{aligned}
	\end{equation}
	where
	\begin{equation*}
		\begin{aligned}
			\mathcal{L}_\varepsilon&=
			(\phi_\varepsilon+\psi_\varepsilon)+(\phi_\varepsilon+\psi_\varepsilon)_*
			-(\phi_\varepsilon+\psi_\varepsilon)'-(\phi_\varepsilon+\psi_\varepsilon)_*',
			\\
			\mathcal{Q}_\varepsilon&=
			\phi_{\varepsilon}'\phi_{\varepsilon*}'
			-\phi_{\varepsilon}'\psi_{\varepsilon}
			-\phi_{\varepsilon}'\psi_{\varepsilon*}
			-\phi_{\varepsilon*}'\psi_{\varepsilon}
			-\phi_{\varepsilon*}'\psi_{\varepsilon*}
			+\psi_{\varepsilon}\psi_{\varepsilon*}
			\\
			&\quad
			-\phi_{\varepsilon}\phi_{\varepsilon*}
			+\phi_{\varepsilon}\psi_{\varepsilon}'
			+\phi_{\varepsilon}\psi_{\varepsilon*}'
			+\phi_{\varepsilon*}\psi_{\varepsilon}'
			+\phi_{\varepsilon*}\psi_{\varepsilon*}'
			-\psi_{\varepsilon}'\psi_{\varepsilon*}',
			\\
			\mathcal{C}_\varepsilon&=
			-\phi_{\varepsilon}'\phi_{\varepsilon*}'(\psi_{\varepsilon}+\psi_{\varepsilon*})
			+(\phi_{\varepsilon}'+\phi_{\varepsilon*}')\psi_{\varepsilon}\psi_{\varepsilon*}
			+\phi_{\varepsilon}\phi_{\varepsilon*}(\psi_{\varepsilon}'+\psi_{\varepsilon*}')
			-(\phi_{\varepsilon}+\phi_{\varepsilon*})\psi_{\varepsilon}'\psi_{\varepsilon*}',
			\\
			\mathcal{T}_\varepsilon&=
			\phi_{\varepsilon}'\phi_{\varepsilon*}'\psi_{\varepsilon}\psi_{\varepsilon*}
			-\phi_{\varepsilon}\phi_{\varepsilon*}\psi_{\varepsilon}'\psi_{\varepsilon*}'.
		\end{aligned}
	\end{equation*}
	Our goal is now to control the terms $\mathcal{Q}_\varepsilon$, $\mathcal{C}_\varepsilon$ and $\mathcal{T}_\varepsilon$ individually by employing the bounds from Lemma \ref{lemma:squareroot:relative:entropy}, which, when combined with the bound on $q_\varepsilon$ from Lemma \ref{lemma:renormalized:collision:bound}, will yield a suitable control of the linear term $\mathcal{L}_\varepsilon$.
	More precisely, we are going to show that
	\begin{equation}\label{nonlinear:bound:1}
		\int_{\mathbb{R}^d\times\mathbb{R}^d\times\mathbb{S}^{d-1}}
		\big(
		|\mathcal{Q}_\varepsilon|
		+\varepsilon|\mathcal{C}_\varepsilon|
		+\varepsilon^2|\mathcal{T}_\varepsilon|
		\big)bm_\varepsilon dvdv_*d\sigma
		=O(\varepsilon^{2\tau-\gamma})_{L^\infty(dt;L^1(dx))},
	\end{equation}
	by relying on the assumption that $d\geq 3$.
	
	For mere convenience of notation, we introduce now the functions
	\begin{equation*}
		\Phi_\varepsilon=\sqrt{\delta M_\varepsilon}\phi_\varepsilon
		\qquad\text{and}\qquad
		\Psi_\varepsilon=\sqrt{1-\delta M_\varepsilon}\psi_\varepsilon,
	\end{equation*}
	which we only employ in this proof. In particular, by Corollary \ref{cor:squareroot:relative:entropy}, notice that
	\begin{equation}\label{nonlinear:bound:2}
		\Phi_\varepsilon,\Psi_\varepsilon=O(\varepsilon^{\frac {2-\gamma}p-1})_{L^\infty(dt;L^p(dxdv))},
	\end{equation}
	for all $2\leq p\leq\infty$.
	
	Therefore, in order to show \eqref{nonlinear:bound:1}, we observe, employing the usual collisional symmetries \eqref{change:prepost} and \eqref{change:exchange}, that it is sufficient to establish that
	\begin{equation*}
		\int_{\mathbb{R}^d\times\mathbb{R}^d\times\mathbb{S}^{d-1}}
		\big(|\Phi_{\varepsilon}\Phi_{\varepsilon*}|
		+|\Phi_{\varepsilon}\Psi_{\varepsilon}'|
		+|\Phi_{\varepsilon}\Psi_{\varepsilon*}'|
		+|\Psi_{\varepsilon}\Psi_{\varepsilon*}|\big)
		b \sqrt{m_\varepsilon}
		dvdv_*d\sigma=O(\varepsilon^{2\tau-\gamma})_{L^\infty(dt;L^1(dx))}.
	\end{equation*}	
	Then, further utilizing the classical Young inequality for products, we see that \eqref{nonlinear:bound:1} is a consequence of the asymptotic bound
	\begin{equation*}
		\int_{\mathbb{R}^d\times\mathbb{R}^d\times\mathbb{S}^{d-1}}
		\big(\Phi_{\varepsilon}^2
		+\Psi_{\varepsilon}^2\big)
		b \sqrt{m_\varepsilon}
		dvdv_*d\sigma=O(\varepsilon^{2\tau-\gamma})_{L^\infty(dt;L^1(dx))}.
	\end{equation*}	
	Therefore, in view of \eqref{nonlinear:bound:2}, with $p=2$, we conclude that \eqref{nonlinear:bound:1} is a direct consequence of the control
	\begin{equation}\label{attenuation:coefficient:1}
		\int_{\mathbb{R}^d\times\mathbb{S}^{d-1}}
		b \sqrt{m_\varepsilon}
		dv_*d\sigma=O(\varepsilon^{2\tau})_{L^\infty(dv)}.
	\end{equation}
	The justification of \eqref{attenuation:coefficient:1} is nontrivial and requires great care. In particular, it fails in the case $d=2$. Thus, for the sake of clarity, it is deferred to Lemma \ref{lemma:attenuation:coefficient}, below, where we provide a precise statement with a complete proof of this asymptotic bound. For the moment, admitting that \eqref{attenuation:coefficient:1} holds true, we have completed the proof of \eqref{nonlinear:bound:1}.

	Now, by further combining \eqref{attenuation:coefficient:1} with the fact that
	\begin{equation*}
		m_\varepsilon^2
		=
		\big(\delta M_\varepsilon(1-\delta M_\varepsilon)\big)
		\big(\delta M_\varepsilon(1-\delta M_\varepsilon)\big)_*
		\big(\delta M_\varepsilon(1-\delta M_\varepsilon)\big)'
		\big(\delta M_\varepsilon(1-\delta M_\varepsilon)\big)_*'
	\end{equation*}
	and the basic asymptotic control \eqref{basic:asymptotics}, we see that
	\begin{equation*}
		\int_{\mathbb{R}^d\times\mathbb{R}^d\times\mathbb{S}^{d-1}}
		b m_\varepsilon
		dvdv_*d\sigma=O(\varepsilon^{3\tau})_{L^\infty(dv)}.
	\end{equation*}
	Therefore, by virtue of Lemma \ref{lemma:renormalized:collision:bound}, we conclude that
	\begin{equation*}
		\begin{aligned}
			\int_{\mathbb{R}^d\times\mathbb{R}^d\times\mathbb{S}^{d-1}}
			|q_\varepsilon|b\sqrt{m_\varepsilon}dvdv_*d\sigma
			&\leq
			\left(\int_{\mathbb{R}^d\times\mathbb{R}^d\times\mathbb{S}^{d-1}}
			q_\varepsilon^2 b dvdv_*d\sigma\right)^\frac 12
			\left(\int_{\mathbb{R}^d\times\mathbb{R}^d\times\mathbb{S}^{d-1}}
			bm_\varepsilon dvdv_*d\sigma\right)^\frac 12
			\\
			&=O(\varepsilon^{\frac{3\tau}2})_{L^2(dtdx)},
		\end{aligned}
	\end{equation*}
	in any dimension $d\geq 3$.
	
	Finally, incorporating \eqref{nonlinear:bound:1} and the preceding bound into \eqref{linearization:decomposition}, we deduce that
	\begin{equation*}
		\int_{\mathbb{R}^d\times\mathbb{R}^d\times\mathbb{S}^{d-1}}
		|\mathcal{L}_\varepsilon|
		bm_\varepsilon dvdv_*d\sigma
		=O(\varepsilon^{1+2\tau-\gamma})_{L^\infty(dt;L^1(dx))}
		+O(\varepsilon^{\frac{\kappa-\gamma+3\tau}2})_{L^2(dtdx)},
	\end{equation*}
	for any $d\geq 3$, which completes the proof of the lemma.
\end{proof}

The next lemma provides a precise justification of \eqref{attenuation:coefficient:1}.

\begin{lem}\label{lemma:attenuation:coefficient}
	Let $d\geq 3$ and consider a cross-section
	\begin{equation*}
		b(z,\sigma)=b\left(|z|,\frac{z}{|z|}\cdot\sigma\right)\geq 0
	\end{equation*}
	such that
	\begin{equation*}
		b(z,\sigma)\in L^\infty(\mathbb{R}^d\times\mathbb{S}^{d-1}).
	\end{equation*}
	Then, for any $\alpha>0$, as $\varepsilon\to 0$, it holds that
	\begin{equation*}
		\int_{\mathbb{R}^d\times\mathbb{S}^{d-1}}
		b(v-v_*,\sigma) m_\varepsilon^\alpha
		dv_*d\sigma=O(\varepsilon^{2\tau})_{L^\infty(dv)},
	\end{equation*}
	where $m_\varepsilon$ is given in \eqref{maxwellian:product:notation}.
\end{lem}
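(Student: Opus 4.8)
The plan is to reduce the estimate to a one-dimensional integral over the energy variable by exploiting the precise structure of the Fermi--Dirac factors in $m_\varepsilon$. Recall that $\delta M_\varepsilon(v) = (1+e^{(v^2-R^2)/\varepsilon^\tau})^{-1}$, so each factor $\delta M_\varepsilon(1-\delta M_\varepsilon)$ evaluated at a velocity $w$ equals $\tfrac14\cosh^{-2}\!\big(\tfrac{w^2-R^2}{2\varepsilon^\tau}\big)$, which is sharply peaked near $|w|=R$ at width $O(\varepsilon^\tau)$. Using $m_\varepsilon = \big(\delta M_\varepsilon'\delta M_{\varepsilon*}'(1-\delta M_\varepsilon)(1-\delta M_{\varepsilon*})\big)$ together with energy conservation $v^2+v_*^2 = v'^2+v_*'^2$, one sees that $m_\varepsilon^\alpha$ decays exponentially in $\varepsilon^{-\tau}$ as soon as \emph{any} of the four velocities $v,v_*,v',v_*'$ leaves an $O(\varepsilon^\tau)$-neighborhood of the Fermi sphere. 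Since $b\in L^\infty$, it suffices to bound $\int_{\mathbb{R}^d\times\mathbb{S}^{d-1}} m_\varepsilon^\alpha\, dv_* d\sigma$ uniformly in $v$ by $O(\varepsilon^{2\tau})$.

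The key steps, in order: First I would fix $v$ and note that $m_\varepsilon^\alpha$ is bounded by a product involving only $(1-\delta M_\varepsilon)(v)^\alpha$ (a constant in the $v_*,\sigma$ integration, bounded by $1$) and the factor coming from $v_*$, namely $\big(\delta M_\varepsilon(1-\delta M_\varepsilon)\big)^\alpha$ is \emph{not} quite what appears, so more carefully I would use the symmetric representation $m_\varepsilon = \delta M_\varepsilon\delta M_{\varepsilon*}(1-\delta M_\varepsilon')(1-\delta M_{\varepsilon*}')$ and retain whichever representation makes the peaked factor land on the integration variable $v_*$. Concretely, bound $m_\varepsilon^\alpha \leq \big(\delta M_{\varepsilon*}(1-\delta M_{\varepsilon*})\big)^{\alpha/2}\cdot(\text{bounded factors})$ after an application of the elementary inequality $\delta M_\varepsilon' \delta M_{\varepsilon*}' \leq \big(\delta M_\varepsilon'(1-\delta M_\varepsilon')\big)^{1/2}\big(\delta M_{\varepsilon*}'(1-\delta M_{\varepsilon*}')\big)^{1/2}$ or similar — the point being to extract a factor that is integrable in $v_*$ with total mass $O(\varepsilon^\tau)$ (by the computation \eqref{basic:asymptotics}) \emph{and} a second factor carrying another $O(\varepsilon^\tau)$ from the $\sigma$-integration. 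The $\sigma$-integration is where the peakedness of the factors at $v'$ or $v_*'$ is used: holding $v,v_*$ fixed, $v'$ and $v_*'$ sweep the sphere of radius $|v-v_*|/2$ centered at $(v+v_*)/2$ as $\sigma$ varies, and the constraint $|v'|\approx R$ (forced by the peaked factor) restricts $\sigma$ to a band of measure $O(\varepsilon^\tau)$ on $\mathbb{S}^{d-1}$ — here the hypothesis $d\geq 3$ enters, since for $d\geq 3$ the surface measure of such a band scales like its width, whereas for $d=2$ it does not and can instead concentrate at isolated points, producing a logarithmic or $\varepsilon^{\tau/2}$ loss.

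Thus the scheme is: extract one power $O(\varepsilon^\tau)$ from integrating a $\cosh^{-2}$-type factor in $|v_*|$ via \eqref{basic:asymptotics}, and a second power $O(\varepsilon^\tau)$ from the restriction of the angular variable $\sigma$ to an $O(\varepsilon^\tau)$-band forced by requiring $|v'|$ (or $|v_*'|$) to lie near $R$; the remaining factors are uniformly bounded and $b$ is bounded, yielding the claimed $O(\varepsilon^{2\tau})$ uniformly in $v$. The main obstacle I anticipate is making the angular estimate fully rigorous: one must parametrize $\sigma\mapsto v'$ carefully, control the Jacobian of this map (which degenerates when $|v-v_*|$ is small, i.e. grazing collisions, but there the whole integrand is harmless since $|v'-v_*'|$ is small and both lie near each other), and verify that the $d\geq 3$ surface-band estimate $|\{\sigma\in\mathbb{S}^{d-1} : |\langle\sigma,e\rangle - c|\leq \varepsilon^\tau\}| = O(\varepsilon^\tau)$ is applied uniformly over the relevant geometric configurations. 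A clean way to organize this is to change variables from $(v_*,\sigma)$ to $(v_*, v')$ — or to use the classical $\omega$-representation — reducing the double integral to an integral over $(v_*,v')\in\mathbb{R}^d\times\mathbb{R}^d$ with an explicit (possibly singular but integrable) kernel, after which both $O(\varepsilon^\tau)$ factors come from two independent $\cosh^{-2}$ integrations à la \eqref{basic:asymptotics}, one in $|v_*|$ and one in $|v'|$, with the transversal directions contributing bounded factors precisely because $d-1\geq 2$.
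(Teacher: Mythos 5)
Your high-level plan coincides with the paper's: write $m_\varepsilon$ explicitly as a product of four factors of the form $\big(2\cosh\frac{w^2-R^2}{2\varepsilon^\tau}\big)^{-1}$, extract one power of $\varepsilon^\tau$ from the radial $v_*$-integration and a second from the angular $\sigma$-integration (the paper substitutes $s=\frac{|v+v_*||v-v_*|}{2\varepsilon^\tau}\cos\eta$, which is exactly the ``band'' variable you describe), and invoke $d\geq 3$ to control the resulting angular singularity. But the proposal stops exactly at the step you flag as your ``main obstacle,'' and that step is the substance of the lemma, not a routine Jacobian check. After the change of variables, the $\sigma$-band measure carries a prefactor $(|v+v_*||v-v_*|)^{-1}$, and one must prove that this is integrable in $\nu_*=v_*/|v_*|$ over $\mathbb{S}^{d-1}$ \emph{uniformly} in the remaining variables; the paper establishes the quantitative estimate
\begin{equation*}
\int_{\mathbb{S}^{d-1}}\frac{d\nu_*}{|v+v_*||v-v_*|}\lesssim \frac{1}{v^2+v_*^2},
\end{equation*}
and this is precisely what $d\geq 3$ buys (for $d=2$ the integral diverges logarithmically). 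Your remark that grazing collisions are ``harmless since $v'$ and $v_*'$ lie near each other'' is an intuition, not an estimate, and it is exactly this singular integral that controls the grazing regime.

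Two further ingredients that your sketch omits are needed to close the argument. First, the spherical Jacobian $(R^2+\varepsilon^\tau u_*)^{(d-2)/2}$ grows in $u_*$ and must be absorbed into the $\cosh$ decay (the paper halves the exponent, writing $\cosh^{-\alpha}(u_*/2)(R^2+\varepsilon^\tau u_*)^{(d-2)/2}\lesssim\cosh^{-\alpha/2}(u_*/2)$) so that the $u_*$-integral remains $O(1)$ uniformly. Second, the bound $\frac{1}{v^2+v_*^2}$ only helps if $v^2+v_*^2$ is bounded below, and the paper arranges this by splitting off the region $\{v^2<R^2/2\}$ beforehand, where $m_\varepsilon^\alpha$ is exponentially small in $\varepsilon^{-\tau}$ and the conclusion is trivial. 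Without those two steps the reduction to a product of one-dimensional $\cosh$-integrals ``\`a la \eqref{basic:asymptotics}'' is not uniform in $v$, which is what the $L^\infty(dv)$ conclusion requires.
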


\begin{proof}
	The first few steps of this proof are valid in any dimension $d\geq 2$. We will then clearly emphasize, below, the moment when the restriction $d\geq 3$ becomes necessary.
	
	We begin by carefully introducing suitable variables which will allow us to understand the precise asymptotic behavior of $bm_\varepsilon^\alpha$. Specifically, writing
	\begin{equation}\label{attenuation:coefficient:8}
		\begin{aligned}
			u&=\frac{v^2-R^2}{\varepsilon^\tau}\in\left [-\frac{R^2}{\varepsilon^\tau},\infty\right),
			&
			\nu&=\frac{v}{|v|}\in\mathbb{S}^{d-1},
			\\
			u_*&=\frac{v_*^2-R^2}{\varepsilon^\tau}\in\left [-\frac{R^2}{\varepsilon^\tau},\infty\right),
			&
			\nu_*&=\frac{v_*}{|v_*|}\in\mathbb{S}^{d-1},
		\end{aligned}
	\end{equation}
	we compute that
	\begin{equation*}
		\begin{aligned}
			\frac{v'^2-R^2}{\varepsilon^\tau}
			&=\frac{u+u_*}2+\frac{|v+v_*||v-v_*|}{2\varepsilon^\tau}\cos\eta,
			\\
			\frac{v_*'^2-R^2}{\varepsilon^\tau}
			&=\frac{u+u_*}2-\frac{|v+v_*||v-v_*|}{2\varepsilon^\tau}\cos\eta,
		\end{aligned}
	\end{equation*}
	where $\eta\in [0,\pi]$ denotes the angle between $v+v_*$ and $\sigma$.
	In particular, further introducing
	\begin{equation}\label{attenuation:coefficient:9}
		s=\frac{|v+v_*||v-v_*|}{2\varepsilon^\tau}\cos\eta
		\in\left[-\frac{|v+v_*||v-v_*|}{2\varepsilon^\tau},\frac{|v+v_*||v-v_*|}{2\varepsilon^\tau}\right],
	\end{equation}
	it is then readily seen that
	\begin{equation*}
		\begin{aligned}
			m_\varepsilon
			&=
			\sqrt{\big(\delta M_\varepsilon(1-\delta M_\varepsilon)\big)
			\big(\delta M_\varepsilon(1-\delta M_\varepsilon)\big)_*
			\big(\delta M_\varepsilon(1-\delta M_\varepsilon)\big)'
			\big(\delta M_\varepsilon(1-\delta M_\varepsilon)\big)_*'}
			\\
			&=\left[16\cosh\left(\frac u2\right)
			\cosh\left(\frac {u_*}2\right)
			\cosh\left(\frac {u+u_*}4+\frac s2\right)
			\cosh\left(\frac {u+u_*}4-\frac s2\right)\right]^{-1}
			\\
			&=\left[8\cosh\left(\frac u2\right)
			\cosh\left(\frac {u_*}2\right)
			\left(\cosh\left(\frac {u+u_*}2\right)
			+\cosh s \right)\right]^{-1}.
		\end{aligned}
	\end{equation*}
	Therefore, employing the above variables, we obtain that
	\begin{equation*}
		\begin{aligned}
			\int_{\mathbb{R}^d\times\mathbb{S}^{d-1}}
			&b(v-v_*,\sigma) m_\varepsilon^\alpha
			dv_*d\sigma
			\\
			&=\frac{\varepsilon^\tau}{2^{1+3\alpha}}
			\int_{-\frac{R^2}{\varepsilon^\tau}}^\infty
			\int_{\mathbb{S}^{d-1}}
			\int_{\mathbb{S}^{d-1}}
			\frac{b(v-v_*,\sigma)(R^2+\varepsilon^\tau u_*)^\frac{d-2}2}
			{\cosh^\alpha\left(\frac u2\right)
			\cosh^\alpha\left(\frac {u_*}2\right)
			\left(\cosh\left(\frac{u+u_*}2\right)+\cosh s\right)^\alpha}
			d\sigma d\nu_* du_*.
		\end{aligned}
	\end{equation*}
	At this stage, it is readily seen that
	\begin{equation*}
		\begin{aligned}
			\mathds{1}_{\{-\frac{R^2}{\varepsilon^\tau}\leq u < -\frac{R^2}{2\varepsilon^\tau}\}}
			&\int_{\mathbb{R}^d\times\mathbb{S}^{d-1}}
			b(v-v_*,\sigma) m_\varepsilon^\alpha
			dv_*d\sigma
			\\
			&\lesssim \varepsilon^\tau\|b\|_{L^\infty}
			\cosh^{-\alpha}\left(\frac {R^2}{4\varepsilon^\tau}\right)
			\int_{-\frac{R^2}{\varepsilon^\tau}}^\infty
			\cosh^{-\frac \alpha 2}\left(\frac {u_*}2\right)
			du_*
			=O(\varepsilon^{2\tau})_{L^\infty(dv)}.
		\end{aligned}
	\end{equation*}
	Thus, there only remains to consider values $u \geq -\frac{R^2}{2\varepsilon^\tau}$, which will be important toward the end of the proof, below.

	Next, we employ the spherical decomposition
	\begin{equation}\label{attenuation:coefficient:10}
		\sigma=\frac{v+v_*}{|v+v_*|}\cos\eta+\widetilde\sigma\sin\eta,
		\qquad \text{with }\widetilde\sigma\in\mathbb{S}^{d-2}\perp \frac{v+v_*}2,
	\end{equation}
	where $\mathbb{S}^{d-2}\perp \frac{v+v_*}2$ denotes the $(d-2)$-dimensional unit sphere in $\mathbb{R}^d$, centered at the origin, normal to $\frac{v+v_*}2$, embedded with its natural surface measure, i.e.,
	\begin{equation*}
		\left(\mathbb{S}^{d-2}\perp \frac{v+v_*}2\right)
		=
		\left\{
		\sigma\in\mathbb{R}^d : |\sigma|=1,\ \frac{v+v_*}2\cdot\sigma=0
		\right\}.
	\end{equation*}
	Then, noticing that
	\begin{equation*}
		d\sigma=\sin^{d-2}\eta d\eta d\widetilde\sigma
		=\frac{2\varepsilon^\tau \left(\left( |v+v_*||v-v_*| \right)^2-(2\varepsilon^\tau s)^2\right)^\frac{d-3}2}
		{\left(|v+v_*||v-v_*|\right)^{d-2}}
		dsd\widetilde\sigma,
	\end{equation*}
	we deduce that
	\begin{equation}\label{attenuation:coefficient:4}
		\begin{aligned}
			\int_{\mathbb{R}^d\times\mathbb{S}^{d-1}}
			&b(v-v_*,\sigma) m_\varepsilon^\alpha
			dv_*d\sigma
			\\
			&=\frac{\varepsilon^{2\tau}}{8^{\alpha}}
			\int_{-\frac{R^2}{\varepsilon^\tau}}^\infty
			\int_{\mathbb{S}^{d-1}}
			\int_{-\frac{|v+v_*||v-v_*|}{2\varepsilon^\tau}}^{\frac{|v+v_*||v-v_*|}{2\varepsilon^\tau}}
			\frac{\left(\int_{\mathbb{S}^{d-2}\perp\frac{v+v_*}{2}}
			b(v-v_*,\sigma)d\widetilde\sigma \right)
			(R^2+\varepsilon^\tau u_*)^\frac{d-2}2}
			{\cosh^\alpha\left(\frac u2\right)
			\cosh^\alpha\left(\frac {u_*}2\right)
			\left(\cosh\left(\frac{u+u_*}2\right)+\cosh s\right)^\alpha}
			\\
			&\quad\times\frac{ \left(\left( |v+v_*||v-v_*| \right)^2-(2\varepsilon^\tau s)^2\right)^\frac{d-3}2}
			{\left(|v+v_*||v-v_*|\right)^{d-2}}
			ds d\nu_* du_*.
		\end{aligned}
	\end{equation}
	It is to be emphasized that the preceding decomposition also holds in the two-dimensional case $d=2$, where $\mathbb{S}^{0}\perp\frac{v+v_*}{2}$ is made up of the only two unit vectors which are orthogonal to $\frac{v+v_*}{2}$ and $|\mathbb{S}^{0}\perp\frac{v+v_*}{2}|=2$.
	
	We have now reached the point where the case $d=2$ needs to be considered separately. Thus, from now on, we assume that $d\geq 3$. Recalling that the cross-section $b$ is uniformly bounded pointwise, it follows that
	\begin{equation*}
		\begin{aligned}
			\varepsilon^{-2\tau}\int_{\mathbb{R}^d\times\mathbb{S}^{d-1}}
			& b(v-v_*,\sigma)m_\varepsilon^\alpha
			dv_*d\sigma
			\\
			&\lesssim
			\int_{-\frac{R^2}{\varepsilon^\tau}}^\infty
			\int_{\mathbb{S}^{d-1}}
			\int_{-\frac{|v+v_*||v-v_*|}{2\varepsilon^\tau}}^{\frac{|v+v_*||v-v_*|}{2\varepsilon^\tau}}
			\frac{\cosh^{-\alpha}\left(\frac u2\right)
			\cosh^{-\frac\alpha 2}\left(\frac {u_*}2\right)
			\cosh^{-\alpha} (s)}
			{|v+v_*||v-v_*|}
			ds d\nu_* du_*
			\\
			&\lesssim
			\cosh^{-\alpha}\left(\frac u2\right)
			\int_{-\frac{R^2}{\varepsilon^\tau}}^\infty
			\cosh^{-\frac\alpha2}\left(\frac {u_*}2\right)
			\left(\int_{\mathbb{S}^{d-1}}
			\frac{1}
			{|v+v_*||v-v_*|}
			d\nu_*\right)
			du_*.
		\end{aligned}
	\end{equation*}
	Here, denoting the angle between $v$ and $v_*$ by $\zeta \in [0,\pi]$, we further notice that
	\begin{equation}\label{attenuation:coefficient:5}
		\begin{aligned}
			\int_{\mathbb{S}^{d-1}}
			\frac{1}{|v+v_*||v-v_*|}
			d\nu_*
			&=
			\int_{\mathbb{S}^{d-1}}
			\left((v^2+v_*^2)^2-(2v\cdot v_*)^2\right)^{-\frac 12}
			d\nu_*
			\\
			&=|\mathbb{S}^{d-2}|
			\int_0^\pi
			\left((v^2+v_*^2)^2-4v^2v_*^2\cos^2\zeta\right)^{-\frac 12}
			\sin^{d-2}\zeta d\zeta
			\\
			&=\frac{|\mathbb{S}^{d-2}|}{2|v||v_*|}
			\int_{-1}^1
			\left(\left(\frac{v^2+v_*^2}{2|v||v_*|}\right)^2-t^2\right)^{-\frac 12}
			(1-t^2)^{\frac{d-3}2} dt
			\\
			&\lesssim \left(|v||v_*|(v^2+v_*^2)\right)^{-\frac 12}
			\int_{0}^1
			\left(\left(\frac{v^2+v_*^2}{2|v||v_*|}-1\right)+t\right)^{-\frac 12}
			t^{\frac{d-3}2} dt
			\\
			&\lesssim
			\frac 1{v^2+v_*^2},
		\end{aligned}
	\end{equation}
	where we used that
	\begin{equation*}
		\int_0^1 \left(\lambda+ t\right)^{-\frac 12} t^{\frac{d-3}2} dt
		=\lambda^{\frac{d-2}2}\int_0^{\lambda^{-1}} \left(1+t\right)^{-\frac 12} t^{\frac{d-3}2} dt\lesssim
		(1+\lambda)^{-\frac 12},
	\end{equation*}
	for all $\lambda\in (0,\infty)$. All in all, combining the previous estimates, we deduce that
	\begin{equation*}
		\varepsilon^{-2\tau}\int_{\mathbb{R}^d\times\mathbb{S}^{d-1}}
		b(v-v_*,\sigma) m_\varepsilon^\alpha
		dv_*d\sigma
		\lesssim
		\int_{-\frac{R^2}{\varepsilon^\tau}}^\infty
		\frac{\cosh^{-\alpha}\left(\frac u2\right)
		\cosh^{-\frac \alpha2}\left(\frac {u_*}2\right)}
		{2R^2+\varepsilon^\tau(u+u_*)}
		du_*.
	\end{equation*}

	Finally, in order to control the last integral, above, we recall that we only need to consider values in the domain $\{v^2 \geq \frac{R^2}{2}\}=\{u \geq-\frac{R^2}{2\varepsilon^\tau}\}$ and then deduce that
	\begin{equation*}
		\begin{aligned}
			\mathds{1}_{\{u \geq -\frac{R^2}{2\varepsilon^\tau}\}}
			\int_{-\frac{R^2}{\varepsilon^\tau}}^\infty
			\frac{\cosh^{-\alpha}\left(\frac u2\right)
			\cosh^{-\frac\alpha 2}\left(\frac {u_*}2\right)}
			{2R^2+\varepsilon^\tau(u+u_*)}
			du_*
			&\leq\frac 2{R^2}
			\int_{-\frac{R^2}{\varepsilon^\tau}}^\infty
			\cosh^{-\alpha}\left(\frac u2\right)
			\cosh^{-\frac\alpha 2}\left(\frac {u_*}2\right)
			du_*
			\\
			&\lesssim\cosh^{-\alpha}\left(\frac u2\right).
		\end{aligned}
	\end{equation*}
	In conclusion, we have now obtained that
	\begin{equation*}
		\mathds{1}_{\{u \geq -\frac{R^2}{2\varepsilon^\tau}\}}\int_{\mathbb{R}^d\times\mathbb{S}^{d-1}}
		b(v-v_*,\sigma) \sqrt{m_\varepsilon}
		dv_*d\sigma
		\lesssim \varepsilon^{2\tau}\cosh^{-\alpha}\left(\frac u2\right),
	\end{equation*}
	which completes the proof of the lemma.
\end{proof}

\subsection{Collisional symmetries on the sphere}

Prior to addressing thermodynamic equilibria of density fluctuations in the hydrodynamic regime $\varepsilon\to 0$, we establish, in the next lemma, an important technical result which builds upon the method of proof of Lemma \ref{lemma:attenuation:coefficient}. It shows, in the case $d\geq 3$, that the classical volume-preserving pre-post-collisional change of variables \eqref{change:prepost} induces volume-preserving collisional symmetries
\begin{equation*}
	(\omega,\omega_*,\sigma)\mapsto \left(\omega',\omega_*',\frac{\omega-\omega_*}{|\omega-\omega_*|}\right)
	=\left(\frac{\omega+\omega_*}{2}+\frac{|\omega-\omega_*|}{2}\sigma,\frac{\omega+\omega_*}{2}-\frac{|\omega-\omega_*|}{2}\sigma,\frac{\omega-\omega_*}{|\omega-\omega_*|}\right),
\end{equation*}
when restricted to the invariant bundle
\begin{equation*}
	(\omega,\omega_*,\sigma)
	\in \partial B(0,R)\times\partial B(0,R)\times\left(\mathbb{S}^{d-2}\perp\frac{\omega+\omega_*}2\right)
	\subset \mathbb{R}^d\times\mathbb{R}^d\times\mathbb{S}^{d-1},
\end{equation*}
where $R>0$ and $\mathbb{S}^{d-2}\perp \frac{\omega+\omega_*}2$ denotes the $(d-2)$-dimensional unit sphere in $\mathbb{R}^d$, centered at the origin, normal to $\frac{\omega+\omega_*}2$, embedded with its natural surface measure.

\begin{lem}\label{lemma:collisional:symmetries}
	Let $d\geq 3$. Then, for all $R>0$, it holds that
	\begin{equation*}
		\begin{aligned}
			\int_{\partial B(0,R)\times\partial B(0,R)\times\left(\mathbb{S}^{d-2}\perp\frac{\omega+\omega_*}2\right)}
			&\chi(\omega,\omega_*,\omega',\omega_*')
			d\omega d\omega_* d\sigma
			\\
			&=
			\int_{\partial B(0,R)\times\partial B(0,R)\times\left(\mathbb{S}^{d-2}\perp\frac{\omega+\omega_*}2\right)}
			\chi\left(\omega',\omega_*',\omega,\omega_*\right)
			d\omega d\omega_* d\sigma,
		\end{aligned}
	\end{equation*}
	for any $\chi\in C\left(\partial B(0,R)^4\right)$, where
	\begin{equation*}
		\omega'=\frac{\omega+\omega_*}{2}+\frac{|\omega-\omega_*|}{2}\sigma
		\in\partial B(0,R)
	\end{equation*}
	and
	\begin{equation*}
		\omega_*'=\frac{\omega+\omega_*}{2}-\frac{|\omega-\omega_*|}{2}\sigma
		\in\partial B(0,R).
	\end{equation*}
\end{lem}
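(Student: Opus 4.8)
The plan is to realize the bundle $\partial B(0,R)^2 \times (\mathbb{S}^{d-2}\perp\tfrac{\omega+\omega_*}2)$ as a smooth fiber space over a base of ``geometric configurations'' and then check that the pre-post-collisional map acts fiberwise in a measure-preserving way, so that the global symmetry follows by Fubini. Concretely, I would parametrize a pair $(\omega,\omega_*)\in\partial B(0,R)^2$ by the midpoint direction $m=\frac{\omega+\omega_*}{|\omega+\omega_*|}$, the ``chord parameter'' $\rho=\tfrac12|\omega-\omega_*|\in[0,R]$ (equivalently the half-angle subtended at the origin), and the unit vector $e=\frac{\omega-\omega_*}{|\omega-\omega_*|}$, which is necessarily orthogonal to $m$ since $|\omega|=|\omega_*|=R$ forces $(\omega+\omega_*)\perp(\omega-\omega_*)$. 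One computes $|\omega+\omega_*|=2\sqrt{R^2-\rho^2}$, so the triple $(m,\rho,e)$ with $e\perp m$, $e\in\mathbb{S}^{d-2}$, determines $(\omega,\omega_*)$ and vice versa. In these coordinates the collisional map sends $e\mapsto\sigma$ and $\sigma\mapsto e$ while fixing $m$ and $\rho$ (because $v^2+v_*^2=v'^2+v_*'^2$ already holds automatically on the sphere, and $v+v_*=v'+v_*'$ fixes the midpoint and its length): it is literally the involution swapping the two unit vectors $e,\sigma$ lying in the same $(d-1)$-dimensional subspace $m^\perp$, with everything else held fixed.

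The key steps, in order: (i) Derive the change-of-variables formula $d\omega\, d\omega_* = J(\rho)\, d\rho\, dm\, de$ on $\partial B(0,R)^2$, where $dm$ is surface measure on $\mathbb{S}^{d-1}$, $de$ is surface measure on $\mathbb{S}^{d-2}\subset m^\perp$, and $J(\rho)$ is an explicit Jacobian depending only on $\rho$ (and $R$, $d$) — this is the same spherical bookkeeping as in the proof of Lemma \ref{lemma:attenuation:coefficient}, specifically the identity $d\sigma=\sin^{d-2}\eta\, d\eta\, d\widetilde\sigma$ combined with the relation between the deflection angle and the chord. (ii) Observe that $d\sigma$ over $(\mathbb{S}^{d-2}\perp\tfrac{\omega+\omega_*}2)$ is, in the same coordinates, just $de'$, surface measure on $\mathbb{S}^{d-2}\subset m^\perp$, independent of $\rho$; hence the full measure on the bundle is $J(\rho)\,d\rho\,dm\,de\,d\sigma$ with $e,\sigma$ playing symmetric roles. (iii) Note $\omega'=\tfrac{\omega+\omega_*}2+\rho\sigma$ and $\omega_*'=\tfrac{\omega+\omega_*}2-\rho\sigma$ lie in $\partial B(0,R)$ since $|\tfrac{\omega+\omega_*}2|^2+\rho^2=(R^2-\rho^2)+\rho^2=R^2$ and $\sigma\perp\tfrac{\omega+\omega_*}2$; moreover the map $(\omega,\omega_*,\sigma)\mapsto(\omega',\omega_*',\tfrac{\omega-\omega_*}{|\omega-\omega_*|})$ in coordinates is precisely $(m,\rho,e,\sigma)\mapsto(m,\rho,\sigma,e)$. (iv) Since this last map merely interchanges the two symmetric variables $e$ and $\sigma$ while leaving the measure $J(\rho)\,d\rho\,dm\,de\,d\sigma$ invariant, the asserted identity follows by relabeling and Fubini, using that $\chi$ is continuous (hence bounded on the compact domain) so all integrals converge absolutely.

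The main obstacle I anticipate is step (i): getting the Jacobian $J(\rho)$ and, more importantly, verifying that the fiber measure $d\sigma$ restricted to $\mathbb{S}^{d-2}\perp\tfrac{\omega+\omega_*}2$ genuinely matches surface measure $de$ on $\mathbb{S}^{d-2}\subset m^\perp$ \emph{with the correct normalization}, so that the pre- and post-collisional measures agree after the swap rather than merely up to a $\rho$-dependent factor. One must be careful that the ``natural surface measure'' on $\mathbb{S}^{d-2}\perp\tfrac{\omega+\omega_*}2$ referenced in the statement is the one induced from $\mathbb{R}^d$ and that this is rotation-equivariant in $m$, so that it descends to a genuine function of $(m,\rho)$ only; this is exactly the bookkeeping already carried out around equation \eqref{attenuation:coefficient:10}, so I would cite and adapt that computation rather than redo it. The dimensional restriction $d\geq 3$ enters precisely because for $d=2$ the sphere $\mathbb{S}^{0}\perp\tfrac{\omega+\omega_*}2$ is two isolated points and ``surface measure'' degenerates, so the fiberwise-swap argument no longer produces a continuous family — this should be noted but, per the statement, not pursued.
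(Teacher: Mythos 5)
Your approach is correct, and it is genuinely different from the paper's. The paper proves the lemma by \emph{approximation}: it extends $\chi$ to a thin spherical shell of thickness $\varepsilon^\tau$ in each slot, invokes the pre-post-collisional symmetry of the full collision operator on $\mathbb{R}^d$, rewrites both sides using the variables $(u,\nu,u_*,\nu_*,s,\widetilde\sigma)$ from the proof of Lemma \ref{lemma:attenuation:coefficient}, and then passes to the limit $\varepsilon\to 0$ by dominated convergence, with a final trick of replacing $\chi$ by $|\omega+\omega_*||\omega-\omega_*|\chi$ to clear the denominator that appears in the limit. Your proposal instead works \emph{intrinsically} on the sphere: parametrize $(\omega,\omega_*)$ by $(m,\rho,e)$ with $\omega=\sqrt{R^2-\rho^2}\,m+\rho e$, $\omega_*=\sqrt{R^2-\rho^2}\,m-\rho e$, observe that $d\omega\,d\omega_*=J(\rho)\,d\rho\,dm\,de$ by rotation-equivariance (the Jacobian can only depend on $\rho$), and note that the pre-post-collisional map is exactly the swap $(m,\rho,e,\sigma)\mapsto(m,\rho,\sigma,e)$. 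Since $e$ and $\sigma$ range over the same sphere $\mathbb{S}^{d-2}\subset m^\perp$ with the same surface measure, and $J(\rho)$ is symmetric in $(e,\sigma)$ by construction, the measure-preservation is immediate. This is shorter and avoids both the dominated-convergence machinery and the singularity bookkeeping around $(|\nu+\nu_*||\nu-\nu_*|)^{-1}$; the paper's route has the mild advantage of reusing computations already set up for Lemma \ref{lemma:attenuation:coefficient}.

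Two small corrections. First, the worry in your ``main obstacle'' paragraph about a possible $\rho$-dependent normalization mismatch between $d\sigma$ and $de$ is unfounded: the paper defines $d\sigma$ as surface measure on the \emph{unit} sphere $\mathbb{S}^{d-2}\perp\tfrac{\omega+\omega_*}{2}$, and $\tfrac{\omega+\omega_*}{2}$ is a positive multiple of $m$, so this is literally the same set (and measure) as $\mathbb{S}^{d-2}\perp m$ on which $e$ lives; there is no $\rho$-dependence to reconcile. Second, your stated reason for the restriction $d\geq 3$ is not quite right: for $d=2$ the fiber $\mathbb{S}^0\perp m$ with counting measure is a perfectly well-defined (if degenerate) measure space, the Jacobian $J(\rho)=2R^2(R^2-\rho^2)^{-1/2}$ is still locally integrable, and your swap argument would in fact go through. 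The restriction in the paper is inherited from the hypotheses needed elsewhere (in particular Lemma \ref{lemma:attenuation:coefficient}, whose integrability argument genuinely fails for $d=2$) rather than from anything intrinsic to this identity. To turn your sketch into a complete proof, you should carry out step (i) in detail: compute $J(\rho)=2^{d-1}R^2\rho^{d-2}(R^2-\rho^2)^{(d-3)/2}$, either by direct differentiation of the parametrization or by testing against functions of $\rho$ alone and invoking rotation invariance to conclude that $J$ cannot depend on $(m,e)$.
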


\begin{proof}
	We begin by extending $\chi$ into a bounded measurable function $\widetilde\chi$ on $(\mathbb{R}^d)^4$, with compact support, by setting
	\begin{equation*}
		\begin{aligned}
			\widetilde\chi(v,v_*,w,w_*)
			&=
			\chi\left(R\frac{v}{|v|},R\frac{v_*}{|v_*|},R\frac{w}{|w|},R\frac{w_*}{|w_*|}\right)
			\\
			&\quad\times\mathds{1}_{\left\{
			|v^2-R^2|\leq \varepsilon^\tau,\ |v_*^2-R^2|\leq \varepsilon^\tau,\ |w^2-R^2|\leq \varepsilon^\tau,\ |w_*^2-R^2|\leq \varepsilon^\tau
			\right\}},
		\end{aligned}
	\end{equation*}
	where $\varepsilon^\tau$, with $\varepsilon>0$ and $\tau>0$, is to be interpreted as a small parameter which will eventually tend to zero. Since we are not currently considering a hydrodynamic regime, this parameter could be replaced by any other symbol and the exponent $\tau$ does not have any particular significance. However, we prefer to stick to the notation $\varepsilon^\tau$ for consistency with the proof of Lemma \ref{lemma:attenuation:coefficient}.
	
	In particular, we can employ the variables introduced in \eqref{attenuation:coefficient:8}, \eqref{attenuation:coefficient:9} and \eqref{attenuation:coefficient:10}, in the proof of Lemma \ref{lemma:attenuation:coefficient}, to write that
	\begin{equation*}
		\widetilde\chi(v,v_*,v',v_*')
		=
		\chi\left(R\nu,R\nu_*,R\frac{v'}{|v'|},R\frac{v_*'}{|v_*'|}\right)
		\mathds{1}_{\left\{
		|u|\leq 1,\ |u_*|\leq 1,\ \left|\frac{u+u_*}2+s\right|\leq 1,\ \left|\frac{u+u_*}2-s\right|\leq 1
		\right\}},
	\end{equation*}
	and, repeating the steps leading to \eqref{attenuation:coefficient:4}, to deduce that
	\begin{equation*}
		\begin{aligned}
			&\int_{\mathbb{R}^d\times\mathbb{R}^d\times\mathbb{S}^{d-1}}
			\widetilde\chi(v,v_*,v',v_*')dvdv_*d\sigma
			\\
			&=\frac{\varepsilon^{2\tau}}4
			\int_{-\frac{R^2}{\varepsilon^\tau}}^\infty
			\int_{\mathbb{S}^{d-1}}
			\int_{-\frac{R^2}{\varepsilon^\tau}}^\infty
			\int_{\mathbb{S}^{d-1}}
			\int_{\mathbb{S}^{d-1}}
			\widetilde\chi(v,v_*,v',v_*')
			(R^2+\varepsilon^\tau u)^\frac{d-2}2
			(R^2+\varepsilon^\tau u_*)^\frac{d-2}2
			d\sigma d\nu du d\nu_* du_*
			\\
			&=\frac{\varepsilon^{3\tau}}2
			\int_{-1}^1
			\int_{\mathbb{S}^{d-1}}
			\int_{-1}^1
			\int_{\mathbb{S}^{d-1}}
			\int_\mathbb{R}
			\left(\int_{\mathbb{S}^{d-2}\perp\frac{v+v_*}{2}}\widetilde\chi(v,v_*,v',v_*')d\widetilde\sigma\right)
			\\
			&\quad\times\frac{ \left(\left( |v+v_*||v-v_*| \right)^2-(2\varepsilon^\tau s)^2\right)^\frac{d-3}2_+
			(R^2+\varepsilon^\tau u)^\frac{d-2}2
			(R^2+\varepsilon^\tau u_*)^\frac{d-2}2}
			{\left(|v+v_*||v-v_*|\right)^{d-2}}
			dsd\nu du d\nu_* du_*,
		\end{aligned}
	\end{equation*}
	where, for any $\lambda\in\mathbb{R}$, we have denoted $(z)_+^\lambda=z^\lambda$, if $z>0$, and $(z)_+^\lambda=0$, if $z\leq 0$.
	
	We are now going to let $\varepsilon$ tend to zero in the last integral above.
	To that end, recall that $\widetilde\chi$ is uniformly bounded and observe that
	\begin{equation}\label{new:singularity}
		\frac{1}{|v+v_*|}\leq
		\frac{1}{(|v||v_*|)^\frac 12|\nu+\nu_*|}
		\qquad\text{and}\qquad
		\frac{1}{|v-v_*|}\leq
		\frac{1}{(|v||v_*|)^\frac 12|\nu-\nu_*|},
	\end{equation}
	which, employing \eqref{attenuation:coefficient:5}, allows us to deduce that the integrand above is dominated by an integrable function independent of $\varepsilon$.
	
	Therefore, by the Dominated Convergence Theorem, we find that
	\begin{equation*}
		\begin{aligned}
			\lim_{\varepsilon\to 0}&\varepsilon^{-3\tau}
			\int_{\mathbb{R}^d\times\mathbb{R}^d\times\mathbb{S}^{d-1}}
			\widetilde\chi(v,v_*,v',v_*')dvdv_*d\sigma
			\\
			&=
			\int_{-1}^1
			\int_{\mathbb{S}^{d-1}}
			\int_{-1}^1
			\int_{\mathbb{S}^{d-1}}
			\int_\mathbb{R}
			H(u,\nu,u_*,\nu_*,s)
			\frac{
			R^{2(d-3)}\mathds{1}_{\left\{
			\left|\frac{u+u_*}2+s\right|\leq 1,\ \left|\frac{u+u_*}2-s\right|\leq 1
		\right\}}}
			{2|\nu+\nu_*||\nu-\nu_*|}
			dsd\nu du d\nu_* du_*,
		\end{aligned}
	\end{equation*}
	where $H$ stands for the pointwise limit
	\begin{equation*}
		H(u,\nu,u_*,\nu_*,s)
		=\lim_{\varepsilon\to 0}
		\int_{\mathbb{S}^{d-2}\perp\frac{v+v_*}{2}}
		\chi\left(R\nu,R\nu_*,R\frac{v'}{|v'|},R\frac{v_*'}{|v_*'|}\right)
		d\widetilde\sigma,
	\end{equation*}
	for almost every $(u,\nu,u_*,\nu_*,s)$.
	
	In order to evaluate $H$ explicitly, we assume that $\nu+\nu_*\neq 0$ and we observe that the pointwise limits
	\begin{equation}\label{pointwise:limits}
		\begin{aligned}
			\lim_{\varepsilon\to 0}v&=\lim_{\varepsilon\to 0}\left(R^2+\varepsilon^\tau u\right)^{\frac 12}\nu=R\nu,
			\\
			\lim_{\varepsilon\to 0}v_*&=\lim_{\varepsilon\to 0}\left(R^2+\varepsilon^\tau u_*\right)^{\frac 12}\nu_*=R\nu_*,
			\\
			\lim_{\varepsilon\to 0} |v'|&=\lim_{\varepsilon\to 0}\left(R^2+\varepsilon^\tau\left(\frac{u+u_*}2+s\right)\right)^\frac 12=R,
			\\
			\lim_{\varepsilon\to 0} |v_*'|&=\lim_{\varepsilon\to 0}\left(R^2+\varepsilon^\tau\left(\frac{u+u_*}2-s\right)\right)^\frac 12=R,
		\end{aligned}
	\end{equation}
	are all independent of $\widetilde\sigma$. Furthermore, parametrizing $\mathbb{S}^{d-2}\perp\frac{v+v_*}{2}$ by
	\begin{equation}\label{parametrization:sphere}
		\widetilde\sigma=\mathcal{R}_{\frac{v+v_*}2}\overline\sigma,
	\end{equation}
	where $\overline\sigma \in\mathbb{S}^{d-2}\perp e_d$, with $e_d=(0,\ldots,0,1)\in\mathbb{R}^d$, and $\mathcal{R}_{V}$, for any $V\in\mathbb{R}^d\setminus\{0\}$, is the unique rotation of $\mathbb{R}^d$ which takes $e_d$ to $\frac{V}{|V|}$ while leaving the $(d-2)$-dimensional subspace of $\mathbb{R}^d$ orthogonal to $e_d$ and $V$ invariant, we infer, by continuity of $\chi$, that
	\begin{equation*}
		\begin{aligned}
			H
			&=\lim_{\varepsilon\to 0}
			\int_{\mathbb{S}^{d-2}\perp e_d}
			\chi\left(R\nu,R\nu_*,R\frac{v'}{|v'|},R\frac{v_*'}{|v_*'|}\right)
			d\overline\sigma
			\\
			&=\int_{\mathbb{S}^{d-2}\perp e_d}
			\chi\left(R\nu,R\nu_*,
			R\left(\frac{\nu+\nu_*}{2}+\frac{|\nu-\nu_*|}{2}
			\mathcal{R}_{\frac{\nu+\nu_*}2}\overline \sigma\right),
			R\left(\frac{\nu+\nu_*}{2}-\frac{|\nu-\nu_*|}{2}
			\mathcal{R}_{\frac{\nu+\nu_*}2}\overline \sigma\right)
			\right)
			d\overline\sigma
			\\
			&=\int_{\mathbb{S}^{d-2}\perp \frac{\nu+\nu_*}2}
			\chi\left(R\nu,R\nu_*,
			R\left(\frac{\nu+\nu_*}{2}+\frac{|\nu-\nu_*|}{2}
			\widetilde \sigma\right),
			R\left(\frac{\nu+\nu_*}{2}-\frac{|\nu-\nu_*|}{2}
			\widetilde \sigma\right)
			\right)
			d\widetilde\sigma,
		\end{aligned}
	\end{equation*}
	which provides us with an explicit expression for $H$.
	
	All in all, we conclude, with the straightforward computation
	\begin{equation*}
		\int_{-1}^1\int_{-1}^1\int_\mathbb{R}
		\mathds{1}_{\left\{
		\left|\frac{u+u_*}2+s\right|\leq 1,\ \left|\frac{u+u_*}2-s\right|\leq 1
		\right\}}
		dsdu_*du
		=\int_{-1}^1\int_{-1}^1(2-|u+u_*|)du_*du=\frac{16}3,
	\end{equation*}
	that
	\begin{equation*}
		\begin{aligned}
			\lim_{\varepsilon\to 0}&\varepsilon^{-3\tau}
			\int_{\mathbb{R}^d\times\mathbb{R}^d\times\mathbb{S}^{d-1}}
			\widetilde\chi(v,v_*,v',v_*')dvdv_*d\sigma
			\\
			&=
			\frac{8}{3R^2}
			\int_{\partial B(0,R)}
			\int_{\partial B(0,R)}
			\int_{\mathbb{S}^{d-2}\perp \frac{\omega+\omega_*}2}
			\frac{\chi\left(\omega,\omega_*,
			\frac{\omega+\omega_*}{2}+\frac{|\omega-\omega_*|}{2}
			\widetilde \sigma,
			\frac{\omega+\omega_*}{2}-\frac{|\omega-\omega_*|}{2}
			\widetilde \sigma
			\right)}
			{|\omega+\omega_*||\omega-\omega_*|}
			d\widetilde\sigma d\omega d\omega_*.
		\end{aligned}
	\end{equation*}
	
	Now, notice that a similar argument yields the limit
	\begin{equation*}
		\begin{aligned}
			\lim_{\varepsilon\to 0}&\varepsilon^{-3\tau}
			\int_{\mathbb{R}^d\times\mathbb{R}^d\times\mathbb{S}^{d-1}}
			\widetilde\chi(v',v_*',v,v_*)dvdv_*d\sigma
			\\
			&=
			\frac{8}{3R^2}
			\int_{\partial B(0,R)}
			\int_{\partial B(0,R)}
			\int_{\mathbb{S}^{d-2}\perp \frac{\omega+\omega_*}2}
			\frac{\chi\left(
			\frac{\omega+\omega_*}{2}+\frac{|\omega-\omega_*|}{2}
			\widetilde \sigma,
			\frac{\omega+\omega_*}{2}-\frac{|\omega-\omega_*|}{2}
			\widetilde \sigma,
			\omega,\omega_*
			\right)}
			{|\omega+\omega_*||\omega-\omega_*|}
			d\widetilde\sigma d\omega d\omega_*.
		\end{aligned}
	\end{equation*}
	Therefore, by the pre-post-collisional symmetries in the whole space, since it holds that
	\begin{equation*}
		\int_{\mathbb{R}^d\times\mathbb{R}^d\times\mathbb{S}^{d-1}}
		\widetilde\chi(v,v_*,v',v_*')dvdv_*d\sigma
		=
		\int_{\mathbb{R}^d\times\mathbb{R}^d\times\mathbb{S}^{d-1}}
		\widetilde\chi(v',v_*',v,v_*)dvdv_*d\sigma,
	\end{equation*}
	 we conclude that
	\begin{equation*}
		\begin{aligned}
			\int_{\partial B(0,R)}
			&\int_{\partial B(0,R)}
			\int_{\mathbb{S}^{d-2}\perp \frac{\omega+\omega_*}2}
			\frac{\chi\left(\omega,\omega_*,
			\frac{\omega+\omega_*}{2}+\frac{|\omega-\omega_*|}{2}
			\widetilde \sigma,
			\frac{\omega+\omega_*}{2}-\frac{|\omega-\omega_*|}{2}
			\widetilde \sigma
			\right)}
			{|\omega+\omega_*||\omega-\omega_*|}
			d\widetilde\sigma d\omega d\omega_*
			\\
			&=
			\int_{\partial B(0,R)}
			\int_{\partial B(0,R)}
			\int_{\mathbb{S}^{d-2}\perp \frac{\omega+\omega_*}2}
			\frac{\chi\left(
			\frac{\omega+\omega_*}{2}+\frac{|\omega-\omega_*|}{2}
			\widetilde \sigma,
			\frac{\omega+\omega_*}{2}-\frac{|\omega-\omega_*|}{2}
			\widetilde \sigma,
			\omega,\omega_*
			\right)}
			{|\omega+\omega_*||\omega-\omega_*|}
			d\widetilde\sigma d\omega d\omega_*.
		\end{aligned}
	\end{equation*}
	Finally, replacing $\chi$ by the continuous function $|\omega+\omega_*||\omega-\omega_*|\chi$ concludes the proof of the lemma.
\end{proof}

\subsection{Relaxation toward thermodynamic equilibrium}\label{section:relaxation}

In Proposition \ref{prop:limit:characterization}, we showed that density fluctuations which satisfy a suitable entropy bound, relative to a normalized Fermi--Dirac distribution at low temperatures, will concentrate on a sphere in the velocity variable. By further exploiting the entropy dissipation bound, which, loosely speaking, controls the distance of densities to the space of statistical equilibria, we are now in a position to describe how the limiting fluctuations reach thermodynamic equilibrium, in the limit $\varepsilon\to 0$, by characterizing their velocity distribution on the sphere. The following result is thus instrumental in understanding hydrodynamic regimes near Fermi ground states.

\begin{prop}\label{prop:instrumental}
	Let $d\geq 3$. Consider a cross-section
	\begin{equation*}
		b(z,\sigma)=b\left(|z|,\frac{z}{|z|}\cdot\sigma\right)\geq 0
	\end{equation*}
	such that
	\begin{equation*}
		b(z,\sigma)\in L^\infty(\mathbb{R}^d\times\mathbb{S}^{d-1})\cap C(B(0,2R)\times\mathbb{S}^{d-1})
	\end{equation*}
	and
	\begin{equation*}
		b(z,\sigma)>0\quad\text{on }B(0,2R)\times\mathbb{S}^{d-1},
	\end{equation*}
	and a family of density distributions $0\leq f_\varepsilon(t,x,v)\leq\delta^{-1}$, with $\varepsilon>0$, such that the relative entropy bound \eqref{relative:entropy:1} and the entropy dissipation bound \eqref{entropy:dissipation:1} hold uniformly in $\varepsilon$, with some fixed parameters $\gamma>0$ and $\kappa>0$, where the Fermi--Dirac distribution $M_\varepsilon$ defined in \eqref{normalized:distribution} has a given parameter value $\tau>0$.
	Further suppose that
	\begin{equation*}
		0<\gamma=\tau< 1
		\qquad\text{and}\qquad
		\kappa>2\tau,
	\end{equation*}
	and consider the density fluctuations $g_\varepsilon(t,x,v)$ defined in \eqref{epsilon:fluctuations}.
	
	In accordance with Proposition \ref{prop:limit:characterization}, as $\varepsilon\to 0$, up to extraction of a subsequence, the family of fluctuations $g_\varepsilon$ converges in the weak* topology of $\mathcal{M}_\mathrm{loc}\big(\mathbb{R}^+\times\mathbb{R}^d\times\mathbb{R}^d\big)$ toward a limit point
	\begin{equation*}
		\mu(t,x,v)=g\left(t,x,R\frac{v}{|v|}\right)dt\otimes dx\otimes \delta_{\partial B(0,R)}(v),
	\end{equation*}
	where the density $g(t,x,\omega)$ belongs to $L^\infty\big(dt;L^2\big(\mathbb{R}^d\times\partial B(0,R)\big)\big)$.
	
	Then, the limiting density can be further characterized as
	\begin{equation*}
		g(t,x,\omega)=\rho(t,x)+U(t,x)\cdot\omega,
	\end{equation*}
	where the density $\rho(t,x)$ and the velocity field $U(t,x)$ belong to $L^\infty\big(dt;L^2\big(\mathbb{R}^d\big)\big)$.
\end{prop}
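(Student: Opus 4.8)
The plan is to prove that, for almost every $(t,x)$, the limiting density $\omega\mapsto g(t,x,\omega)$ is a collision invariant for the volume‑preserving collisional symmetries on the Fermi sphere of Lemma~\ref{lemma:collisional:symmetries}, and then to exploit that $|\omega|^2\equiv R^2$ is constant on $\partial B(0,R)$, so that the classical energy invariant degenerates into a constant and the only remaining invariants are $1$ and the components of $\omega$; this forces $g(t,x,\omega)=\rho(t,x)+U(t,x)\cdot\omega$, and the coefficients $\rho=\fint_{\partial B(0,R)}g\,d\omega$ and $U=\frac{d}{R^2}\fint_{\partial B(0,R)}\omega\,g\,d\omega$, being bounded linear functionals of $g(t,x,\cdot)\in L^2(\partial B(0,R))$, inherit the membership in $L^\infty(dt;L^2(\mathbb R^d))$ from Proposition~\ref{prop:limit:characterization}. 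The characterization of collision invariants on the sphere is where $d\geq3$ is genuinely used: for $d=2$ the post‑collisional pair $(\omega',\omega_*')$ always coincides with $(\omega,\omega_*)$, every function is an invariant, and the argument collapses — which is why that case is treated in~\cite{aa25}.

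The collisional invariance of $g$ is extracted from the entropy dissipation. By Lemma~\ref{lemma:linearized:dissipation} (with $\gamma=\tau$), the renormalized fluctuation $h_\varepsilon:=\phi_\varepsilon+\psi_\varepsilon$ from~\eqref{epsilon:renormalized:fluctuations} becomes an asymptotic collision invariant: its defect $\mathcal L_\varepsilon:=h_\varepsilon+h_{\varepsilon*}-h_\varepsilon'-h_{\varepsilon*}'$ is $O(\varepsilon^{1+\tau})$ in $L^\infty(dt;L^1(bm_\varepsilon\,dxdvdv_*d\sigma))$ plus $O(\varepsilon^{(\kappa+2\tau)/2})$ in $L^2(dtdx;L^1(bm_\varepsilon\,dvdv_*d\sigma))$, both vanishing precisely because $\tau<1$ and $\kappa>2\tau$. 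Meanwhile, combining~\eqref{renormalized:decomposition} with~\eqref{asymptotic:control} gives $\delta M_\varepsilon(1-\delta M_\varepsilon)h_\varepsilon=g_\varepsilon+O(\varepsilon^{1-\gamma})_{L^\infty(dt;L^1(dxdv))}$, so that $h_\varepsilon$ concentrates on the Fermi sphere with limiting profile proportional to $g$. I would then test $\mathcal L_\varepsilon$ against a function of the form $\varphi(t,x)\,\xi\!\big(R\tfrac v{|v|}\big)\,\chi\!\big(\tfrac{v^2-R^2}{\varepsilon^\tau}\big)$, weighted by $bm_\varepsilon$ and rescaled by $\varepsilon^{-2\tau}$ (the natural power, dictated by $\int_{\mathbb R^d\times\mathbb S^{d-1}}bm_\varepsilon\,dv_*d\sigma=O(\varepsilon^{2\tau})_{L^\infty(dv)}$ of Lemma~\ref{lemma:attenuation:coefficient}). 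The entropy‑dissipation side then vanishes by the rates above; on the kinetic side, the self‑adjointness of the linearized operator with respect to the invariant measure $bm_\varepsilon\,dvdv_*d\sigma$ — a consequence of the collisional symmetries~\eqref{change:prepost}--\eqref{change:exchange} — transfers the weighted test function onto $h_\varepsilon$, which one rewrites through $g_\varepsilon$; passing to the dilated variables of~\eqref{dilated:fluctuations} and the collisional decomposition~\eqref{attenuation:coefficient:10}, and using the weak $L^1$ convergence $\widetilde g_\varepsilon\rightharpoonup\widetilde g$ with $\int_{\mathbb R}\widetilde g\,du=g$ from Proposition~\ref{prop:limit:characterization:dilated}, the weighted collision kernel converges boundedly and pointwise — exactly as in the Dominated‑Convergence computations of Lemmas~\ref{lemma:attenuation:coefficient} and~\ref{lemma:collisional:symmetries} — to an explicit kernel carried by the bundle $\partial B(0,R)\times\partial B(0,R)\times(\mathbb S^{d-2}\perp\tfrac{\omega+\omega_*}2)$, with a continuous strictly positive spatial weight inherited from $b>0$ on $B(0,2R)$.

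Because this limiting kernel couples the dilated energy variable $u$ to the angular variable $\omega$ through the factor $\big(\cosh\tfrac u2\cosh\tfrac{u_*}2\cosh\tfrac{u'}2\cosh\tfrac{u_*'}2\big)^{-1}$, letting the profile $\chi$ vary one does not obtain an identity for $g$ directly, but rather that $\widetilde g$ is, in the limit, a collision invariant of the factorized structure on $\mathbb R_u\times\partial B(0,R)_\omega$ in which the energy variables undergo the one‑dimensional shear $u+u_*=u'+u_*'$ and the angular variables the sphere collision. Self‑adjointness of this factorized operator (the measure‑preserving change $(u,u_*,s)\mapsto(u',u_*',\cdot)$ composed with Lemma~\ref{lemma:collisional:symmetries}), together with positivity of the weight, upgrades the orthogonality relations to a pointwise identity on the bundle; its invariants, for $d\geq3$, are exactly $\mathrm{span}\{1,u,\omega_1,\dots,\omega_d\}$ — there is no $u^2$‑invariant since the $u$'s are only constrained by their sum — so $4\cosh^2\!\tfrac u2\,\widetilde g=\rho+U\cdot\omega+E(t,x)u$, which is~\eqref{limiting:infinitesimal:distribution:energy}. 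Integrating over $u\in\mathbb R$, with $\int_{\mathbb R}(4\cosh^2\tfrac u2)^{-1}du=1$ and $\int_{\mathbb R}u\,(4\cosh^2\tfrac u2)^{-1}du=0$, and invoking~\eqref{link:dilated} gives $g=\int_{\mathbb R}\widetilde g\,du=\rho+U\cdot\omega$.

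The main obstacle is the limiting argument of the previous two paragraphs: one must pick the $\varepsilon$‑normalization so that the entropy‑dissipation bound forces the tested quantity to vanish while the kinetic side still converges to a \emph{nondegenerate} limit — which is possible exactly in the window $\kappa>2\tau$, $\tau<1$ — and one must control the compounded concentration of $h_\varepsilon$ and of the collision measure $bm_\varepsilon$ onto the degenerating collisional geometry of the Fermi sphere, using the quadratic $L^2$ estimates on $\phi_\varepsilon,\psi_\varepsilon$ of Lemma~\ref{lemma:squareroot:relative:entropy} and Corollary~\ref{cor:squareroot:relative:entropy} to absorb the nonlinear remainders. By comparison, the characterization of the invariants and the regularity of $\rho$ and $U$ are routine.
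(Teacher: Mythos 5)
Your proposal follows essentially the same route as the paper: use Lemma~\ref{lemma:linearized:dissipation} to show that the $\varepsilon^{-2\tau}$-normalized linearized collision term tested against $bm_\varepsilon^\alpha$-weighted functions vanishes, pass to dilated variables, use Dominated Convergence to obtain a limiting bilinear form on the bundle $\partial B(0,R)^2\times(\mathbb{S}^{d-2}\perp\tfrac{\omega+\omega_*}2)\times\mathbb R_u^2$ (via Lemmas~\ref{lemma:attenuation:coefficient} and~\ref{lemma:collisional:symmetries}), upgrade to pointwise collision invariance by self-adjointness and positivity, characterize invariants as $\mathrm{span}\{1,u,\omega_1,\dots,\omega_d\}$, and integrate out $u$. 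The one substantive step you gloss over is the nontrivial characterization of angular collision invariants on the sphere, for which the paper invokes the companion article~\cite{aa24} after reducing to the relation~\eqref{functional:1} by fixing $u=u_*$, $s=0$; you also write $bm_\varepsilon$ where the paper needs $bm_\varepsilon^\alpha$ with $\alpha>1$ to make the limiting bilinear form bounded on the $\cosh$-weighted $L^2$ space containing $\widetilde\pi$.
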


\begin{proof}
	Consider the renormalized fluctuations $\phi_\varepsilon(t,x,v)$ and $\psi_\varepsilon(t,x,v)$ defined in \eqref{epsilon:renormalized:fluctuations}. For mere convenience of notation, we also introduce the renormalized density fluctuation
	\begin{equation}\label{renormalized:fluctuation}
		h_\varepsilon=\delta M_\varepsilon\left(1-\delta M_\varepsilon\right)
		\left(\phi_\varepsilon+\psi_\varepsilon\right).
	\end{equation}
	Now, in view of the decomposition \eqref{renormalized:decomposition} and the bound \eqref{bound:extra}, recall that
	\begin{equation}\label{fluctuation:expansion}
		g_\varepsilon= h_\varepsilon+O(\varepsilon^{1-\tau})_{L^\infty (dt;L^1(dxdv))}
	\end{equation}
	and
	\begin{equation}\label{distance:densities}
		(\delta M_\varepsilon(1-\delta M_\varepsilon))^{\alpha-\frac 12}
		h_\varepsilon=O\left(\varepsilon^{\tau\left(\frac 1q-1\right)}\right)_{L^\infty (dt;L^2(dx;L^q(dv)))},
	\end{equation}
	for all $1\leq q\leq 2$ and $\alpha> 0$, or $q=2$ and $\alpha=0$.
	In particular, this implies that both families of fluctuations $g_\varepsilon$ and $h_\varepsilon$ converge in the weak* topology of locally finite Radon measures toward the same limit point $\mu$.
	Specifically, for any compactly supported continuous test function $\varphi(t,x,v)$, it holds that
	\begin{equation*}
		\int_{\mathbb{R}^+\times\mathbb{R}^d\times\mathbb{R}^d}
		h_\varepsilon(t,x,v)\varphi(t,x,v)dtdxdv
		\to
		\int_{\mathbb{R}^+\times\mathbb{R}^d\times\partial B(0,R)}
		g(t,x,\omega)\varphi(t,x,\omega)dtdxd\omega,
	\end{equation*}
	as $\varepsilon\to 0$.
	Therefore, in order to characterize $\mu$ and $g$, it will be convenient to work on $h_\varepsilon$, rather than $g_\varepsilon$, which will allow us to employ Lemma \ref{lemma:linearized:dissipation}.
	
	Next, for any $\alpha>1$ and any compactly supported continuous test function $\Theta(t,x,u,v)$, with $(t,x,u,v)\in\mathbb{R}^+\times\mathbb{R}^d\times\mathbb{R}\times\mathbb{R}^d$,
	we consider the weak form of the linearized collision operator
	\begin{equation*}
		\mathcal{I}_\varepsilon(t,x)=
		\varepsilon^{-2\tau}\int_{\mathbb{R}^d\times\mathbb{R}^d\times\mathbb{S}^{d-1}}
		\left(\pi_{\varepsilon}+\pi_{\varepsilon*}-\pi_{\varepsilon}'-\pi_{\varepsilon*}'\right)
		\Theta\left(t,x,\frac{v^2-R^2}{\varepsilon^\tau},v\right)
		bm_\varepsilon^\alpha dvdv_*d\sigma,
	\end{equation*}
	where $m_\varepsilon$ is given in \eqref{maxwellian:product:notation} and $\pi_\varepsilon$ denotes
	\begin{equation}\label{pi:fluctuation}
		\pi_\varepsilon=\frac{h_\varepsilon}{\delta M_\varepsilon\left(1-\delta M_\varepsilon\right)}=
		\phi_\varepsilon+\psi_\varepsilon.
	\end{equation}
	The asymptotic behavior of $\mathcal{I}_\varepsilon$, as $\varepsilon\to 0$, will give us crucial information on the convergence of $g_\varepsilon$ and $h_\varepsilon$ toward thermodynamic equilibrium.
	In particular, by Lemma \ref{lemma:linearized:dissipation}, one observes that
	\begin{equation}\label{asymptotic:I}
		\mathcal{I}_\varepsilon(t,x)=
		O(\varepsilon^{1-\tau})_{L^\infty(dt;L^1(dx))}
		+O(\varepsilon^{\frac{\kappa}2-\tau})_{L^2(dtdx)},
	\end{equation}
	as $\varepsilon\to 0$.

	In order to further study the limit of $\mathcal{I}_\varepsilon$, we introduce the quantity
	\begin{equation*}
		\begin{aligned}
			\chi_\varepsilon(t,x,v,v_*,\sigma)
			&=\Theta+\Theta_*-\Theta'-\Theta_*'
			\\
			&=\Theta\left(t,x,\frac{v^2-R^2}{\varepsilon^\tau},v\right)
			+\Theta\left(t,x,\frac{v_*^2-R^2}{\varepsilon^\tau},v_*\right)
			\\
			&\quad -\Theta\left(t,x,\frac{v'^2-R^2}{\varepsilon^\tau},v'\right)
			-\Theta\left(t,x,\frac{v_*'^2-R^2}{\varepsilon^\tau},v_*'\right),
		\end{aligned}
	\end{equation*}
	so that $\mathcal{I}_\varepsilon$ can be recast, exploiting the collisional symmetries, in the convenient dual form
	\begin{equation*}
		\mathcal{I}_\varepsilon(t,x)=
		\varepsilon^{-2\tau}\int_{\mathbb{R}^d\times\mathbb{R}^d\times\mathbb{S}^{d-1}}
		\pi_\varepsilon\chi_\varepsilon
		bm_\varepsilon^\alpha dvdv_*d\sigma.
	\end{equation*}
	Since we are assuming that $d\geq 3$, note that
	\begin{equation*}
		\varepsilon^{-2\tau}\int_{\mathbb{R}^d\times\mathbb{R}^d\times\mathbb{S}^{d-1}}
		\left(\Theta+\Theta_*-\Theta'-\Theta_*'\right)
		bm_\varepsilon^{\alpha-1} dvdv_*d\sigma
	\end{equation*}
	remains bounded, uniformly in $\varepsilon$, by virtue of Lemma \ref{lemma:attenuation:coefficient}.
	
	Then, we further decompose $\mathcal{I}_\varepsilon$ into
	\begin{equation*}
		\mathcal{I}_\varepsilon=\mathcal{I}_\varepsilon^1+\mathcal{I}_\varepsilon^2,
	\end{equation*}
	where
	\begin{equation*}
		\begin{aligned}
			\mathcal{I}_\varepsilon^1(t,x)&=
			\varepsilon^{-2\tau}\int_{\mathbb{R}^d\times\mathbb{R}^d\times\mathbb{S}^{d-1}}
			\pi_\varepsilon\chi_\varepsilon
			bm_\varepsilon^\alpha \mathds{1}_{\{v^2\geq \frac {R^2}2,v_*^2\geq \frac {R^2}2\}} dvdv_*d\sigma,
			\\
			\mathcal{I}_\varepsilon^2(t,x)&=
			\varepsilon^{-2\tau}\int_{\mathbb{R}^d\times\mathbb{R}^d\times\mathbb{S}^{d-1}}
			\pi_\varepsilon\chi_\varepsilon
			bm_\varepsilon^\alpha
			\left(\mathds{1}_{\{v^2< \frac {R^2}2\}}
			+\mathds{1}_{\{v^2\geq \frac {R^2}2,v_*^2< \frac {R^2}2\}}
			\right)
			dvdv_*d\sigma.
		\end{aligned}
	\end{equation*}
	By the fact that
	\begin{equation*}
		m_\varepsilon
		\leq
		\left[16\cosh\left(\frac{v^2-R^2}{2\varepsilon^\tau}\right)\cosh\left(\frac{v_*^2-R^2}{2\varepsilon^\tau}\right)\right]^{-1},
	\end{equation*}
	it is then readily seen, for any $1<\alpha'<\alpha$, that
	\begin{equation*}
		\left\|\mathcal{I}_\varepsilon^2\right\|_{L^\infty(dt;L^2(dx))}
		\lesssim
		\big\|(\delta M_\varepsilon(1-\delta M_\varepsilon))^{\frac {\alpha-\alpha'+1}2}\pi_\varepsilon\big\|_{L^\infty(dt;L^2(dx;L^1(dv)))}
		\frac{\|\Theta\|_{L^\infty}\|b\|_{L^\infty}}{\varepsilon^{\tau}\cosh^{\alpha'-1}\left(\frac{R^2}{4\varepsilon^\tau}\right)},
	\end{equation*}
	which implies, by \eqref{distance:densities}, that $\mathcal{I}_\varepsilon^2$ vanishes exponentially in $L^\infty(dt;L^2(dx))$. In other words, relaxing the exponential convergence, one can write that
	\begin{equation}\label{vanishing:I2}
		\mathcal{I}_\varepsilon^2(t,x)=O(\varepsilon^\beta)_{L^\infty(dt;L^2(dx))},
	\end{equation}
	for any $\beta>0$.

	In order to analyze the asymptotic behavior of $\mathcal{I}_\varepsilon^1$, we consider the suitable variables $u$, $u_*$, $\nu$, $\nu_*$, $s$ and $\widetilde \sigma$ introduced in \eqref{attenuation:coefficient:8}, \eqref{attenuation:coefficient:9} and \eqref{attenuation:coefficient:10}. Then, repeating the steps leading to \eqref{attenuation:coefficient:4}, we obtain the representation
	\begin{equation*}
		\begin{aligned}
			\mathcal{I}_\varepsilon^1 &=
			\int_{\mathbb{R}^d}
			\int_{-\frac{R^2}{\varepsilon^\tau}}^\infty
			\int_{\mathbb{S}^{d-1}}
			\int_{-\frac{|v+v_*||v-v_*|}{2\varepsilon^\tau}}^{\frac{|v+v_*||v-v_*|}{2\varepsilon^\tau}}
			\frac{\pi_\varepsilon\left(\int_{\mathbb{S}^{d-2}\perp\frac{v+v_*}{2}}
			\chi_\varepsilon(v,v_*,\sigma)b(v-v_*,\sigma)d\widetilde\sigma \right)
			(R^2+\varepsilon^\tau u_*)^\frac{d-2}2}
			{8^{\alpha}\cosh^\alpha\left(\frac u2\right)
			\cosh^\alpha\left(\frac {u_*}2\right)
			\left(\cosh\left(\frac{u+u_*}2\right)+\cosh s\right)^\alpha}
			\\
			&\quad\times\frac{ \left(\left( |v+v_*||v-v_*| \right)^2-(2\varepsilon^\tau s)^2\right)^\frac{d-3}2}
			{\left(|v+v_*||v-v_*|\right)^{d-2}}
			\mathds{1}_{\{v^2\geq \frac {R^2}2,v_*^2\geq \frac {R^2}2\}}ds d\nu_* du_*dv
			\\
			&=
			\int_{\mathbb{R}}
			\int_{\mathbb{S}^{d-1}}
			\int_{\mathbb{R}}
			\int_{\mathbb{S}^{d-1}}
			\int_\mathbb{R}
			\frac{\widetilde \pi_\varepsilon\left(\int_{\mathbb{S}^{d-2}\perp\frac{v+v_*}{2}}
			\chi_\varepsilon b d\widetilde\sigma \right)
			(R^2+\varepsilon^\tau u_*)_+^\frac{d-2}2}
			{8^{\alpha}\cosh^\alpha\left(\frac u2\right)
			\cosh^\alpha\left(\frac {u_*}2\right)
			\left(\cosh\left(\frac{u+u_*}2\right)+\cosh s\right)^\alpha}
			\\
			&\quad\times\frac{ \left(\left( |v+v_*||v-v_*| \right)^2-(2\varepsilon^\tau s)^2\right)^\frac{d-3}2_+}
			{\left(|v+v_*||v-v_*|\right)^{d-2}}
			\mathds{1}_{\{v^2\geq \frac {R^2}2,v_*^2\geq \frac {R^2}2\}}ds d\nu_* du_*d\nu du,
		\end{aligned}
	\end{equation*}
	where, for any $\lambda\in\mathbb{R}$, we have denoted $(z)_+^\lambda=z^\lambda$, if $z>0$, and $(z)_+^\lambda=0$, if $z\leq 0$, and introduced the function
	\begin{equation}\label{pi:fluctuation:dilation}
		\widetilde \pi_\varepsilon(t,x,u,\nu)
		= \frac{\varepsilon^\tau}2 \pi_\varepsilon(t,x,v)\left(R^2+\varepsilon^\tau u\right)_+^{\frac{d-2}{2}},
	\end{equation}
	which is defined for all $(t,x,u,\nu)\in \mathbb{R}^+\times\mathbb{R}^d\times\mathbb{R}\times\mathbb{S}^{d-1}$.

	Some work is now required to identify the weak limit of $\mathcal{I}_\varepsilon^1$. To that end, on the one hand, for any compactly supported continuous test function $\varphi(t,x,v)$, we notice that
	\begin{equation*}
		\begin{aligned}
			\int_{\mathbb{R}^+\times\mathbb{R}^d\times\mathbb{R}\times\mathbb{S}^{d-1}}
			\frac{\widetilde \pi_\varepsilon(t,x,u,\nu)}{4\cosh^2(\frac u2)}
			\varphi(t,x,v)dtdxdud\nu
			&=
			\int_{\mathbb{R}^+\times\mathbb{R}^d\times\mathbb{R}^d}
			h_\varepsilon(t,x,v)\varphi(t,x,v)dtdxdv
			\\
			&\to
			\int_{\mathbb{R}^+\times\mathbb{R}^d\times\partial B(0,R)}
			g(t,x,\omega)\varphi(t,x,\omega)dtdxd\omega,
		\end{aligned}
	\end{equation*}
	as $\varepsilon\to 0$.

	On the other hand, by virtue of \eqref{distance:densities}, for any $\beta>0$, one sees that
	\begin{equation*}
		\int_{\mathbb{R}\times\mathbb{S}^{d-1}}
		\big|\widetilde \pi_\varepsilon\big|\left(\delta M_\varepsilon\left(1-\delta M_\varepsilon\right)\right)^{\beta+\frac 12}
		dud\nu
		=
		\int_{\mathbb{R}^{d}}
		| h_\varepsilon|
		\left(\delta M_\varepsilon\left(1-\delta M_\varepsilon\right)\right)^{\beta-\frac 12}
		dv
		=O(1)_{L^\infty(dt;L^2(dx))}
	\end{equation*}
	and
	\begin{equation*}
		\begin{aligned}
			\int_{\mathbb{R}\times\mathbb{S}^{d-1}}
			&\big|\widetilde \pi_\varepsilon\big|^2
			\left(\delta M_\varepsilon\left(1-\delta M_\varepsilon\right)\right)^{2\beta+1}
			dud\nu
			\\
			&\lesssim
			\varepsilon^{\tau}
			\int_{\mathbb{R}\times\mathbb{S}^{d-1}}
			\left(\left|\sqrt{\delta M_\varepsilon}\phi_\varepsilon\right|
			+\left|\sqrt{1-\delta M_\varepsilon}\psi_\varepsilon\right|\right)^2
			\frac{\varepsilon^\tau}2 \left(R^2+\varepsilon^\tau u\right)_+^{\frac{d-2}{2}}
			dud\nu
			\\
			&=
			\varepsilon^{\tau}
			\int_{\mathbb{R}^{d}}
			\left(\left|\sqrt{\delta M_\varepsilon}\phi_\varepsilon\right|
			+\left|\sqrt{1-\delta M_\varepsilon}\psi_\varepsilon\right|\right)^2
			dv
			=O(1)_{L^\infty(dt;L^1(dx))},
		\end{aligned}
	\end{equation*}
	whereby $\widetilde \pi_\varepsilon \left(\delta M_\varepsilon\left(1-\delta M_\varepsilon\right)\right)^{\beta+\frac 12}$ is uniformly bounded in $L^\infty(dt;L^2(dx;L^1\cap L^2(dud\nu))$. Thus, up to extraction of a subsequence, there exists $\widetilde \pi(t,x,u,\nu)$ such that
	\begin{equation}\label{bound:pi}
		\frac{\widetilde \pi(t,x,u,\nu)}{\left(4\cosh^2(\frac u2)\right)^{\beta+\frac 12}}
		\in L^\infty(dt;L^2(dx;L^1\cap L^2(dud\nu))
	\end{equation}
	and
	\begin{equation}\label{convergence:pi}
		\widetilde \pi_\varepsilon \left(\delta M_\varepsilon\left(1-\delta M_\varepsilon\right)\right)^{\beta+\frac 12}
		\rightharpoonup^*
		\frac{\widetilde \pi(t,x,u,\nu)}{\left(4\cosh^2(\frac u2)\right)^{\beta+\frac 12}},
	\end{equation}
	as $\varepsilon\to 0$, for all $\beta>0$, at least in the weak* topology of $L^\infty(dt;L^2(dxdud\nu))$. Therefore, observing that
	\begin{equation*}
		\begin{aligned}
			\int_{\mathbb{R}^+\times\mathbb{R}^d\times\mathbb{R}\times\mathbb{S}^{d-1}}&
			\big|\widetilde \pi_\varepsilon(t,x,u,\nu)\left(\delta M_\varepsilon\left(1-\delta M_\varepsilon\right)\right)
			\left(\varphi(t,x,v)-\varphi(t,x,R\nu)\right)\big|dtdxdud\nu
			\\
			&=\int_{\mathbb{R}^+\times\mathbb{R}^d\times\mathbb{R}\times\mathbb{S}^{d-1}}
			\Big|\frac{\widetilde \pi_\varepsilon(t,x,u,\nu)}{4\cosh^2(\frac u2)}
			\left(\varphi(t,x,v)-\varphi(t,x,R\nu)\right)\Big|dtdxdud\nu
			\\
			&\leq
			\left\|\widetilde \pi_\varepsilon \left(\delta M_\varepsilon\left(1-\delta M_\varepsilon\right)\right)^{\beta+\frac 12}\right\|_{L^\infty(dt;L^2(dxdud\nu))}
			\\
			&\quad\times\left\|
			\left(\varphi(t,x,v)-\varphi(t,x,R\nu)\right)
			\left(\delta M_\varepsilon\left(1-\delta M_\varepsilon\right)\right)^{\frac 12-\beta}\right\|_{L^1(dt;L^2(dxdud\nu))},
		\end{aligned}
	\end{equation*}
	and, if $0<\beta<\frac 12$, by the Dominated Convergence Theorem, that
	\begin{equation*}
		\lim_{\varepsilon\to 0}\left\|
		\left(\varphi(t,x,v)-\varphi(t,x,R\nu)\right)
		\left(\delta M_\varepsilon\left(1-\delta M_\varepsilon\right)\right)^{\frac 12-\beta}\right\|_{L^1(dt;L^2(dxdud\nu))}
		=0,
	\end{equation*}
	we conclude that
	\begin{equation*}
		\begin{aligned}
			\int_{\mathbb{R}^+\times\mathbb{R}^d\times\mathbb{R}\times\mathbb{S}^{d-1}}
			&\frac{\widetilde \pi_\varepsilon(t,x,u,\nu)}{4\cosh^2(\frac u2)}\varphi(t,x,v)dtdxdud\nu
			\\
			&\to
			\int_{\mathbb{R}^+\times\mathbb{R}^d\times\mathbb{R}\times\mathbb{S}^{d-1}}
			\frac{\widetilde \pi(t,x,u,\nu)}{4\cosh^2(\frac u2)}
			\varphi(t,x,R\nu)dtdxdud\nu.
		\end{aligned}
	\end{equation*}

	We have thus evaluated the weak* limit of $\widetilde \pi_\varepsilon$ in two different ways, which, when compared, lead to the conclusion that
	\begin{equation*}
		\int_{\mathbb{R}^+\times\mathbb{R}^d\times\partial B(0,R)}
		g(t,x,\omega)\varphi(t,x,\omega)dtdxd\omega
		=
		\int_{\mathbb{R}^+\times\mathbb{R}^d\times\mathbb{R}\times\mathbb{S}^{d-1}}
		\frac{\widetilde \pi(t,x,u,\nu)}{4\cosh^2(\frac u2)}\varphi(t,x,R\nu)dtdxdud\nu.
	\end{equation*}
	By the arbitrariness of $\varphi$, we then deduce that
	\begin{equation}\label{velocity:distributions}
		\int_{\mathbb{R}}\frac{\widetilde \pi(t,x,u,\nu)}{4\cosh^2(\frac u2)}du=R^{d-1}g(t,x,R\nu).
	\end{equation}

	The use of the limiting scaled density $\widetilde \pi$ will now allow us to evaluate the limit of $\mathcal{I}_\varepsilon^1$.
	To that end, we write that
	\begin{equation*}
		\mathcal{I}_\varepsilon^1(t,x)=
		\int_{\mathbb{R}\times\mathbb{S}^{d-1}}
		\widetilde \pi_\varepsilon(t,x,u,\nu)
		H_\varepsilon(t,x,u,\nu)
		dud\nu,
	\end{equation*}
	where $H_\varepsilon$ stands for
	\begin{equation*}
		\begin{aligned}
			H_\varepsilon(t,x,u,\nu)
			&=
			\int_{\mathbb{R}\times\mathbb{S}^{d-1}\times\mathbb{R}}
			\frac{\left(\int_{\mathbb{S}^{d-2}\perp\frac{v+v_*}{2}}
			\chi_\varepsilon b d\widetilde\sigma \right)
			(R^2+\varepsilon^\tau u_*)_+^\frac{d-2}2}
			{8^{\alpha}\cosh^\alpha\left(\frac u2\right)
			\cosh^\alpha\left(\frac {u_*}2\right)
			\left(\cosh\left(\frac{u+u_*}2\right)+\cosh s\right)^\alpha}
			\\
			&\quad\times\frac{ \left(\left( |v+v_*||v-v_*| \right)^2-(2\varepsilon^\tau s)^2\right)^\frac{d-3}2_+}
			{\left(|v+v_*||v-v_*|\right)^{d-2}}
			\mathds{1}_{\{v^2\geq \frac {R^2}2,v_*^2\geq \frac {R^2}2\}}
			du_* d\nu_* ds.
		\end{aligned}
	\end{equation*}
	We claim now that
	\begin{equation}\label{limit:I:0}
		\int_{\mathbb{R}^+\times\mathbb{R}^d}\mathcal{I}_\varepsilon^1(t,x) dtdx
		\to
		\int_{\mathbb{R}^+\times\mathbb{R}^d\times\mathbb{R}\times\mathbb{S}^{d-1}}
		\widetilde \pi(t,x,u,\nu)
		H(t,x,u,\nu)
		dtdxdud\nu,
	\end{equation}
	as $\varepsilon\to 0$, with
	\begin{equation*}
		H(t,x,u,\nu)
		=
		\int_{\mathbb{R}\times\mathbb{S}^{d-1}\times\mathbb{R}}
		\frac{
		\left(\int_{\mathbb{S}^{d-2}\perp\frac{\nu+\nu_*}{2}}
		\chi_0
		b\big(R(\nu-\nu_*),\widetilde\sigma\big) d\widetilde\sigma
		\right)
		R^{d-4}du_* d\nu_* ds}
		{8^{\alpha}\cosh^\alpha\left(\frac u2\right)
		\cosh^\alpha\left(\frac {u_*}2\right)
		\left(\cosh\left(\frac{u+u_*}2\right)+\cosh s\right)^\alpha
		|\nu+\nu_*||\nu-\nu_*|}
	\end{equation*}
	and
	\begin{equation*}
		\begin{aligned}
			\chi_0(t,x,u,\nu,u_*,\nu_*,s,\widetilde \sigma)
			&=\Theta+\Theta_*-\Theta'-\Theta_*'
			\\
			&=
			\Theta(t,x,u,R\nu)+\Theta(t,x,u_*,R\nu_*)
			\\
			&\quad -\Theta\left(t,x,u',R\nu'\right)-\Theta\left(t,x,u_*',R\nu_*'\right),
		\end{aligned}
	\end{equation*}
	where we have denoted that
	\begin{equation*}
		u'=\frac{u+u_*}2+s,
		\qquad
		u_*'=\frac{u+u_*}2-s,
	\end{equation*}
	and
	\begin{equation*}
		\nu'=\frac{\nu+\nu_*}2+\frac{|\nu-\nu_*|}2\widetilde\sigma,
		\qquad
		\nu_*'=\frac{\nu+\nu_*}2-\frac{|\nu-\nu_*|}2\widetilde\sigma.
	\end{equation*}

	In particular, observing that $H(t,x,u,\nu)\cosh^\alpha\left(\frac u2\right)$ belongs to $L^\infty(dtdxdud\nu)$ and that it is compactly supported in $(t,x)$, we notice, by virtue of the weak* convergence of $\widetilde \pi_\varepsilon$ established in \eqref{convergence:pi}, that
	\begin{equation*}
		\begin{aligned}
			\int_{\mathbb{R}^+\times\mathbb{R}^d\times\mathbb{R}\times\mathbb{S}^{d-1}}
			\widetilde \pi_\varepsilon
			H
			dtdxdud\nu
			&=
			\int_{\mathbb{R}^+\times\mathbb{R}^d\times\mathbb{R}\times\mathbb{S}^{d-1}}
			\frac{\widetilde \pi_\varepsilon}{\cosh^{\alpha'}\left(\frac u2\right)}
			H\cosh^{\alpha'}\left(\frac u2\right)
			dtdxdud\nu
			\\
			&\to
			\int_{\mathbb{R}^+\times\mathbb{R}^d\times\mathbb{R}\times\mathbb{S}^{d-1}}
			\widetilde \pi
			H
			dtdxdud\nu,
		\end{aligned}
	\end{equation*}
	where $\alpha'$ denotes any fixed value in the range $(1,\alpha)$.
	Therefore, we see that \eqref{limit:I:0} will follow as a direct consequence of the control
	\begin{equation}\label{limit:I:1}
		\left\|(H_\varepsilon-H)\cosh^{\alpha'}\left(\frac u2\right)\right\|_{L^2(dtdxdud\nu)}\to 0,
	\end{equation}
	as $\varepsilon\to 0$.
	
	In order to justify the validity of \eqref{limit:I:1}, we first note, employing \eqref{new:singularity} and the fact that the domain of integration in $H_\varepsilon$ is restricted to $\{v^2\geq \frac {R^2}2,v_*^2\geq \frac {R^2}2\}$, that $(H_\varepsilon-H)\cosh^{\alpha'}\left(\frac u2\right)$ is dominated by the square-integrable function
	\begin{equation*}
		\frac{\|\Theta(t,x,u,v)\|_{L^\infty(dudv)}\|b\|_{L^\infty}}
		{\cosh^{\alpha-\alpha'}\left(\frac u2\right)}
		\int_{\mathbb{S}^{d-1}} \frac 1{|\nu+\nu_*||\nu-\nu_*|} d\nu_*
		\in L^2(dtdxdud\nu).
	\end{equation*}
	It is to be emphasized, in view of \eqref{attenuation:coefficient:5} and since $d\geq 3$, that the singularity $(|\nu+\nu_*||\nu-\nu_*|)^{-1}$ is integrable in $\nu_*$, uniformly in $\nu$.
	Therefore, by the Dominated Convergence Theorem, we see that, in order to show \eqref{limit:I:1}, it is now sufficient to establish the pointwise limit
	\begin{equation}\label{limit:dominated:1}
		\lim_{\varepsilon\to 0} H_\varepsilon(t,x,u,\nu)=H(t,x,u,\nu),
	\end{equation}
	for almost every $(t,x,u,\nu)\in \mathbb{R}^+\times\mathbb{R}^d\times\mathbb{R}\times\mathbb{S}^{d-1}$.
	
	To that end, we consider the parametrization of $\mathbb{S}^{d-2}\cap\frac{v+v_*}2$ given in \eqref{parametrization:sphere} to deduce, for any given $(t,x,u,\nu,u_*,\nu_*,s)$ such that $\nu+\nu_*\neq 0$, by continuity of $\Theta$ and $b$, that
	\begin{equation*}
		\begin{aligned}
			\lim_{\varepsilon\to 0}&
			\int_{\mathbb{S}^{d-2}\perp\frac{v+v_*}{2}}
			\chi_\varepsilon(t,x,v,v_*,\sigma) b(v-v_*,\sigma) d\widetilde\sigma
			\\
			&=\lim_{\varepsilon\to 0}
			\int_{\mathbb{S}^{d-2}\perp e_d}
			\chi_\varepsilon(t,x,v,v_*,\sigma) b(v-v_*,\sigma)
			d\overline\sigma
			\\
			&=
			\int_{\mathbb{S}^{d-2}\perp e_d}
			\Bigg(\Theta\left(t,x,u,R\nu\right)
			+\Theta\left(t,x,u_*,R\nu_*\right)
			\\
			&\quad-\Theta\left(t,x,u',R\left(\frac{\nu+\nu_*}{2}+\frac{|\nu-\nu_*|}{2}
			\mathcal{R}_{\frac{\nu+\nu_*}2}\overline \sigma\right)\right)
			\\
			&\quad-\Theta\left(t,x,u_*',R\left(\frac{\nu+\nu_*}{2}-\frac{|\nu-\nu_*|}{2}
			\mathcal{R}_{\frac{\nu+\nu_*}2}\overline \sigma\right)\right)\Bigg)
			b\big(R(\nu-\nu_*),\mathcal{R}_{\frac{\nu+\nu_*}2}\overline \sigma\big)
			d\overline\sigma
			\\
			&=
			\int_{\mathbb{S}^{d-2}\perp\frac{\nu+\nu_*}{2}}
			\chi_0(t,x,u,\nu,u_*,\nu_*,s,\widetilde\sigma)
			b\big(R(\nu-\nu_*),\widetilde\sigma\big) d\widetilde\sigma,
		\end{aligned}
	\end{equation*}
	where we exploited the pointwise limits \eqref{pointwise:limits}.
	Thus, fixing the variables $(t,x,u,\nu)$ and observing that the integrand of $H_\varepsilon$ is dominated by the function
	\begin{equation*}
		\frac{\|\Theta\|_{L^\infty}\|b\|_{L^\infty}}
		{\cosh^{\frac\alpha 2}\left(\frac {u_*}2\right)\cosh^{\alpha} (s)
		|\nu+\nu_*||\nu-\nu_*|},
	\end{equation*}
	which is integrable in $(u_*,\nu_*,s)$, we deduce, by the Dominated Convergence Theorem, again, that \eqref{limit:dominated:1} holds true. As previously emphasized, this in turn establishes the validity of \eqref{limit:I:0} and \eqref{limit:I:1}.

	All in all, combining the asymptotic controls of $\mathcal{I}_\varepsilon^1$ and $\mathcal{I}_\varepsilon^2$ given in \eqref{limit:I:0} and \eqref{vanishing:I2}, respectively, we conclude that
	\begin{equation*}
		\int_{\mathbb{R}^+\times\mathbb{R}^d}\mathcal{I}_\varepsilon(t,x) dtdx
		\to
		\int_{\mathbb{R}^+\times\mathbb{R}^d\times\mathbb{R}\times\mathbb{S}^{d-1}}
		\widetilde \pi(t,x,u,\nu)
		H(t,x,u,\nu)
		dtdxdud\nu,
	\end{equation*}
	as $\varepsilon\to 0$. In particular, further taking into account the asymptotic behavior of $\mathcal{I}_\varepsilon$ described in \eqref{asymptotic:I}, we finally deduce that
	\begin{equation*}
		\int_{\mathbb{R}^+\times\mathbb{R}^d\times\mathbb{R}\times\mathbb{S}^{d-1}}
		\widetilde \pi(t,x,u,\nu)
		H(t,x,u,\nu)
		dtdxdud\nu=0.
	\end{equation*}
	
	Next, we exploit the collisional symmetries from Lemma \ref{lemma:collisional:symmetries} in combination with the fact that the map
	\begin{equation*}
		(u,u_*,s)\mapsto (u',u_*',\frac{u-u_*}2)=
		\left(\frac{u+u_*}2+s,\frac{u+u_*}2-s,\frac{u-u_*}2\right)
	\end{equation*}
	is a volume-preserving involution on $\mathbb{R}^3$ to arrive at the result
	\begin{equation*}
		B\big(\widetilde \pi(t,x,u,\nu),\Theta(t,x,u,R\nu)\big)=0,
	\end{equation*}
	where $B$ is the bilinear form defined by
	\begin{equation*}
		\begin{aligned}
			&B(p,q)=
			\\
			&\quad\int
			\frac{
			\int_{\mathbb{S}^{d-2}\perp\frac{\nu+\nu_*}{2}}
			(p+p_*-p'-p_*')
			(q+q_*-q'-q_*')
			b\big(R(\nu-\nu_*),\widetilde\sigma\big) d\widetilde\sigma
			}
			{\cosh^\alpha\big(\frac u2\big)
			\cosh^\alpha\big(\frac {u_*}2\big)
			\cosh^\alpha\big(\frac {u'}2\big)
			\cosh^\alpha\big(\frac {u_*'}2\big)
			|\nu+\nu_*||\nu-\nu_*|}
			dtdxdud\nu du_* d\nu_* ds,
		\end{aligned}
	\end{equation*}
	for suitable functions $p(t,x,u,\nu)$ and $q(t,x,u,\nu)$. It can be shown that $B$ is bounded on $L^2$ in the sense that
	\begin{equation*}
		|B(p,q)|
		\lesssim \|b\|_{L^\infty}
		\left(
		\int \frac{p(t,x,u,\nu)^2}{\cosh^\alpha\big(\frac u2\big)}dtdxdud\nu
		\right)^\frac 12
		\left(
		\int \frac{q(t,x,u,\nu)^2}{\cosh^\alpha\big(\frac u2\big)}dtdxdud\nu
		\right)^\frac 12.
	\end{equation*}
	Therefore, a standard approximation argument allows us to infer that
	\begin{equation*}
		B\big(\widetilde \pi,\widetilde \pi \big)=0,
	\end{equation*}
	whereby, recalling that the cross-section $b(z,\sigma)$ is positive on $B(0,2R)\times\mathbb{S}^{d-1}$, we obtain the crucial functional relation
	\begin{equation}\label{functional:0}
		\widetilde\pi (u,\nu)+\widetilde\pi (u_*,\nu_*)-\widetilde\pi (u',\nu')-\widetilde\pi (u_*',\nu_*')=0,
	\end{equation}
	for almost all $(u,u_*,s,\nu,\nu_*,\widetilde\sigma)$, which expresses that $\widetilde\pi$ is a collision invariant, where we omit the dependence of $\widetilde\pi$ on $t$ and $x$.

	We have now reached the last stage of our proof, which consists in extracting suitable information from \eqref{functional:0} to characterize all collision invariants. To that end, we begin by fixing $u=u_*$ and $s=0$ in \eqref{functional:0}, which reduces it to the equation
	\begin{equation}\label{functional:1}
		\widetilde\pi (u,\nu)+\widetilde\pi (u,\nu_*)-\widetilde\pi (u,\nu')-\widetilde\pi (u,\nu_*')=0,
	\end{equation}
	for almost all $(\nu,\nu_*,\widetilde\sigma)$.
	A thorough analysis of collision invariants on the sphere satisfying \eqref{functional:1} is available in our companion article \cite{aa24}, where we have shown that the set of solutions to \eqref{functional:1} is comprised of the linear span of $\{1,\nu_1,\ldots,\nu_d\}$. It therefore follows that
	\begin{equation*}
		\widetilde \pi (t,x,u,\nu)=A(t,x,u)+C(t,x,u)\cdot \nu,
	\end{equation*}
	for some suitable coefficients $A\in\mathbb{R}$ and $C\in\mathbb{R}^d$.
	
	Then, going back to the original functional relation \eqref{functional:0} and setting $\nu=\nu_*$ therein, we deduce, omitting the dependence of functions on $t$ and $x$, that
	\begin{equation*}
		A(u)+A(u_*)=A\left(\frac{u+u_*}2+s\right)+A\left(\frac{u+u_*}2-s\right),
	\end{equation*}
	for all real values $u$, $u_*$ and $s$. In particular, fixing $u_*=0$ and introducing the auxiliary coefficients $\widetilde A(u)=A(u)-A(0)$, it is readily seen that $\widetilde A$ verifies Cauchy's functional relation, that is,
	\begin{equation*}
		\widetilde A(y+z)=\widetilde A(y)+\widetilde A(z),
	\end{equation*}
	for all $y,z\in\mathbb{R}$. In particular, by virtue of classical results on Cauchy's relation (see \cite{aa24} for a presentation of such results), we conclude that $\widetilde A(t,x,u)=A_1(t,x)+A_2(t,x)u$, for some suitable coefficients $A_1,A_2\in\mathbb{R}$.
	
	Finally, in order to describe $C(u)$, setting $u=u_*$ and $\widetilde\sigma$ in \eqref{functional:0} so that $\nu=\nu'$ and $\nu_*=\nu_*'$, we observe that
	\begin{equation*}
		C(u)=C(u+s),
	\end{equation*}
	for all real values $u$ and $s$, which implies that $C(t,x,u)$ must be constant in $u$.
	
	In conclusion, in view of \eqref{bound:pi}, we have now shown that
	\begin{equation}\label{characterization:limit:pi}
		\widetilde \pi(t,x,u,\nu)=A_1(t,x)+A_2(t,x)u+C(t,x)\cdot\nu,
	\end{equation}
	where the coefficients $A_1$, $A_2$ and $C$ belong to $L^\infty(dt;L^2(dx))$,
	which fully characterizes the velocity distribution of collision invariants satisfying \eqref{functional:0}.
	
	At last, recalling that $\widetilde\pi(t,x,u,\nu)$ is related to $g(t,x,\omega)$ through the identity \eqref{velocity:distributions}, a direct computation shows that
	\begin{equation*}
		g(t,x,\omega)=\rho(t,x)+U(t,x)\cdot\omega,
	\end{equation*}
	for some suitable choice of coefficients $\rho(t,x)$ and $U(t,x)$, and that
	\begin{equation*}
		\int_{\partial B(0,R)}g(t,x,\omega)^2d\omega=R^{d-1}|\mathbb{S}^{d-1}|\rho(t,x)^2+\frac{R^{d+1}|\mathbb{S}^{d-1}|}{d}U(t,x)^2
		\in L^\infty(dt;L^1(dx)),
	\end{equation*}
	which completes the proof of the proposition.
\end{proof}

\subsection{Relaxation of dilated fluctuations}

The proof of Proposition \ref{prop:instrumental} also yields a characterization of the thermodynamic equilibria for the dilated fluctuations $\widetilde g_\varepsilon$ in the hydrodynamic regime as $\varepsilon\to 0$. We provide a precise statement and justification of this result below.

\begin{prop}\label{prop:instrumental:dilated}
	Let $d\geq 3$. Consider a cross-section
	\begin{equation*}
		b(z,\sigma)=b\left(|z|,\frac{z}{|z|}\cdot\sigma\right)\geq 0
	\end{equation*}
	such that
	\begin{equation*}
		b(z,\sigma)\in L^\infty(\mathbb{R}^d\times\mathbb{S}^{d-1})\cap C(B(0,2R)\times\mathbb{S}^{d-1})
	\end{equation*}
	and
	\begin{equation*}
		b(z,\sigma)>0\quad\text{on }B(0,2R)\times\mathbb{S}^{d-1},
	\end{equation*}
	and a family of density distributions $0\leq f_\varepsilon(t,x,v)\leq\delta^{-1}$, with $\varepsilon>0$, such that the relative entropy bound \eqref{relative:entropy:1} and the entropy dissipation bound \eqref{entropy:dissipation:1} hold uniformly in $\varepsilon$, with some fixed parameters $\gamma>0$ and $\kappa>0$, where the Fermi--Dirac distribution $M_\varepsilon$ defined in \eqref{normalized:distribution} has a given parameter value $\tau>0$.
	Further suppose that
	\begin{equation*}
		0<\gamma=\tau< 1
		\qquad\text{and}\qquad
		\kappa>2\tau,
	\end{equation*}
	and consider the density fluctuations $g_\varepsilon(t,x,v)$ defined in \eqref{epsilon:fluctuations}, and its dilations $\widetilde g_\varepsilon (t,x,u,\omega)$ given in \eqref{dilated:fluctuations}.
	
	In accordance with Proposition \ref{prop:limit:characterization:dilated}, as $\varepsilon\to 0$, up to extraction of a subsequence, the family of dilated fluctuations $\widetilde g_\varepsilon$ converges in the weak topology of $L^1_\mathrm{loc}\big(dtdx;L^1\big(dud\omega\big)\big)$ toward a limit point
	\begin{equation*}
		\widetilde g\in L^\infty\left(dt;L^2\left(\cosh^2\left(\frac u2\right)dxdud\omega\right)\right).
	\end{equation*}
	
	Then, the limiting dilated density can be further characterized as
	\begin{equation*}
		\widetilde g(t,x,u,\omega)=\frac{\rho(t,x)+U(t,x)\cdot\omega+E(t,x)u}{4\cosh^2(\frac u2)},
	\end{equation*}
	where the density $\rho(t,x)$, the velocity field $U(t,x)$ and the energy density $E(t,x)$ belong to $L^\infty\big(dt;L^2\big(\mathbb{R}^d\big)\big)$.
\end{prop}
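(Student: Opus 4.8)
The plan is to re-run the proof of Proposition \ref{prop:instrumental}, retaining the full dependence of the limiting velocity distribution on the energy variable $u$ instead of averaging it out. From that proof I would reuse, verbatim: the renormalized density fluctuation $h_\varepsilon=\delta M_\varepsilon(1-\delta M_\varepsilon)\pi_\varepsilon$ with $\pi_\varepsilon=\phi_\varepsilon+\psi_\varepsilon$; the expansion $g_\varepsilon=h_\varepsilon+O(\varepsilon^{1-\tau})_{L^\infty(dt;L^1(dxdv))}$ of \eqref{fluctuation:expansion}; the rescaled density $\widetilde\pi_\varepsilon$ of \eqref{pi:fluctuation:dilation}, together with its weak* limit \eqref{convergence:pi}, which for $\beta=\frac12$ states that $\widetilde\pi_\varepsilon\,\delta M_\varepsilon(1-\delta M_\varepsilon)\rightharpoonup^*\widetilde\pi/(4\cosh^2(\frac u2))$ in the weak* topology of $L^\infty(dt;L^2(dxdud\nu))$; and, most importantly, the characterization \eqref{characterization:limit:pi}, namely $\widetilde\pi(t,x,u,\nu)=A_1(t,x)+A_2(t,x)u+C(t,x)\cdot\nu$ with $A_1,A_2,C\in L^\infty(dt;L^2(dx))$. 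All the substantive analysis — the linearized entropy-dissipation estimate of Lemma \ref{lemma:linearized:dissipation}, the collisional symmetries on the sphere of Lemma \ref{lemma:collisional:symmetries}, and the resolution of the collision-invariant functional equation \eqref{functional:0}, including the affine $u$-dependence — has already been done there.

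The first concrete step is an algebraic identification of $\widetilde g_\varepsilon$ with $\widetilde\pi_\varepsilon$. Since $|v|^2=R^2+\varepsilon^\tau u$, definition \eqref{pi:fluctuation:dilation} gives $\widetilde\pi_\varepsilon=\frac{\varepsilon^\tau}2|v|^{d-2}\pi_\varepsilon$, while on the set $\{u>-R^2/\varepsilon^\tau\}$ one has $\delta M_\varepsilon(1-\delta M_\varepsilon)=(4\cosh^2(\frac u2))^{-1}$. Combining these with $h_\varepsilon=\delta M_\varepsilon(1-\delta M_\varepsilon)\pi_\varepsilon$ and the definition \eqref{dilated:fluctuations} of the dilated fluctuation, and writing $\omega=R\nu$, one finds
\begin{equation*}
	\widetilde g_\varepsilon(t,x,u,\omega)=\frac{1}{R^{d-1}}\,\delta M_\varepsilon(1-\delta M_\varepsilon)\,\widetilde\pi_\varepsilon(t,x,u,\nu)+\widetilde r_\varepsilon(t,x,u,\omega),
\end{equation*}
where $\widetilde r_\varepsilon$ denotes the dilation of $g_\varepsilon-h_\varepsilon$. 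Because the change of variables \eqref{dilation:change:variable} is an $L^1$-isometry, one gets $\|\widetilde r_\varepsilon\|_{L^\infty(dt;L^1(dxdud\omega))}=\|g_\varepsilon-h_\varepsilon\|_{L^\infty(dt;L^1(dxdv))}=O(\varepsilon^{1-\tau})$, which vanishes since $\tau<1$; in particular $\widetilde r_\varepsilon\to 0$ strongly in $L^1_\mathrm{loc}$, hence weakly against compactly supported continuous test functions.

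Next I would pass to the limit. Testing the displayed identity against $\varphi\in C_c(\mathbb{R}^+\times\mathbb{R}^d\times\mathbb{R}\times\partial B(0,R))$ and changing variables $\omega=R\nu$ (so that the Jacobians $R^{d-1}$ and $R^{-(d-1)}$ cancel), the left-hand side converges to $\int\widetilde g\,\varphi$ by Proposition \ref{prop:limit:characterization:dilated}, and the main term on the right converges, by \eqref{convergence:pi}, to $R^{-(d-1)}\int\widetilde\pi(t,x,u,\nu)(4\cosh^2(\frac u2))^{-1}\varphi(t,x,u,R\nu)$, since compactly supported continuous functions lie in the predual $L^1(dt;L^2(dxdud\nu))$. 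By the arbitrariness of $\varphi$ one obtains $\widetilde g(t,x,u,\omega)=R^{-(d-1)}\widetilde\pi(t,x,u,\omega/R)/(4\cosh^2(\frac u2))$ almost everywhere, and inserting \eqref{characterization:limit:pi} gives $\widetilde g=(\rho+U\cdot\omega+Eu)/(4\cosh^2(\frac u2))$ with $\rho=R^{-(d-1)}A_1$, $E=R^{-(d-1)}A_2$ and $U=R^{-d}C$, all lying in $L^\infty(dt;L^2(dx))$. Integrating in $u$ and using $\int_\mathbb{R}(4\cosh^2(\frac u2))^{-1}du=1$, $\int_\mathbb{R}u(4\cosh^2(\frac u2))^{-1}du=0$ and \eqref{link:dilated} then confirms that $\rho$ and $U$ agree with the coefficients of Proposition \ref{prop:instrumental}.

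The hard part is not any new estimate but the careful bookkeeping across the dilation. One has to verify that the weak* limit \eqref{convergence:pi} was established against a test-function class (the predual of $L^\infty(dt;L^2)$) that contains the compactly supported continuous functions used to pin down $\widetilde g$, that the normalizations are tracked exactly — the factors of $R^{d-1}$ arising both from \eqref{dilation:change:variable} and from identifying $\mathbb{S}^{d-1}$ with $\partial B(0,R)$, and the identity $\delta M_\varepsilon(1-\delta M_\varepsilon)=(4\cosh^2(\frac u2))^{-1}$ — and that the remainder $\widetilde r_\varepsilon$ vanishes in a topology compatible with all these limits, which it does precisely because $\tau<1$. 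In fact, a shorter route is to observe that Propositions \ref{prop:instrumental} and \ref{prop:limit:characterization:dilated} together already determine $\widetilde\pi$ on the support of $\widetilde g$, so the present statement is essentially a corollary of the former.
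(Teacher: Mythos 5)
Your argument is correct and matches the paper's proof almost line for line: both identify $\widetilde g_\varepsilon$ with $R^{-(d-1)}\,\delta M_\varepsilon(1-\delta M_\varepsilon)\,\widetilde\pi_\varepsilon$ up to the dilation of $g_\varepsilon-h_\varepsilon$, show the latter vanishes in $L^1$ at rate $\varepsilon^{1-\tau}$, pass to the weak* limit via \eqref{convergence:pi}, and then insert the characterization \eqref{characterization:limit:pi} of $\widetilde\pi$ established in the proof of Proposition \ref{prop:instrumental}. Your closing remark is slightly loose (the statement of Proposition \ref{prop:instrumental} alone does not determine the $u$-dependence; only the internal result \eqref{characterization:limit:pi} does), but this is exactly what you invoke in the body of the argument, so there is no gap.
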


\begin{proof}
	First, we consider the renormalized density fluctuation $h_\varepsilon(t,x,v)$ introduced in \eqref{renormalized:fluctuation}, in the context of the proof of Proposition \ref{prop:instrumental}, and define its dilation
	\begin{equation*}
		\widetilde h_\varepsilon (t,x,u,\omega)=
		\left\{
		\begin{aligned}
			&\frac{\varepsilon^\tau|v|^{d-2}}{2R^{d-1}}
			h_\varepsilon(t,x,v) &&\text{if }u\geq -\frac{R^2}{\varepsilon^\tau},
			\\
			&0 &&\text{if }u< -\frac{R^2}{\varepsilon^\tau},
		\end{aligned}
		\right.
	\end{equation*}
	where the variables $v$, $u$ and $\omega$ are related through \eqref{spectrum:variables}. Then, recalling the change of variable formula \eqref{dilation:change:variable}, it follows from \eqref{fluctuation:expansion} that
	\begin{equation*}
		\widetilde g_\varepsilon= \widetilde h_\varepsilon+O(\varepsilon^{1-\tau})_{L^\infty (dt;L^1(dxdud\omega))},
	\end{equation*}
	whereby $\widetilde g$ is also a limit point of $\widetilde h_\varepsilon$ in the weak topology of $L^1_\mathrm{loc}\big(dtdx;L^1\big(dud\omega\big)\big)$.
	
	We also consider the fluctuation term $\pi_\varepsilon(t,x,v)$ and its dilated version $\widetilde \pi_\varepsilon(t,x,u,\nu)$, which are defined in \eqref{pi:fluctuation} and \eqref{pi:fluctuation:dilation}, respectively. In particular, we have that
	\begin{equation*}
		\widetilde h_\varepsilon(t,x,u,\omega)=\frac{\widetilde \pi_\varepsilon(t,x,u,\nu)}{4R^{d-1}\cosh^2\left(\frac u2\right)}.
	\end{equation*}
	Observe that the variables $\nu$ and $\omega$ satisfy that $\omega=R\nu$.
	
	It is then established in \eqref{bound:pi}, \eqref{convergence:pi} and \eqref{characterization:limit:pi}, up to extraction of subsequences, that
	\begin{equation*}
		\frac{\widetilde \pi_\varepsilon(t,x,u,\nu)}{4\cosh^2\left(\frac u2\right)}
		\rightharpoonup^*
		\frac{A(t,x)+B(t,x)u+C(t,x)\cdot\nu}{4\cosh^2(\frac u2)},
	\end{equation*}
	as $\varepsilon\to 0$, at least in the weak* topology of $L^\infty(dt;L^2(dxdud\nu))$, where the coefficients $A$, $B$ and $C$ belong to $L^\infty(dt;L^2(dx))$.
	
	All in all, we conclude that the limit point $\widetilde g$ can be written as
	\begin{equation*}
		\widetilde g(t,x,u,\omega)=\frac{\rho(t,x)+U(t,x)\cdot\omega+E(t,x)u}{4\cosh^2(\frac u2)},
	\end{equation*}
	for some suitable choice of coefficients $\rho(t,x)$, $U(t,x)$ and $E(t,x)$. Moreover, employing that
	\begin{equation*}
		\int_{\mathbb{R}}\frac{1}{4\cosh^2\left(\frac u2\right)}du=1
		\qquad\text{and}\qquad
		\int_{\mathbb{R}}\frac{u^2}{4\cosh^2\left(\frac u2\right)}du=\frac{\pi^2}3,
	\end{equation*}
	we readily compute that
	\begin{equation*}
		\begin{aligned}
			&\int_{\mathbb{R}\times\partial B(0,R)}
			\widetilde g(t,x,u,\omega)^2
			4\cosh^2\left(\frac u2\right)dud\omega
			\\
			&\quad =R^{d-1}|\mathbb{S}^{d-1}|\rho(t,x)^2+\frac{R^{d+1}|\mathbb{S}^{d-1}|}{d}U(t,x)^2
			+\frac{\pi^2}{3}R^{d-1}|\mathbb{S}^{d-1}|E(t,x)^2
			\in L^\infty(dt;L^1(dx)),
		\end{aligned}
	\end{equation*}
	which completes the proof of the proposition.
\end{proof}

\section{Removal of conservation defects}\label{removal:defects}

Proving Theorem \ref{thm:main:2} requires demonstrating that the conservation defects vanish in the low-temperature hydrodynamic regime $\varepsilon\to 0$. This crucial, technical result is presented in the following lemma.

It is important to note, however, that this result requires the imposition of the additional parameter restriction $\kappa+\tau<2$. Further observe that, as $\tau\to 0$, this restriction formally becomes $\kappa<2$, which matches the parameter range for the classical acoustic limit of the Boltzmann equation discussed in Section \ref{acoustic:limit}.

\begin{lem}\label{lemma:removal}
	In the proof of Theorem \ref{thm:main:2}, the conservation defects
	\begin{equation*}
		D_\varepsilon(t,x)=
		\frac{1}{\varepsilon^{\kappa+1}}\int_{\mathbb{R}^d} \frac{v^2-R^2}{\varepsilon^\tau}Q_\mathrm{FD}(f_\varepsilon)(t,x,v)\chi(v)dv
	\end{equation*}
	converge to zero in the strong topology of $L^1_{\mathrm{loc}}(dtdx)$, provided that the constant parameter $\alpha$ in \eqref{cutoff:function:0} is sufficiently large.
\end{lem}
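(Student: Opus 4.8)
The plan is to estimate $D_\varepsilon$ using the entropy dissipation bound rather than trying to bound the collision operator pointwise. The starting point is the standard factorization of the collision operator against a test function $\phi(v)$: using the collisional symmetries \eqref{change:prepost} and \eqref{change:exchange},
\begin{equation*}
	\int_{\mathbb{R}^d} Q_\mathrm{FD}(f_\varepsilon)\phi\, dv
	=\frac14\int_{\mathbb{R}^d\times\mathbb{R}^d\times\mathbb{S}^{d-1}}
	\big(f_\varepsilon' f_{\varepsilon*}'(1-\delta f_\varepsilon)(1-\delta f_{\varepsilon*})
	-f_\varepsilon f_{\varepsilon*}(1-\delta f_\varepsilon')(1-\delta f_{\varepsilon*}')\big)
	(\phi+\phi_*-\phi'-\phi_*')\,b\,dvdv_*d\sigma.
\end{equation*}
Here $\phi(v)=\frac{v^2-R^2}{\varepsilon^\tau}\chi(v)$. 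We write the integrand difference as $\delta^{-1}$ times $q_\varepsilon$ (from \eqref{collision:integrand}, up to the prefactor $\delta^{1/2}\varepsilon^{1+\frac{\kappa-\gamma}2}$) multiplied by the companion "sum" factor $\sqrt{\cdots}+\sqrt{\cdots}$, and use Cauchy--Schwarz in $b\,dvdv_*d\sigma$. The $q_\varepsilon$ factor is controlled in $L^2(b\,dt dx dv dv_* d\sigma)$ by Lemma \ref{lemma:renormalized:collision:bound}; what remains is to bound
\begin{equation*}
	\int_{\mathbb{R}^d\times\mathbb{R}^d\times\mathbb{S}^{d-1}}
	\Big(\sqrt{\delta f_\varepsilon'\delta f_{\varepsilon*}'(1-\delta f_\varepsilon)(1-\delta f_{\varepsilon*})}
	+\sqrt{\delta f_\varepsilon\delta f_{\varepsilon*}(1-\delta f_\varepsilon')(1-\delta f_{\varepsilon*}')}\Big)^2
	\big|\phi+\phi_*-\phi'-\phi_*'\big|^2\,b\,dvdv_*d\sigma,
\end{equation*}
which, after bounding $\delta f_\varepsilon\le 1$ and $1-\delta f_\varepsilon\le 1$ and Taylor-expanding around $M_\varepsilon$ using the renormalized fluctuations $\phi_\varepsilon,\psi_\varepsilon$ from \eqref{epsilon:renormalized:fluctuations}, reduces to an $L^1$ bound of $m_\varepsilon$ (and related products) against $|\phi+\phi_*-\phi'-\phi_*'|^2 b$. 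Collecting powers of $\varepsilon$: $D_\varepsilon$ carries the prefactor $\varepsilon^{-(\kappa+1)}$ times $\varepsilon^{1+\frac{\kappa-\gamma}2}$ from $q_\varepsilon$ times the square root of the "sum-squared-times-test" integral; with $\gamma=\tau$ this should leave a net positive power of $\varepsilon$ precisely when $\kappa+\tau<2$.

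The key structural point is that $\phi+\phi_*-\phi'-\phi_*'$ is \emph{not} small merely because $\frac{v^2-R^2}{\varepsilon^\tau}$ is a collision invariant in the limit — it is, since $v^2+v_*^2=v'^2+v_*'^2$ makes $\frac{v^2-R^2}{\varepsilon^\tau}+\frac{v_*^2-R^2}{\varepsilon^\tau}$ a genuine collision invariant — the defect comes \emph{entirely} from the cutoff $\chi$ in \eqref{cutoff:function:0}. So the first step is the algebraic identity: on the set where all four of $v,v_*,v',v_*'$ lie inside the support of $\chi$, one has $\phi+\phi_*-\phi'-\phi_*'=0$ exactly. Hence the defect is supported on the set where at least one of the four velocities has $\frac{|v^2-R^2|}{\varepsilon^\tau}\ge -\log\varepsilon^\alpha=\alpha|\log\varepsilon|$. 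The weight $m_\varepsilon$ (and the individual factors $\delta M_\varepsilon(1-\delta M_\varepsilon)=\frac{1}{4\cosh^2(u/2)}$ in the variable $u=\frac{v^2-R^2}{\varepsilon^\tau}$) decays like $\cosh^{-2}(u/2)\sim e^{-|u|}$, so restricting to $|u|\gtrsim \alpha|\log\varepsilon|$ in any one slot gains a factor $e^{-c\alpha|\log\varepsilon|}=\varepsilon^{c\alpha}$. Choosing $\alpha$ large makes this gain beat any fixed negative power of $\varepsilon$ coming from the prefactors; that is exactly why "$\alpha$ sufficiently large" appears in the statement. Concretely, I would split the domain into four regions according to which velocity slot is the "bad" one, use the collisional symmetries to reduce to the region $\{|u|\ge\alpha|\log\varepsilon|\}$ in the first slot, and there bound $|\phi+\phi_*-\phi'-\phi_*'|^2\lesssim (\log\varepsilon)^2(1+\cdots)$ crudely (since on the support of all four cutoffs each $\phi$ is $O(|\log\varepsilon|)$, and off it we just use the Pauli bound $0\le\delta f_\varepsilon\le1$ together with $\int m_\varepsilon^{1/2} b\,dv_*d\sigma=O(\varepsilon^{2\tau})$ from Lemma \ref{lemma:attenuation:coefficient}).

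The estimates needed for the "sum" factor squared are: (i) expanding $\sqrt{\delta f_\varepsilon'\delta f_{\varepsilon*}'(1-\delta f_\varepsilon)(1-\delta f_{\varepsilon*})}=\sqrt{m_\varepsilon}(1+O(\varepsilon\Phi_\varepsilon)+O(\varepsilon\Psi_\varepsilon))$ in the notation of the proof of Lemma \ref{lemma:linearized:dissipation}, so that the square of the sum is $\lesssim m_\varepsilon(1+\varepsilon^2(\Phi_\varepsilon^2+\Psi_\varepsilon^2+\cdots))$; (ii) integrating against $|\phi+\phi_*-\phi'-\phi_*'|^2 b$: the leading $m_\varepsilon$ term gives, after the cutoff localization, $O(\varepsilon^{2\tau}(\log\varepsilon)^2\,e^{-c\alpha|\log\varepsilon|})$ in $L^\infty(dv)$, hence $O(\varepsilon^{2\tau+c\alpha}(\log\varepsilon)^2)$ integrated in $v$ against the remaining $M_\varepsilon$-type weight (another $O(\varepsilon^\tau)$ in $L^1(dv)$); the lower-order terms carry extra $\varepsilon^2\Phi_\varepsilon^2$ etc., which are under control by Lemma \ref{lemma:squareroot:relative:entropy} and Corollary \ref{cor:squareroot:relative:entropy}. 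Combining with the $O(\varepsilon^{1+\frac{\kappa-\gamma}2})$ from $q_\varepsilon$ and dividing by $\varepsilon^{\kappa+1}$, and using $\gamma=\tau$, I expect the net order of $D_\varepsilon$ in $L^1_{\mathrm{loc}}(dtdx)$ to be roughly $\varepsilon^{\frac{2-\kappa-\tau}{2}+c\alpha}|\log\varepsilon|$, which tends to $0$ as long as $\kappa+\tau<2$, regardless of how small $2-\kappa-\tau$ is, because $\alpha$ can be taken large.

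The main obstacle is the careful bookkeeping of powers of $\varepsilon$ across the four regions together with the worst-case singularities. Two points deserve attention. First, the cross section $b$ is only assumed $L^\infty$ (and continuous, positive near $B(0,2R)$), so one cannot use any decay of $b$ at large relative velocities; the tightness/decay must come entirely from $m_\varepsilon$ and the Fermi--Dirac weights — this is fine because on the relevant region $\chi$ already localizes $v$ near $\partial B(0,R)$ (so $|v-v_*|\le 2R$-ish once $v_*$ is also localized, while if $v_*$ is \emph{not} localized the $m_\varepsilon$ factor decays in $u_*$ to compensate). Second, one must check that the gain $e^{-c\alpha|\log\varepsilon|}$ genuinely appears even when the "bad" slot is a \emph{post}-collisional velocity $v'$ or $v_*'$; here I would use the pre-post collisional change of variables \eqref{change:prepost} to move the bad slot into the pre-collisional position, at the cost of nothing since the change is volume-preserving and $m_\varepsilon$ is symmetric under it by \eqref{maxwellian:product:notation}. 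Once the regions are all handled symmetrically, the proof reduces to a single elementary integral estimate of the type $\int_{|u|\ge\alpha|\log\varepsilon|} |u|^2\cosh^{-2}(u/2)\,du = O(\varepsilon^{c\alpha}(\log\varepsilon)^2)$, and the lemma follows by choosing $\alpha>\frac{2-\kappa-\tau}{2c}$ — in fact any $\alpha$ with $c\alpha>\frac{\kappa+\tau-2}{2}+(\text{margin})$ works, and since $\kappa+\tau-2<0$ every sufficiently large $\alpha$ suffices.
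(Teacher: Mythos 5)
Your overall strategy matches the paper's: factor $G_\varepsilon'-G_\varepsilon$ through the renormalized integrand $q_\varepsilon$, use the entropy dissipation bound in $L^2(b)$, exploit the exact collision invariance $u+u_*=u'+u_*'$ so the defect is localized to the set where at least one cutoff fails, and gain smallness there. However, there is a genuine conceptual gap in your power counting, centered on the claim that the final bound has the form $\varepsilon^{\frac{2-\kappa-\tau}{2}+c\alpha}|\log\varepsilon|$.

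The issue is that $G_\varepsilon$ and $G_\varepsilon'$ are built from $f_\varepsilon$, not from $M_\varepsilon$. Writing $f_\varepsilon = M_\varepsilon + \varepsilon g_\varepsilon$, the Fermi--Dirac part does decay like $\cosh^{-2}(u/2)$ off the Fermi sphere, producing $\varepsilon^{c\alpha}$ when restricted to $\{|u|\geq\alpha|\log\varepsilon|\}$; but the deviation $\varepsilon g_\varepsilon$ does \emph{not} localize. Its control comes only from the relative entropy bound \eqref{asymptotic:control}, which splits $g_\varepsilon$ as $\sqrt{\delta M_\varepsilon(1-\delta M_\varepsilon)}\,O(\varepsilon^{-\tau/2})_{L^2}+O(\varepsilon^{1-\tau})_{L^1}$. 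The first piece still carries a Fermi--Dirac weight and localizes, but the second piece, the quadratic remainder $\tfrac{\delta\varepsilon}4\delta M_\varepsilon(1-\delta M_\varepsilon)(\phi_\varepsilon^2-\psi_\varepsilon^2)$, is controlled only in $L^1$ and can a priori concentrate in the very region $\{|u|\geq\alpha|\log\varepsilon|\}$ that you hope is negligible. Restricting to this region gains nothing for that term: the inner integral in the paper's estimates \eqref{defect:3} and \eqref{defect:4} is
$\varepsilon^{\alpha/2}+\varepsilon^{1+\alpha/4}+\varepsilon^{2-\tau}$, and for $\alpha$ large the third term dominates. Hence after Cauchy--Schwarz the net order is $\varepsilon^{1-\frac{\kappa+\tau}2}|\log\varepsilon|^{3/2}$ \emph{without} an extra $\varepsilon^{c\alpha}$ factor. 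This is why $\kappa+\tau<2$ is genuinely necessary and cannot be traded for $\alpha$ large; the role of $\alpha$ is only to guarantee the Fermi--Dirac tail terms $\varepsilon^{\alpha/2}$ and $\varepsilon^{1+\alpha/4}$ do not spoil the rate set by $\varepsilon^{2-\tau}$. Your stated exponent $\frac{2-\kappa-\tau}2+c\alpha$ would, if correct, lift the $\kappa+\tau<2$ restriction entirely, which is inconsistent with the statement you set out to prove.

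Two smaller omissions. First, after factoring $G'_\varepsilon-G_\varepsilon=\delta^{-1/2}\varepsilon^{1+\frac{\kappa-\tau}2}q_\varepsilon(\sqrt{G'_\varepsilon}+\sqrt{G_\varepsilon})$ and writing $\sqrt{G'_\varepsilon}+\sqrt{G_\varepsilon}=2\sqrt{G_\varepsilon}+(\sqrt{G'_\varepsilon}-\sqrt{G_\varepsilon})$, you also generate $q_\varepsilon^2$ contributions (the paper's $D_\varepsilon^1$ and $D_\varepsilon^6$), which are $O(\varepsilon^{1-\tau}|\log\varepsilon|)$ by Lemma \ref{lemma:renormalized:collision:bound}, uniformly in $\alpha$; your proposal does not account for them explicitly, although they are subordinate. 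Second, when the ``bad'' slot is a post-collisional velocity $v'$ or $v_*'$, you suggest moving it to the pre-collisional position via the measure-preserving change \eqref{change:prepost}; but this simultaneously transforms the cutoffs $\chi\chi_*\leftrightarrow\chi'\chi_*'$, so the structure you land on is not the same as the $v$-bad case. The paper instead needs the separate scattering-geometry estimate $\sup_{v_*}\int\chi'\chi_*'\,dvd\sigma=O(\varepsilon^\tau|\log\varepsilon|)$ (its \eqref{technical:1}), which your proposal does not anticipate.
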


\begin{proof}
	The control of conservation defects relies primarily on two estimates. First, on the bound
	\begin{equation}\label{defect:1}
		q_\varepsilon=O(1)_{L^2(bdtdxdvdv_*d\sigma)},
	\end{equation}
	established in Lemma \ref{lemma:renormalized:collision:bound} as a consequence of the entropy dissipation bound, where the renormalized collision integrands $q_\varepsilon$ are defined in \eqref{collision:integrand}. Second, on the asymptotic control \eqref{asymptotic:control} based on Lemma \ref{lemma:squareroot:relative:entropy}, which we reproduce here as
	\begin{equation}\label{defect:2}
		g_\varepsilon= \sqrt{\delta M_\varepsilon(1-\delta M_\varepsilon)}
		O(\varepsilon^{-\frac{\tau}2})_{L^\infty (dt;L^2(dxdv))}
		+O(\varepsilon^{1-\tau})_{L^\infty (dt;L^1(dxdv))}
	\end{equation}
	and follows from the control of the relative entropy.
	
	Next, we decompose $D_\varepsilon$ into
	\begin{equation*}
		D_\varepsilon=\sum_{i=1}^6D_\varepsilon^i,
	\end{equation*}
	with
	\begin{equation*}
		\begin{aligned}
			D_\varepsilon^1&=\delta\varepsilon^{1-\tau}
			\int_{\mathbb{R}^d\times\mathbb{R}^d\times\mathbb{S}^{d-1}}
			q_\varepsilon^2 u\chi bdvdv_*d\sigma,
			\\
			D_\varepsilon^2&=\frac{2\delta^\frac 12}{\varepsilon^{\frac{\kappa+\tau}{2}}}
			\int_{\mathbb{R}^d\times\mathbb{R}^d\times\mathbb{S}^{d-1}}
			q_\varepsilon
			\sqrt{G_\varepsilon}u\chi(1-\chi_*) bdvdv_*d\sigma,
			\\
			D_\varepsilon^3&=\frac{2\delta^\frac 12}{\varepsilon^{\frac{\kappa+\tau}{2}}}
			\int_{\mathbb{R}^d\times\mathbb{R}^d\times\mathbb{S}^{d-1}}
			q_\varepsilon
			\sqrt{G_\varepsilon}u\chi\chi_*\big(1-\chi'\big) bdvdv_*d\sigma,
			\\
			D_\varepsilon^4&=\frac{2\delta^\frac 12}{\varepsilon^{\frac{\kappa+\tau}{2}}}
			\int_{\mathbb{R}^d\times\mathbb{R}^d\times\mathbb{S}^{d-1}}
			q_\varepsilon
			\sqrt{G_\varepsilon}u\chi\chi_*\chi'\big(1-\chi_*'\big) bdvdv_*d\sigma,
			\\
			D_\varepsilon^5&=\frac{1}{\varepsilon^{\kappa+1}}
			\int_{\mathbb{R}^d\times\mathbb{R}^d\times\mathbb{S}^{d-1}}\big(G_\varepsilon'-G_\varepsilon\big)u\chi\chi_*\chi'\chi_*' bdvdv_*d\sigma,
			\\
			D_\varepsilon^6&=-\delta\varepsilon^{1-\tau}
			\int_{\mathbb{R}^d\times\mathbb{R}^d\times\mathbb{S}^{d-1}}
			q_\varepsilon^2 u\chi\chi_*\chi'\chi_*' bdvdv_*d\sigma,
		\end{aligned}
	\end{equation*}
	where we used the notation
	\begin{equation*}
		\begin{aligned}
			G_\varepsilon&=ff_*(1-\delta f')(1-\delta f_*'),
			\\
			G_\varepsilon'&=f'f_*'(1-\delta f)(1-\delta f_*),
		\end{aligned}
	\end{equation*}
	for the sake of convenience.
	
	It is then readily seen that $D_\varepsilon^5=0$, by a straightforward application of collisional symmetries \eqref{change:prepost} and \eqref{change:exchange}. Furthermore, in view of \eqref{defect:1}, we find that
	\begin{equation*}
		D_\varepsilon^1,D_\varepsilon^6=O(\varepsilon^{1-\tau}|\log\varepsilon|)_{L^1(dtdx)}.
	\end{equation*}
	However, the control of $D_\varepsilon^2$, $D_\varepsilon^3$ and $D_\varepsilon^4$ is more involved.
	
	Due to their similitude, the terms $D_\varepsilon^2$ and $D_\varepsilon^4$ are handled together by first writing the decompositions
	\begin{equation*}
		D_\varepsilon^2=\sum_{i=1}^3 D_\varepsilon^{2,i}
		\quad\text{and}\quad
		D_\varepsilon^4=\sum_{i=1}^3 D_\varepsilon^{4,i},
	\end{equation*}
	where
	\begin{equation*}
		\begin{aligned}
			D_\varepsilon^{2,1}&=\frac{2\delta^\frac 12}{\varepsilon^{\frac{\kappa+\tau}{2}}}
			\int_{\mathbb{R}^d\times\mathbb{R}^d\times\mathbb{S}^{d-1}}
			q_\varepsilon
			\sqrt{G_\varepsilon}u\chi(1-\chi_*)\mathds{1}_{\{|v_*|> R\}} bdvdv_*d\sigma,
			\\
			D_\varepsilon^{2,2}&=\frac{2\delta^\frac 12}{\varepsilon^{\frac{\kappa+\tau}{2}}}
			\int_{\mathbb{R}^d\times\mathbb{R}^d\times\mathbb{S}^{d-1}}
			q_\varepsilon
			\sqrt{G_\varepsilon'}u\chi(1-\chi_*)\mathds{1}_{\{|v_*|< R\}} bdvdv_*d\sigma,
			\\
			D_\varepsilon^{2,3}&= -2\delta\varepsilon^{1-\tau}
			\int_{\mathbb{R}^d\times\mathbb{R}^d\times\mathbb{S}^{d-1}}
			q_\varepsilon^2u\chi(1-\chi_*)\mathds{1}_{\{|v_*|< R\}} bdvdv_*d\sigma,
		\end{aligned}
	\end{equation*}
	and
	\begin{equation*}
		\begin{aligned}
			D_\varepsilon^{4,1}&=\frac{2\delta^\frac 12}{\varepsilon^{\frac{\kappa+\tau}{2}}}
			\int_{\mathbb{R}^d\times\mathbb{R}^d\times\mathbb{S}^{d-1}}
			q_\varepsilon
			\sqrt{G_\varepsilon'}u\chi\chi_*\chi'\big(1-\chi_*'\big)\mathds{1}_{\{|v_*'|>R\}} bdvdv_*d\sigma,
			\\
			D_\varepsilon^{4,2}&=\frac{2\delta^\frac 12}{\varepsilon^{\frac{\kappa+\tau}{2}}}
			\int_{\mathbb{R}^d\times\mathbb{R}^d\times\mathbb{S}^{d-1}}
			q_\varepsilon
			\sqrt{G_\varepsilon}u\chi\chi_*\chi'\big(1-\chi_*'\big)\mathds{1}_{\{|v_*'|<R\}} bdvdv_*d\sigma,
			\\
			D_\varepsilon^{4,3}&=-2\delta \varepsilon^{1-\tau}
			\int_{\mathbb{R}^d\times\mathbb{R}^d\times\mathbb{S}^{d-1}}
			q_\varepsilon^2
			u\chi\chi_*\chi'\big(1-\chi_*'\big)\mathds{1}_{\{|v_*'|>R\}} bdvdv_*d\sigma.
		\end{aligned}
	\end{equation*}

	It then follows from \eqref{defect:1} that
	\begin{equation*}
		D_\varepsilon^{2,3},D_\varepsilon^{4,3}=O(\varepsilon^{1-\tau}|\log\varepsilon|)_{L^1(dtdx)}.
	\end{equation*}
	Moreover, for any compact set $K\subset\mathbb{R}^+\times\mathbb{R}^d$, we obtain, since $f_\varepsilon$ is uniformly bounded pointwise, that
	\begin{equation*}
		\begin{aligned}
			\int_K\big(|D_\varepsilon^{2,1}|+|D_\varepsilon^{4,1}|\big)dtdx
			&\lesssim
			\frac{|\log\varepsilon|}{\varepsilon^{\frac{\kappa+\tau}{2}}}
			\left(\int_{K\times\mathbb{R}^d\times\mathbb{R}^d\times\mathbb{S}^{d-1}}
			G_\varepsilon \chi(1-\chi_*)\mathds{1}_{\{|v_*|> R\}} dtdxdvdv_*d\sigma\right)^\frac 12
			\\
			&\lesssim
			\frac{|\log\varepsilon|}{\varepsilon^{\frac{\kappa+\tau}{2}}}
			\left(\int_{\mathbb{R}^d}\chi dv\right)^\frac 12
			\left(\int_{K\times\mathbb{R}^d}
			f_{\varepsilon} (1-\chi)\mathds{1}_{\{|v|> R\}} dtdxdv\right)^\frac 12,
		\end{aligned}
	\end{equation*}
	and
	\begin{equation*}
		\begin{aligned}
			\int_K\big(|D_\varepsilon^{2,2}|+|D_\varepsilon^{4,2}|\big)dtdx
			&\lesssim
			\frac{|\log\varepsilon|}{\varepsilon^{\frac{\kappa+\tau}{2}}}
			\left(\int_{K\times\mathbb{R}^d\times\mathbb{R}^d\times\mathbb{S}^{d-1}}
			G_\varepsilon' \chi(1-\chi_*)\mathds{1}_{\{|v_*|< R\}} dtdxdvdv_*d\sigma\right)^\frac 12
			\\
			&\lesssim
			\frac{|\log\varepsilon|}{\varepsilon^{\frac{\kappa+\tau}{2}}}
			\left(\int_{\mathbb{R}^d}\chi dv\right)^\frac 12
			\left(\int_{K\times\mathbb{R}^d}
			(1-\delta f_\varepsilon) (1-\chi)\mathds{1}_{\{|v|< R\}} dtdxdv\right)^\frac 12.
		\end{aligned}
	\end{equation*}
	In particular, employing \eqref{defect:2} and recalling that
	\begin{equation*}
		\chi(v)=\mathds{1}_{\left\{\frac{|v^2-R^2|}{\varepsilon^\tau}< -\log\varepsilon^\alpha\right\}}
		=\mathds{1}_{\left\{|u|< -\log\varepsilon^\alpha\right\}},
	\end{equation*}
	the preceding estimates lead to
	\begin{equation}\label{defect:3}
		\begin{aligned}
			\int_K\big(|D_\varepsilon^{2,1}|+|D_\varepsilon^{4,1}|\big)dtdx
			&\lesssim
			\frac{|\log\varepsilon|^\frac 32}{\varepsilon^{\frac{\kappa}{2}}}
			\left(\int_{K\times\mathbb{R}^d}
			f_{\varepsilon} (1-\chi)\mathds{1}_{\{|v|> R\}} dtdxdv\right)^\frac 12
			\\
			&\lesssim
			\frac{|\log\varepsilon|^\frac 32}{\varepsilon^{\frac{\kappa}{2}}}
			\left(\int_{K\times\mathbb{R}^d}
			\big(M_{\varepsilon}+\varepsilon|g_{\varepsilon}|\big) \mathds{1}_{\left\{\frac{v^2-R^2}{\varepsilon^\tau}\geq -\log\varepsilon^\alpha\right\}} dtdxdv\right)^\frac 12
			\\
			&\lesssim
			\frac{|\log\varepsilon|^\frac 32}{\varepsilon^{\frac{\kappa}{2}}}
			\left(\varepsilon^\frac\alpha 2
			+\varepsilon^{1+\frac\alpha 4}+\varepsilon^{2-\tau}\right)^\frac 12
			\lesssim |\log\varepsilon|^\frac 32\varepsilon^{1-\frac{\kappa+\tau}2}
		\end{aligned}
	\end{equation}
	and
	\begin{equation}\label{defect:4}
		\begin{aligned}
			\int_K\big(|D_\varepsilon^{2,2}|+|D_\varepsilon^{4,2}|\big)&dtdx
			\\
			&\lesssim
			\frac{|\log\varepsilon|^\frac 32}{\varepsilon^{\frac{\kappa}{2}}}
			\left(\int_{K\times\mathbb{R}^d}
			(1-\delta f_\varepsilon) (1-\chi)\mathds{1}_{\{|v|< R\}} dtdxdv\right)^\frac 12
			\\
			&\lesssim
			\frac{|\log\varepsilon|^\frac 32}{\varepsilon^{\frac{\kappa}{2}}}
			\left(\int_{K\times\mathbb{R}^d}
			\big((1-\delta M_{\varepsilon})+\varepsilon|g_{\varepsilon}|\big) \mathds{1}_{\left\{\frac{v^2-R^2}{\varepsilon^\tau}\leq \log\varepsilon^\alpha\right\}} dtdxdv\right)^\frac 12
			\\
			&\lesssim
			\frac{|\log\varepsilon|^\frac 32}{\varepsilon^{\frac{\kappa}{2}}}
			\left(\varepsilon^\frac\alpha 2
			+\varepsilon^{1+\frac\alpha 4}+\varepsilon^{2-\tau}\right)^\frac 12
			\lesssim |\log\varepsilon|^\frac 32\varepsilon^{1-\frac{\kappa+\tau}2},
		\end{aligned}
	\end{equation}
	provided the constant $\alpha$ is sufficiently large. All in all, gathering the preceding bounds, we deduce that
	\begin{equation*}
		D_\varepsilon^2,D_\varepsilon^4=O(\varepsilon^{1-\frac{\kappa+\tau}2}|\log\varepsilon|^\frac 32)_{L^1_\mathrm{loc}(dtdx)}+O(\varepsilon^{1-\tau}|\log\varepsilon|)_{L^1(dtdx)},
	\end{equation*}
	which establishes the vanishing of $D_\varepsilon^2$ and $D_\varepsilon^4$, as $\varepsilon\to 0$, provided that $\kappa+\tau<2$.
	
	The control of the last term $D_\varepsilon^3$ is analogous to the preceding estimates. However, it requires a few extra technical steps. Thus, we begin with the technical estimate, for any $v_*\in\mathbb{R}^d$,
	\begin{equation}\label{technical:1}
		\begin{aligned}
			\int_{\mathbb{R}^d\times\mathbb{S}^{d-1}}\chi'\chi_*' dvd\sigma&=\int_{\mathbb{R}^d\times\mathbb{S}^{d-1}}\chi\left(v_*+\frac V2+\frac{|V|}2\sigma\right)\chi\left(v_*+\frac V2-\frac{|V|}2\sigma\right) dVd\sigma
			\\
			&=2\int_{\{\sigma\cdot V>0\}}\chi\left(v_*+\frac V2+\frac{|V|}2\sigma\right)\chi\left(v_*+\frac V2-\frac{|V|}2\sigma\right) dVd\sigma
			\\
			&=2\int_{\{\sqrt 2 \sigma\cdot W>|W|\}}\chi\left(v_*+W\right)\chi\left(v_*+W-\frac{W^2}{\sigma\cdot W}\sigma\right) \frac{2^{d-1}W^2}{(\sigma\cdot W)^2}dWd\sigma
			\\
			&\leq 2^{d+1}\int_{\mathbb{R}^d}\chi(W)dW=O(\varepsilon^\tau|\log\varepsilon|),
		\end{aligned}
	\end{equation}
	where we employed the change of variable
	\begin{equation*}
		W=\frac V2+\frac{|V|}2\sigma\in\mathbb{R}^d\cap\{\sigma\cdot W>0\},\qquad
		V=2W-\frac{W^2}{\sigma\cdot W}\sigma\in\mathbb{R}^d\cap\{\sigma\cdot V>-|V|\},
	\end{equation*}
	with Jacobian determinant
	\begin{equation*}
		dV=\frac{2^{d-1}W^2}{(\sigma\cdot W)^2}dW.
	\end{equation*}
	Note that this change of variable formula can be readily established through a straightforward use of spherical coordinates. The asymptotic control \eqref{technical:1} will come in handy later on in the proof.
	
	Then, as before, we proceed by decomposing $D_\varepsilon^3$ into
	\begin{equation*}
		D_\varepsilon^3=\sum_{i=1}^3 D_\varepsilon^{3,i},
	\end{equation*}
	where
	\begin{equation*}
		\begin{aligned}
			D_\varepsilon^{3,1}&=\frac{2\delta^\frac 12}{\varepsilon^{\frac{\kappa+\tau}{2}}}
			\int_{\mathbb{R}^d\times\mathbb{R}^d\times\mathbb{S}^{d-1}}
			q_\varepsilon
			\sqrt{G_\varepsilon'}u\chi\chi_*\big(1-\chi'\big)\mathds{1}_{\{|v'|>R\}} bdvdv_*d\sigma,
			\\
			D_\varepsilon^{3,2}&=\frac{2\delta^\frac 12}{\varepsilon^{\frac{\kappa+\tau}{2}}}
			\int_{\mathbb{R}^d\times\mathbb{R}^d\times\mathbb{S}^{d-1}}
			q_\varepsilon
			\sqrt{G_\varepsilon}u\chi\chi_*\big(1-\chi'\big)\mathds{1}_{\{|v'|<R\}} bdvdv_*d\sigma,
			\\
			D_\varepsilon^{3,3}&=-2\delta \varepsilon^{1-\tau}
			\int_{\mathbb{R}^d\times\mathbb{R}^d\times\mathbb{S}^{d-1}}
			q_\varepsilon^2
			u\chi\chi_*\big(1-\chi'\big)\mathds{1}_{\{|v'|>R\}} bdvdv_*d\sigma.
		\end{aligned}
	\end{equation*}
	It is then readily seen from \eqref{defect:1} that
	\begin{equation*}
		D_\varepsilon^{3,3}=O(\varepsilon^{1-\tau}|\log\varepsilon|)_{L^1(dtdx)}.
	\end{equation*}
	In order to control $D_\varepsilon^{3,1}$ and $D_\varepsilon^{3,2}$, we repeat the estimates leading to \eqref{defect:3} and \eqref{defect:4}, and obtain, provided $\alpha$ is large enough, that
	\begin{equation*}
		\begin{aligned}
			\int_K|D_\varepsilon^{3,1}|dtdx
			&\lesssim
			\frac{|\log\varepsilon|}{\varepsilon^{\frac{\kappa+\tau}{2}}}
			\left(\sup_{v_*\in\mathbb{R}^d}\int_{\mathbb{R}^d\times\mathbb{S}^{d-1}}\chi'\chi_*' dvd\sigma\right)^\frac 12
			\\
			&\quad\times\left(\int_{K\times\mathbb{R}^d}
			f_{\varepsilon} (1-\chi)\mathds{1}_{\{|v|> R\}} dtdxdv\right)^\frac 12
			\\
			&\lesssim |\log\varepsilon|\varepsilon^{1-\frac{\kappa}2-\tau}
			\left(\sup_{v_*\in\mathbb{R}^d}\int_{\mathbb{R}^d\times\mathbb{S}^{d-1}}\chi'\chi_*' dvd\sigma\right)^\frac 12
		\end{aligned}
	\end{equation*}
	and
	\begin{equation*}
		\begin{aligned}
			\int_K|D_\varepsilon^{3,2}|dtdx
			&\lesssim
			\frac{|\log\varepsilon|}{\varepsilon^{\frac{\kappa+\tau}{2}}}
			\left(\sup_{v_*\in\mathbb{R}^d}\int_{\mathbb{R}^d\times\mathbb{S}^{d-1}}\chi'\chi_*' dvd\sigma\right)^\frac 12
			\\
			&\quad\times\left(\int_{K\times\mathbb{R}^d}
			(1-\delta f_\varepsilon) (1-\chi)\mathds{1}_{\{|v|< R\}} dtdxdv\right)^\frac 12
			\\
			&\lesssim |\log\varepsilon|\varepsilon^{1-\frac{\kappa}2-\tau}
			\left(\sup_{v_*\in\mathbb{R}^d}\int_{\mathbb{R}^d\times\mathbb{S}^{d-1}}\chi'\chi_*' dvd\sigma\right)^\frac 12.
		\end{aligned}
	\end{equation*}
	Therefore, by making use of \eqref{technical:1}, we conclude that	
	\begin{equation*}
		D_\varepsilon^{3,1},D_\varepsilon^{3,2}=O(\varepsilon^{1-\frac{\kappa+\tau}2}|\log\varepsilon|^\frac 32)_{L^1_\mathrm{loc}(dtdx)},
	\end{equation*}
	whence
	\begin{equation*}
		D_\varepsilon^3=O(\varepsilon^{1-\frac{\kappa+\tau}2}|\log\varepsilon|^\frac 32)_{L^1_\mathrm{loc}(dtdx)}+O(\varepsilon^{1-\tau}|\log\varepsilon|)_{L^1(dtdx)},
	\end{equation*}
	which shows the vanishing of $D_\varepsilon^3$, provided that $\kappa+\tau<2$.
	
	Finally, by gathering the preceding asymptotic estimates for each $D_\varepsilon^i$, we deduce that
	\begin{equation*}
		D_\varepsilon=O(\varepsilon^{1-\frac{\kappa+\tau}2}|\log\varepsilon|^\frac 32)_{L^1_\mathrm{loc}(dtdx)}+O(\varepsilon^{1-\tau}|\log\varepsilon|)_{L^1(dtdx)},
	\end{equation*}
	which, since $\kappa+\tau<2$, completes the proof that conservation defects vanish in the strong topology of integrable functions.
\end{proof}

\bibliographystyle{alpha}
\bibliography{quantum}

\end{document}